\newcommand{\rel}{\mathfrak{R}}
\tikzstyle myBG=[line width=3pt,opacity=1]
\newcommand{\lb}{\langle}
\newcommand{\rb}{\rangle}
\newcommand{\ZE}{\mathbb{Z}E}
\newcommand{\ZV}{\mathbb{Z}V}
\newcommand{\gr}{\mathrel{\mathscr{R}}}
\newcommand{\ZL}{{\mathbb{Z}L}}
\newcommand{\ZM}{{\mathbb{Z}M}}
\newcommand{\Z}{\mathbb{Z}}
\newcommand{\FPn}{{\rm FP}\sb n}
\newcommand{\Fn}{{\rm F}\sb n}
\newcommand{\F}{{\rm F}}
\newcommand{\FP}{{\rm FP}}
\newcommand{\FPinfty}{{\rm FP}\sb \infty}
\definecolor{cof}{RGB}{219,144,71}
\definecolor{pur}{RGB}{186,146,162}
\definecolor{greeo}{RGB}{91,173,69}
\definecolor{greet}{RGB}{52,111,72}
\newcommand{\inv}{^{-1}}
\newcommand{\p}{\varphi}
\newcommand{\ov}[1]{\ensuremath{\overline {#1}}}
\newcommand{\Irr}{\mathop{\mathrm{Irr}}\nolimits}
\newtheorem{Thm}{Theorem}[section]
\newtheorem{Prop}[Thm]{Proposition}
\newtheorem{Lemma}[Thm]{Lemma}
{\theoremstyle{definition}
}
{\theoremstyle{remark}
\newtheorem{Rmk}[Thm]{Remark}}
{\theoremstyle{remark}
\newtheorem{remark}[Thm]{Remark}}
\newtheorem{Cor}[Thm]{Corollary}
\theoremstyle{remark}
\newtheorem{Example}[Thm]{Example}}
\newtheorem{lem}[Thm]{Lemma}
\theoremstyle{remark}
\theoremstyle{remark}
\theoremstyle{remark}
\numberwithin{equation}{section}
\title[Topological finiteness properties]{
Topological finiteness properties of monoids \\ Part 2: special monoids, one-relator monoids,  \\ amalgamated free products, and HNN extensions
}
\subjclass[2010]{20M50, 20M05, 20J05, 57M07, 20F10, 20F65}
\keywords{Equivariant CW-complex,
homological finiteness property,
classifying space,
cohomological dimension,
Hochschild cohomological dimension,
geometric dimension,
Bass--Serre tree,
monoid,
special monoid,
one-relator monoid,
free product with amalgamation,
HNN extension.
%%free inverse monoid.
\\
\indent
This work was supported by the
EPSRC grants 
EP/N033353/1 `Special inverse monoids: subgroups, structure, geometry, rewriting systems and the word problem' and 
EP/V032003/1 `Algorithmic, topological and geometric aspects of infinite groups, monoids and inverse semigroups'. The second author was supported by  NSA MSP \#H98230-16-1-0047 and a PSC-CUNY award}
\begin{document}
\maketitle

\begin{center}
ROBERT D. GRAY
\footnote{School of Mathematics, University of East Anglia, Norwich NR4 7TJ, England.
Email \texttt{Robert.D.Gray@uea.ac.uk}.
}
and
BENJAMIN STEINBERG\footnote{
Department of Mathematics, City College of New York, Convent Avenue at 138th Street, New York, New York 10031,  USA.
Email \texttt{bsteinberg@ccny.cuny.edu}.}
\\
\end{center}

\begin{abstract}
We show how topological methods developed in a previous article can be applied to prove new results about topological and homological finiteness properties of monoids.
A monoid presentation is called special if the right-hand side of each relation is equal to $1$.
We prove results which relate the finiteness properties of a monoid defined by a special presentation with those of its group of units.
Specifically we show that the monoid inherits the finiteness properties $\F_n$ and $\FP_n$ from its group of units.
We also obtain results which relate the geometric and cohomological dimensions of such a monoid to those of its group of units.
We apply these results to prove a Lyndon's Identity Theorem for one-relator monoids of the form $\lb A \mid r=1 \rb$. 
In particular we show that all such monoids are of type $\F_{\infty}$ (and $\FP_{\infty}$), and that when $r$ is not a proper power, then the monoid has geometric and cohomological dimension at most $2$.   
The first of these results, resolves an important case of a question of Kobayashi from 2000 on homological finiteness properties of one-relator monoids.
We also show how our topological approach can be used to prove results about the closure properties of 
various homological and topological finiteness properties for 
amalgamated free products and HNN-extensions of monoids. 
To prove these results we introduce new methods for constructing equivariant
classifying spaces for monoids, as well as developing a Bass--Serre theory for free constructions of monoids.
 
\end{abstract}

\section{Introduction}
Topological methods play an important role in the modern study of infinite discrete groups.
Recall that an Eilenberg--Mac Lane complex of type $K(G,1)$ is an aspherical CW complex with fundamental group $G$.
For any group $G$ a $K(G,1)$ complex exists, and it is unique up to homotopy equivalence.
While the existence of such spaces is elementary, it is often a much harder problem to find a $K(G,1)$ complex which is suitably `nice' to be used for doing calculations.
This is important if one wants to compute homology and cohomology groups.
This is part of the motivation for the study of higher order topological finiteness properties of groups, a topic which goes back to pioneering work of Wall~\cite{Wall1965} and Serre~\cite{Serre1971}.
We recall that a group is of type $\F_n$ if there is a $K(G,1)$-complex with a finite $n$-skeleton.
The property $\F_1$ is equivalent to finite generation, while a group is of type $\F_2$ if and only if it is finitely presented, so $\F_n$ gives a natural higher dimensional analogue of these two fundamental finiteness properties.
The geometric dimension of a group $G$, denoted $\mathrm{gd}(G)$, is the minimum dimension of a $K(G,1)$ complex.  
The topological finiteness property $\F_n$ and geometric dimension correspond, respectively, to the homological finiteness property $\FP_n$ and the cohomological dimension of the group.
The study of topological and homological finiteness properties is an active area of research. 
We refer the reader to~\cite[Chapter~8]{BrownCohomologyBook},~\cite[Chapters~6-9]{GeogheganBook} and~\cite{Brown2010} for more background on this topic.

The homological finiteness properties $\FP_n$ and cohomological dimension have also been extensively studied more generally for monoids. 
One major motivation for studying homological finiteness properties of monoids comes from important connections with the theory of 
rewriting systems, and the word problem for finitely presented monoids. 
It is well known that there are finitely presented monoids with undecidable word problem.
Given that the word problem is undecidable in general, a central theme running through the development of geometric and combinatorial group and monoid theory has been to identify and study classes of finitely presented monoids all of whose members have solvable word problem.
 A finite complete rewriting system is a finite presentation for a monoid of a particular form (both confluent and Noetherian) which gives a solution of the word problem for the monoid; see~\cite{BookAndOtto}.
Complete rewriting systems are also of interest because of their close connection with 
the theory of Gr\"{o}bner--Shirshov bases; see~\cite{Ufnarovski1998}.
The connection between complete rewriting systems and homological finiteness properties is given by the Anick-Groves-Squier Theorem  which shows that a monoid that admits such a presentation must be of type 
$\FPinfty$; see \cite{Anick1986, Squier1987} and \cite{Brown1989}.
The property $\FP_n$ for monoids also arises in the study of Bieri--Neumann--Strebel--Renz invariants of groups; see~\cite{Bieri1987}.

A number of other interesting homological and  homotopical finiteness properties have been studied in relation to monoids defined by complete rewriting systems; see~\cite{AlonsoHermiller2003, 
Pride2005, Guiraud2012}.
The cohomological dimension of monoids has also received attention in the literature; see for example~\cite{Cheng1980, 
Guba1998, 
Margolis2015}.  
In fact, for monoids these properties depend on whether one works with left $\mathbb{Z}M$-modules or right $\mathbb{Z}M$-modules, giving rise to the notions of both left- and right-$\FP_n$, and left and right cohomological dimension.
In general these are independent of each other; see~\cite{Cohen1992, Guba1998, Pride2006}.
Working with bimodules resolutions of the $(\ZM, \ZM)$-bimodule $\ZM$ one obtains the notion bi-$\FPn$ introduced and studied in~\cite{KobayashiOtto2001}.
This property is of interest from the point of view of Hochschild cohomology, which is the standard notion of cohomology for rings; see~\cite{HochschildCoh, Mitchell1972}.
For more background on the study of homological finiteness properties in monoid theory, and the connections with the theory of string rewriting systems, see~\cite{Brown1989,Cohen1997,Otto1997}.

While homological finiteness properties of monoids have been extensively studied, in contrast, until recently there was no corresponding theory of topological finiteness properties of monoids.
The results in this paper are part of a research programme  of the authors, initiated in \cite{GraySteinberg1}, aimed at developing such a theory.
A central theme of this work is that the topological approach allows for less technical, and more conceptual, proofs than had previously been possible using only algebraic means.
Other recent results in the literature where topological methods have been usefully applied in the study of monoids include, e.g. \cite{
Brittenham2009, Meakin2015, Margolis2017}. 

This paper is the sequel to the article~\cite{GraySteinberg1} where we set out the foundations of $M$-equivariant homotopy theory for monoids acting on CW complexes,  and the corresponding study of topological finiteness properties of monoids.
In that paper we introduced the notion of a left equivariant classifying space for a monoid, which is a contractible projective $M$-CW complex. 
A left equivariant classifying space always exists, for any monoid $M$, and it is unique up to $M$-homotopy equivalence.
We then define the corresponding finiteness conditions left-$\F_n$ and left geometric dimension in the obvious natural way in terms of the existence of a left equivariant classifying space satisfying appropriate finiteness properties. 
It follows easily from the definitions that left-$\F_n$ implies left-$\FP_n$, and that the left geometric dimension is an upper bound on the left cohomological dimension of the monoid.
There are obvious dual definitions and statements working with right actions.
We also developed  a two-sided analogue of this theory in~\cite{GraySteinberg1}, with two-sided $M$ actions, defining the notion of a bi-equivariant classifying space for a monoid, and the resulting finiteness properties bi-$\F_n$ and geometric dimension.
It follows from the definitions that bi-$\F_n$ implies bi-$\FP_n$ (in the sense of~\cite{KobayashiOtto2001}) and that the geometric dimension is an upper bound for the Hochschild cohomological dimension.
See Section~\ref{sec_prelims} below for full details and formal definitions of all of these notions. 

The aim of this paper is to apply the ideas and results from~\cite{GraySteinberg1} to solve some open problems concerning homological finiteness properties of monoids that seemed resistant to algebraic techniques.
Let us begin with some history.
An important open problem is whether every one-relator monoid has decidable word problem.
While the question is open in general, it has been solved in a number of special cases; see Adjan~\cite{Adjan1966} and Adjan and 
Oganesyan~\cite{Adyan1987}.
 Related to this is another open question which asks whether every one-relator monoid admits a presentation by a finite complete rewriting system.
Of course, a positive answer to this question would imply a positive solution to the word problem.
In light of the Anick-Groves-Squier Theorem which states that monoids which admit finite complete presentations are of type right- and left-$\FP_\infty$, it is natural to ask whether all one-relator monoids are of type $\FP_\infty$.
This question was posed by Kobayashi in~\cite[Problem~1]{Kobayashi2000}. 
The question is also natural given the fact that all one-relator groups are all of type $\FP_\infty$, as a consequence of Lyndon's Identity Theorem  for one-relator groups; see Lyndon \cite{Lyndon1950}.

The first positive result concerning the word problem for one-relator monoids dealt with the case of, so-called, special one-relator monoids~\cite{Adjan1966}.
A \emph{special} monoid is one defined by a finite presentation of the form $\langle A\mid w_1=1,\ldots,w_k=1\rangle$.
They were first studied in the sixties by Adjan~\cite{Adjan1966} and Makanin~\cite{Makanin66}.
 Adjan proved that the group of units of a one-relator special monoid is a one-relator group and reduced the word problem of the monoid to that of the group, which has a decidable word problem by Magnus's theorem~\cite{LyndonAndSchupp}.
 Makanin proved more generally that the group of units of a $k$-relator special monoid is a $k$-relator group and reduced the word problem of the monoid to that of the group.
See~\cite{Zhang} for a modern approach to these results.
Thus there is a much closer connection for special monoids between the group of units and the monoid than is customary.

One of the main results of this paper is that if $M = \langle A\mid w_1=1,\ldots,w_k=1\rangle$, and if $G$ is the group of units of $M$, then   if $G$ is of type $\FPn$ with $1\leq n\leq \infty$, then $M$ is also of type left- and right-$\FPn$.
Moreover, we prove that both the left and right cohomological dimensions of $M$ are bounded below by $\mathop{\mathrm{cd}} G$, and are bounded above by $\max\{2,\mathop{\mathrm{cd}} G\}$.
We shall also prove the topological analogues of these results, obtaining the corresponding statements with right and left-$\F_n$ and geometric dimension.
These results are obtained by proving new results about the geometry of Cayley digraphs of special monoids, including the observation that the quotient of the Cayley digraph by its strongly connected components is a regular rooted tree on which the monoid acts by simplicial maps.
  We use this to show how one can construct a left equivariant classifying space for a special monoid from an equivariant classifying space for its group of units.

We shall then go on to apply these results to prove a  Lyndon's Identity Theorem \cite{Lyndon1950} for one-relator monoids of the form $\langle A\mid w=1\rangle$. 
Specifically, we show that our results can be applied to 
construct equivariant classifying spaces for one-relator monoids of 
this form,
which have finitely many orbits of cells in each dimension, and have dimension at most $2$ unless the monoid has torsion.  
We apply this to give a positive answer to Kobayashi's question \cite[Problem~1]{Kobayashi2000} on homological finiteness properties of one-relator monoids, in the case of one-relator monoids of the form $\langle A\mid w=1\rangle$, by proving that all such monoids are of type left- and right-$\F_\infty$ and $\FP_\infty$.
We also show that if $M=\langle A\mid w=1\rangle$ with $w$ not a proper power then the left and right cohomological dimension of $M$ are bounded above by $2$, and if $w$ is a proper power then they are both equal to $\infty$.
The analogous topological result for the left and right geometric dimension of a one-relator special monoid is also obtained.
In fact, it will follow from our results that when $w$ is not a proper power then the Cayley complex of the one-relator monoid $M$ is an equivariant classifying space for $M$ of dimension at most $2$.   
This is the analogue, for one-relator special monoids, of the fact that the presentation complex of a torsion-free one-relator group is aspherical and is thus a $K(G,1)$ complex for the group of dimension at most $2$; see \cite{Cockcroft1954, DyerVasquez1973}.  
These results on special monoids, and one-relator monoids, will be given in Section~\ref{sec_special}.

The results we obtain in this paper for special one-relator monoids form an important infinite family of base cases for the main result in our article \cite{GraySteinberg3} where we prove a Lyndon's Identity Theorem for arbitrary one-relator monoids $\langle A\mid u=v \rangle$.  Applying this result, in \cite{GraySteinberg3} we give a positive answer to Kobayashi's question by showing that every one-relator monoid $\lb A \mid u=v \rb$ is of type left- and right-$\FP_{\infty}$.

In Section~\ref{sec_amalg} below we prove several new results about the preservation of topological and homological finiteness properties for amalgamated free products of monoids.
Monoid amalgamated products are far more complicated than group ones.
For example, an amalgamated free product of finite monoids can have an undecidable word problem, and the factors do not necessarily embed, or intersect, in the base monoid; see~\cite{Sapir2000}.
In particular there are no normal form results at our disposal when working with monoid amalgamated free products.
We give a method for constructing  an equivariant classifying spaces for an amalgamated free product of monoids $L = M_1 \ast_W M_2$ from equivariant classifying spaces of the monoids $M_1$, $M_2$ and $W$.
To do this, we use homological ideas of Dicks~\cite{Dicks80} on derivations to construct a Bass--Serre tree $T$ for the amalgam $L$.
 We also develop an analogous theory in the two-sided case. 
These constructions are used to prove several results about the closure properties of $\F_n$, $\FP_n$, and geometric and cohomological dimension.

Finally, in Section~\ref{sec_HNNOttoPride} we consider HNN extensions construction for monoids, in the sense of Otto and Pride \cite{Pride2004}, and those defined by Howie \cite{Howie1963}. 
As in the case of amalgamated free products, we give constructions of equivariant classifying spaces, and apply these to deduce results about the closure properties of topological and homological finiteness properties. 
This also involves constructing appropriate Bass--Serre trees.  
As special cases of our results we recover generalisations of a number of results of Otto and Pride from~\cite{Pride2004} and~\cite{Pride2005}.

\section{Preliminaries}
\label{sec_prelims}

In this section we recall some of the relevant background from~\cite{GraySteinberg1} needed for the rest of the article. For full details, and proofs of the statements made here we refer the reader to~\cite[Sections~2-4]{GraySteinberg1}. For additional general background on algebraic topology, and topological methods in group theory, we refer the reader to~\cite{May1999} and~\cite{GeogheganBook}.

\subsection{The category of $M$-sets}

Let $M$ be a monoid. A \emph{left $M$-set} consists of a set $X$ and a mapping
$M \times X \rightarrow X$ written $(m,x) \mapsto mx$ called a \emph{left
action}, such that $1x=x$ and $m(nx) = (mn)x$ for all $m,n \in M$ and
$x \in X$. Right $M$-sets are defined dually, they are the same thing
as left $M^{op}$-sets, where $M^{op}$ is the \emph{opposite} of the monoid $M$
which is the monoid with the same underlying set $M$ and multiplication given by
$x \cdot y = yx$.  A \emph{bi-$M$-set} is an $M\times M^{op}$-set.
A mapping $f\colon X \rightarrow Y$ between $M$-sets is \emph{$M$-equivariant} if
$f(mx) = mf(x)$ for all $x \in X$, $m \in M$, and $M$-sets together with $M$-equivariant
mappings form a category.

If $X$ is an $M$-set and $A\subseteq X$, then $A$ is said to be a \emph{free basis for $X$} if and only if each element of $X$ can be uniquely expressed as $ma$ with $m\in M$ and $a\in A$. The free left $M$-set on $A$ exists and can be realised as the set $M \times A$ with action $m(m',a) = (mm',a)$. Note that if $G$ is a  group, then a left $G$-set $X$ is free if and only if $G$ acts freely on $X$, that is, each element of $X$ has trivial stabilizer.  In this case, any set of orbit representatives is a basis. An  $M$-set $P$ is \emph{projective} if any $M$-equivariant surjective mapping $f\colon X\to P$ has an $M$-equivariant section $s\colon P\to X$ with $f\circ s=1_P$.  Every free $M$-set is projective, and an $M$-set is projective if and only if it is a retract of a free one. Each projective $M$-set $P$ is isomorphic to an $M$-set of the form $\coprod_{a\in A} Me_a$ (disjoint union, which is the coproduct in the category of $M$-sets) with $e_a\in E(M)$, where $E(M)$ denotes the set of idempotents of the monoid $M$. 
In particular, projective $G$-sets are the same thing as free $G$-sets for a group $G$.

If $A$ is a right $M$-set and $B$ is a left $M$-set, then $A\otimes_M B$ is the quotient of $A\times B$ by the least equivalence relation $\sim$ such that $(am,b)\sim (a,mb)$ for all  $a\in A$, $b\in B$ and $m\in M$.  We write $a\otimes b$ for the class of $(a,b)$ and note that the mapping $(a,b)\mapsto a\otimes b$ is universal for mappings $f\colon A\times B\to X$ with $X$ a set and $f(am,b)=f(a,mb)$.  If $M$ happens to be a group, then $M$ acts on $A\times B$ via $m(a,b)=(am^{-1},mb)$ and $A\otimes_M B$ is just the set of orbits of this action. The tensor product $A\otimes_M()$ preserves all colimits because it is a left adjoint to the functor $X\mapsto X^A$.

If $B$ is a left $M$-set there is a natural preorder relation $\leq$ on $B$ where $x \leq y$ if and only if $Mx \subseteq My$. We write $x \approx y$ if there is a sequence $z_1, z_2, \ldots, z_n$ of elements of $B$ such that for each $0 \leq i \leq n-1$ either $z_i \leq z_{i+1}$ or $z_i \geq z_{i+1}$. This is clearly an equivalence relation and we call the $\approx$-classes of $B$ the \emph{weak orbits} of the $M$-set. This corresponds to the notion of the weakly connected components in a directed graph. If $B$ is a right $M$-set then we use $B/M$ to denote the set of weak orbits of the $M$-set while if $B$ is a left $M$-set we use $M\backslash B$ to denote the set of weak orbits. Note that if $1$ denotes the trivial right $M$-set and $B$ is a left $M$-set, then we have $M\backslash B=1\otimes_M B$. Let $M,N$ be monoids.  An \emph{$M$-$N$-biset} is an $M\times N^{op}$-set. If $A$ is an $M$-$N$-biset and $B$ is a left $N$-set, then the equivalence relation defining $A\otimes_N B$ is left $M$-invariant and so $A\otimes_N B$ is a left $M$-set with action $m(a\otimes b) = ma\otimes b$.

\subsection{Projective $M$-CW complexes}
A \emph{left $M$-space} is a topological space $X$ with a continuous left action $M\times X\to X$ where $M$ has the discrete topology.  A right $M$-space is the same thing as a left $M^{op}$-space and a \emph{bi-$M$-space} is an $M\times M^{op}$-space.  Each $M$-set can be viewed as a discrete $M$-space.
Colimits in the category of $M$-spaces are formed by taking colimits in the category of spaces and observing that the result has a natural $M$-action.

Our main interest in this article will be in  $M$-spaces $X$ where $X$ is a CW complex.
Following~\cite{GraySteinberg1} we define a (projective) \emph{$M$-cell} of dimension $n$ to be an $M$-space of the form $Me\times B^n$ where $e\in E(M)$ is an idempotent and $B^n$ has the trivial action. In the special case $e=1$, we call it a \emph{free $M$-cell}.  We then define a projective $M$-CW complex in an inductive fashion by imitating the usual definition of a CW complex but by attaching $M$-cells $Me\times B^n$ via $M$-equivariant maps from $Me\times S^{n-1}$ to the $(n-1)$-skeleton. Formally, a \emph{projective (left) relative $M$-CW complex} is a pair $(X,A)$ of $M$-spaces such that $X=\varinjlim X_n$ with $i_n\colon X_n\to X_{n+1}$ inclusions, $X_{-1}=A$, $X_0 = P_0\cup A$ with $P_0$ a projective $M$-set and where $X_n$ is obtained as a pushout of $M$-spaces
\begin{equation}\label{eq:pushout}
\begin{tikzcd}P_n\times S^{n-1}\ar{r}\ar[d,hook] & X_{n-1}\ar[d,hook]\\ P_n\times B^n\ar{r} & X_n \end{tikzcd}
\end{equation}
with $P_n$ a projective $M$-set and $B^n$ having a trivial $M$-action for $n\geq 1$.  The set $X_n$ is the \emph{$n$-skeleton} of $X$ and if $X_n=X$ and $P_n\neq \emptyset$, then $X$ is said to have \emph{dimension} $n$.
Since $P_n$ is isomorphic to a coproduct of $M$-sets of the form $Me$ with $e\in E(M)$, we are indeed attaching $M$-cells at each step. If $A=\emptyset$, we call $X$ a \emph{projective $M$-CW complex}. Note that a projective $M$-CW complex is a CW complex and the $M$-action is cellular (in fact, takes $n$-cells to $n$-cells).  We can define projective right $M$-CW complexes and projective bi-$M$-CW complexes by replacing $M$ with $M^{op}$ and $M\times M^{op}$, respectively. We say that $X$ is a \emph{free $M$-CW complex} if each $P_n$ is a free $M$-set.
A projective $M$-CW complex $X$ is of \emph{$M$-finite type} if $P_n$ is a finitely generated projective $M$-set for each $n$, and we say that $X$ is \emph{$M$-finite} if it is finite dimensional and of $M$-finite type (i.e., $X$ is constructed from finitely many $M$-cells).
The degree $n$ component of the cellular chain complex for
the projective $M$-CW complex
$X$ is isomorphic to $\mathbb ZP_n$ as a $\mathbb ZM$-module, and hence is projective. 

\begin{comment}
More generally we define an $M$-CW complex in the same way as above
except that the $P_i$ are allowed to be arbitrary $M$-sets.
\end{comment}

A \emph{projective $M$-CW subcomplex} of $X$ is an $M$-invariant subcomplex $A\subseteq X$ which is a union of $M$-cells of $X$.
If $X$ is a projective $M$-CW complex then so is $Y=X\times I$ where $I$ is given the trivial action.  If we retain the above notation, then $Y_0=X_0\times \partial I\cong X_0\coprod X_0$.  The $n$-cells for $n\geq 1$ are obtained from attaching $P_n\times B^n\times \partial I\cong (P_n\coprod P_n)\times B^n$ and $P_{n-1}\times B^{n-1}\times I$.  Notice that $X\times \partial I$ is a projective $M$-CW subcomplex of $X\times I$.
An \emph{$M$-homotopy} between $M$-equivariant continuous maps $f,g\colon X\to Y$ between $M$-spaces $X$ and $Y$ is an $M$-equivariant mapping $H\colon X\times I\to Y$ with $H(x,0)=f(x)$ and $H(x,1)=g(x)$ for $x\in X$ where $I$ is viewed as having the trivial $M$-action.  We write $f\simeq_M g$ in this case.  We say that $X,Y$ are \emph{$M$-homotopy equivalent}, written $X\simeq_M Y$, if there are $M$-equivariant continuous mappings (called \emph{$M$-homotopy equivalences}) $f\colon X\to Y$ and $g\colon Y\to X$ such that $gf\simeq_M 1_X$ and $fg\simeq_M 1_Y$.  
Every $M$-equivariant continuous mapping of projective $M$-CW complexes is $M$-homotopy equivalent to a cellular one. This is the \emph{cellular approximation theorem}; see \cite[Theorem 2.8]{GraySteinberg1}.

If $X$ is a left $M$-space and $A$ is a right $M$-set, then $A\otimes_M X$ is a topological space with the quotient topology.  The following base change result will be used frequently below.  
\begin{Prop}\label{c:base.change.cw}
\emph{\cite[Proposition~3.1 and Corollary~3.2]{GraySteinberg1}}
If $A$ is an $M$-$N$-biset that is projective (free) as an $M$-set and $X$ is a projective (free) $N$-CW complex, then $A\otimes_N X$ is a projective (free) $M$-CW complex.  If $A$ is in addition finitely generated as an $M$-set and $X$ is of $N$-finite type, then $A\otimes_N X$ is of $M$-finite type.  Moreover, $\dim A\otimes_N X=\dim X$.
\end{Prop}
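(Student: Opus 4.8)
The plan is to exploit the fact, recorded above, that $A\otimes_N(-)$ is a left adjoint and therefore preserves all colimits; this lets me transport the entire inductive construction of the projective $N$-CW complex $X$ through the functor, provided I can identify what the functor does to a single projective $N$-cell. So the proof splits into a local computation on cells and a global reassembly.

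First I would compute the functor on cells. For an idempotent $e\in E(N)$ the key identity is $A\otimes_N Ne\cong Ae$ as left $M$-sets, the isomorphism being $ae\otimes e\mapsto ae$ with inverse $a\otimes ne\mapsto ane$; both maps are $M$-equivariant and mutually inverse by a direct check using $e^2=e$. Now right multiplication $\rho_e\colon A\to A$, $a\mapsto ae$, is an $M$-equivariant idempotent endomorphism, since the left $M$- and right $N$-actions on the biset $A$ commute, and $Ae$ is its image; hence $Ae$ is a retract of $A$, and therefore a projective $M$-set because $A$ is projective. When $e=1$ this reads $A\otimes_N N\cong A$, which is free whenever $A$ is, covering the free case. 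Since $B^n$ and $S^{n-1}$ carry the trivial action and $A$ is discrete, I would next establish the homeomorphism $A\otimes_N(Ne\times T)\cong(A\otimes_N Ne)\times T\cong Ae\times T$ for $T\in\{B^n,S^{n-1}\}$: on underlying sets this is immediate, and topologically it holds because the quotient map $A\times Ne\to A\otimes_N Ne$ crossed with the identity of the locally compact space $T$ is again a quotient map. Passing to coproducts, for a projective (free) $N$-set $P_n\cong\coprod_\alpha Ne_\alpha$ I obtain a projective (free) $M$-set $Q_n:=A\otimes_N P_n\cong\coprod_\alpha Ae_\alpha$, together with $A\otimes_N(P_n\times T)\cong Q_n\times T$.

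For the global reassembly I would apply $A\otimes_N(-)$ to the defining data of $X$. Applying it to the pushout \eqref{eq:pushout} and inserting the cell computation yields a pushout of $M$-spaces with top-left corner $Q_n\times S^{n-1}$, top-right corner $A\otimes_N X_{n-1}$, and bottom-left corner $Q_n\times B^n$; this is exactly the pushout attaching the $M$-cells $Q_n\times B^n$ to $A\otimes_N X_{n-1}$. Since the pushout of a closed cofibration is a closed cofibration, the maps $A\otimes_N X_{n-1}\to A\otimes_N X_n$ remain inclusions, and preservation of the sequential colimit gives $A\otimes_N X=\varinjlim A\otimes_N X_n$ with the weak topology. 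This is precisely a projective (free) $M$-CW structure on $A\otimes_N X$ whose $n$-cells are indexed by $Q_n$. The remaining claims then fall out of the cell count: if $A$ is finitely generated over $M$ and each $P_n$ is finitely generated over $N$, then $P_n\cong\coprod_{i=1}^{k}Ne_i$ is finite and $Q_n\cong\coprod_{i=1}^{k}Ae_i$ is a finite coproduct of quotients of the finitely generated set $A$, hence finitely generated, so $M$-finite type is inherited. For dimension, $A\otimes_N(-)$ multiplies the cell factor $B^n$ rather than raising its dimension, so $A\otimes_N X$ has dimension at most $\dim X$, while $Q_n=A\otimes_N P_n\neq\emptyset$ whenever $A,P_n\neq\emptyset$, giving $\dim A\otimes_N X=\dim X$.

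I expect the main obstacle to be the topological bookkeeping rather than the algebra: one must verify that $A\otimes_N(-)$ carries the weak topology correctly, namely that $A\otimes_N(P_n\times T)\cong Q_n\times T$ is a genuine homeomorphism (not merely a continuous bijection) and that the attaching maps reassemble into an honest CW filtration with closed cell inclusions. Both reduce to the standard point-set fact that the product of a quotient map with the identity of a locally compact Hausdorff space is again a quotient map, applied to the compact cells $B^n$ and $S^{n-1}$; once this is in place, the discreteness of $A$ and of the $M$-sets $Q_n$ renders everything formal. By contrast the algebraic core, that $Ae$ is a retract of $A$ and hence projective, is comparatively easy.
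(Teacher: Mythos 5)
Your proof is correct, and it is essentially the canonical argument: the paper itself does not prove this proposition but imports it from Part~1 of the series, and the ingredients you use --- that $A\otimes_N(-)$ preserves colimits as a left adjoint, that $A\otimes_N Ne\cong Ae$ is a retract of $A$ (hence projective, and free in the case $e=1$ relevant to free complexes), and the cell-by-cell transport through the pushouts \eqref{eq:pushout} --- are exactly the ones the paper records in its preliminaries for this purpose. The only blemish is expository: in your description of the isomorphism $A\otimes_N Ne\cong Ae$ the two displayed formulas both describe the forward direction $a\otimes ne\mapsto ane$ rather than a map and its inverse (the inverse is $ae\mapsto a\otimes e$), but the mathematics is unaffected.
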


\begin{Rmk}\label{r:tensor.with.free}
 We shall use the observation that if $X$ is a free right $M$-set on $A$, then $A$ is in bijection with $X/M$ and hence $X\cong X/M\times M$ as a right $M$-set where $M$ acts trivially on $X/M$. Hence if $Y$ is a projective $M$-CW complex, then $X\otimes_M Y\cong \coprod_A Y\cong X/M\times Y$ where $X/M$ has the discrete topology.  Moreover, these homeomorphisms come from isomorphisms of the CW structure.
\end{Rmk}

\subsection{Equivariant classifying spaces and topological finiteness properties for monoids}

A \emph{(left) equivariant classifying space} $X$ for a monoid $M$ is a projective $M$-CW complex which is contractible. A right equivariant classifying space for $M$ will be a left equivariant classifying space for $M^{op}$. 
In some cases, an equivariant classifying space for a monoid may be constructed using the Cayley digraph of the monoid as the $1$-skeleton.  Recall that if $M$ is a monoid and $A\subseteq M$, then the (right) \emph{Cayley digraph} $\Gamma(M,A)$ of $M$ with respect to $A$ is the graph with vertex set $M$ and with edges in bijection with $M\times A$ where the directed edge (arc) corresponding to $(m,a)$ starts at $m$ and ends at $ma$.  Note that $\Gamma(M,A)$ is a free $M$-graph and is $M$-finite if and only if $A$ is finite (see Section~\ref{sec_amalg} below for the definition of $M$-graph).

Equivariant classifying spaces of monoids are unique up to $M$-homotopy equivalence; see~\cite[Theorem~6.3 \& Corollary~6.5]{GraySteinberg1}.
The definition of equivariant classifying spaces for monoids leads naturally to the definitions of the following topological finiteness properties.
A monoid $M$ is of type \emph{left-$\F_n$} (for a non-negative integer $n$) if there is an equivariant classifying space $X$ for $M$ such that $X_n$ is $M$-finite, i.e., such that $M\backslash X$ has finite $n$-skeleton.  We say that $M$ is of type \emph{left-$\F_{\infty}$} if $M$ has an equivariant classifying space $X$ that is of $M$-finite type, i.e., $M\backslash X$ is of finite type. The monoid $M$ is defined to have type \emph{right-$\F_n$} if $M^{op}$ is of type left-$\F_n$ for $0\leq n\leq \infty$. The \emph{left geometric dimension} of $M$ is defined to be the minimum dimension of a left equivariant classifying space for $M$.  The right geometric dimension is defined dually.

The homological analogue of left-$\Fn$ is the finiteness property left-$\FPn$, where a monoid $M$ is said to be of type left-$\FPn$ if there is a projective resolution $P = (P_i)_{i \geq 0}$ of the trivial left $\ZM$-module $\Z$ such that $P_i$ is finitely generated for $i \leq n$.
There is a dual notion of right-$\FPn$, and we say a monoid is of type $\FPn$ if it is both of type left- and right-$\FPn$.
For any monoid $M$, if $M$ is of type left-$\F_n$ for some $0 \leq n \leq \infty$ then it is of type left-$\FP_n$. Indeed, if $X$ is an equivariant classifying space for $M$ then the augmented cellular chain complex of $X$ gives a projective $\ZM$-resolution of the trivial $\ZM$-module $\Z$ with the desired finiteness properties.
If $M$ is a monoid of type left-$\F_2$, then $M$ is of type left-$\F_n$ if and only if $M$ is of type left-$\FP_n$ for $0\leq n\leq \infty$. In particular, for finitely presented monoids the conditions left-$\F_n$ and left-$\FP_n$ are equivalent. In the special case that the monoid $M$ is a group, the definition of left-$\F_n$ above is easily seen to agree with the usual definition of $\F_n$ for groups.
The left geometric dimension is clearly an upper bound on the left cohomological dimension, denoted $\mathop{\mathrm{left \; cd}} M$, of a monoid $M$ where 
the left \emph{cohomological dimension} of $M$ is the shortest length of a projective resolution of the trivial left $\ZM$-module $\mathbb Z$.

To define the bilateral notion of a classifying space, first recall that $M$ is an $M\times M^{op}$-set via the action $(m_L,m_R)m=m_Lmm_R$.  We say that a projective $M\times M^{op}$-CW complex $X$ is a \emph{bi-equivariant classifying space for $M$} if $\pi_0(X)\cong M$ as an $M\times M^{op}$-set and each component of $X$ is contractible; equivalently, $X$ has an $M\times M^{op}$-equivariant homotopy equivalence to the discrete $M\times M^{op}$-set $M$. We can augment the cellular chain complex of $X$ via the canonical surjection $\varepsilon\colon C_0(X)\to H_0(X)\cong \mathbb Z\pi_0(X)\cong \mathbb ZM$. Since each component of $X$ is contractible, this gives a projective bimodule resolution of $\mathbb ZM$.
A bi-equivariant classifying space may be constructed for any monoid~\cite[Corollary~7.4]{GraySteinberg1}. 
As in the one-sided case, bi-equivariant classifying spaces are unique up to $M\times M^{op}$-homotopy equivalence; see~\cite[Theorem~7.2]{GraySteinberg1}.

A monoid $M$ is said to be of type \emph{bi-$\F_n$} if there is a bi-equivariant classifying space $X$ for $M$ such that $X_n$ is $M\times M^{op}$-finite, i.e., $M\backslash X/M$ has finite $n$-skeleton.  We say that $M$ is of type \emph{bi-$\F_{\infty}$} if $M$ has a bi-equivariant classifying space $X$ that is of $M\times M^{op}$-finite type, i.e., $M\backslash X/M$ is of finite type. We define the \emph{geometric dimension} of $M$ to be the minimum dimension of a bi-equivariant classifying space for $M$.
The homological analogue of bi-$\F_n$ is the property \emph{bi-$\FPn$} (in the sense of~\cite{KobayashiOtto2001}), where a monoid is said to be of type \emph{bi-$\FPn$}
if there is a projective resolution
\[
 \cdots \rightarrow P_1 \rightarrow
P_0 \rightarrow \mathbb ZM \rightarrow 0
\]
of the $(\mathbb ZM,\mathbb ZM)$-bimodule $\mathbb ZM$, where $P_0, P_1, \ldots, P_n$ are finitely generated projective $(\mathbb{Z}M, \mathbb{Z}M)$-bimodules. For $0\leq n\leq \infty$, if $M$ is of type bi-$\F_n$, then it is of type bi-$\FP_n$. 
If $M$ is of type bi-$\F_n$ for $0\leq n\leq\infty$, then $M$ is of type left-$\F_n$ and type right-$\F_n$. If $M$ is a monoid of type bi-$\F_2$, then $M$ is of type bi-$\F_n$ if and only if $M$ is of type bi-$\FP_n$ for $0\leq n\leq \infty$; see~\cite[Theorem 7.15]{GraySteinberg1}. In particular, for finitely presented monoids bi-$\F_n$ and bi-$\FP_n$ are equivalent.
The \emph{Hochschild cohomological dimension} of $M$, written $\dim M$, is the length of a shortest projective resolution of $\mathbb ZM$ as a $\mathbb Z[M\times M^{op}]$-module. The Hochschild cohomological dimension bounds both the left and right cohomological dimension and the geometric dimension bounds the Hochschild cohomological dimension. The geometric dimension also bounds both the left and right geometric dimensions because if $X$ is a bi-equivariant classifying space for $M$ of dimension $n$, then $X/M$ is an equivariant classifying space of dimension $n$. 

\subsection{A theorem of Brown}

We end this section by recalling a result of Brown which will be useful for proofs of results about homological finiteness properties of monoids. Unless otherwise stated, all modules considered here are left modules.
Let us say that a module $V$ over a (unital) ring $R$ is of type $\FP_n$ if it has a projective resolution that is finitely generated through degree $n$; this is equivalent to having a free resolution that is finitely generated through degree $n$; see~\cite[Proposition~4.3]{BrownCohomologyBook}.  We say that $V$ is of type $\FP_{\infty}$ if it has a projective (equivalently, free) resolution that is finitely generated in all degrees.  So a monoid is of type left $\FP_n$ if and only if the trivial left module is of type $\FP_n$. One says that $V$ has \emph{projective dimension} at most $d$ if it has a projective resolution of length $d$.  Note that the left cohomological dimension of a monoid is the projective dimension of the trivial left module.
 Notice also that both the class of modules of type $\FP_n$ and the class of modules having projective dimension at most $d$ are closed under direct sum.

The following is
lemma of K.~Brown~\cite{Brown1982}. Recall that a morphism of chain
complexes is a \emph{weak equivalence} if it induces an isomorphism on
homology.

\begin{Lemma}\emph{
\cite[Lemma~1.5]{Brown1982}
}\label{l:brown}
Let $R$ be a ring and $C=(C_i)$ a chain complex of (left) $R$-modules and, for each $i$, let $(P_{ij})_{j\geq 0}$ be a projective resolution of $C_i$.  Then one can find a chain complex $Q=(Q_n)$ with $Q_n = \bigoplus_{i+j=n} P_{ij}$ such that there is a weak equivalence $f\colon Q\to C$.
\end{Lemma}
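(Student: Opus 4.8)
The plan is to obtain $Q$ as the total complex of a homotopy-coherent double complex assembled from the columns $P_{i\bullet}$, and then to establish the weak equivalence by filtering $Q$ by columns. Write $d^v$ for the (vertical) differentials of the resolutions and $\varepsilon_i\colon P_{i0}\to C_i$ for their augmentations. First I would invoke the comparison theorem for projective resolutions to lift each differential $d_i\colon C_i\to C_{i-1}$ of $C$ to a chain map $t_1\colon P_{i\bullet}\to P_{i-1,\bullet}$ covering it, so that $\varepsilon_{i-1}t_1=d_i\varepsilon_i$ and $t_1 d^v=d^v t_1$. The naive guess for a differential on $Q_n=\bigoplus_{i+j=n}P_{ij}$ is $t_0\pm t_1$ with $t_0=d^v$; but because the $P_{ij}$ are \emph{arbitrary} resolutions, the horizontal composite $t_1t_1$ merely covers $d_{i-1}d_i=0$ and so is only null-homotopic rather than zero, and $(t_0\pm t_1)^2\ne 0$ in general.

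The technical heart is therefore to manufacture higher components $t_k\colon P_{ij}\to P_{i-k,\,j+k-1}$, each of total degree $-1$, so that $D=\sum_{k\ge 0}t_k$ squares to zero. On any fixed $P_{ij}$ this sum is finite, since the resolutions sit in non-negative vertical degree and (as in all our applications) $C$ is bounded below, so $P_{i'j'}=0$ for $i'<0$. I would build the $t_k$ by induction on $k$: the vanishing of $D^2$ in ``depth $m$'' amounts to an equation $d^v t_m\pm t_m d^v=\phi_m$, where $\phi_m=-\sum_{0<k<m}t_{m-k}t_k$ is determined by the lower terms. Using the relations already arranged for $k<m$ one checks that $\phi_m$ is a cycle of positive degree $m-1$ in the total Hom-complex $\Hom(P_{i\bullet},P_{i-m,\bullet})$. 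Since $P_{i\bullet}$ is a complex of projectives and $P_{i-m,\bullet}$ is a resolution, every positive-degree chain map between them is null-homotopic by the comparison theorem, so $\phi_m$ is a boundary and the required $t_m$ exists. Carrying out this inductive obstruction computation, with the attendant sign bookkeeping so that $D^2=0$ holds identically, is the step I expect to be the main obstacle.

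With $D$ constructed, I would define $f\colon Q\to C$ to be $\varepsilon_n$ on the row $P_{n0}$ and zero on $P_{ij}$ with $j>0$. Checking that $f$ is a chain map is then routine: the only components of $D$ that land in a $j=0$ summand are $t_1$ out of a $j=0$ summand and $d^v$ out of a $j=1$ summand, so the chain-map identity reduces exactly to $\varepsilon_{i-1}t_1=d_i\varepsilon_i$ and to $\varepsilon_i d^v=0$, both of which hold by construction of the resolutions.

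Finally I would prove that $f$ is a weak equivalence by filtering $Q$ by columns, $F_pQ=\bigoplus_{i\le p}P_{i\bullet}$. Every $t_k$ with $k\ge 1$ strictly lowers the index $i$ while $t_0=d^v$ preserves it, so each $F_pQ$ is a subcomplex and the associated graded $\mathrm{gr}_pQ$ is precisely the column $(P_{p\bullet},d^v)$, a resolution of $C_p$. In the resulting spectral sequence $E^1_{p,q}=H_q(P_{p\bullet})$ is $C_p$ for $q=0$ and $0$ otherwise, and the differential $d^1$ is induced by $t_1$, hence equals $d_p$; thus $E^2_{p,0}=H_p(C)$ and the sequence collapses, giving $H_n(Q)\cong H_n(C)$. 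Because $C$ is bounded below the filtration is bounded below and exhaustive, so the spectral sequence converges; and since the edge identification $E^1_{p,0}\cong C_p$ is exactly $\varepsilon_p$, this isomorphism is the one induced by $f$, which is therefore a weak equivalence as required.
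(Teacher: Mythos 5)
The paper gives no proof of this lemma --- it is quoted from \cite[Lemma~1.5]{Brown1982} with only the citation --- so there is no in-paper argument to compare against; judged on its own terms, your proof is correct and is a standard way to establish the statement (a perturbed, or ``twisted,'' total complex together with the column filtration). Two small points are worth flagging. First, your degree count for the obstruction is off by one: since $t_k$ raises the vertical (resolution) degree by $k-1$, the composite $t_{m-k}t_k$ raises it by $m-2$, so $\phi_m=-\sum_{0<k<m}t_{m-k}t_k$ is a cycle of degree $m-2$ in $\Hom_\ast(P_{i\bullet},P_{i-m,\bullet})$, not $m-1$. For $m\geq 3$ this degree is still positive and the vanishing $H_{>0}\bigl(\Hom_\ast(P_{i\bullet},P_{i-m,\bullet})\bigr)=0$ (valid because $P_{i\bullet}$ is a non-negative complex of projectives and $P_{i-m,\bullet}$ is quasi-isomorphic to the module $C_{i-m}$ in degree $0$) kills the obstruction; but for $m=2$ the obstruction sits in degree $0$, where the homology of the Hom-complex is $\Hom_R(C_i,C_{i-2})\neq 0$ in general, and one must instead use --- as you do in your opening paragraph --- that $t_1t_1$ covers $d_{i-1}d_i=0$ and is therefore null-homotopic by the comparison theorem. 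So the induction genuinely needs that separate base case; with it, the argument closes. Second, the bounded-below hypothesis you interpolate is really needed as stated: without it the sum $D=\sum_k t_k$ need not be finite on a given $P_{ij}$ (its components land in $P_{i-k,j+k-1}$ for all $k$), and the column filtration need not be bounded below; since every application in the paper (Corollary~\ref{c:fp.resolved}) concerns non-negatively graded complexes, this restriction is harmless here. Modulo these remarks and the sign bookkeeping you acknowledge, the construction of $D$, the verification that $f$ is a chain map, and the collapse of the column-filtration spectral sequence at $E^2$ are all sound.
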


\begin{Cor}\label{c:fp.resolved}
Suppose that $R$ is a ring and
\[C_n\longrightarrow C_{n-1}\longrightarrow\cdots\longrightarrow C_0\longrightarrow V\] is a partial resolution of an $R$-module $V$.
\begin{enumerate}
\item If $C_i$ is of type  $\FP_{n-i}$, for $0\leq i\leq n$, then $V$ is of type $\FP_n$.
\item Let $d\geq n$ and suppose that $C_n\to C_{n-1}$ is injective. If $C_i$ has a projective dimension of at most $d-i$, for $0\leq i\leq n$, then $V$ has a projective dimension at most $d$.
\end{enumerate}
\end{Cor}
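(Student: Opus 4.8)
The plan is to apply Brown's Lemma~\ref{l:brown} to the chain complex $C=(C_i)$ concentrated in degrees $0\le i\le n$, with the differentials supplied by the given partial resolution. First I would record its homology: exactness of the partial resolution gives $H_0(C)\cong V$ and $H_i(C)=0$ for $1\le i\le n-1$, while $H_n(C)=\ker(C_n\to C_{n-1})$ and $H_i(C)=0$ for $i>n$. For each $i$ choose a projective resolution $(P_{ij})_{j\ge 0}$ of $C_i$ and let $Q=(Q_m)$ with $Q_m=\bigoplus_{i+j=m}P_{ij}$ be the complex of projectives produced by Lemma~\ref{l:brown}, together with its weak equivalence $f\colon Q\to C$. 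Since $f$ induces isomorphisms on homology, $Q$ has the same homology as $C$; in particular $H_0(Q)\cong V$, and the augmentation $Q_0\to H_0(Q)\cong V$ makes $Q_n\to Q_{n-1}\to\cdots\to Q_0\to V\to 0$ exact at $V$ and at $Q_0,\ldots,Q_{n-1}$.

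For part (1), I would choose each $(P_{ij})_j$ so that $P_{ij}$ is finitely generated for $j\le n-i$, which is exactly possible because $C_i$ is of type $\FP_{n-i}$. A summand $P_{i,m-i}$ of $Q_m$ is then finitely generated whenever $m-i\le n-i$, i.e.\ whenever $m\le n$; as $Q_m$ is a finite direct sum of such summands, $Q_m$ is finitely generated projective for all $m\le n$. Thus $Q_n\to\cdots\to Q_0\to V\to 0$ is a partial resolution (exact through degree $n-1$) by finitely generated projectives, and splicing an arbitrary projective resolution of $\ker(Q_n\to Q_{n-1})$ onto the top produces a projective resolution of $V$ that is finitely generated through degree $n$. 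Hence $V$ is of type $\FP_n$.

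For part (2), the hypothesis that $C_n\to C_{n-1}$ is injective forces $H_n(C)=0$, so now $H_i(C)=0$ for all $i\ge 1$ and $C$ is a genuine length-$n$ resolution of $V$. This time I would choose each $(P_{ij})_j$ to be a projective resolution of $C_i$ of length at most $d-i$, which exists because $C_i$ has projective dimension at most $d-i$; then $P_{ij}=0$ whenever $j>d-i$. Consequently a nonzero summand of $Q_m=\bigoplus_{i+j=m}P_{ij}$ requires $j\le d-i$, i.e.\ $m=i+j\le d$, so $Q_m=0$ for $m>d$. Since $H_i(Q)\cong H_i(C)=0$ for all $i\ge 1$ and $H_0(Q)\cong V$, the complex $Q$ is a projective resolution of $V$ of length at most $d$, and therefore $V$ has projective dimension at most $d$.

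I expect essentially no serious obstacle once the setup is in place: the real content is entirely carried by Brown's Lemma, and what remains is the index bookkeeping that converts the hypothesis on each $C_i$ (type $\FP_{n-i}$, respectively projective dimension $\le d-i$) into the corresponding global property of the total complex $Q$. The only point requiring a little care is the standard observation used in part (1) that a partial resolution by finitely generated projectives which is exact through degree $n-1$ already certifies type $\FP_n$, since how one continues the resolution above degree $n$ is immaterial (cf.~\cite[Proposition~4.3]{BrownCohomologyBook}).
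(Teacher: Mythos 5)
Your proposal is correct and follows essentially the same route as the paper: both apply Brown's Lemma to the complex $C=(C_i)$ with the same choices of resolutions $(P_{ij})$ (finitely generated through degree $n-i$ for part (1), of length at most $d-i$ for part (2)) and then read off the required properties of the total complex $Q$ from the index bookkeeping $Q_m=\bigoplus_{i+j=m}P_{ij}$. The only cosmetic difference is that you make explicit the splice above degree $n$ in part (1), where the paper simply invokes the fact that a length-$n$ partial resolution by finitely generated projectives already witnesses type $\FP_n$.
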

\begin{proof}
To prove the first item, put $C=(C_i)$ and let $(P_{ij})_{j\geq 0}$ be a projective resolution of $C_i$ by finitely generated projectives that is finitely generated through degree $n-i$.  Then the chain complex $Q$ from Lemma~\ref{l:brown} is a complex of projectives with $Q_k$ finitely generated, for $0\leq k\leq n$, with $H_0(Q)\cong H_0(C)=V$ and $H_q(Q)\cong H_q(C)=0$ for $0<q<n$.  Thus if we augment
\[Q_n\longrightarrow Q_{n-1}\longrightarrow \cdots \longrightarrow Q_0\]
by the natural epimorphism $Q_0\to H_0(Q)\cong V$, we obtain a partial projective resolution of $V$ of length $n$ by finitely generated projectives.

For the second item, again let $C=(C_i)$ and let $(P_{ij})_{j\geq 0}$ be a projective resolution of $C_i$ of length at most $d-i$.  Then the chain complex $Q$ from Lemma~\ref{l:brown} is a complex of projectives of length at most $d$ with $H_0(Q)\cong H_0(C)\cong V$ and $H_q(Q)=H_q(C)=0$ for $q>0$.  Thus if we augment $Q$ by the canonical epimorphism $Q_0\to H_0(Q)\cong V$, we obtain a projective resolution of $V$ of length at most $d$.
\end{proof}

Next we show that projective dimension and $\FP_n$ are stable under flat base extension.

\begin{Lemma}\label{l:flat.base}
Suppose that $\p\colon R\to S$ is a ring homomorphism and that $S$ is flat as a right $R$-module.  Let $V$ be a left $R$-module.
\begin{enumerate}
  \item If $V$ is of type $\FP_n$, then $S\otimes_R V$ is of type $\FP_n$ as an $S$-module.
  \item If $V$ has projective dimension at most $d$, then $S\otimes_R V$ has projective dimension at most $d$ over $S$.
\end{enumerate}
\end{Lemma}
\begin{proof}
Since $S\otimes_R R\cong S$ and tensor products preserve direct sums and retracts, it follows that if $P$ is a (finitely generated) projective $R$-module, then $S\otimes_R P$ is a (finitely generated) projective $S$-module.  If $(P_i)$ is a projective resolution of $V$, then by flatness of $S$ and the preceding observation, we obtain that $(S\otimes_R P_i)$ is a projective resolution of $S\otimes_R V$ with $S\otimes_R P_i$ finitely generated whenever $P_i$ is.  The result follows.
\end{proof}

A typical way to apply Corollary~\ref{c:fp.resolved} in order to prove that a monoid $M$ is of type $\FPn$ is to find an action of $M$ by cellular mappings on a contractible CW complex $X$ such that the $i^{th}$-cellular chain group  $C_i(X)$ is of type $\FP_{n-i}$ as a $\mathbb ZM$-module for $0\leq i\leq n$.

\section{Special monoids and one-relator monoids}
\label{sec_special}

Let $M$ be the monoid defined by the finite presentation $\langle A\mid w_1=1,\ldots,w_k=1\rangle$. Presentations of this form are called \emph{special}, and monoids which admit such presentations are called \emph{special monoids}.  Special presentations were first studied by Adjan~\cite{Adjan1966} and Makanin~\cite{Makanin66}. The main aim of this section is to prove some results which relate the topological and homological finiteness properties of special monoids to the corresponding properties holding in their group of units. By specialising to the case of one-relator monoids and combining with results of Adjan~\cite{Adjan1966} and Lyndon~\cite{Lyndon1950}
we then obtain a result characterising homological and cohomological finiteness properties of special one-relator monoids.  These results answer an important case of the open problem of Kobayashi~\cite{Kobayashi2000} which asks whether all one-relator monoids are of type right and left-$\FPinfty$. As discussed in the introduction to this paper, additional motivation for this question comes from its connection to the question of whether one-relator monoids admit presentations by finite complete rewriting systems which, in turn, relates to the longstanding open problem of whether such monoids have decidable word problem.

For rewriting systems we follow~\cite[Chapter~12]{HoltBook}. We recall some basic definitions and notation here.
Let $A$ be a non-empty set, known as an alphabet, and let $A^*$ denote the free monoid of all words over $A$. 
If $w = a_1 a_2 \ldots a_n \in A^*$, with $a_i \in A$ for $1 \leq i \leq n$, then we write $|w| = n$ and call this the \emph{length} of the word $w$. 
A \emph{rewriting system} $\rel$ over $A$ is a subset of $A^* \times A^*$. The pair $\lb A \mid \rel \rb$ is called a \emph{monoid presentation}. The elements of $\rel$ are called \emph{rewrite rules}. For words $u,v \in A^*$ we write $u \rightarrow_\rel v$ if there are words $\alpha, \beta \in A^*$ and a rewrite rule $(l,r)$ in $\rel$ such that $u = \alpha l \beta$ and $v = \alpha r \beta$. We use $\rightarrow_\rel^*$ to denote the reflexive transitive closure of $\rightarrow_\rel$, while $\leftrightarrow_\rel^*$ denotes the symmetric closure of $\rightarrow_\rel^*$. The relation $\leftrightarrow_\rel^*$ defines a congruence on $A^*$ and the quotient $A^* /  \leftrightarrow_\rel^*$ is called the \emph{monoid defined by the presentation} $\lb A \mid \rel \rb$. For any word $w \in A^*$ we use $[w]_\rel$ to denote the $\leftrightarrow_\rel^*$-class of the word $w$. So for words $u, v \in A^*$ when we write $u=v$ it means that $u$ and $v$ are equal as words in $A^*$, while $[u]_\rel=[v]_\rel$ means that $u$ and $v$ represent the same element of the monoid defined by the presentation.  We also sometimes write $u=_{\rel} v$ to mean the $[u]_\rel=[v]_\rel$. When the set of rewrite rules with respect to which we are working with is clear from context, we shall often omit the subscript $\rel$ and simply write $[u]$, $\rightarrow$, $\rightarrow^*$ and $\leftrightarrow^*$.

A word $u$ is called \emph{irreducible} if no rewrite rule can be applied to it, that is, there is no word $v$ such that $u \rightarrow v$. We use $\Irr(\rel)$ to denote the set of irreducible words of the system $\rel$. The rewriting system $\rel$ is \emph{Noetherian} if there is no infinite chain of words $u_i \in A^*$ with $u_i \rightarrow u_{i+1}$ for all $i \geq 1$. The system is \emph{confluent} if whenever $u \rightarrow^* u_1$ and $u \rightarrow^* u_2$ there is a word $v \in A^*$ such that $u_1 \rightarrow^* v$ and $u_2 \rightarrow^* v$. A rewriting system that is both Noetherian and confluent is called \emph{complete}. If $\rel$ is a complete rewriting system then each $\leftrightarrow^*$ equivalence class contains a unique irreducible word. Thus in this situation, $\Irr(\rel)$ provides a set of normal forms for the elements of the monoid defined by the presentation $\lb A \mid \rel \rb$.

Let $M=\langle A\mid w_1=1,\ldots,w_k=1\rangle = \langle A \mid T \rangle$  be the finitely presented special monoid defined above. 
The symbol $M$ will be used to denote this monoid for the remainder of this section. 
We call $w_1, w_2, \ldots, w_k$ the  \emph{defining relators} of this presentation. Let $\Gamma(M,A)$ denote the right Cayley graph of $M$ with respect to $A$. The strongly connected components of $\Gamma(M,A)$ are called the \emph{Sch\"{u}tzenberger graphs} of $M$. Here we say that two vertices $u$ and $v$ of a directed graph belong to the same strongly connected component if and only if there is a directed path from $u$ to $v$, and also a directed path from $v$ to $u$. Our aim is to prove that any two Sch\"{u}tzenberger graphs of $M$ are isomorphic to each other and that, modulo the Sch\"{u}tzenberger graphs, the Cayley graph of $M$ has a tree-like structure. We begin by summarising some results of Zhang~\cite{Zhang} on special monoids that will be used extensively below.

Let $G$ be the group of units of $M$.
By~\cite[Theorem~3.7]{Zhang}, we have that $G$ has a group
presentation with $k$ defining relations. Let $R$ be the submonoid of
right invertible elements. Then $R$ is isomorphic to a free product of
$G$ with a finitely generated free monoid
by~\cite[Theorem~4.4]{Zhang}.

In more detail, we say that a word $u \in A^*$ is invertible
if $[u] \in M$ is invertible.
Let
$u \in A^+$ be a non-empty invertible word.
We say that the invertible word $u$ is
\emph{indecomposable} if no non-empty proper prefix of $u$ is invertible.
Every non-empty invertible word $v$ has a unique decomposition
$v = v_1 v_2 \ldots v_l$ where each $v_i$ is indecomposable.
To obtain this decomposition, first write $v = v_1 u_1$ where $v_1$ is the shortest non-empty invertible prefix of $v$. Since $v$ and $v_1$ are invertible it follows that $u_1$ is invertible. If $u_1$ is non-empty we repeat this process writing $u_1 = v_2 u_2$ where $v_2$ is the shortest non-empty invertible prefix of $u_1$. Continuing in this way gives the decomposition $v = v_1 v_2 \ldots v_l$. It is unique since if $v_1' v_2' \ldots v_k'$ were some other such decomposition then
$v_1 v_2 \ldots v_l = v_1' v_2' \ldots v_k'$,
 neither $v_1$ nor $v_1'$ can be a proper prefix of the other, hence $v_1 = v_1'$, and then inductively we see that $v_i = v_i'$ for all $i$.
We call $u \in A^+$ a \emph{minimal invertible word} if it is indecomposable and invertible and the length of $u$ does not exceed the length of any of the relators in $T$.
Each relation word $w_i$ in $T$ represents the identity of $M$ and
thus is invertible. Therefore each relation word $w_i$ has a unique
decomposition $ w_i = w_{i,1} w_{i,2} \ldots w_{i, n_i} $ into
indecomposable  invertible words. The words $w_{i,j}$ for $1 \leq i \leq n$,
$1 \leq j \leq n_j$ are called the \emph{minimal factors} of the
relators of the presentation. Each minimal factor is clearly a
minimal invertible word.

Let $\Delta$ be the set of all minimal invertible words
$\delta \in A^*$ such that $\delta$ is equal in $M$ to at least one of
the minimal factors $w_{i,j}$ of the relators. Clearly $\Delta$ is a
finite set of words over $A$. It is also immediate from the definition
that $\Delta$ contains in particular all of the minimal factors $w_{i,j}$ of
the relators. It is also a consequence of the definitions that no
non-empty proper prefix of a word from $\Delta$ can be equal to a
non-empty proper suffix of a word from $\Delta$. On the other hand, a
word from $\Delta$ can, in general, arise as a subword of a word from
$\Delta$ (and there are examples where this happens).  It also follows from the definitions that $\Delta$ is a prefix code, meaning that no word from $\Delta$ is a prefix of any other word from $\Delta$.  It follows that $\Delta$ freely generates a free submonoid of $A^*$.

The elements represented by the words from $\Delta$ give a finite
generating set for the group of units $G$ of the monoid $M$. Indeed, it may be shown that every indecomposable invertible word $v$ is equal in $M$ to some word from $\Delta$; see~\cite[Lemma~3.4]{Zhang}, and every invertible word can be written as a product of indecomposable invertible words.

A finite presentation for the group of units $G$ of $M$, with respect to the finite generating set $\Delta$ may be constructed in the following way. We partition the finite set of words $\Delta$ as the disjoint union $ \Delta = \Delta_1 \cup \Delta_2 \cup \ldots \cup \Delta_m $
of non-empty sets
where two words belong to the same set $\Delta_j$ if and only if they represent the same element of the monoid $M$. Note that two distinct factors $w_{i,j}$ could well represent the same element of $M$ even if they are not equal as words. Set $B = \{b_1, b_2, \ldots, b_m\}$ and define a map $\phi$ from $\Delta$ to $B$ which maps every word from the set $\Delta_j$ to the letter $b_j$. Extend this to a surjective homomorphism $\phi\colon  \Delta^* \rightarrow B^*$.
Note that for any word $v \in A^*$, if $v \in \Delta^*$ then as observed above $v$ has a unique decomposition $v = v_1 v_2 \ldots v_l$ where each $v_i \in \Delta^*$ and thus the mapping $\phi$ is well-defined on the subset $\Delta^*$ of $A^*$.
Let $T_0$ be the rewriting system over the alphabet $B$ given by applying $\phi$ to each of the relators from the presentation $\lb A \mid T \rb$ (recall that each $w_j\in \Delta^*$) to obtain
\[
T_0  =  \{
(s,1):
\mbox{$s$ is some cyclic permutation of some $\phi(w_j)$}\}.
\]
This means for each relator $w_j$ from $T$, we decompose $w_j$ into its minimal factors, then read the factors recording the sets $\Delta_i$ to which each of them belongs, and then write down the corresponding word over $B$, and all of its cyclic conjugates.

\begin{Thm} \emph{~\cite[Theorem~3.7]{Zhang} } Let $M$ be the monoid
  defined by a finite special presentation $\lb A \mid T \rb$. Then
  $\lb B \mid T_0 \rb$ is a finite monoid presentation for the group
  of units $G$ of $M$. \end{Thm}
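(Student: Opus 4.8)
The plan is to build the obvious surjection $\lb B\mid T_0\rb\twoheadrightarrow G$ and then to prove it is injective, the injectivity being the real content. Write $g_j\in G$ for the common image in $G$ of the words in $\Delta_j$; this is well defined by the definition of the partition $\Delta=\Delta_1\cup\dots\cup\Delta_m$, and the assignment $b_j\mapsto g_j$ gives a monoid homomorphism $\overline{\mu}\colon B^*\to G$, which is surjective because the elements represented by $\Delta$ generate $G$. For each relator, writing $w_i=w_{i,1}\cdots w_{i,n_i}$ with $w_{i,l}\in\Delta_{c_l}$, we have $\phi(w_i)=b_{c_1}\cdots b_{c_{n_i}}$ and hence $\overline{\mu}(\phi(w_i))=[w_{i,1}]\cdots[w_{i,n_i}]=[w_i]=1$ in $G$; since $G$ is a group, every cyclic permutation of $\phi(w_i)$ also maps to $1$. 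Thus every relation of $T_0$ holds in $G$, so $\overline{\mu}$ factors as $B^*\xrightarrow{\,q\,}N\xrightarrow{\,\pi\,}G$, where $N=\lb B\mid T_0\rb$, $q$ is the quotient map and $\pi$ is a surjective homomorphism. I would next observe that $N$ is a group: each generator $b_j$ occurs in some $\phi(w_i)$, because choosing $\delta\in\Delta_j$ it equals in $M$ some minimal factor $w_{i,l}$, and as minimal factors lie in $\Delta$ this forces $w_{i,l}\in\Delta_j$, i.e.\ $b_j=\phi(w_{i,l})$ is a letter of $\phi(w_i)$; combining the relation $\phi(w_i)=1$ with the cyclic conjugate beginning, respectively ending, in $b_j$ exhibits a two-sided inverse of $b_j$ in $N$. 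Hence every generator of $N$ is a unit and $N$ is a group.

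It remains to show $\pi$ is injective. Given $n\in N$ with $\pi(n)=1$, choose $w\in B^*$ with $q(w)=n$ and then $x\in\Delta^*$ with $\phi(x)=w$ (possible since $\phi$ is onto); then $x$ represents the identity of $M$. Everything therefore reduces to the following claim: \emph{if $x\in\Delta^*$ represents $1$ in $M$, then $\phi(x)=1$ in $N$.} Granting this, $n=q(\phi(x))=1$, so $\pi$ has trivial kernel and, being a surjection of groups, is an isomorphism; finiteness of the presentation is immediate since $A$ and $T$ are finite, whence $\Delta$, $B$ and $T_0$ are finite.

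The claim is the main obstacle. An equality witnessing that $x$ represents $1$ is a finite sequence of insertions and deletions of the relators $w_i$, each of which itself lies in $\Delta^*$. I would aim to normalise such a derivation so that every relator is inserted or deleted exactly at a boundary of the $\Delta$-factorisation of the current word, using the rigidity of $\Delta$: it is a prefix code, and no nonempty proper prefix of a $\Delta$-word can equal a nonempty proper suffix of a $\Delta$-word, which together forbid a relator from straddling two factors or being absorbed strictly inside one. Once the derivation respects the $\Delta$-factorisation, applying $\phi$ converts each relator move into the application of a $T_0$-relation (a cyclic conjugate of some $\phi(w_i)$), yielding $\phi(x)=_{T_0}1$. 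This normalisation --- essentially the statement that the free factorisation by $\Delta$ is preserved by the defining congruence of $M$ --- is the technical heart, and is exactly what the structural analysis of special monoids in \cite{Zhang} supplies; in a self-contained treatment I would establish it via a van Kampen/pictures argument over the presentation $\lb A\mid T\rb$ restricted to invertible words.
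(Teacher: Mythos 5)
First, a point of comparison: the paper does not prove this statement at all --- it is quoted verbatim from Zhang (Theorem~3.7 there) and used as an imported ingredient, so there is no internal proof to measure yours against. Judged on its own terms, your proposal correctly isolates the skeleton of the argument: the map $b_j\mapsto g_j$ is well defined on $B^*$ and surjective onto $G$ because every invertible element is represented by a word of $\Delta^*$ (Zhang's Lemma~3.4 plus the unique decomposition into indecomposable invertible factors); the inclusion of all cyclic conjugates in $T_0$ does make each generator of $N=\lb B\mid T_0\rb$ a two-sided unit, so $N$ is a group; and injectivity reduces to the claim that $x\in\Delta^*$ with $[x]_T=1$ forces $\phi(x)=_{T_0}1$. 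All of that is sound.

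But that last claim is the entire content of the theorem, and you have not proved it: you have named a strategy (normalise a $T$-derivation so that relators are only ever inserted or deleted at boundaries of the $\Delta$-factorisation) and then explicitly deferred its execution to Zhang or to an unspecified van Kampen argument. The difficulty is genuine. A derivation witnessing $x=_T1$ passes through arbitrary words of $A^*$, not words of $\Delta^*$; an insertion of $w_i$ strictly inside a $\Delta$-factor can cause the ambient word to re-parse, and the two rigidity properties you invoke (prefix code; no nonempty proper prefix of a $\Delta$-word equals a nonempty proper suffix of one) do not by themselves control the interaction between an inserted relator and material produced by earlier insertions --- note the paper's warning that a word of $\Delta$ can occur as a proper subword of another word of $\Delta$. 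Zhang's actual route sidesteps this combinatorics on raw derivations: he constructs the infinite complete rewriting system $S=S(T)$ with $\leftrightarrow_S^*=\leftrightarrow_T^*$, shows each class $\Delta_i$ is closed under $\rightarrow_S$, and reads the group presentation off the irreducible forms. So your outline is a reasonable reconstruction of where the work must go, but as written the proof has a gap precisely at its critical step.
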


It follows that $\langle B\mid \phi(w_1)=1,\ldots, \phi(w_k)=1\rangle$ is a group presentation for the group of units of $M$ with the same number of defining relations as the presentation of $M$.

Choose and fix some order on the finite alphabet $A$, and for words
$x,y \in A^*$ write $x < y$ if $x$ precedes $y$ in the resulting
shortlex ordering~\cite[Definition~2.60]{HoltBook}. Now define a rewriting system
$S = S(T)$ over $A$ as follows:
\[
S = \{
(u,v) \mid
u, v \in \Delta^*\colon
\phi(u) =_{T_0} \phi(v) \ \& \ u > v \}.
\]
In fact, it follows from the results of Zhang that the condition $\phi(u) =_{T_0} \phi(v)$ is
equivalent to saying that $u =_T v$, i.e. that $u$ and $v$ represent
the same element of the group of units of the monoid $M$. So the
condition $\phi(u) =_{T_0} \phi(v)$ could be replaced by the condition
$u =_T v$ in the definition of $S$.

\begin{Thm}
\emph{
\cite[Proposition~3.2]{Zhang}
}
The infinite presentation $\lb A \mid S \rb$ is
Noetherian, confluent and defines the monoid $M$.  In fact, the rewriting systems $T$ and $S=S(T)$ are equivalent, that is, $\leftrightarrow_S^* = \leftrightarrow_T^*$.
\end{Thm}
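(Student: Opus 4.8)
The statement bundles three assertions about $S=S(T)$: that it is Noetherian, that it is confluent, and that it defines $M$, the last being equivalent to $\leftrightarrow_S^*=\leftrightarrow_T^*$. The plan is to dispose of termination and the congruence identity directly, and to concentrate the effort on confluence.

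For termination, the shortlex order $>$ is a translation-invariant well-order on $A^*$: it compares length first and then lexicographically, and both comparisons are preserved on prepending and appending a common word, so $u>v$ implies $\alpha u\beta>\alpha v\beta$. Every rule of $S$ strictly lowers the shortlex value of the word to which it is applied, and since $>$ is a well-order no infinite reduction can occur; hence $S$ is Noetherian. For the congruence identity I would argue by two inclusions, using the stated equivalence that $\phi(u)=_{T_0}\phi(v)$ and $u=_T v$ coincide for $u,v\in\Delta^*$. Each rule $(u,v)\in S$ has $u=_T v$, so $\leftrightarrow_S^*\subseteq\leftrightarrow_T^*$. Conversely, each defining relator $w_j$ is a product of its minimal factors, all of which lie in $\Delta$, so $w_j\in\Delta^*$; as $w_j=_T 1$ we get $\phi(w_j)=_{T_0}1$, and $w_j>1$ in shortlex because $w_j$ is non-empty, whence $(w_j,1)\in S$. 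Thus every relation of $T$ is already a rule of $S$, giving $\leftrightarrow_T^*\subseteq\leftrightarrow_S^*$; in particular $\lb A\mid S\rb$ defines $M$.

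Confluence is the substantial part. Since $S$ is Noetherian, Newman's lemma reduces confluence to local confluence, and the critical pair lemma reduces this further to joining the overlaps of left-hand sides; redexes occupying disjoint factors commute trivially. The mechanism that makes every critical pair join is the \emph{saturation} of $S$: it contains \emph{every} shortlex-comparable pair of words in $\Delta^*$ with equal image under $\phi$. Concretely, in the proper-overlap case the alignment lemma (below) lets us write the overlap word as $w=\alpha\, xyz\,\beta$ with $x,y,z\in\Delta^*$, the two rules acting as $xy\to v_1$ and $yz\to v_2$; the one-step descendants are $\alpha v_1 z\beta$ and $\alpha x v_2\beta$, with $v_1z,\,xv_2\in\Delta^*$ and $\phi(v_1z)=_{T_0}\phi(xyz)=_{T_0}\phi(xv_2)$, so $v_1z$ and $xv_2$ are either equal as words or related by a single rule of $S$, and the peak joins in at most one further step. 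The containment case, where one left-hand side equals $x u_2 z$ with $x,z\in\Delta^*$, is identical in spirit. This resolves all critical pairs, yielding local confluence and therefore confluence.

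The crux --- and the step I expect to be the real work --- is the alignment lemma: \emph{if $u_1,u_2\in\Delta^*$ occur as overlapping factors of a word, then the factor boundaries of their $\Delta$-decompositions coincide throughout the overlap}, so that the overlapping portion, and hence the whole overlap word, lies in $\Delta^*$. I would prove this from the code-theoretic properties of $\Delta$. Suppose the overlap $p$ (a suffix of one left-hand side, a prefix of the other) began strictly inside a $\Delta$-factor $\delta$, leaving a non-empty proper suffix $\sigma$ of $\delta$ as a prefix of $p$. Comparing $\sigma$ with the successive $\Delta$-factors $\delta_1',\delta_2',\ldots$ of the other left-hand side (of which $p$, and hence $\sigma$, is a prefix), one of three things happens at each stage: the next factor is longer than the remaining piece of $\sigma$, forcing a non-empty proper prefix of a $\Delta$-word to equal a non-empty proper suffix of a $\Delta$-word, which is impossible; the next factor equals the remaining piece, forcing a whole $\Delta$-word to be a proper suffix of $\delta$, which is impossible because elements of $\Delta$ are indecomposable (such a factorisation would exhibit an invertible non-empty proper prefix of $\delta$); or the next factor is a proper prefix of the remaining piece, which strictly shortens $\sigma$ and lets the comparison recurse. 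Since the length strictly decreases, the recursion must terminate in one of the two impossible cases, so $p$ in fact begins at a factor boundary; being a suffix it then lies in $\Delta^*$, and freeness of $\Delta^*$ forces it to end at a boundary of the other left-hand side as well. The containment case follows by the same comparison applied at both ends. Thus the genuine content of the theorem is this combinatorial synchronisation of $\Delta$-factorisations, resting on the prefix-code property, the prefix/suffix incompatibility, and the indecomposability of minimal invertible words; once it is in place, termination together with the saturation of $S$ makes confluence essentially automatic.
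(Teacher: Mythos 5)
This theorem is not proved in the paper at all: it is quoted verbatim from Zhang (\cite[Proposition~3.2]{Zhang}), so there is no in-paper argument to compare yours against. Judged on its own merits, your treatment of termination (shortlex is a reduction well-order) and of $\leftrightarrow_S^*=\leftrightarrow_T^*$ (each $(w_j,\varepsilon)$ is itself a rule of $S$, and every rule of $S$ is a $T$-consequence) is fine, and your resolution of the \emph{proper-overlap} critical pairs, via the alignment of $\Delta$-factorisations and the saturation of $S$, is correct as far as it goes: the peeling argument, the prefix/suffix incompatibility, and indecomposability do force a suffix-equals-prefix overlap of two left-hand sides to respect both $\Delta$-factorisations.

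The gap is in the containment case, and it is not a technicality. Your alignment lemma, as proved, only shows that an overlap $p$ which is a \emph{suffix} of $u_1$ and a \emph{prefix} of $u_2$ must start at a factor boundary; the peeling recursion terminates in a contradiction precisely because $p$ runs to the end of $u_1$, so the residual suffix $\sigma_i$ of the factor $\delta$ can never outlast the factors of $u_2$. When instead $u_2$ sits \emph{strictly inside a single $\Delta$-factor} $\delta$ of $u_1$ (so $\delta=\tau u_2\rho$ with $\tau,\rho$ both non-empty), the recursion simply exhausts the factors of $u_2$ while part of $\sigma$ remains, and no contradiction arises. This case cannot be excluded: the paper explicitly warns that ``a word from $\Delta$ can, in general, arise as a subword of a word from $\Delta$,'' and such a $u_2$ can perfectly well be a left-hand side of $S$ (for instance any non-minimal element of some $\Delta_i$, or any $u_2\in\Delta^*$ with $\phi(u_2)=_{T_0}1$). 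For such a critical pair the two reducts are $v_1$ and $\tau v_2\rho$ (in context), and $\tau v_2\rho$ need not lie in $\Delta^*$, so your mechanism --- ``both descendants are in $\Delta^*$ with the same $\phi$-image, hence related by a single rule'' --- does not apply; one would have to argue separately that $\tau v_2\rho$ can be rewritten to meet $v_1$, and indecomposability of $\delta$ gives you no handle on $\tau$ and $\rho$ individually (only that $[\tau][u_2][\rho]$ is invertible). Until this containment case is resolved, local confluence, and hence confluence, is not established.
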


We shall prove statements about $M$ by working with the irreducible
words $\Irr(S)$ associated with this infinite complete rewriting
system. For the rest of this section, when we say a word over the
alphabet $A$ is irreducible, we mean that it is irreducible with
respect to the rewriting system $S$.

The submonoid of right units $R$ is generated by the prefixes of the
words from $\Delta$. Indeed, let $I$ be the set of non-empty prefixes
of words from $\Delta$, that is,
\[
I = \{ x \in A^+ \mid xy \in \Delta \ \mbox{for some} \
y \in A^* \}.
\]
Clearly all words in the set $I$ represent right invertible elements
of $M$. Conversely, we have the following result.

\begin{Lemma}\emph{\cite[Lemma~3.3]{Zhang}} Let $u \in A^*$ be
  irreducible modulo $S = S(T)$. If $[u]_T$ is right invertible, then $u \in I^*$. \end{Lemma}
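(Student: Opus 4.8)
The plan is to induct on the length $|u|$, peeling an invertible block off the front of $u$ at each step. The base case $u$ empty is immediate, since the empty word lies in $I^*$. For the inductive step, suppose $u$ is nonempty. Since $[u]_T$ is right invertible, there is a word $v$ with $uv =_T 1$; in particular $uv$ is a nonempty invertible word, so by the unique decomposition of invertible words recalled above it factors, as a \emph{word}, as $uv = d_1 d_2 \cdots d_n$ into indecomposable invertible words $d_i$, with $d_1$ the shortest nonempty invertible prefix of $uv$. As $u$ is a prefix of $uv$, exactly one of two cases occurs: either $|u| \geq |d_1|$, so that $d_1$ is a prefix of $u$, or $|u| < |d_1|$, so that $u$ is a proper prefix of $d_1$.

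In the first case I would write $u = d_1 u''$. Because $[d_1]_T$ is (two-sided) invertible and $[u]_T$ is right invertible, $[u'']_T = [d_1]_T^{-1}[u]_T$ is again right invertible: if $[u]_T[z]_T = 1$ then $[u'']_T[z d_1]_T = 1$. Moreover $u''$, being a factor of the irreducible word $u$, is itself irreducible, and $|u''| < |u|$, so the induction hypothesis gives $u'' \in I^*$. It then suffices to prove $d_1 \in I^*$; note that $d_1$, as a prefix of $u$, is irreducible, and it is an indecomposable invertible word, so by \cite[Lemma~3.4]{Zhang} it is equal in $M$ to some $\delta \in \Delta$. In the second case $u$ is a proper prefix of the indecomposable invertible word $d_1$, and the goal collapses to showing that such a $u$ lies in $I$, i.e.\ is a prefix of a word of $\Delta$.

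The hard part, common to both cases, is the \emph{word-level} identification: passing from the $M$-statement ``an indecomposable invertible word equals a member of $\Delta$'' to the syntactic statement that the irreducible word in front of us, or a prefix of it, is literally a product of prefixes of $\Delta$-words. I would isolate this as a combinatorial lemma asserting that the irreducible form (with respect to $S$) of an indecomposable invertible word is a single element of $\Delta$, so that an irreducible indecomposable invertible word lies in $\Delta \subseteq I$ and any irreducible prefix of one lies in $I$. Proving this lemma is where the defining features of $\Delta$ enter essentially: that $\Delta$ is a prefix code, that no nonempty proper prefix of a $\Delta$-word equals a nonempty proper suffix of a $\Delta$-word, and that $S$ is confluent with all rewrite rules supported on $\Delta^*$. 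The plan is to show that a reduction applied to a word of $\Delta^*$ can only rewrite complete $\Delta$-blocks, so irreducible forms of $\Delta^*$-words remain in $\Delta^*$; controlling how a redex from $\Delta^*$ can straddle the $\Delta$-factorization is the main obstacle, and the overlap condition on $\Delta$ is precisely what excludes the bad straddling configurations. Granting this lemma, both cases close and $u \in I^*$ follows by induction.
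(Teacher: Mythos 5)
First, a point of comparison: the paper does not prove this lemma at all --- it is imported verbatim from Zhang \cite[Lemma~3.3]{Zhang} --- so there is no in-paper argument to measure yours against, and your proposal has to stand on its own. Your induction and your Case 1 are plausible in outline: if $d_1$ is a prefix of $u$ then $d_1$ is irreducible, and an irreducible indecomposable invertible word can be identified with the unique irreducible element of its class $\Delta_i$ (granting the facts quoted from Zhang's Lemmas 3.4 and 4.2), so $d_1\in\Delta\subseteq I$ and the induction proceeds. The genuine gap is in Case 2, and Case 2 is where the entire content of the lemma lives.

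Concretely, in Case 2 you assert that the goal ``collapses to showing that such a $u$ lies in $I$,'' and your auxiliary lemma concerns irreducible prefixes of \emph{irreducible} indecomposable invertible words whose normal forms lie in $\Delta$. Both claims fail. Take the bicyclic monoid $M=\langle a,b\mid ab=1\rangle$, so that $\Delta=\{ab\}$, $I=\{a,ab\}$, and the rules of $S$ rewrite powers of $ab$. Let $u=aa$: it is irreducible and right invertible, and for any word $v$ with $[uv]_T=1$ the shortest nonempty invertible prefix $d_1$ of $uv$ properly contains $u$ (no proper nonempty prefix of, say, $aabb$ is invertible), so you land in Case 2 no matter how $v$ is chosen. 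But $aa\notin I$ --- it is not a prefix of any word of $\Delta$ --- it lies only in $I^*$, as $a\cdot a$, and that factorisation is not induced by any $\Delta$-factorisation of $d_1$ or of its normal form; indeed the normal form of the indecomposable invertible word $aabb$ is the \emph{empty} word, not an element of $\Delta$, so the auxiliary lemma is false as stated. More structurally: $d_1$ need not be irreducible and need not lie in $\Delta^*$, so the combinatorial claim about reductions of $\Delta^*$-words never gets a grip on it; and since every prefix of an invertible word is right invertible, the statement you actually need in Case 2 (``an irreducible proper prefix of an arbitrary indecomposable invertible word lies in $I^*$'') is essentially the lemma itself, so the reduction defers the difficulty rather than discharging it. A correct argument has to work with a reduction sequence $uv\rightarrow_S^*\varepsilon$ (with $v$ irreducible) and analyse how left-hand sides from $\Delta^*$ can straddle the boundary of the irreducible prefix $u$; it is there that the prefix-code and no-overlap properties of $\Delta$ do their work and produce the factorisation $aa=a\cdot a$ that your scheme cannot see.
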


It follows from this lemma that $I$ constitutes a finite generating
set for the submonoid $R$ of right units of the monoid $M$ (that is, the submonoid of all right invertible elements). 
Furthermore, in \cite[Theorem~4.4]{Zhang} Zhang proves the following result which describes the structure of the 
submonoid of right units of the monoid $M$. 

\begin{Thm}\emph{
\cite[Theorem~4.4]{Zhang}
}
Let $M$ be a finitely presented special monoid. 
The submonoid of right units $R$ of $M$ is    
a free product of the group of units $G$ and a finitely generated free monoid.
\end{Thm}
Monoid free products will be formally defined in Section~\ref{sec_amalg} below.
Our next goal is to show that the Cayley graph of a special monoid
has a tree-like structure. The action of the monoid on the
corresponding tree will be used to construct a free resolution of the
trivial module.

Let
$\mathcal T$ be the set of irreducible words in $A^*$ with no suffix
in $I$.

\begin{Lemma}\label{l:preserve.irred} Let $w\in\mathcal T$ and let
  $u\in A^*$ be irreducible. Then $wu$ is irreducible. \end{Lemma}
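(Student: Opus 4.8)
The plan is to argue by contradiction: assuming that $wu$ is reducible with respect to $S$, I will produce a suffix of $w$ lying in $I$, which contradicts the hypothesis $w\in\mathcal T$. The starting observation is that a word fails to be irreducible precisely when it contains, as a factor, the left-hand side of some rule of $S$, and that by the definition of $S$ every such left-hand side lies in $\Delta^*$. Hence, if $wu$ were reducible I could write $wu=\alpha p\beta$ with $\alpha,\beta\in A^*$ and $p\in\Delta^*$ a non-empty word occurring as the left-hand side of a rule of $S$.

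First I would locate the occurrence of $p$ relative to the point where $w$ meets $u$. There are three cases. If $p$ lies entirely within $w$, then $w$ is reducible, contradicting the fact that $w\in\mathcal T$ is in particular irreducible; if $p$ lies entirely within $u$, then $u$ is reducible, contrary to hypothesis. Both of these cases are dismissed immediately, so the real content is in the case where $p$ straddles the boundary.

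In that remaining case I write $p=st$, where $s$ is a non-empty suffix of $w$ and $t$ is a non-empty prefix of $u$; in particular $s$ is a non-empty prefix of $p\in\Delta^*$. Choosing a factorization $p=\delta_1\cdots\delta_r$ with each $\delta_i\in\Delta$ (unique, since $\Delta$ is a prefix code), let $j$ be least with $|s|\le|\delta_1\cdots\delta_j|$. Then $s=\delta_1\cdots\delta_{j-1}s'$, where $s'$ is a non-empty prefix of $\delta_j$. Since $s'$ is a non-empty prefix of the word $\delta_j\in\Delta$, the definition $I=\{x\in A^+ : xy\in\Delta \text{ for some } y\in A^*\}$ gives $s'\in I$. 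As $s'$ is a suffix of $s$ and $s$ is a suffix of $w$, the word $w$ has a suffix in $I$, contradicting $w\in\mathcal T$.

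The only genuine subtlety lies in this last case, namely in verifying that the overlap $s$ always ends with a word of $I$; this is exactly the step that consumes the hypothesis that $w$ has no suffix in $I$. I expect it to be straightforward once the reduction is set up, since it relies solely on the shape of the left-hand sides of $S$ (they lie in $\Delta^*$) together with the definition of $I$ as the set of non-empty prefixes of words from $\Delta$. Notably, neither the completeness of $S$ nor the non-overlapping property of $\Delta$ is required for this particular lemma.
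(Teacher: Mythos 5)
Your proposal is correct and follows essentially the same route as the paper: reduce to the case where a left-hand side of $S$ straddles the boundary between $w$ and $u$, use the fact that left-hand sides lie in $\Delta^*$ to extract a non-empty suffix of $w$ that is a prefix of a word of $\Delta$ (hence lies in $I$), and contradict $w\in\mathcal T$. The paper's version is just terser, and your observations that the prefix-code property of $\Delta$ and the completeness of $S$ are not needed here are accurate.
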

\begin{proof} If $wu$ is not irreducible, then since both $w$ and $u$
  are irreducible it follows that $w=xy$ and $u=zw$ with
  $yz$ a left-hand side of a rewrite rule and $y,z$ both non-empty. But
  every left hand side of a rewrite rule is in $\Delta^*$ and so $y$
  has a non-empty suffix $v$ that is a prefix of an element of
  $\Delta$. But then $v\in I$, contradicting that $w\in \mathcal T$.
\end{proof}

We recall the definition of the pre-order $\leq_{\gr}$ on the monoid
$M$. For all $m,n \in M$ we write $m \leq_{\gr} n$ if and only if $mM
\subseteq nM$, and write $m \gr n$ if $m \leq_{\gr} n$ and $n
\leq_{\gr} m$. Obviously $\gr$ is an equivalence relation on $M$, usually called Green's $\mathscr{R}$-relation, and
$M / \gr$ is a poset with the order induced by $\leq_{\gr}$.
In terms of the right Cayley graph $\Gamma(M,A)$ of $M$ we have $m \leq_{\gr} n$ if and
only if there is a directed path from $n$ to $m$, while the
$\gr$-classes 
are the vertex sets of the 
Sch\"{u}tzenberger graphs of the monoid.

Let $\mathcal{L}$ be a subset of $A^*$ containing the empty word. For
any two words $\alpha, \beta \in \mathcal{L}$ write
$\alpha \preceq \beta$ if and only if $\beta$ is a prefix of $\alpha$.
This defines a poset which we denote by $P_{\mathcal{L}}$. This poset
is the reversal of the prefix order on the set of words $\mathcal{L}$.
This poset is countable since $A$ is finite. The empty word is the
unique maximal element of the poset.  This poset is
\emph{locally-finite} in the sense that every interval $[x,y]$ in this
poset contains finitely many elements. 
In fact the principal filter of every
element in this poset is finite since a word admits only finitely many
prefixes.
Recall that if $s$ and $t$ are elements of a poset $P$ then
we say $s$ \emph{covers} $t$ if $s < t$ and $[s,t] = \{s,t\}$. A locally
finite poset is completely determined by its cover relations. The
\emph{Hasse diagram} of a poset $P$ is a graph whose edges are the
cover relations. Hasse diagrams are drawn in such a way that
if $s < t$ then $t$ is drawn with a higher vertical coordinate than
$s$.

\begin{Prop}\label{prop:tree}
Let $\mathcal L\subseteq A^*$ contain the empty word.  Then the Hasse diagram of $P_{\mathcal{L}}$ is a rooted tree (with root the empty word).
\end{Prop}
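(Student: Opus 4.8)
The plan is to exhibit $P_{\mathcal L}$ as a poset with a greatest element in which the principal filter of each vertex is a finite chain, and then to deduce directly that such a structure forces the Hasse diagram to be a rooted tree. Throughout I write $\varepsilon$ for the empty word and I use the elementary fact that the set of prefixes of a fixed word $w\in A^*$ is totally ordered by the prefix relation (of two prefixes of $w$, the shorter is a prefix of the longer), so the prefixes of $w$ form a chain.

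First I would record that $\varepsilon$ is the greatest element of $P_{\mathcal L}$: since $\varepsilon$ is a prefix of every word, $\alpha\preceq\varepsilon$ for all $\alpha\in\mathcal L$. Next, for a fixed $w\in\mathcal L$ the principal filter $\{\beta\in\mathcal L : w\preceq\beta\}$ is exactly the set of prefixes of $w$ lying in $\mathcal L$; by the fact above this is a chain, and it is finite because $w$ has only finitely many prefixes (as already noted in the discussion preceding the statement). Consequently every $w\neq\varepsilon$ has a \emph{unique} cover in $P_{\mathcal L}$, namely the longest proper prefix $p(w)$ of $w$ that lies in $\mathcal L$ (this exists since $\varepsilon$ is such a prefix). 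I would check uniqueness straight from the definition of cover: if $w$ covers both $\beta_1$ and $\beta_2$, then both are proper prefixes of $w$ in $\mathcal L$, hence comparable, and if say $\beta_1$ is a proper prefix of $\beta_2$, then $\beta_2$ lies strictly inside the interval $[w,\beta_1]$, contradicting $[w,\beta_1]=\{w,\beta_1\}$.

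With unique covers established, I would describe the Hasse diagram concretely: its vertex set is $\mathcal L$ and its edge set is precisely $\{\,\{w,p(w)\} : w\in\mathcal L\setminus\{\varepsilon\}\,\}$, since every cover relation has this form. To see this graph is a tree I would verify connectivity and acyclicity. For connectivity, iterating $w\mapsto p(w)$ strictly decreases word length, so after finitely many steps the process reaches $\varepsilon$, producing a path from any vertex to the root. For acyclicity, I would take a hypothetical simple cycle and consider a vertex $w$ of maximal length on it (necessarily $w\neq\varepsilon$, as $\varepsilon$ has minimal length): its two cycle-neighbours lie among $p(w)$, which is shorter, and the vertices $v$ with $p(v)=w$, which are strictly longer; by maximality both neighbours must equal $p(w)$, contradicting that they are distinct. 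Hence the Hasse diagram is connected and acyclic, i.e.\ a rooted tree with root $\varepsilon$.

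The argument is essentially routine once this viewpoint is fixed; the one point requiring care is the identification of the unique cover, where one genuinely uses that the prefixes of a single word form a chain (covers can fail to be unique in a general locally finite poset) together with the orientation of $\preceq$, so that moving toward the root corresponds to passing to shorter prefixes. The finiteness of principal filters, already observed in the text, is exactly what guarantees that a longest proper prefix in $\mathcal L$ exists, and hence that the parent map $p$ is defined on all of $\mathcal L\setminus\{\varepsilon\}$.
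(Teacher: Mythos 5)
Your proof is correct, but it takes a different route from the one in the paper. The paper first reduces to the case of a finite language $\mathcal L$ by writing the Hasse diagram as a direct limit of the Hasse diagrams of the truncations $\mathcal L_n$ (words of length at most $n$) and invoking the fact that a direct limit of trees is a tree; it then inducts on $|\mathcal L|$, removing a word $w$ of maximal length, observing that the unique edge incident to $w$ joins it to its longest proper prefix in $\mathcal L$, and concluding via an Euler characteristic comparison with the tree for $\mathcal L\setminus\{w\}$. You instead argue directly on the whole (possibly infinite) poset: you identify the unique parent $p(w)$ of each $w\neq\varepsilon$ as the longest proper prefix of $w$ lying in $\mathcal L$ (using that the principal filter of $w$ is a finite chain), check that every Hasse edge has the form $\{w,p(w)\}$, get connectivity by iterating $p$ down to $\varepsilon$, and get acyclicity by looking at a maximal-length vertex on a hypothetical cycle. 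The key structural observation is the same in both arguments --- each non-root vertex meets exactly one edge going toward shorter words --- but your version avoids both the direct-limit reduction and the induction/Euler characteristic bookkeeping, at the cost of having to verify connectivity and acyclicity by hand; it is the more elementary and self-contained of the two, while the paper's is shorter once the limit-of-trees fact is granted.
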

\begin{proof}
For $n\geq 0$, let  $\mathcal L_n$ consist of those words from $\mathcal L$ of length at most $n$.  Let $\Lambda$ (respectively, $\Lambda_n$) be the Hasse diagram of $P_{\mathcal{L}}$ (respectively, $P_{\mathcal{L}_n}$).  Then $\Lambda=\varinjlim \Lambda_n$ and hence, since a direct limit of trees is a tree, it suffices to handle the case that $\mathcal L$ is finite.  We proceed by induction on $|\mathcal L|$.  If $|\mathcal L|=1$, then $\Lambda$ consists of a single vertex and there is nothing to prove.  Assume true for languages with at most $n$ elements and suppose that $\mathcal L$ has $n+1$ elements.  Suppose that $w\in \mathcal L$ has maximum length.  Let $v$ be the longest proper prefix of $w$ belonging to $\mathcal L$ (it could be the empty word).   Let $\Lambda'$ be the Hasse diagram of $P_{\mathcal{L}\setminus \{w\}}$; it is a rooted tree with root the empty word by induction.   Then there is an edge between $v$ to $w$ in $\Lambda$ and that is the only edge incident on $w$.  Hence $\Lambda$ and $\Lambda'$ have the same Euler characteristic and so $\Lambda$ is a tree (as $\Lambda'$ was).
\end{proof}

  It is possible for an element of $P_{\mathcal{L}}$ to cover infinitely
many distinct elements of $P_{\mathcal{L}}$. For example, if
$\mathcal{L}= \{\epsilon, ab, aab, aaab, aaaab, \ldots \}$ then
$\epsilon$ covers all the other words in this set.

The following fact is essentially established
in~\cite[Lemma~5.2]{Zhang} and the discussion afterwards.

\begin{Prop}\label{p:transversal}
  Every element $m\in M$ can uniquely be expressed in the form
  $m=[w_m]u_m$ with $w_m\in \mathcal T$ and $u_m\in R$. Moreover, the
  irreducible word $v\in A^*$ representing $m$ is $w_mt$ where
  $t\in I^*$ is the longest suffix of $v$ in $I^*$ and $[t]=u_m$.
  Furthermore, if $m,n\in M$, then $m\leq_{\mathscr R} n$ if and only
  if $w_n$ is a prefix of $w_m$. Hence the Hasse diagram of
  $M/\mathscr R$ is a tree rooted at $1$. \end{Prop}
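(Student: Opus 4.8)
The plan is to exploit the complete rewriting system $S$: every $m\in M$ has a unique irreducible representative $v\in A^*$, and I factor $v$ as $v=w_m t$, where $t$ is the longest suffix of $v$ lying in $I^*$ (this exists, the empty word being in $I^*$). Existence of the asserted decomposition is then immediate. Indeed $w_m$, being a prefix of an irreducible word, is itself irreducible, and it has no suffix in $I$: otherwise that suffix could be prepended to $t$ to produce a strictly longer suffix of $v$ in $I^*$ (using $I\cdot I^*\subseteq I^*$), contradicting maximality. Thus $w_m\in\mathcal T$. Since $I\subseteq R$ we have $t\in I^*\subseteq R$, so $u_m:=[t]\in R$ and $m=[w_m]u_m$, which also identifies $t$ as claimed.

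The engine for everything that follows is the combinatorial observation that if $\tau\sigma\in I^*$ with $\tau$ non-empty, then $\tau$ itself has a non-empty suffix lying in $I$. To see this, factor $\tau\sigma=i_1\cdots i_r$ with each $i_j\in I$ and locate where the prefix $\tau$ ends: either $\tau$ ends at a factor boundary, in which case its last factor $i_s\in I$ is the desired suffix, or $\tau$ ends strictly inside some $i_{s+1}$, in which case the corresponding initial segment $q$ of $i_{s+1}$ is a non-empty prefix of a word of $\Delta$ and hence lies in $I$, while being a suffix of $\tau$. This is the step I expect to be the main obstacle to state cleanly, since it is the only place where the fine structure of $I$ as the set of prefixes of the prefix code $\Delta$ is genuinely used; once it is in hand, uniqueness and the order statement both fall out quickly.

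For uniqueness, suppose $m=[w]u$ with $w\in\mathcal T$ and $u\in R$. Representing $u$ by its irreducible word $p$, which lies in $I^*$ by \cite[Lemma~3.3]{Zhang}, Lemma~\ref{l:preserve.irred} guarantees that $wp$ is irreducible, so $wp=v$ by uniqueness of normal forms. Comparing this with $v=w_m t$: any length discrepancy between the two factorisations would express a non-empty suffix of $w$ (or of $w_m$) as the initial part $\tau$ of a word $\tau\sigma\in I^*$, and the combinatorial observation would then hand that word of $\mathcal T$ a non-empty suffix in $I$, a contradiction. Hence $w=w_m$ and $u=[p]=[t]=u_m$.

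For the order statement I first record that, since $u_n$ has a right inverse $u_n'$ in $M$, the identity $n u_n'=[w_n]$ gives $nM=[w_n]M$, so I may replace $n$ by $[w_n]$. If $w_n$ is a prefix of $w_m$, writing $w_m=w_n c$ yields $m=[w_n]\big([c]u_m\big)\in[w_n]M=nM$, so $m\leq_{\gr}n$. Conversely, if $m=[w_n]z$, pick an irreducible word $\zeta$ for $z$; then $w_n\zeta$ is irreducible by Lemma~\ref{l:preserve.irred} and so equals $v$, and the combinatorial observation (applied to a putative non-empty suffix of $w_n$ that would lie at the start of the longest $I^*$-suffix of $v$) forces that suffix to be trivial, whence $w_n$ is a prefix of $w_m$. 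Combining the two implications shows $m\gr n$ if and only if $w_m=w_n$, so $m\mapsto w_m$ identifies $M/\mathscr R$ with the poset $P_{\mathcal L}$ of Proposition~\ref{prop:tree} for $\mathcal L=\{w_m\mid m\in M\}$ under the reverse prefix order. Therefore the Hasse diagram of $M/\mathscr R$ is a rooted tree with root the class of $1$ (corresponding to the empty word), by Proposition~\ref{prop:tree}.
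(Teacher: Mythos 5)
Your proof is correct and follows essentially the same route as the paper: factor the unique irreducible representative by its longest suffix in $I^*$, use Lemma~\ref{l:preserve.irred} together with the prefix-closedness of $I$ (and hence of $I^*$) for uniqueness and for the $\leq_{\mathscr R}$ characterisation, and invoke Proposition~\ref{prop:tree} for the tree statement. The only cosmetic differences are that you isolate the prefix-closedness argument as an explicit combinatorial lemma where the paper uses it inline, and in the forward direction of the order statement you cancel $u_n$ to reduce to $nM=[w_n]M$ whereas the paper cancels $u_m$ to compare irreducible words for $[w_m]$ directly.
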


\begin{proof}
  Let $v\in A^*$ be the irreducible word with $[v]=m$. Then $v=v'v''$
  where $v''$ is the longest suffix in $I^*$. It follows that
  $v'\in \mathcal T$ and $v''$ represents an element of $R$. This
  shows the existence of such a factorization. For uniqueness, let
  $w\in \mathcal T$ and $x\in A^*$ be an irreducible word representing
  an element of $R$. By~\cite[Lemma~3.3]{Zhang}, we have that
  $x\in I^*$. Then $wx$ is irreducible by
  Lemma~\ref{l:preserve.irred}. Thus $wx=v'v''$. By choice of $v''$,
  we must have $|x|\leq |v''|$. If $|x|<|v''|$, then some non-empty
  prefix of $v''$ is a suffix of $w$. As $I$ is prefix-closed, whence
  so is $I^*$, this contradicts that $w\in \mathcal T$. Thus $x=v''$
  and hence $w=v'$. This establishes the uniqueness of the
  decomposition.

Suppose now that $m=nn'$ with $n'\in M$. Let $z$ be a right inverse of $u_m$ and let $v$ be an irreducible word representing $u_nn'z$.  Then $w_nv$ is an irreducible word representing $nn'z=mz=[w_m]u_mz=[w_m]$ by Lemma~\ref{l:preserve.irred}.  Thus $w_m=w_nv$ and so $w_n$ is a prefix of $w_m$.  Conversely, suppose that $w_n$ is a prefix of $w_m$.  Clearly, $[w_n]\mathrel{\mathscr R} n$ and $[w_m]\mathrel{\mathscr R} m$ as $u_m,u_n$ are right invertible.  So it suffices to observe that $[w_m]\leq_{\mathscr R} [w_n]$.

The final statement follows from Proposition~\ref{prop:tree}.
\end{proof}

Retaining the notation of Proposition~\ref{p:transversal} we obtain the following immediate corollary.

\begin{Cor}\label{c:free.right}
The action of $R$ on the right of $M$ is free with transversal $\mathscr T=\{[w]\mid w\in\mathcal T\}$.  Furthermore, $M/R\cong M/\mathscr R$.
\end{Cor}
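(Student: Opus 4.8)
The plan is to deduce both assertions directly from Proposition~\ref{p:transversal}, supplemented only by the elementary fact that right multiplication by a right unit preserves Green's $\mathscr R$-relation.

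For the first assertion, I would first note that every $w\in\mathcal T$ is irreducible modulo $S$, and since $S$ is complete each $\leftrightarrow_S^*$-class has a unique irreducible representative; hence the map $w\mapsto[w]$ is injective and $\mathscr T$ is genuinely in bijection with $\mathcal T$. Proposition~\ref{p:transversal} states that every $m\in M$ is \emph{uniquely} of the form $m=[w_m]u_m$ with $w_m\in\mathcal T$ and $u_m\in R$. Viewing $M$ as a right $R$-set (with $R$ acting by right multiplication) and using the injectivity just noted, this says exactly that each element of $M$ is uniquely a product $s\cdot u$ with $s\in\mathscr T$ and $u\in R$. By the definition of a free basis, this is precisely the assertion that the right action of $R$ on $M$ is free with transversal $\mathscr T$.

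For the isomorphism $M/R\cong M/\mathscr R$, the key lemma is that for every $m\in M$ and every right unit $r\in R$ one has $mr\mathrel{\mathscr R}m$: choosing $r'$ with $rr'=1$ gives $m=(mr)r'$, whence $mM\subseteq mrM\subseteq mM$. Consequently, if $x\le y$ in the natural preorder of the right $R$-set $M$ (that is, $xR\subseteq yR$), then $x\in yR$ forces $x=yr$ for some $r\in R$, and so $x\mathrel{\mathscr R}y$. Passing to the equivalence relation generated by $\le$ shows that the weak-orbit relation $\approx$ is contained in $\mathscr R$. For the reverse inclusion, Proposition~\ref{p:transversal} gives $m=[w_m]u_m$ with $u_m\in R$, so $mR\subseteq[w_m]R$ and hence $m\approx[w_m]$, while the lemma gives $m\mathrel{\mathscr R}[w_m]$. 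Thus if $m\mathrel{\mathscr R}n$ then $[w_m]\mathrel{\mathscr R}[w_n]$; but elements of $\mathscr T$ have trivial $R$-part, so the prefix characterisation of $\le_{\mathscr R}$ in Proposition~\ref{p:transversal} forces $w_m$ and $w_n$ to each be a prefix of the other, i.e.\ $w_m=w_n$. Therefore $m\approx[w_m]=[w_n]\approx n$, giving $\mathscr R\subseteq\approx$. Combining the inclusions, $\approx$ and $\mathscr R$ coincide as equivalence relations on $M$, so the weak orbits of the right $R$-set $M$ are exactly the $\mathscr R$-classes; the induced orders agree as well, matching the tree structure from Proposition~\ref{p:transversal}.

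The genuinely substantive input is Proposition~\ref{p:transversal}, which is already available; once it is in hand the corollary is essentially bookkeeping. The only point requiring care is that the weak-orbit relation is generated by a zig-zag of $\le$-comparisons and a priori could wander across several $\mathscr R$-classes, but the unit-multiplication lemma keeps every single $\le$-step inside one $\mathscr R$-class, so no such wandering occurs. I therefore expect no real obstacle beyond this verification.
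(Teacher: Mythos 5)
Your proof is correct and follows exactly the route the paper intends: the corollary is stated without proof as an immediate consequence of Proposition~\ref{p:transversal}, and your argument is just a careful unwinding of that — freeness from the unique factorisation $m=[w_m]u_m$, and the identification $M/R\cong M/\mathscr R$ from the observation that right multiplication by a right unit preserves $\mathscr R$ together with the prefix characterisation of $\leq_{\mathscr R}$. No gaps.
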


Another corollary is that all principal right ideals of $M$ are isomorphic as right $M$-sets.

\begin{Cor}\label{c:ISO.princ.right}
Let $n\in M$.  Then the mapping $\varphi_n\colon M\to nM$ given by  $\varphi_n(m)=[w_n]m$ is an isomorphism of right $M$-sets.
\end{Cor}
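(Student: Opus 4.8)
The plan is to verify that $\varphi_n$ is a well-defined, $M$-equivariant bijection onto $nM$. Retaining the notation of Proposition~\ref{p:transversal}, write $n=[w_n]u_n$ with $w_n\in\mathcal T$ and $u_n\in R$. First I would pin down the image. Since $u_n$ is right invertible, choosing $z\in M$ with $u_nz=1$ gives $[w_n]=[w_n]u_nz=nz\in nM$, so $[w_n]M\subseteq nM$; conversely $n=[w_n]u_n\in[w_n]M$ yields $nM\subseteq[w_n]M$. Hence $\varphi_n(M)=[w_n]M=nM$, which shows simultaneously that $\varphi_n$ lands in $nM$ and that it is surjective onto $nM$. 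Equivariance of the right actions is immediate: for $m,m'\in M$ one has $\varphi_n(mm')=[w_n]mm'=(\varphi_n(m))m'$.

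The crux is injectivity, and here I would exploit the complete rewriting system $S$, whose irreducible words furnish unique normal forms for the elements of $M$. Suppose $\varphi_n(m)=\varphi_n(m')$, that is $[w_n]m=[w_n]m'$ in $M$, and let $v,v'\in\Irr(S)$ be the irreducible words representing $m$ and $m'$, so $m=[v]$ and $m'=[v']$. Because $w_n\in\mathcal T$, Lemma~\ref{l:preserve.irred} guarantees that both $w_nv$ and $w_nv'$ are irreducible; moreover $[w_nv]=[w_n]m$ and $[w_nv']=[w_n]m'$. Since these two elements of $M$ coincide and each $\leftrightarrow_S^*$-class of a complete rewriting system contains a unique irreducible word, we conclude $w_nv=w_nv'$ as words in $A^*$. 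Cancelling the common prefix $w_n$ in the free monoid gives $v=v'$, whence $m=[v]=[v']=m'$.

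Combining the three steps, $\varphi_n$ is an $M$-equivariant bijection $M\to nM$, that is, an isomorphism of right $M$-sets. The one point that needs care is the injectivity argument: the naive temptation is to cancel $[w_n]$ directly in $M$, but $M$ need not be left cancellative in general. What legitimises the cancellation here is precisely that $w_n$ lies in $\mathcal T$, so Lemma~\ref{l:preserve.irred} lifts the identity $[w_n]m=[w_n]m'$ in $M$ to an identity of \emph{irreducible} words, where genuine free-monoid cancellation is available. This is the step I expect to be the main obstacle, and it is exactly where the tree-like normal form theory developed above is used.
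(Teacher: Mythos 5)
Your proposal is correct and follows essentially the same route as the paper: surjectivity via $nM=[w_n]M$ (which you justify in slightly more detail using the right invertibility of $u_n$), and injectivity by lifting $[w_n]m=[w_n]m'$ to the irreducible words $w_nv$ and $w_nv'$ via Lemma~\ref{l:preserve.irred} and using uniqueness of normal forms for the complete system $S$. Your closing remark correctly identifies why the cancellation is legitimate despite $M$ not being left cancellative.
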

\begin{proof}
As $nM=[w_n]M$, the map $\varphi_n$ is clearly a surjective homomorphism of right $M$-sets.  To see that is an isomorphism, suppose that $\varphi_n(m)=\varphi_n(m')$.  Let $v,v'\in A^*$ be irreducible words representing $m,m'$, respectively.  Then $w_nv$ and $w_nv'$ are irreducible by Lemma~\ref{l:preserve.irred}.  As they represent the same element of $M$, we deduce that $v=v'$ and so $m=m'$.
\end{proof}

We now generalize Corollary~\ref{c:ISO.princ.right} to show that every right ideal of $M$ is a free $M$-set.

\begin{Thm}\label{t:right.free}
Let $M$ be a special monoid.  Then every right ideal of $M$ is a free right $M$-set and dually every left ideal of $M$ is a free left $M$-set.
\end{Thm}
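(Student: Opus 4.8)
The plan is to reduce everything to the statement about right ideals, deduce the left-ideal case by duality, and then exploit the tree structure of $M/\mathscr{R}$ from Proposition~\ref{p:transversal} together with the fact (Corollary~\ref{c:ISO.princ.right}) that every principal right ideal of $M$ is isomorphic as a right $M$-set to $M$ itself, hence free of rank one; so the theorem amounts to upgrading Corollary~\ref{c:ISO.princ.right} from principal right ideals to arbitrary ones. For the duality step I would note that $M^{op}$ is again a special monoid, presented by $\langle A \mid \widetilde{w}_1 = 1, \dots, \widetilde{w}_k = 1\rangle$ where $\widetilde{w}_i$ is the reverse of $w_i$; since a left ideal of $M$ is exactly a right ideal of $M^{op}$, and a free left $M$-set is a free right $M^{op}$-set, the right-ideal statement applied to $M^{op}$ yields the left-ideal statement for $M$.

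So fix a right ideal $\rho$ of $M$. First I would record that $\rho$ is a down-set for $\leq_{\mathscr{R}}$: if $m \in \rho$ and $n \leq_{\mathscr{R}} m$ then $n \in mM \subseteq \rho$. In particular $\rho$ is a union of $\mathscr{R}$-classes. Using the bijection $m \mapsto w_m$ between $\mathscr{R}$-classes and $\mathcal{T}$ and the identification of $\leq_{\mathscr{R}}$ with the reverse prefix order (Proposition~\ref{p:transversal}), $\rho$ corresponds to a subset $D \subseteq \mathcal{T}$ closed under passing to prefix-extensions, i.e. to a descendant-closed set of vertices of the rooted tree $M/\mathscr{R}$.

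The key step is to write $\rho$ as a disjoint union of principal right ideals indexed by the maximal vertices of $D$. Concretely, let $B = \{[w] : w \in D \text{ and no proper prefix of } w \text{ lies in } D\}$, the transversal representatives of the $\leq_{\mathscr{R}}$-maximal $\mathscr{R}$-classes contained in $\rho$. I would then check two things. Covering: for $m \in \rho$ the set of prefixes of $w_m$ lying in $D$ is nonempty (it contains $w_m$) and finite, so it has a shortest member $w^*$, which is then maximal in $D$ and a prefix of $w_m$; hence $[w^*] \in B$ and $m \in [w^*]M$. Disjointness: if $b_1 = [w_1]$ and $b_2 = [w_2]$ are distinct elements of $B$ and $m \in b_1 M \cap b_2 M$, then $w_1$ and $w_2$ are both prefixes of $w_m$, hence comparable, so the shorter is a proper prefix of the longer lying in $D$, contradicting maximality. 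Thus $\rho = \coprod_{b \in B} bM$ as right $M$-sets.

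Finally, each $bM$ is free of rank one by Corollary~\ref{c:ISO.princ.right}, and a coproduct of free right $M$-sets is free, so $\rho$ is free with basis $B$. I expect the main obstacle to be exactly this disjoint-decomposition step, namely verifying that the $\leq_{\mathscr{R}}$-maximal $\mathscr{R}$-classes of $\rho$ furnish a disjoint covering by principal right ideals. This is where the tree structure of $M/\mathscr{R}$ is indispensable: disjointness uses that two prefixes of a common word are comparable, and covering uses the finiteness of word length (equivalently, that every vertex of $D$ lies below a maximal one, so there are no issues with infinite ascending chains toward the root). Once this combinatorial decomposition is in place, the passage from freeness of principal right ideals to freeness of arbitrary right ideals is a purely formal consequence of closure of freeness under coproducts.
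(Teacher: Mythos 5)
Your proposal is correct and follows essentially the same route as the paper: both identify the right ideal with a prefix-extension-closed subset of $\mathcal T$, take as free basis the elements with no proper prefix in that subset, and use Proposition~\ref{p:transversal} for disjointness and Corollary~\ref{c:ISO.princ.right} for freeness of each principal piece. The only difference is presentational (you phrase it as a disjoint coproduct decomposition, the paper as showing a freely generated sub-$M$-set exhausts the ideal), which amounts to the same argument.
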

\begin{proof}
Let $X$ be a right ideal of $M$ and let $X'=\{w\in \mathcal T\mid [w]\in X\}$.  Let $U'$ be the set of elements $w\in X'$ with no proper prefix in $X'$.  We claim that $X$ is freely generated as an $M$-set by $U=\{[w]\mid w\in U'\}$.  By Proposition~\ref{p:transversal} if $s,t\in U$ are distinct, then $sM\cap tM=\emptyset$.  Indeed, if $m\in sM\cap tM$, then $w_m$ has both $w_s$ and $w_t$ as prefixes and hence either $w_s$ is a prefix of $w_t$, or vice versa, contradicting the definition of $U'$.  Also, by Corollary~\ref{c:ISO.princ.right}, for each $s\in U$, we have that $sM\cong M$ as a right $M$-set.  It follows that $U$ freely generates a sub-$M$-subset $Y$ of $X$.  We show that $Y=X$.

If $m\in X$, then $m=[w_m]u_m$ with $w_m\in \mathcal T$ and $u_m\in R$.  Then $[w_m]\in X$ as $[w_m]\gr m$.  Let $w\in \mathcal T$ be the shortest prefix of $w_m$ with $[w]\in X$.  Then $w\in U'$ and  $m\in [w]M\subseteq Y$.  This completes the proof.
\end{proof}

\begin{Rmk}\label{r:fg}
Note that if $X$ is a free right $M$-set on a subset $B$ and if $X$ has a finite generating set, then $B$ is finite.  Indeed, if $C$ is a finite generating set for  $X$, then there is a finite subset $B'\subseteq B$ such that $C\subseteq B'M$.  But then $B\subseteq B'M$ and hence $B=B'$ by freeness of the action.
\end{Rmk}

Let $\Gamma(M,A)$ be the Cayley graph of $M$ with respect to $A$.  Let $\Gamma(M,A,m)$ denote the strongly connected component of $m$ (also called the Sch\"utz\-en\-ber\-ger graph of $m$).    An immediate geometric consequence of Corollary~\ref{c:ISO.princ.right} is the following.

\begin{Cor}\label{c:Cayley.geometry}
Let $n\in M$.  Then there is an isomorphism of $A$-labeled graphs  $\Gamma(M,A,1)\to \Gamma(M,A,n)$ sending $1$ to $[w_n]$.  If $\Gamma_n$ is the induced subgraph of $\Gamma(M,A)$ consisting of all vertices accessible from $n$, then $\Gamma(M,A)$ is isomorphic to $\Gamma_n$ as an $A$-labeled graph via an isomorphism taking $1$ to $[w_n]$.
\end{Cor}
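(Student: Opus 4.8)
The plan is to deduce both assertions from the single fact, supplied by Corollary~\ref{c:ISO.princ.right}, that $\varphi_n\colon M\to nM$ defined by $\varphi_n(m)=[w_n]m$ is an isomorphism of right $M$-sets, together with the observation that the $A$-labelled edge structure of the right Cayley graph is nothing more than the record of right multiplication by the letters of $A\subseteq M$. First I would identify the vertex set of $\Gamma_n$: a directed path out of $n$ labelled by a word $u\in A^*$ terminates at the vertex $n[u]$, and conversely every element $nm$ is reached in this way, so the vertices accessible from $n$ are exactly the elements of $nM=[w_n]M$, and $\Gamma_n$ is the full subgraph of $\Gamma(M,A)$ induced on $nM$.

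Next I would promote $\varphi_n$ to a morphism of $A$-labelled graphs. Since $\varphi_n$ is right $M$-equivariant, for each generator $a\in A$ it sends the edge $(m,ma)$ to $(\varphi_n(m),\varphi_n(m)a)=(\varphi_n(m),\varphi_n(ma))$, again an $a$-labelled edge. As $\varphi_n$ is a bijection of $M$ onto $nM$, and every $a$-labelled edge with source in $nM$ has the form $([w_n]m,[w_n]ma)$ and so is the image of $(m,ma)$, the map $\varphi_n$ is a label-preserving isomorphism from $\Gamma(M,A)$ onto the induced subgraph on $nM$, that is, onto $\Gamma_n$, with $\varphi_n(1)=[w_n]$. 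This establishes the second assertion.

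For the first assertion I would use that isomorphisms of directed graphs preserve strong connectivity and hence carry strongly connected components bijectively onto strongly connected components. Thus $\varphi_n$ restricts to an isomorphism from $\Gamma(M,A,1)$, the component of $1$, onto the component of $[w_n]=\varphi_n(1)$ computed inside $\Gamma_n$. Here I would note the routine point that, for a vertex accessible from $n$, its strongly connected component inside $\Gamma_n$ coincides with its component inside all of $\Gamma(M,A)$: any directed path joining two vertices of $nM$ runs through vertices accessible from $n$ and so stays in $\Gamma_n$. Finally, writing $n=[w_n]u_n$ with $u_n\in R$ right invertible gives $n\mathrel{\mathscr R}[w_n]$ (exactly as in the proof of Proposition~\ref{p:transversal}), so this component is precisely $\Gamma(M,A,n)$, yielding the claimed isomorphism $\Gamma(M,A,1)\to\Gamma(M,A,n)$ sending $1$ to $[w_n]$.

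I expect the only steps requiring any argument at all to be the identification of the accessible vertices with $nM$ and the verification that components computed in $\Gamma_n$ agree with those computed in $\Gamma(M,A)$; both are immediate once one recalls that the Sch\"utzenberger graphs are the strongly connected components, equivalently the $\mathscr R$-classes. The rest is the purely formal remark that a morphism of right $M$-sets is the same data as a label-preserving morphism of right Cayley graphs, which is why the corollary is genuinely an immediate consequence of Corollary~\ref{c:ISO.princ.right}.
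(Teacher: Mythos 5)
Your argument is correct and is exactly the one the paper intends: the corollary is stated there without proof as an ``immediate geometric consequence'' of Corollary~\ref{c:ISO.princ.right}, and your write-up simply makes explicit the standard observations (accessible vertices from $n$ are $nM$, a right $M$-set isomorphism is a label-preserving graph isomorphism, graph isomorphisms preserve strong components, and $n\mathrel{\mathscr R}[w_n]$) that the authors leave implicit. Nothing to add.
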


Corollary~\ref{c:Cayley.geometry} recovers as a special case the result~\cite[Theorem~4.6]{Malheiro2005} that
all the maximal subgroups of a special monoid are isomorphic to each
other.  This is because the
Sch\"{u}tzenberger group of a regular $\gr$-class is isomorphic to the
automorphism group of its labelled Sch\"{u}tzenberger graph
\cite[Theorem~3]{Stephen1996}.

Next we wish to show that there is a unique edge entering any strongly connected component of $\Gamma(M,A)$ other than the strong component of $1$, and that it ends at an element of $\mathscr T$ (see Corollary~\ref{c:free.right} for the notation).  Let us say that an edge of a digraph \emph{enters} a strong component $C$
of the graph
if its initial vertex is not in $C$ and its terminal vertex is in $C$.

\begin{Prop}\label{p:entrance}
Let $n\in \mathscr T\setminus \{1\}$ (and so $n=[w_n]$).  Then if $w_n=xa$ with $a\in A$, we have that $[x]>_{\mathscr R} n$, $[a]\notin R$ and $[x]\xrightarrow{\,\,a\,\,}n$ is the unique edge entering $\Gamma(M,A,n)$.
\end{Prop}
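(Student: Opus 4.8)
The plan is to prove three things about the entrance edge into the Schützenberger graph $\Gamma(M,A,n)$ where $n=[w_n]$ with $n\in\mathscr T\setminus\{1\}$: that $[x]>_{\mathscr R}n$, that $[a]\notin R$, and that $[x]\xrightarrow{\,a\,}n$ is the unique edge entering the strong component. The central tool is Proposition~\ref{p:transversal}, which tells us that $m\leq_{\mathscr R}n$ if and only if $w_n$ is a prefix of $w_m$, together with Lemma~\ref{l:preserve.irred} on preservation of irreducibility.

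First I would establish the setup. Since $n=[w_n]\in\mathscr T\setminus\{1\}$, the word $w_n\in\mathcal T$ is irreducible and non-empty, so we can write $w_n=xa$ with $a\in A$. The key observation is that $x$ is irreducible (being a prefix of the irreducible word $w_n$) and moreover $x\in\mathcal T$: indeed $x$ has no suffix in $I$, for if it did, that suffix would also be a suffix of $w_n=xa$ of length at least two --- but wait, this needs care since appending $a$ could create a suffix in $I$. The cleaner route is to note that since $w_n\in\mathcal T$ has no suffix in $I$, in particular any proper prefix obtained by deleting the last letter still lies in $\mathcal T$ because a suffix of $x$ in $I$ would be a proper suffix of $w_n$, contradicting $w_n\in\mathcal T$. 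Hence $x\in\mathcal T$, so $[x]\in\mathscr T$ and $w_{[x]}=x$. Since $x$ is a proper prefix of $w_n$, Proposition~\ref{p:transversal} gives $[x]>_{\mathscr R}n$ (strict, as the prefix is proper and the decomposition in Proposition~\ref{p:transversal} is unique). This disposes of the first claim.

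For the second claim, that $[a]\notin R$: I would argue that if $[a]\in R$ then $a\in I$ (since $a$ is a single letter representing a right-invertible element, and by the Zhang lemma cited before Proposition~\ref{p:transversal}, an irreducible word representing a right unit lies in $I^*$, forcing $a\in I$). But $a$ is then a suffix of $w_n$ lying in $I$, contradicting $w_n\in\mathcal T$. Hence $[a]\notin R$, and consequently $[x]$ and $n=[x][a]$ are not $\mathscr R$-related (the inequality $[x]>_{\mathscr R}n$ is strict), confirming that the edge $[x]\xrightarrow{\,a\,}n$ genuinely crosses between two distinct strong components.

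The main obstacle is the uniqueness claim: that $[x]\xrightarrow{\,a\,}n$ is the \emph{only} edge entering $\Gamma(M,A,n)$. For this I would take an arbitrary edge $p\xrightarrow{\,b\,}q$ entering the component, so $q\gr n$, $p\not\gr n$, and $[p][b]=q$. Since $q\gr n$ we have $w_q=w_n$ by Proposition~\ref{p:transversal} (elements of a common $\mathscr R$-class share the same $\mathcal T$-part). The relation $q=[p][b]$ gives $q\leq_{\mathscr R}p$, so $w_p$ is a prefix of $w_q=w_n$; since $p\not\gr n$ this prefix is proper, and I would show it must be exactly $x$ by a length and irreducibility argument: writing an irreducible word for $p$ as $w_p t$ with $t\in I^*$ (Proposition~\ref{p:transversal}), the product with $b$ must reduce to have $\mathcal T$-part equal to $w_n=xa$. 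The delicate point is controlling how $[p][b]$ reduces; I expect to use Lemma~\ref{l:preserve.irred} to show that the irreducible form of $w_p b$ (or $w_p t b$) has $w_n$ as its $\mathcal T$-prefix, forcing $w_p=x$, $t$ empty, and $b=a$. Pinning down that $t$ must be empty and $b=a$ simultaneously --- ruling out that a non-trivial right-unit tail $t$ combines with $b$ to rebuild the letter $a$ --- is where the careful combinatorics on the prefix code $\Delta$ and the structure of $I$ will be needed, and is the step I would expect to write out most carefully.
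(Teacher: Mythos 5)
Your argument breaks down at the very first step, and the error propagates. You claim that $x\in\mathcal T$ because ``a suffix of $x$ in $I$ would be a proper suffix of $w_n$''; this is false. A suffix $s$ of $x$ is \emph{not} a suffix of $w_n=xa$ --- the corresponding suffix of $w_n$ is $sa$, and since $I$ consists of prefixes of words from $\Delta$, one can perfectly well have $s\in I$ while $sa\notin I$. So $x$ need not lie in $\mathcal T$ and $w_{[x]}$ need not equal $x$. The paper avoids this by writing $x=x'x''$ with $x''$ the longest suffix of $x$ in $I^*$, so that $w_{[x]}=x'$, and then observing that $w_n$ cannot be a prefix of $x'$ simply because $|x'|\leq|x|<|w_n|$; Proposition~\ref{p:transversal} then gives $[x]>_{\mathscr R}n$ strictly. (Your conclusion is correct, but your route to it is not.) Your argument that $[a]\notin R$ --- via Zhang's lemma forcing the irreducible single letter $a$ into $I$, contradicting $w_n\in\mathcal T$ --- is fine and is a legitimate small variant of the paper's deduction, which instead infers $a\notin R$ directly from the strict inequality.

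The more serious problem is that the uniqueness claim, which you correctly identify as the main obstacle, is never actually proved: you defer exactly the step where the content lies (``the careful combinatorics on the prefix code $\Delta$ and the structure of $I$''). Worse, the target you set yourself --- forcing $w_p=x$ \emph{and} $t$ empty --- is infected by the same misconception, since in general the irreducible word representing the source of the entering edge is $x=w_pt$ with $t\in I^*$ possibly non-trivial; $x$ need not be in $\mathcal T$. The paper's actual argument is short once set up correctly: writing the irreducible word for the source as $w=w_my$ with $y\in I^*$ maximal, one shows $wb$ has no suffix in $I$ (otherwise prefix-closure of $I^*$ forces $yb\in I^*$, whence $[wb]\mathrel{\mathscr R}[w_m]\mathrel{\mathscr R}[w]$, contradicting that the edge enters the component), and then that $wb$ is irreducible (a reduction would have to use a left-hand side in $\Delta^*$ ending at $b$, producing a suffix of $wb$ in $I$). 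Hence $wb\in\mathcal T$, and the uniqueness in Proposition~\ref{p:transversal} forces $wb=w_n=xa$, so $w=x$ and $b=a$. Without this (or an equivalent) argument, the proposition is not established.
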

\begin{proof}
Note that $x$ is irreducible.  Let $x=x'x''$ with $x''$ the longest suffix of $x$ in $I^*$.  Then $x'=w_{[x]}$ and $w_n$ is not a prefix of $x'$.  Thus $[x]>_{\mathscr R} [w_n]=n$ by Proposition~\ref{p:transversal}.  It follows that $[x]\xrightarrow{\,\,a\,\,}n$ enters $\Gamma(M,A,n)$ and hence $a\notin R$.

Suppose that $m\xrightarrow{\,\,b\,\,}m'$ enters $\Gamma(M,A,n)$.  Let $w$ be an irreducible word representing $m$.  Then $w=w_my$ where $y\in I^*$ is the longest suffix of $w$ in $I^*$.  We claim that $wb$ has no suffix in $I$.  Indeed, if it did, then since  $I$ is prefix-closed and $w_m$ has no suffix in $I$, we must have that $yb$ has a suffix in $I$.  Then $yb=rs$ where $s\in I$.  Since $r$ is a prefix of $y$ and $I$ (and hence $I^*$) is prefix-closed, we obtain that $yb=rs\in I^*$.  Thus $yb$ represents an element of $R$ and so \[m'=[w_myb]\mathrel{\mathscr R}[w_m]\mathrel{\mathscr R} m\] a contradiction.  Thus $wb$ has no suffix in $I$.

We claim that $wb$ is irreducible.   Suppose that $wb$ is not irreducible.  Then since $w$ is irreducible, each left-hand side in the rewriting system belongs to $\Delta^*$ and $\Delta\subseteq I$, we must have that $wb$ has a suffix in $I$, a contradiction.

Putting it all together, we deduce that $wb\in \mathcal T$ and so $wb=w_n$ by Proposition~\ref{p:transversal}.  It follows that $b=a$ and $w=x$, completing the proof.
\end{proof}

Let $\Gamma$ be the directed graph obtained from $\Gamma(M,A)$ by collapsing each strongly connected component (and its internal edges) to a point.  So the vertex set of $\Gamma$ is $M/\mathscr R$ and there is an edge $(m,a)$ from the $\mathscr{R}$-class $R_m$ of $m$ to the $\mathscr{R}$-class $R_{ma}$ of $ma$ if $m\in M$, $a\in A$ and $R_m\neq R_{ma}$.  We aim to show that $\Gamma$ is a regular rooted tree isomorphic to the Hasse diagram of $M/\mathscr{R}$.  Note that this tree can be of infinite degree.

\begin{Thm}\label{t:looks.like.Hasse}
The graph $\Gamma$ is isomorphic as a digraph to the Hasse diagram of $M/\mathscr{R}$ ordered by $\geq_{\mathscr R}$.  This graph is a regular rooted tree with root the strong component of $1$.
\end{Thm}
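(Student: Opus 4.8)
The plan is to identify both digraphs with the prefix-reversal tree $P_{\mathcal T}$ attached to the transversal $\mathcal T$ of Proposition~\ref{p:transversal}, and then to check that the single edge of $\Gamma$ entering each nontrivial strong component is exactly the cover edge of $M/\mathscr R$. First I would record the vertex identification: by Proposition~\ref{p:transversal} the vertex set $M/\mathscr R$ of $\Gamma$ is in bijection with $\mathcal T$ via $R_m\mapsto w_m$, and under this bijection the order $\geq_{\mathscr R}$ is the prefix order (so $R_m\geq_{\mathscr R}R_n$ iff $w_m$ is a prefix of $w_n$), with $R_1$ (the word $\epsilon$) as unique maximum. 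Hence the Hasse diagram of $(M/\mathscr R,\geq_{\mathscr R})$ is precisely the rooted tree $P_{\mathcal T}$ of Proposition~\ref{prop:tree}. Each edge of $\Gamma$ runs from $R_m$ to $R_{ma}$ with $R_{ma}<_{\mathscr R}R_m$, so edges strictly decrease the $\mathscr R$-class (whence no directed cycles and consistent downward orientation), and no edge of $\Gamma$ terminates at $R_1$, since $R_{ma}=R_1$ means $ma$ is right invertible, forcing $m$ right invertible and $R_m=R_1$.

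The heart of the argument is then to invoke Proposition~\ref{p:entrance}: for each $n\in\mathscr T\setminus\{1\}$ the strong component $\Gamma(M,A,n)$ is entered by the single Cayley edge $[x]\xrightarrow{\,\,a\,\,}n$, where $w_n=xa$, so after collapsing, $R_n$ has a unique incoming $\Gamma$-edge, originating at $R_{[x]}$. The key claim is that $R_{[x]}$ is the cover of $R_n$ in $P_{\mathcal T}$, i.e. $w_{[x]}$ is the longest proper prefix of $w_n$ lying in $\mathcal T$. Writing $w_n=x'x''a$ with $x=x'x''$ and $x''$ the longest suffix of $x$ in $I^*$ (so $w_{[x]}=x'$), any $\mathcal T$-prefix strictly between $x'$ and $w_n$ has the form $x'z$ with $z$ a nonempty proper prefix of $x''a$, hence a nonempty prefix of $x''$; since $I$ is prefix-closed, such a $z$ lies in $I^*$ and so has a suffix in $I$, whence $x'z$ has a suffix in $I$ and cannot belong to $\mathcal T$. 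This rules out any intermediate $\mathcal T$-word and proves the claim.

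Consequently the unique incoming $\Gamma$-edge of each $R_n$ coincides with its cover edge in $P_{\mathcal T}$, and conversely every cover edge of $P_{\mathcal T}$ is realized (since Proposition~\ref{p:entrance} guarantees $R_n$ does have an incoming $\Gamma$-edge). Thus $\Gamma$ and the Hasse diagram have identical edge sets under the identification of vertex sets, so they are equal, and in particular isomorphic, as digraphs. That $\Gamma$ is a rooted tree with root $R_1$ then follows from Proposition~\ref{p:transversal} (or directly: the parents strictly shorten $w_n$, and the principal filters in $P_{\mathcal T}$ are finite, so the parent chains terminate at $\epsilon$).

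Finally, for regularity I would apply Corollary~\ref{c:Cayley.geometry}: the $A$-labelled isomorphism $\Gamma(M,A)\to\Gamma_n$ sending $1$ to $[w_n]$ preserves strong components and their incident edges, hence descends to an isomorphism of $\Gamma$ onto the rooted subtree hanging below $R_n$. Therefore every rooted subtree of $\Gamma$ is isomorphic to $\Gamma$ itself, all vertices have the same out-degree, and $\Gamma$ is a regular rooted tree. I expect the main obstacle to be the prefix computation in the middle step—showing no $\mathcal T$-word lies strictly between $x'$ and $w_n$—which is exactly where prefix-closedness of $I$ is indispensable.
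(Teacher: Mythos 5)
Your proposal is correct and follows essentially the same route as the paper: identify the Hasse diagram of $M/\mathscr R$ with the reverse-prefix tree on $\mathcal T$ via Proposition~\ref{p:transversal} and Proposition~\ref{prop:tree}, use Proposition~\ref{p:entrance} together with prefix-closedness of $I$ to show the unique edge entering each nontrivial strong component is exactly the cover edge, and deduce regularity from Corollary~\ref{c:Cayley.geometry}. The only cosmetic difference is that you replace the paper's two separate implications (every $\Gamma$-edge is a cover edge, and every cover edge is realized) by the observation that both edge sets are in bijection with $\mathscr T\setminus\{1\}$ via ``unique incoming edge'' and ``unique parent'', which amounts to the same thing.
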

\begin{proof}
We retain the above notation.
Suppose first that $w,w'\in \mathcal T$ and there is an edge from $\Gamma(M,A,[w'])$ to $\Gamma(M,A,[w])$; it is unique by Proposition~\ref{p:entrance}.  Then, by Proposition~\ref{p:entrance}, we have that if $w=xa$ with $a\in A$, then $[x]\mathrel{\mathscr R} [w']$.  Thus if $x'$ is the longest suffix of $x$ belonging to $I^*$, then $x=w'x'$ and $w=w'x'a$.   Since $I$ is prefix-closed, it follows that if $y$ is any non-empty prefix of $x'$, then $w'y$ has a suffix in $I$ and hence does not belong to $\mathcal T$.  Thus in the prefix order on $\mathcal T$, there is no element between $w'$ and $w$.  It follows from Proposition~\ref{p:transversal} that in the Hasse diagram of $M/\mathscr{R}$ with respect to $\geq_{\mathscr R}$, there is an edge from $R_{[w']}$ to $R_{[w]}$.

Conversely, suppose that there is an edge in the Hasse diagram from
$R_{[w']}$ to $R_{[w]}$ with $w,w'\in \mathcal T$.  Then $w'$ is a
proper prefix of $w$ by Proposition~\ref{p:transversal} and so $w=w'y$
with $y\in A^*$ irreducible  and non-empty. Let $a\in A$ be the last
letter of $y$, so $y=y'a$.  Then $[w']\leq_{\mathscr
  R}[w'y']\leq_{\mathscr R}[w]$ and so one of these inequalities is an
equality.  Since $w$ is not a prefix of $w'y'$, it follows
from Proposition~\ref{p:transversal} (or by ~\cite[Lemma~5.2]{Zhang})
that the second inequality is strict. Thus $[w'y']$ belongs to the
strong component of $[w']$ and the image of the edge
$[w'y']\xrightarrow{\,\,a\,\,}[w]$ connects the strong component of
$[w']$ to the strong component of $[w]$ in $\Gamma$ (and is the only
such edge by Proposition~\ref{p:entrance}).

Since the reverse prefix order on any set of words containing the empty word is a rooted tree, it follows that $\Gamma$ is a rooted tree with root the strong component of $1$.  By construction of $\Gamma$ and Corollary~\ref{c:Cayley.geometry} it follows that all vertices have the same cardinality set of children.
\end{proof}

Note that in general if $M$ is a monoid generated by a finite set $A$, and if $R'$ and $R''$ are $\gr$-classes of $M$ such that
$R'$ covers $R''$ in the poset $M / \gr$, then there must exist
elements $x \in R'$ and $y \in R''$ and a generator $a \in A$ such
that $xa = y$ in $M$. The second part of the proof of the above theorem shows that in a finitely generated special monoid in this situation there are unique elements $x \in R', y \in R''$ and $a \in A$ satisfying these properties.

We note that the left action of $M$ on $\Gamma(M,A)$ induces a left action of $M$ on $\Gamma$ by cellular mappings since strong components are mapped into strong components.  However, elements of $m$ can collapse edges to a point.  In fact, $\Gamma$ (being a tree) is a simplicial graph ($1$-dimensional simplicial complex) and $M$ acts by simplicial mappings.
For example,  consider the bicyclic monoid $B=\langle a,b\mid ab=1\rangle$. 
Then since $a \mathrel{\mathscr R} 1$ left multiplication by $a$ collapses the vertices corresponding to the  strong components of $1$ and $b$ and hence collapses the edge between these components.

We can view the vertex set of $\Gamma$ as $M/R$ and so if we use the simplicial chain complex for $\Gamma$, we have $C_0(\Gamma)\cong \mathbb Z[M/R]\cong \mathbb ZM\otimes_{\mathbb ZR}\mathbb Z$ as a $\mathbb ZM$-module.    We can identify $C_1(\Gamma)$ as a $\mathbb ZM$-module with  the quotient $C_1(\Gamma(M,A))/N$ where $N$ is the $\mathbb ZM$-submodule generated as an abelian group by edges $m\xrightarrow{\,\,a\,\,}ma$ with $a\in A$ and $m\mathrel{\mathscr R} ma$.    Note that  $C_1(\Gamma(M,A))$ is a free $\mathbb ZM$-module of rank $|A|$.  We shall show that $N$ is a free $\mathbb ZM$-module of finite rank, as well.  It will then follow that $C_1(\Gamma)$ is of type $\FP_{\infty}$ with projective dimension at most $1$.

Note that $N$ is the direct sum over all $a\in A$ of the submodules $N_a$ spanned by edges $m\xrightarrow{\,\,a\,\,}ma$ with $m\mathrel{\mathscr R}ma$ and so it suffices to show that each of these submodules $N_a$  is a finitely generated free $\mathbb ZM$-module.

\begin{Prop}\label{p:loops.free}
Let $a\in A$.  Then $N_a$ is a finitely generated free $\mathbb ZM$-module.  Consequently, $N$ is a finitely generated free $\mathbb ZM$-module.
\end{Prop}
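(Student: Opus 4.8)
The plan is to handle each summand $N_a$ separately and to identify it with the free abelian group on a left ideal of $M$. Recall that the $a$-labelled edges of $\Gamma(M,A)$ form a free left $M$-set isomorphic to $M$ via $(m,a)\mapsto m$, the left action being left multiplication, so that $C_1(\Gamma(M,A))$ is free of rank $|A|$. Under this identification $N_a$ is the $\mathbb Z$-span of the subset $L_a=\{m\in M\mid m\gr ma\}$, and thus $N_a\cong \mathbb Z[L_a]$ as $\mathbb ZM$-modules. First I would verify that $L_a$ is a left ideal of $M$, which simultaneously reconfirms that $N_a$ is a $\mathbb ZM$-submodule. Since $maM\subseteq mM$ we always have $ma\leq_{\mathscr R}m$, so $m\gr ma$ is equivalent to $m\leq_{\mathscr R}ma$, i.e. $m=maz$ for some $z\in M$; left multiplying by any $n\in M$ gives $nm=nmaz$, whence $nm\leq_{\mathscr R}nma$, and together with the automatic $nma\leq_{\mathscr R}nm$ this yields $nm\gr nma$. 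Hence $L_a$ is closed under left multiplication.

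With $L_a$ realised as a left ideal, freeness is immediate: by the left-handed statement of Theorem~\ref{t:right.free}, every left ideal of a special monoid is a free left $M$-set, so $L_a$ is free and therefore $N_a\cong\mathbb Z[L_a]$ is a free $\mathbb ZM$-module (the free abelian group on a free left $M$-set is $\mathbb ZM$-free, with a basis in bijection with any $M$-basis of the $M$-set). So the whole content of the proposition is the finiteness of the rank.

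It remains to show that $N_a$ has finite rank, equivalently that $L_a$ is finitely generated as a left $M$-set; by the left-handed form of Remark~\ref{r:fg} this forces the unique $M$-basis of $L_a$ to be finite, and then $N=\bigoplus_{a\in A}N_a$ is a finite direct sum of finitely generated free modules, hence finitely generated free. This finiteness step is the main obstacle. My approach would be to exploit the self-similarity of the Cayley graph together with the combinatorics of $\Delta$. Using the unique factorization $m=[w_m]u_m$ of Proposition~\ref{p:transversal} and the isomorphism $\varphi_{w_m}$ of Corollary~\ref{c:ISO.princ.right} (which is left multiplication by $[w_m]$ and, being a right $M$-set isomorphism, preserves $\leq_{\mathscr R}$), one checks that $m\gr ma$ holds if and only if $u_m\gr u_m a$, and since $u_m\in R$ this is in turn equivalent to $u_m a\in R$. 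Consequently $L_a=\{[w]r\mid w\in\mathcal T,\ r\in D_a\}$ where $D_a=\{r\in R\mid ra\in R\}$, and in particular $L_a=M\cdot D_a$ (taking $[w]$ as the left multiplier). I would then reduce $D_a$: writing $r\in D_a$ in its irreducible normal form $r=i_1\cdots i_s$ over the free-product generators $i_j\in I_0$ of $R$, one may strip off initial blocks, so that $L_a$ is already generated over $M$ by the minimal elements of $D_a$, namely those $r\in D_a$ no proper right free-product factor of which again lies in $D_a$.

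The crux is to bound the length of such minimal $r$. Since membership $ra\in R$ is detected by a reduction in which $a$ completes or cancels against a suffix of $r$, and since $\Delta$ is a prefix code in which no non-empty proper prefix of a word of $\Delta$ equals a non-empty proper suffix of a word of $\Delta$, this interaction should be confined to a suffix of $r$ of length at most $\max_i|w_i|$; a minimal $r$ therefore has bounded length, and there are only finitely many words of bounded length over the finite alphabet $A$. I expect verifying precisely that the overlap-freeness of $\Delta$ localises the reduction in this way to be the delicate part of the argument. Granting it, one obtains a finite generating set for $L_a$, so each $N_a$ is finitely generated free, and hence $N$ is a finitely generated free $\mathbb ZM$-module, as required.
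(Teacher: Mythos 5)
Your reduction is sound as far as it goes, and for the formal part it coincides with the paper's proof: identify $N_a$ with $\mathbb Z[L_a]$ for the left ideal $L_a=\{m\in M\mid m\mathrel{\mathscr R} ma\}$, invoke Theorem~\ref{t:right.free} to get that $L_a$ is a free left $M$-set, and invoke (the dual of) Remark~\ref{r:fg} to reduce everything to showing that $L_a$ is finitely generated as a left $M$-set. Your further reduction to $D_a=\{r\in R\mid ra\in R\}$ via Proposition~\ref{p:transversal} and Corollary~\ref{c:ISO.princ.right} is also correct. But the finiteness of the generating set is the entire content of the proposition beyond these formal reductions, and at exactly that point you stop proving and start conjecturing: the claim that ``the interaction of $a$ with $r$ is confined to a suffix of bounded length'' is asserted, flagged by you as the delicate part, and then granted. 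That is a genuine gap, not a detail. Moreover, the specific mechanism you propose --- minimality with respect to stripping initial blocks of the $I_0$-normal form of $R\cong G\ast C^*$, plus a length bound on minimal elements --- is awkward to carry out, because the relevant factorization of the irreducible word representing $r$ need not align with the free-product block structure, and bounding the length of ``minimal'' elements is a stronger and less natural statement than simply exhibiting a finite generating set.

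The paper closes the gap much more directly, without passing through $D_a$ or the free-product normal form at all. Dispose first of the trivial case $a\in R$ (then $L_a=M$). For $a\notin R$, take $m\in L_a$ with irreducible word $w$, and split on whether $wa$ is irreducible. If $wa$ is irreducible, then since $ma\mathrel{\mathscr R} m$ forces $w_{ma}=w_m$, the word $wa$ cannot lie in $\mathcal T$, so it has a suffix $xa\in I$; since $a\notin R$ the word $x$ is non-empty, and since $I$ is prefix-closed, $x\in I$. If $wa$ is reducible, then because $w$ is irreducible and every left-hand side of a rule of $S$ lies in $\Delta^*$, the applicable left-hand side must end at the final letter, giving $wa=s(xa)$ with $xa\in\Delta\subseteq I$ and again $x\in I$ non-empty. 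In either case $m=[s][x]$ with $[x],[xa]\in R$, hence $[x]\mathrel{\mathscr R}[x]a$, so $L_a$ is generated by the finite set $I'=\{[x]\in L_a\mid x\in I\}$. This is precisely the ``localisation to a bounded suffix'' you hoped for, but it is obtained in a few lines from prefix-closure of $I$ and the form of the rewriting system, rather than from an overlap analysis of $\Delta$. To repair your write-up you would need to supply essentially this case analysis; as it stands, the proposal does not prove finite generation.
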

\begin{proof}
Let $L=\{m\in M\mid m\mathrel{\mathscr R} ma\}$.  Then $L$ is a left ideal of $M$ and $N_a\cong \mathbb ZL$.  First observe that if $a\in R$, then $L=M$ and there is nothing to prove.   So assume that $a\in A\setminus R$.  By Theorem~\ref{t:right.free} we have that $L$ is a free left $M$-set.  By Remark~\ref{r:fg} it suffices to prove that $L$ is finitely generated.

We claim that $L$ is generated by $I'=\{[w]\in L\mid w\in I\}$ , which is finite as $I$ is finite.  Let $m\in L$ and let $w\in A^*$ be irreducible with $[w]=m$.  There are two cases.  Assume first that $wa$ is irreducible. Then since $ma\mathrel{\mathscr R}m$, it follows from Proposition~\ref{p:transversal} that $wa\notin\mathcal T$ (as $wa$ is not a prefix of $w$) and so $wa=sxa$ with $xa\in I$.  Since $a\notin R$, we must have that $x$ is non-empty.   Since $I$ is prefix-closed, $x\in I$.  Thus $[x],[xa]\in R$ and hence $[x]\mathrel{\mathscr{R}}[x]a$.  Then $m=[s][x]$ and $[x]\in I'$.  So $m\in MI'$.

Next assume that $wa$ is not irreducible.   Then $wa=sxa$ with $xa\in \Delta$, as $w$ is irreducible.  But $a\notin R$ and so $x$ is non-empty.  Thus $x\in I$.  Also $xa\in \Delta\subseteq I$.  Thus $[x],[xa]\in R$ and so $[x]\mathrel{\mathscr R}[x]a$.  Also, $m=[s][x]$ with $[x]\in I'$ and so $m\in MI'$.  This completes the proof.
\end{proof}

Now all is in place to prove the first main result of this section.

\begin{Thm}\label{t:special}
Let $M$ be a finitely presented special monoid with group of units $G$.
\begin{enumerate}
\item If $G$ is of type $\FP_n$ with $1\leq n\leq \infty$, then $M$ is of type left-$\FP_n$ and of type right-$\FP_n$.
\item $\mathop{\mathrm{cd}} G\leq \mathrm{left \; cd} M \leq \max\{2,\mathop{\mathrm{cd}} G\}$ and
$\mathop{\mathrm{cd}} G\leq \mathrm{right \; cd} M\leq \max\{2,\mathop{\mathrm{cd}} G\}$.
\end{enumerate}
\end{Thm}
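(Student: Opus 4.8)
The plan is to build a left equivariant classifying space for $M$ explicitly from one for its group of units $G$, using the tree structure $\Gamma$ established in Theorem~\ref{t:looks.like.Hasse} together with the freeness results for ideals. The augmented simplicial chain complex of the tree $\Gamma$ gives a short exact sequence of $\mathbb ZM$-modules
\begin{equation*}
0\longrightarrow C_1(\Gamma)\longrightarrow C_0(\Gamma)\longrightarrow \mathbb Z\longrightarrow 0,
\end{equation*}
since $\Gamma$ is a contractible tree. By Proposition~\ref{p:loops.free}, $C_1(\Gamma)$ is a finitely generated free $\mathbb ZM$-module, and $C_0(\Gamma)\cong \mathbb ZM\otimes_{\mathbb ZR}\mathbb Z$. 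The strategy is to resolve each of $C_0(\Gamma)$ and $C_1(\Gamma)$ separately and then splice via Brown's machinery (Corollary~\ref{c:fp.resolved}).

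For the degree-one term, freeness of $C_1(\Gamma)$ means it is already its own resolution of length $0$, hence of type $\FP_\infty$ and projective dimension $0$. The crux is the degree-zero term $C_0(\Gamma)\cong \mathbb ZM\otimes_{\mathbb ZR}\mathbb Z$. Here I would transfer finiteness properties along the chain of monoids $G\hookrightarrow R\hookrightarrow M$. By Zhang's Theorem~\cite[Theorem~4.4]{Zhang}, $R$ is a free product of $G$ with a free monoid, so $R$ inherits $\FP_n$ and cohomological dimension control from $G$ (a free monoid contributes nothing beyond dimension $1$). The key structural input is Corollary~\ref{c:free.right}: $M$ is a free right $R$-set, so $\mathbb ZM$ is free, hence flat, as a right $\mathbb ZR$-module. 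The plan is therefore to resolve $\mathbb Z$ over $\mathbb ZR$ and apply the flat base change Lemma~\ref{l:flat.base} with $\p\colon \mathbb ZR\to\mathbb ZM$, obtaining that $\mathbb ZM\otimes_{\mathbb ZR}\mathbb Z\cong C_0(\Gamma)$ is of type $\FP_n$ (resp.\ has projective dimension at most $\cdim G$) whenever the trivial $\mathbb ZR$-module is.

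Combining these, I would invoke Corollary~\ref{c:fp.resolved}(1): the two-term complex $C_1(\Gamma)\to C_0(\Gamma)\to\mathbb Z$ has $C_1$ of type $\FP_{n-1}$ (indeed $\FP_\infty$) and $C_0$ of type $\FP_n$, so $\mathbb Z$ is of type $\FP_n$, giving left-$\FP_n$ for $M$. For the dimension bound (item 2), the map $C_1(\Gamma)\to C_0(\Gamma)$ is injective (it is the boundary map of the contractible complex $\Gamma$), so Corollary~\ref{c:fp.resolved}(2) applies with $d=\max\{2,\cdim G\}$: since $C_1$ has projective dimension $0\leq d-1$ and $C_0$ has projective dimension at most $\cdim G\leq d$, we conclude $\mathrm{left\; cd}\,M\leq d$. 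The lower bound $\cdim G\leq \mathrm{left\; cd}\,M$ I would derive by retraction: the retraction $M\to G$ onto the group of units (or the fact that $\mathbb ZG$ is obtained from $\mathbb ZM$ by a suitable base change that preserves exactness) forces any $\mathbb ZM$-resolution of $\mathbb Z$ to induce one of the same length over $\mathbb ZG$.

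\textbf{The main obstacle} will be establishing the cohomological dimension bounds for $R$ from those of $G$ cleanly, and in particular pinning down that the free-monoid factor in $R=G*(\text{free monoid})$ raises projective dimension by at most one (accounting for the ``$2$'' in the bound). The free-product structure of $R$ must be handled carefully, since monoid free products do not behave as simply as group ones; I expect to need either a Mayer--Vietoris-type argument or a direct resolution built from the tree structure to control $\cdim R$. The right-handed statements follow by the dual argument applied to $M^{op}$, which is again special, so no separate work is needed beyond invoking the symmetry.
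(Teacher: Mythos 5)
Your overall architecture is the same as the paper's: use the tree $\Gamma$ of Theorem~\ref{t:looks.like.Hasse} to get the short exact sequence $0\to C_1(\Gamma)\to C_0(\Gamma)\to\mathbb Z\to 0$, control $C_0(\Gamma)\cong\mathbb ZM\otimes_{\mathbb ZR}\mathbb Z$ by transferring finiteness from $G$ to $R=G\ast C^*$ and then applying flat base change (Lemma~\ref{l:flat.base}) via Corollary~\ref{c:free.right}, control $C_1(\Gamma)$ via Proposition~\ref{p:loops.free}, and splice with Corollary~\ref{c:fp.resolved}. However, there are two genuine errors. First, Proposition~\ref{p:loops.free} does \emph{not} say that $C_1(\Gamma)$ is a finitely generated free $\mathbb ZM$-module; it says that the submodule $N$ spanned by the collapsed edges is finitely generated free. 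What one gets is $C_1(\Gamma)\cong C_1(\Gamma(M,A))/N$, a quotient of a finitely generated free module by a finitely generated free submodule, hence of type $\FP_\infty$ and of projective dimension at most $1$ --- but not free (and not of projective dimension $0$) in general: the left $M$-action genuinely kills edges, as the bicyclic monoid already shows. Your conclusions survive because the bounds you actually need are $\FP_{n-1}$ and projective dimension at most $\max\{2,\cdim G\}-1\geq 1$, but as written the justification is wrong.

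Second, and more seriously, your lower bound $\cdim G\leq\mathrm{left\;cd}\,M$ rests on ``the retraction $M\to G$ onto the group of units,'' which does not exist in general for a special monoid (the group of units is a submonoid, not a homomorphic image, and no retraction onto it is constructed anywhere in the paper or in Zhang's work); your parenthetical fallback about ``a suitable base change'' is too vague to count as an argument. The paper's route is different and you would need it: one shows $M$ is a free left $L$-set ($L$ the submonoid of left units, by the dual of Proposition~\ref{p:transversal}), and that $G$ acts freely on the left of $L$ because $L\cong G\ast(\text{free monoid})$; hence $M$ is a free left $G$-set, so $\mathbb ZM$ is a free $\mathbb ZG$-module and every projective $\mathbb ZM$-resolution of $\mathbb Z$ restricts to a projective $\mathbb ZG$-resolution, giving $\cdim G\leq\mathrm{left\;cd}\,M$ (and likewise the $\FP$ lower bounds are not needed, only the upper transfer). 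You should replace the retraction claim with this freeness argument. The remaining ingredient you flag as the ``main obstacle'' --- that $R=G\ast C^*$ satisfies $\FP_n$ and $\mathrm{left\;cd}\,R\leq\max\{1,\cdim G\}$ --- is indeed needed and is supplied by the free product results (Corollaries~\ref{c:free.prod.fp} and~\ref{c:free.prod.geom}, or~\cite{CremannsOtto1998}), so your anticipation there is accurate.
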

\begin{proof}
We retain the above notation.
We prove the results for left-$\FP_n$ and left cohomological dimension (the other results are dual).
First note that if $L$ denotes the submonoid of left invertible elements, then $M$ is a free left $L$-set by the dual of Proposition~\ref{p:transversal}.  If $B$ is the basis of $M$ as a left $L$-set, then each element $m\in M$ can be expressed uniquely as $u_mb_m$ with $b_m\in B$ and $u_m\in L$.  But then if $g\in G$ with $gm=m$, we must have $gu_mb_m=u_mb_m$.  It follows that $gu_m=u_m$ by uniqueness.  But since $L$ is a free product of $G$ with a finitely generated free monoid by~\cite[Theorem~4.4]{Zhang}, it follows that $G$ acts freely on the left of $L$ and so $g=1$.    Thus $\mathbb ZM$ is a free left $\mathbb ZG$-module and so $\mathop{\mathrm {cd}} G\leq \mathop{\mathrm {left \; cd}} M$ as any projective resolution of $\mathbb Z$ over $\mathbb ZM$ is a projective resolution over $\mathbb ZG$.

The graph $\Gamma$ is a tree with a simplicial action by $M$ described above.  So we have an exact sequence of $\mathbb ZM$-modules
\[0\longrightarrow C_1(\Gamma)\longrightarrow C_0(\Gamma)\longrightarrow \mathbb Z\longrightarrow 0.\] We have identified $C_1(\Gamma)\cong C_1(\Gamma(M,A))/N$ where $C_1(\Gamma(M,A))$ is free of rank $|A|$ and $N$ is a finitely generated free module by Proposition~\ref{p:loops.free}.  Thus $C_1(\Gamma)$ is of type $\FP_{\infty}$ and has projective dimension at most $1$.

On the other hand, $C_0(\Gamma)\cong \mathbb Z[M/R]\cong \mathbb ZM\otimes_{\mathbb ZR} \mathbb Z$.  By Zhang's theorem~\cite[Theorem~4.4]{Zhang}, $R=G\ast C^*$ where $C$ is a finite alphabet, and hence $R$ is of type $\FP_n$ whenever $G$ is, and, $\mathop{\mathrm{left \; cd}} R\leq \max\{1,\mathop{\mathrm{cd}} G\}$ by~\cite[Theorem~5.5]{CremannsOtto1998} (or see Corollaries~\ref{c:free.prod.fp} and~\ref{c:free.prod.geom} below).  Note that a finitely generated free monoid is of type $\FP_{\infty}$ and of cohomological dimension $1$ because its Cayley graph is a tree and a free $M$-CW complex of finite type of dimension $1$.

As $\mathbb ZM$ is a free, and hence flat, right $\mathbb ZR$-module
by Corollary~\ref{c:free.right}, it follows from Lemma~\ref{l:flat.base} that $C_0(\Gamma)\cong \mathbb ZM\otimes_{\mathbb ZR}\mathbb Z$ is of type $\FP_n$ and of projective dimension at most $\max\{1,\mathop{\mathrm{cd}} G\}$.

The result now follows from an application of Corollary~\ref{c:fp.resolved}.
\end{proof}

In general, the left- and right-cohomological dimensions of a monoid are not equal. In fact they are completely independent of each other; see~\cite{Guba1998}. One immediate corollary of the above result is that if $M$ is a finitely presented special monoid with left- and right-cohomological dimensions both at least equal to $2$, then the left cohomological dimension of $M$ is equal to its right cohomological dimension.

As an application of Theorem~\ref{t:special} we now show how it can be used to prove that all special one-relator monoids are of type $\FPinfty$, answering a case of a question of Kobayashi. We also recover Kobayashi's result
(see~\cite[Theorem~7.2]{Kobayashi1998} and~\cite[Corollary 7.5]{Kobayashi2000})
that if the relator is not a proper power then the cohomological dimension is at most $2$.

A word $u \in A^*$ is called \emph{primitive} if it is not a proper power in $A^*$.

\begin{lem}\emph{\cite[Corollary~4.2]{LyndonSchutz1962}}
For every nonempty word $w \in A^*$ there is a unique primitive word $p$ and a unique integer $k \geq 1$ such that $w = p^k$.
\end{lem}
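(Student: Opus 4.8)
The plan is to separate existence, which is routine, from uniqueness, which carries the content. For existence, I would look at the set of nonempty words $q$ with $w = q^j$ for some $j \ge 1$; this set is nonempty (it contains $w = w^1$), so I may choose $p$ in it of minimal length, say $w = p^k$. This $p$ is automatically primitive: were $p = r^i$ with $i \ge 2$ and $r$ nonempty, then $w = r^{ik}$ with $|r| < |p|$ would contradict the minimality of $|p|$. Hence a pair $(p,k)$ with the required properties exists.

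For uniqueness, suppose $w = p^k = q^l$ with $p, q$ primitive and $k, l \ge 1$. It suffices to prove $p = q$, since then $k|p| = |w| = l|p|$ forces $k = l$. The whole difficulty is concentrated in the following classical lemma on words: if $u^m = v^n$ with $u, v$ nonempty and $m, n \ge 1$, then $u$ and $v$ are powers of a common word $t$. Granting the lemma, I apply it to $p^k = q^l$ to obtain $t$ with $p = t^a$ and $q = t^b$; since $p$ is primitive and $t \neq \epsilon$, we must have $a = 1$, so $p = t$, and symmetrically $q = t$, whence $p = q$ as required.

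To prove the lemma I would invoke Fine and Wilf's periodicity theorem, which is the real obstacle. The common word $w = u^m = v^n$ admits both $|u|$ and $|v|$ as periods; taking $|u| \le |v|$ without loss of generality and disposing first of the trivial case $n = 1$ (where $v = u^m$ is already a power of $u$), we have $|w| = n|v| \ge 2|v| \ge |u| + |v|$, which exceeds $|u| + |v| - \gcd(|u|,|v|)$. Fine--Wilf then guarantees that $w$ has period $g = \gcd(|u|,|v|)$. Writing $t$ for the length-$g$ prefix of $w$, periodicity gives $w = t^{|w|/g}$, and since $g$ divides both $|u|$ and $|v|$ we read off $u = t^{|u|/g}$ and $v = t^{|v|/g}$, the desired common root. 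An alternative to citing Fine--Wilf would be a direct strong induction on $|u| + |v|$ that imitates the Euclidean algorithm on words, but either way it is this periodicity phenomenon, rather than the surrounding formal reductions, that forms the technical heart of the statement.
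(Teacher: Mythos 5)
Your proof is correct, but there is nothing in the paper to compare it against: the lemma is stated with a citation to Lyndon and Sch\"utzenberger and no proof is given in the text, since it is imported as a known fact about free monoids. Your argument is the standard self-contained one --- existence by taking a root of minimal length, uniqueness reduced to the commutation lemma that $u^m = v^n$ forces $u$ and $v$ to be powers of a common word, and that lemma in turn settled by Fine--Wilf periodicity (or, as you note, by a Euclidean-style induction on $|u|+|v|$, which is essentially how Lyndon--Sch\"utzenberger themselves argue). All the steps check out: the case $n=1$ is correctly split off before invoking Fine--Wilf, the length bound $|w| \geq |u|+|v| > |u|+|v|-\gcd(|u|,|v|)$ is right, and primitivity of $p$ correctly forces the exponent over the common root to be $1$. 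The only cosmetic slip is writing $l|p|$ where $l|q|$ is meant in the line $k|p| = |w| = l|q|$; it is harmless since at that point $p=q$ has already been established.
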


The following lemma is well known. We include it here for completeness.

\begin{lem}\label{lem_proppower}
Let $M=\langle A\mid w=1\rangle$. Write $w = p^k$ where $p$ is a primitive word and $k \geq 1$. The group of units $G$ of $M$ is a one-relator group with torsion if and only if $k > 1$.
\end{lem}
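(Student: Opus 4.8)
The plan is to reduce the statement to the classical description of torsion in one-relator groups and then to track how a proper-power decomposition of $w$ passes through Zhang's homomorphism $\phi$. By the theorem of Zhang quoted above, $G$ has the one-relator group presentation $\langle B\mid \phi(w)\rangle$, so $G$ is certainly a one-relator group. By the classical theorem on one-relator groups with torsion (Karrass--Magnus--Solitar; see~\cite{LyndonAndSchupp}), the group $\langle B\mid R\rangle$, with $R$ cyclically reduced, has torsion if and only if $R$ is a proper power in the free group $F(B)$, and in that case, writing $R=S^m$ with $S$ not a proper power, the torsion elements are exactly the conjugates of powers of $S$ and have order dividing $m$. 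Thus everything comes down to showing that $\phi(w)$ is a proper power in $F(B)$ if and only if $k>1$.

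First I would record that $p\in\Delta^*$ and that $\phi(w)=\phi(p)^k$. Since $w=p^k$ represents $1$, the element $[p]$ satisfies $[p]^k=1$ and so $[p]$ is two-sided invertible, with inverse $[p]^{k-1}$; hence $p$ is an invertible word and admits a unique factorisation $p=p_1\cdots p_s$ into indecomposable invertible words. Then $w=p^k$ has $(p_1,\ldots,p_s)$ repeated $k$ times as its indecomposable invertible factorisation, so by uniqueness each $p_i$ is a minimal factor of $w$ and therefore lies in $\Delta$. Consequently $p\in\Delta^*$, the homomorphism $\phi$ is defined on $p$, and $\phi(w)=\phi(p)^k$. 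Note that $\phi(p)$ is a nonempty positive word over $B$, hence is cyclically reduced and nontrivial in $F(B)$; in particular $\phi(w)=\phi(p)^k$ is already displayed as a cyclically reduced word, so that the criterion of~\cite{LyndonAndSchupp} applies to it directly.

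The forward implication is then immediate: if $k>1$, then $\phi(w)=\phi(p)^k$ is a proper power of a nontrivial cyclically reduced word, so $\phi(w)$ is a proper power in $F(B)$ and $G$ has torsion.

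For the converse I must show that if $k=1$, that is $w=p$ is primitive in $A^*$, then $\phi(w)=\phi(p)$ is \emph{not} a proper power in $F(B)$, so that $G$ is torsion-free. The approach is to argue by contradiction: suppose $\phi(p)=V^m$ with $m>1$ and $V$ primitive in $F(B)$. Since $\phi(p)$ is a positive cyclically reduced word, $V$ may be taken positive and the equality holds at the level of words in $B^*$, so the sequence of factor-classes of $p=p_1\cdots p_s$ is periodic with period $q=s/m$; equivalently $p_i=_M p_{i+q}$ (i.e.\ $p_i$ and $p_{i+q}$ represent the same element of $M$) for all $i$. Writing $u=p_1\cdots p_q$, this gives $[u]^m=[p]=1$, and it would suffice to promote the equalities $p_i=_M p_{i+q}$ to equalities \emph{of words}, since then $p=u^m$ as a word, contradicting primitivity of $p$. \textbf{This promotion is the main obstacle.} The difficulty is precisely the phenomenon flagged earlier: distinct minimal factors can represent the same element of $M$, so $\phi$ need not be injective on $\Delta$, and the pairwise relations $p_i=_M p_{i+q}$ do not by themselves force $p_i=p_{i+q}$ as words (their lengths need not even agree). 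To overcome this I would not argue factor by factor but exploit the global periodic structure: the word $p$ is read off a single invertible loop closing up to $1$, the set $\Delta$ is a prefix code with the additional property that no nonempty proper prefix of a word of $\Delta$ equals a nonempty proper suffix of a word of $\Delta$, and the indecomposable factorisation is produced greedily by taking shortest invertible prefixes. Combining these constraints with a Fine--Wilf / Lyndon--Schützenberger type periodicity argument on words (cf.~\cite{LyndonSchutz1962}) should force the length sequence $(|p_i|)$ itself to be $q$-periodic, and then the blocks $p_1\cdots p_q$ and $p_{q+1}\cdots p_{2q}$ to coincide as words, which yields the word-periodicity of $p$ and the desired contradiction.
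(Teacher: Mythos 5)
Your forward direction is fine, and your overall framing (reduce to Karrass--Magnus--Solitar and decide whether the relator of a one-relator presentation of $G$ is a proper power) is the same as the paper's. But the converse direction has a genuine gap, and it is exactly the one you flag yourself. Working with Zhang's presentation $\langle B\mid \phi(w)\rangle$, the letters of $B$ index the $=_M$-classes $\Delta_1,\ldots,\Delta_m$ of minimal factors, so $\phi$ identifies distinct words of $\Delta$ that happen to represent the same element of $M$. Consequently, $\phi(p)=V^m$ with $m>1$ only gives you $p_i=_M p_{i+q}$, and you need $p_i=p_{i+q}$ as words to contradict primitivity of $p$. Your proposed repair --- a Fine--Wilf/Lyndon--Sch\"utzenberger periodicity argument exploiting that $\Delta$ is a prefix code with no nonempty proper prefix equal to a nonempty proper suffix --- is only asserted to ``should force'' the conclusion; it is not carried out, and it is not clear it can be, since the words $p_i$ and $p_{i+q}$ need not even have the same length. (The fact you want is true, but a posteriori: it follows from the lemma itself together with Karrass--Magnus--Solitar applied to Zhang's presentation, so deducing it that way would be circular.)

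The paper sidesteps this entirely by invoking a different presentation: Adjan's result \cite[Lemma 96]{Adjan1966} gives $G\cong \mathrm{Gp}\langle X\mid (x_{p_1}\cdots x_{p_l})^k=1\rangle$ where $X$ is in bijection with the \emph{set of distinct words} $\{p_1,\ldots,p_l\}$ occurring as indecomposable invertible factors of $p$, not with their $=_M$-classes. With that choice the substitution $p_i\mapsto x_{p_i}$ is injective on words, so any period of $(x_{p_1}\cdots x_{p_l})$ in $X^*$ pulls back to a genuine word-period of $p$ in $A^*$, and primitivity of $p$ immediately yields primitivity of $x_{p_1}\cdots x_{p_l}$. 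Everything else (positivity, hence cyclic reducedness, and the appeal to \cite[Theorem~5.2]{LyndonAndSchupp}) is as in your write-up. If you want to salvage your route through $\phi$, you would need an independent combinatorial proof that $\phi(p)$ is primitive whenever $p$ is, which is precisely the content you have left unproved; switching to Adjan's presentation is the clean fix.
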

\begin{proof}
Since it is a prefix and suffix of $w$, it follows that $p$ is invertible in $M$. Therefore, the decomposition of $w$ into indecomposable invertible factors has the form $w = (p_1 p_2 \ldots p_l)^k$ where $p_1 p_2 \ldots p_l$ is the decomposition of $p$ into indecomposable invertible factors.
Let $P = \{ p_i : 1 \leq i \leq l \} \subseteq A^*$. Let $X = \{ x_{p} : p \in P \}$ be an alphabet in bijection with the set of words $P$, so distinct words $p_i$ and $p_j$ from $P$ correspond to distinct letters $x_{p_i}$ and $x_{p_j}$ from the alphabet $X$. It follows from~\cite[Lemma 96]{Adjan1966} that the group of units of the monoid $M$ is isomorphic to the group defined by the group presentation $\mathrm{Gp}\langle X \mid (x_{p_1} x_{p_2} \ldots x_{p_l})^k=1 \rangle$. Observe that $x_{p_1} x_{p_2} \ldots x_{p_l} \in X^*$, i.e. this is a positive word over the alphabet $X$. In particular the word $(x_{p_1} x_{p_2} \ldots x_{p_l})^k$ is cyclically reduced. Since the word $p_1 p_2 \ldots p_l$ is primitive by assumption it follows that the word $x_{p_1} x_{p_2} \ldots x_{p_l} \in X^*$ is also primitive. Hence $(x_{p_1} x_{p_2} \ldots x_{p_l})^k$ is a proper power if and only if $k > 1$. But then by a well-known result of Karrass, Magnus and Solitar characterising elements of finite order in one-relator groups~\cite[Theorem~5.2]{LyndonAndSchupp} it follows that the group of units of $M$ is a one-relator group with torsion if and only if $k > 1$.
\end{proof}

Well-written accounts of the result~\cite[Lemma 96]{Adjan1966} of Adjan used in the previous proof may be found in~\cite[Section~1]{Lallement1974} and~\cite[Section~2]{Lallement1988}.
The following result gives a positive answer to Kobayashi's question \cite[Problem~1]{Kobayashi2000} in the case of special one-relator monoids 

\begin{Cor}\label{c:special.one} 
Let $M$ be the one-relator monoid  $\langle A\mid w=1\rangle$. 
 Then $M$ is of type left- and right-$\FP_{\infty}$. 
Moreover, if $w$ is not a proper power then
  $\mathop{\mathrm{left \; cd}} M\leq 2$ and $\mathop{\mathrm{right \; cd}} M\leq 2$, and otherwise
  $\mathop{\mathrm{left \; cd}} M=
\mathop{\mathrm{right \; cd}} M=
\infty$. \end{Cor}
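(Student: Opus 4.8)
The plan is to obtain Corollary~\ref{c:special.one} as a direct application of Theorem~\ref{t:special}, feeding in two classical facts about one-relator groups. First I would record that $M=\langle A\mid w=1\rangle$ is a finitely presented special monoid with a single defining relator, so Theorem~\ref{t:special} is available. By the analysis preceding Theorem~\ref{t:special} (or equivalently by Adjan's theorem that the group of units of a one-relator special monoid is a one-relator group), the group of units $G$ admits the one-relator presentation $\langle B\mid \phi(w)=1\rangle$, where $B$ is finite, so in particular $G$ is finitely generated.

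For the finiteness statement, I would invoke Lyndon's Identity Theorem: every one-relator group is of type $\FP_{\infty}$. Thus $G$ is of type $\FP_{\infty}$, and Theorem~\ref{t:special}(1) with $n=\infty$ immediately yields that $M$ is of type left- and right-$\FP_{\infty}$.

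For the dimension statement, I would write $w=p^k$ with $p$ primitive and split into two cases according to Lemma~\ref{lem_proppower}, which identifies torsion in $G$ with the condition $k>1$. If $w$ is not a proper power ($k=1$), then $G$ is a torsion-free one-relator group, and Lyndon's Identity Theorem gives that its presentation $2$-complex is aspherical, so $\mathop{\mathrm{cd}} G\leq 2$; the upper bound in Theorem~\ref{t:special}(2) then forces $\mathop{\mathrm{left \; cd}} M\leq\max\{2,\mathop{\mathrm{cd}} G\}=2$ and dually $\mathop{\mathrm{right \; cd}} M\leq 2$. If $w$ is a proper power ($k>1$), then $G$ has a nontrivial element of finite order, hence a nontrivial finite cyclic subgroup $C$; since a nontrivial finite group has infinite cohomological dimension over $\mathbb Z$ and cohomological dimension is monotone under passage to subgroups, $\mathop{\mathrm{cd}} G\geq\mathop{\mathrm{cd}} C=\infty$. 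The lower bound $\mathop{\mathrm{cd}} G\leq\mathop{\mathrm{left \; cd}} M$ from Theorem~\ref{t:special}(2) then gives $\mathop{\mathrm{left \; cd}} M=\infty$, and symmetrically $\mathop{\mathrm{right \; cd}} M=\infty$.

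I do not expect a genuine obstacle: the corollary is essentially bookkeeping once Theorem~\ref{t:special} is in place. The only points requiring care are citing the correct classical inputs---Lyndon's Identity Theorem for both the $\FP_{\infty}$ property and the asphericity giving $\mathop{\mathrm{cd}} G\leq 2$ in the torsion-free case, and the standard fact that a group with torsion has infinite cohomological dimension for the proper-power case---together with the (already noted) finiteness of $B$, which guarantees that $G$ is finitely generated so that the $\FP_{\infty}$ property for one-relator groups genuinely applies.
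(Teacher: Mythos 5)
Your proposal is correct and follows essentially the same route as the paper: both deduce the corollary from Theorem~\ref{t:special} by citing Adjan's theorem that the group of units $G$ is a one-relator group, Lyndon's Identity Theorem for the $\FP_\infty$ statement and the dichotomy on $\mathop{\mathrm{cd}} G$, and Lemma~\ref{lem_proppower} to translate the proper-power condition on $w$ into torsion in $G$. Your added justifications (asphericity of the presentation complex in the torsion-free case, and infinite cohomological dimension of groups with torsion via finite cyclic subgroups) are exactly the standard facts the paper is implicitly invoking.
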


\begin{proof}
We prove the results for left-$\FP_\infty$ and left cohomological dimension (the other results are dual).
The group
  of units $G$ of $M$ is a one-relator group by Adjan's theorem~\cite[Lemma 96]{Adjan1966} (this
  also follows from the results of Zhang described above), and
  hence of type $\FP_{\infty}$ by Lyndon's theorem~\cite{Lyndon1950}. This proves the
  first statement in light of Theorem~\ref{t:special}. The second
  statement follows since by Lemma~\ref{lem_proppower} the group
$G$ is a
  one-relator group whose defining relator is not a proper power in
  the first case and is a proper power in the second. By a theorem of
  Lyndon~\cite{Lyndon1950} $G$ has cohomological dimension at most $2$ in the first case
  and has infinite cohomological dimension in the second. The result
  now follows from Theorem~\ref{t:special}. \end{proof}

We now turn our attention to proving the topological analogue of Theorem~\ref{t:special}. We do this by showing how an equivariant classifying space for a special monoid may be constructed from an equivariant classifying space for its group of units.

Note that while for finitely presented monoids it follows from~\cite{GraySteinberg1} that the properties left $\FP_n$ and left $\F_n$ are equivalent, in contrast it is not known whether $\mathrm{left \; cd}(M)$ and $\mathrm{left \; gd}(M)$ coincide (this is even open for groups). Therefore, the second part of the following theorem is not an immediate consequence of Theorem~\ref{t:special}.

\begin{Thm}\label{t:special:topological}
Let $M$ be a finitely presented special monoid with group of units $G$.
\begin{enumerate}
\item If $G$ is of type $\F_n$ with $1\leq n\leq \infty$, then $M$ is of type left- and right-$\F_n$.
\item $\mathop{\mathrm{gd}} G\leq \mathop{\mathrm{left \; gd}} M \leq \max\{2,\mathop{\mathrm{gd}} G\}$
and
$\mathop{\mathrm{gd}} G\leq \mathop{\mathrm{right \; gd}} M \leq \max\{2,\mathop{\mathrm{gd}} G\}$.
\end{enumerate}
\end{Thm}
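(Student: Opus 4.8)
The plan is to construct a left equivariant classifying space $X$ for $M$ out of one for its group of units, mirroring at the level of spaces the short exact sequence $0\to C_1(\Gamma)\to C_0(\Gamma)\to \mathbb Z\to 0$ used to prove Theorem~\ref{t:special}. First I would build an equivariant classifying space $Z$ for the submonoid of right units $R$. Since $R=G\ast C^\ast$ is a free product of $G$ with a finitely generated free monoid by Zhang's theorem, the free-product closure results (Corollaries~\ref{c:free.prod.fp} and~\ref{c:free.prod.geom}) supply such a $Z$ with $\dim Z\le\max\{1,\mathop{\mathrm{gd}}G\}$ and with $Z$ of $R$-finite type through degree $n$ whenever $G$ is of type $\F_n$; here the free monoid contributes only its $1$-dimensional Cayley tree.

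Next I would pass to $M$ by base change. By Corollary~\ref{c:free.right} the monoid $M$ is a free right $R$-set, so $\mathbb ZM$ is free as a right $\mathbb ZR$-module; viewing $M$ as an $M$-$R$-biset (free of rank one as a left $M$-set), Proposition~\ref{c:base.change.cw} shows that $V:=M\otimes_R Z$ is a projective $M$-CW complex of dimension $\le\max\{1,\mathop{\mathrm{gd}}G\}$ and of $M$-finite type through degree $n$. By Remark~\ref{r:tensor.with.free} we have $V\cong\coprod_{M/R}Z$, so each component of $V$ is contractible and $\pi_0(V)\cong M/R\cong M/\mathscr R$ as $M$-sets. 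Thus $V$ realizes the ``vertex spaces'' sitting over the tree $\Gamma$ of Theorem~\ref{t:looks.like.Hasse}, and $C_\ast(V)=\mathbb ZM\otimes_{\mathbb ZR}C_\ast(Z)$ is, by flatness, a projective $M$-resolution of $C_0(\Gamma)$.

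I would then attach cells to connect and fill $V$ into a contractible complex $X$. First attach one free $M$-orbit of $1$-cells for each $a\in A$, the orbit of $a$ joining the basepoints of the components $V_{R_1}$ and $V_{R_a}$; equivariantly this adds, for every $m\in M$, an edge $m\to ma$ joining $V_{R_m}$ to $V_{R_{ma}}$. When $m\mathrel{\mathscr R}ma$ this edge is a loop based at a single component, and such loops are killed by attaching, for each generator of the finitely generated free module $N_a=\mathbb ZL_a$ of Proposition~\ref{p:loops.free}, a free $M$-orbit of $2$-cells spanning the loop together with a path inside the relevant contractible component. Because $L_a$ is a left ideal, the collapsing condition $m\mathrel{\mathscr R}ma$ is preserved under left multiplication, so this assignment is consistent equivariantly. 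The resulting $X$ is a projective $M$-CW complex with $|A|$ orbits of $1$-cells and finitely many orbits of $2$-cells, hence of $M$-finite type through degree $n$, and $\dim X\le\max\{2,\mathop{\mathrm{gd}}G\}$ since the added cells lie in dimensions $1$ and $2$.

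The main obstacle is the contractibility of $X$. Acyclicity is formal: $C_\ast(X)$ is assembled from the resolution $C_\ast(V)$ of $C_0(\Gamma)$ and the length-one free resolution $0\to N\to\mathbb ZM^{|A|}\to C_1(\Gamma)\to 0$, glued along the tree boundary map $C_1(\Gamma)\to C_0(\Gamma)$, so its homology equals that of $0\to C_1(\Gamma)\to C_0(\Gamma)\to\mathbb Z\to 0$, which is exact since $\Gamma$ is a tree. To upgrade acyclicity to contractibility I would show $X$ is simply connected and invoke Hurewicz--Whitehead. Collapsing each contractible component of $V$ to a point turns the $1$-skeleton into the Cayley graph with vertices identified under $\mathscr R$; by Proposition~\ref{p:entrance} the non-collapsed edges are exactly the unique entering edges, so they form the tree $\Gamma$ and contribute nothing to $\pi_1$, whereas the collapsed edges give the only loops and are precisely those killed by the $2$-cells. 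Hence $\pi_1(X)=1$, and with acyclicity $X$ is contractible, establishing part (1) and the upper bounds in part (2). Finally, for $\mathop{\mathrm{gd}}G\le\mathop{\mathrm{left \; gd}}M$ I would use, as in the proof of Theorem~\ref{t:special}, that $G$ acts freely on the left of $M$; since $Me_i\subseteq M$ is a $G$-invariant subset, every projective $M$-set $\coprod Me_i$ restricts to a free $G$-set, so any left equivariant classifying space for $M$ restricts to a free contractible $G$-CW complex of the same dimension, i.e.\ a model for $EG$. The right-handed statements follow by the dual construction.
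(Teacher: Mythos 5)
Your construction is essentially identical to the paper's own proof: build an equivariant classifying space for $R=G\ast C^*$ via the free-product results, base-change along the free right $R$-set $M$ to get one contractible component per $\mathscr R$-class, join them with one free $M$-orbit of $1$-cells per generator of $A$, and kill the resulting loops using the finite free basis from Proposition~\ref{p:loops.free} with free orbits of $2$-cells; the lower bound, obtained by restricting a (necessarily free, by Theorem~\ref{t:right.free}) projective $M$-CW complex to a free contractible $G$-CW complex, is likewise the paper's argument. The only cosmetic difference is the final step: the paper collapses the contractible components to exhibit the complex as homotopy equivalent to the tree $\Gamma$ with disk-filled loops, rather than establishing acyclicity and simple connectivity separately and invoking Hurewicz--Whitehead.
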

\begin{proof}
We prove the results for left-$\F_n$ and left geometric dimension. The other results are dual.
It is proved in~\cite[Section~6]{GraySteinberg1} for finitely presented monoids the properties left-$\F_n$ and left-$\FP_n$ coincide. Now part (1) of the theorem follows from the first part of Theorem~\ref{t:special}. (One can also see this directly from the construction below.)

To prove part (2), first note that we showed that $M$ was a free left $G$-set at the beginning of the proof of Theorem~\ref{t:special}.  Hence any free $M$-CW complex is a free $G$-CW complex.  Also note that Theorem~\ref{t:right.free} implies that every projective $M$-set is free, as $Me$ is a left ideal for any idempotent $e$.  Thus any projective $M$-CW complex $X$ is a free $M$-CW complex and so it follows that $G\backslash X$ is $K(G,1)$-space. The inequality $\mathop{\mathrm{gd}} G\leq \mathop{\mathrm{right \; gd}} M$ follows.

We  shall now explain how to construct an equivariant classifying space for $M$ of dimension $\max\{2, \mathrm{gd}(G) \}$.

Let $X_G$ be an equivariant classifying space for the group $G$.
Since $G$ is a group it follows that the projective $G$-CW complex $X_G$ is
a free $G$-CW complex.
By Zhang's theorem~\cite[Theorem~4.4]{Zhang}, the submonoid of right units $R$ of $M$ is isomorphic to the monoid free product $G\ast C^*$ where $C^*$ is a free monoid over a finite alphabet $C$. The right Cayley graph $\Gamma(C^*)$ of $C^*$ with respect to the generating set $C$ is a tree and thus is a free equivariant classifying space for the monoid $C^*$. In particular $C^*$ is of geometric dimension at most $1$. Let $X$ be the left equivariant classifying space for $R \cong G \ast C^*$ given by the construction in the proof of Theorem~\ref{t:bass.serre.free} in Section~\ref{sec_amalg} below.
From the construction it follows that $X$ is a free $R$-CW complex and an equivariant classifying space for $R$.  (If $X_G$ has a $G$-finite $n$-skeleton, then $X$ has an $R$-finite $n$-skeleton.)
It also follows from the construction of $X$ that
$\dim X \leq \max\{1, \dim X_G\}$ (compare with Theorem~\ref{t:amalg.cd}).

Now $M$ is an $M$-$R$-biset, which is free as a left $M$-set and is also free as a right $R$-set by Corollary~\ref{c:free.right}, and $X$ is a free left $R$-CW complex. It follows
from Proposition~\ref{c:base.change.cw}
that $M \otimes_R X$ is a free left $M$-CW complex with $\dim M \otimes_R X=\dim X$. (It will have $M$-finite $n$-skeleton if $X$ has $R$-finite $n$-skeleton.) The complex $M \otimes_R X$ is a disjoint union of copies of $X$, one for each $\mathscr{R}$-class of $M$ by Remark~\ref{r:tensor.with.free}.  To make this concrete, take the transversal $\mathcal{T}$ of the $\mathscr{R}$-classes of $M$ defined above, which is a basis for $M$ as a free right $R$-set.  Then each element of $M\otimes_R X$ can be uniquely written in the form $t\otimes x$ with $t\in \mathcal T$ and $x\in X$ and $M\otimes_R X=\coprod_{t\in \mathcal T} t\otimes X$.   We say that two elements $m \otimes x$ and $m' \otimes x'$ of $M \otimes_R X$ belong to the same copy of $X$ in $M \otimes_R X$ if and only if $m \mathrel{\mathscr{R}} m'$.

Fix a basepoint $x_0 \in \mathcal{Q} \subseteq X_0$. Next we connect the space $M \otimes_R X$ by attaching edges $m \otimes x_0 \rightarrow ma \otimes x_0$ for each $m \in M$ and $a \in A$. This is the same as attaching a free $M$-cell $M \times B^1$ of dimension $1$ based at $1 \otimes x_0 \rightarrow a \otimes x_0$ for each $a \in A$. Let $Y$ denote the
resulting free $M$-CW complex. The $\mathscr{R}$-order in the monoid $M$ induces in a natural way an order  on the copies of $X$ in $Y$, and there is an edge joining two distinct copies of $X$ in $Y$ if and only if there is an edge in the right Cayley graph of $M$ joining the corresponding $\mathscr{R}$-classes. Moreover, it follows from the definition of $Y$, and Proposition~\ref{p:entrance}, that there is at most one edge joining any pair of distinct copies of $X$ in $Y$. It follows that if we contract each of the copies of $X$ in $Y$ we obtain the graph $\Gamma$ in Theorem~\ref{t:looks.like.Hasse}, which is a regular rooted tree, together with possibly infinitely many loops at each vertex. These loops arise from the edges $m \otimes x_0 \rightarrow ma \otimes x_0$ where $m \mathrel{\mathscr{R}} ma$ added in the construction of $Y$.  (Notice that if $M\otimes_R X$ has $M$-finite $n$-skeleton, then so does $Y$.)

To turn $Y$ into an equivariant classifying space for $M$ we add $2$-cells to deal with these loops, in the following way. It follows from Proposition~\ref{p:loops.free} that for each $a \in A$, the set $L=\{m\in M\mid m\mathrel{\mathscr R} ma\}$
is a free left
$M$-set generated by a finite set $F_a \subseteq L$ with $F_a \subseteq R$. For each $r \in F_a$, choose a path in $p_r$ in $1 \otimes X$ from $1 \otimes x_0$ to $1 \otimes rx_0$, choose a path $q_r$ in $1 \otimes X$ from $1 \otimes x_0$ to $1 \otimes ra x_0$, and let $e_r$ denote the edge in $Y$ labelled by $a$ from $1 \otimes rx_0$ to $1 \otimes ra x_0$. Note that since $r \in F_a \subseteq L$ it follows that $r \in R$ and $ra \in R$ and so $1 \otimes rx_0 = r \otimes x_0$ and $1 \otimes ra x_0 = ra \otimes x_0$ and hence $e_r$ is indeed one of the edges that was added during the construction of $Y$. Now for each $a \in A$ attach a free $2$-cell $M \times B^2$ to $Y$ by attaching a $2$-cell at $1 \otimes x_0$ with boundary path $p_r e_r q_r^{-1}$ and all of its translates under the action of $M$. We do this for each $a \in A$ and call the resulting complex $Z$. Now if we contract the copies of $X$ in $Z$, we obtain the tree $\Gamma$, together with loops at each vertex each of which bounds a single disk. Thus $Z$ is homotopy equivalent to the tree $\Gamma$, and hence is contractible. This shows that $Z$ is an equivariant classifying space for the monoid $M$. (Note that if $Y$ has $M$-finite $n$-skeleton, then so does $Z$ hence giving an alternative proof that if $G$ is of type $\F_n$, then $M$ is of type left-$\F_n$.)

To complete the proof, since the free $M$-CW complex $Z$ was constructed from $M \otimes_R X$ by attaching $1$-cells and $2$-cells, and since we have already observed that $\dim M \otimes_R X = \dim X \leq \max \{ 1, \dim X_G \}$, it follows that
$
\dim Z \leq \max \{ 2, \dim X_G \}
$
and hence $\mathrm{left \; gd}(M) \leq \max\{2, \mathrm{gd}(G) \}$.
\end{proof}

For special one-relator monoids we obtain the following corollary which is the topological analogue of Corollary~\ref{c:special.one}.

\begin{Cor}\label{c:special.one:topological} 
Let $M$ be the one-relator monoid  $\langle A\mid w=1\rangle$. 
Then $M$ is of type left- and right-$\F_{\infty}$. 
Moreover, if $w$  is not a proper power then
  $\mathop{\mathrm{left \; gd}} M\leq 2$ and $\mathop{\mathrm{right \; gd}} M\leq 2$, and otherwise
  $\mathop{\mathrm{left \; gd}} M=
\mathop{\mathrm{right \; gd}} M =
\infty$. \end{Cor}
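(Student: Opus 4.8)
The plan is to run the argument of Corollary~\ref{c:special.one} almost verbatim, but feeding the data into the topological Theorem~\ref{t:special:topological} rather than the homological Theorem~\ref{t:special}. First I would recall that, by Adjan's theorem~\cite[Lemma 96]{Adjan1966} (or by Zhang's results recalled above), the group of units $G$ of $M$ is a one-relator group, and in particular is finitely presented. By Lyndon's theorem~\cite{Lyndon1950} every one-relator group is of type $\FP_\infty$; since for a finitely presented group the properties $\F_n$ and $\FP_n$ coincide (see~\cite[Chapter~8]{BrownCohomologyBook} and~\cite{GeogheganBook}), it follows that $G$ is of type $\F_\infty$. Theorem~\ref{t:special:topological}(1), applied with $n=\infty$, then gives that $M$ is of type left- and right-$\F_\infty$, establishing the first assertion.

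For the geometric dimension I would split according to Lemma~\ref{lem_proppower}, writing $w=p^k$ with $p$ primitive. Suppose first that $w$ is not a proper power, so $k=1$. Then Lemma~\ref{lem_proppower} shows that $G$ is a torsion-free one-relator group, and by the classical asphericity of the presentation complex of such a group~\cite{Cockcroft1954, DyerVasquez1973} (the topological form of Lyndon's Identity Theorem) this complex is a two-dimensional $K(G,1)$; hence $\mathop{\mathrm{gd}} G\leq 2$. Substituting this into the upper bound of Theorem~\ref{t:special:topological}(2) yields $\mathop{\mathrm{left \; gd}} M\leq \max\{2,\mathop{\mathrm{gd}} G\}=2$, and dually $\mathop{\mathrm{right \; gd}} M\leq 2$. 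It is worth emphasizing that I would invoke the asphericity of the presentation complex \emph{directly}, rather than trying to deduce $\mathop{\mathrm{gd}} G\leq 2$ from $\mathop{\mathrm{cd}} G\leq 2$, precisely so as to sidestep the Eilenberg--Ganea problem.

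In the remaining case $k>1$, Lemma~\ref{lem_proppower} shows that $G$ is a one-relator group with torsion. Any group containing a nontrivial finite subgroup has infinite cohomological dimension, and since geometric dimension bounds cohomological dimension, $\mathop{\mathrm{gd}} G=\infty$. The lower bound $\mathop{\mathrm{gd}} G\leq \mathop{\mathrm{left \; gd}} M$ of Theorem~\ref{t:special:topological}(2) then forces $\mathop{\mathrm{left \; gd}} M=\infty$, and dually $\mathop{\mathrm{right \; gd}} M=\infty$.

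The one step that is not a purely formal substitution is the upgrade from $\FP_\infty$ to $\F_\infty$ for the group of units: I would need the standard fact that a finitely presented group of type $\FP_\infty$ is of type $\F_\infty$. This is genuinely a group-theoretic input and deserves to be flagged, especially in light of the remark preceding Theorem~\ref{t:special:topological} that $\mathop{\mathrm{cd}}$ and $\mathop{\mathrm{gd}}$ are not known to coincide in general. Everything else is dictated by Theorem~\ref{t:special:topological} exactly as in the homological Corollary~\ref{c:special.one}, so I expect no further obstacle.
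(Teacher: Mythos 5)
Your proof is correct and follows exactly the route the paper intends for this corollary (which it leaves implicit as the topological analogue of Corollary~\ref{c:special.one}): feed Adjan's and Lyndon's theorems into Theorem~\ref{t:special:topological}. Your care in obtaining $\mathop{\mathrm{gd}} G\leq 2$ directly from the asphericity of the one-relator presentation complex \cite{Cockcroft1954, DyerVasquez1973}, rather than from $\mathop{\mathrm{cd}} G\leq 2$, is exactly the right move and matches the fact the authors themselves cite in the introduction.
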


In particular this results says that for every special one-relator monoid whose defining relator is not a proper power admits an equivariant classifying space of dimension at most $2$. 
In fact, in this case it turns out that the Cayley complex of the monoid gives an equivariant classifying space of dimension at most $2$,  as the following result demonstrates.

\begin{Thm}\label{thm_CayleyComplex}
Let $M=\langle A\mid w=1\rangle$ such that $w$ is not a proper power. 
Let $X$ be the $2$-complex obtained by filling in each loop labeled by $w$ in the Cayley graph $\Gamma(M,A)$ of $M$. Then $X$ is left equivariant classifying space for $M$ with dimension at most $2$.  
 \end{Thm}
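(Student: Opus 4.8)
The plan is to verify directly the two defining properties of a left equivariant classifying space: that $X$ is a projective $M$-CW complex of dimension at most $2$, and that it is contractible.

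First I would record the $M$-cell structure of $X$. The left multiplication action of $M$ on $\Gamma(M,A)$ is free, the vertices forming a single free orbit $\cong M$ and the $a$-edges ($a\in A$) forming $|A|$ free orbits $\cong M\times A$. Since $[w]=1$ in $M$, reading $w$ from any vertex $m$ returns to $m$, so there is exactly one loop labelled $w$ based at each vertex; these $2$-cells are indexed by their base vertices, i.e.\ by $M$, and are permuted freely by the action. Hence $X$ is a free, and in particular projective, $M$-CW complex with $X=X_2$, $P_0\cong M$, $P_1\cong M\times A$, $P_2\cong M$, so $\dim X\le 2$. It remains to prove that $X$ is contractible.

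Next I would localise the $2$-cells. Every non-empty proper prefix of the invertible word $w$ is itself right-invertible, so every vertex traversed by the $w$-loop based at $m$ is $\mathscr R$-related to $m$; thus each $w$-loop lies entirely inside a single Schützenberger graph. Consequently $X$ is a tree of spaces modelled on the tree $\Gamma$ of Theorem~\ref{t:looks.like.Hasse}: over each strong component sits a copy of the Schützenberger complex $S$, namely the Schützenberger graph of the identity together with its filled $w$-loops (all such copies are isomorphic by Corollary~\ref{c:Cayley.geometry}), and by Proposition~\ref{p:entrance} distinct copies are joined by exactly one edge, attached at single vertices. Since collapsing a disjoint family of contractible subcomplexes to points is a homotopy equivalence, and doing this to the copies of $S$ in $X$ yields exactly the tree $\Gamma$, we get $X\simeq\Gamma$; as $\Gamma$ is a tree, $X$ is contractible as soon as $S$ is contractible.

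The heart of the matter, and the main obstacle, is the contractibility of $S$. Here I would invoke Zhang's theorem that the monoid of right units satisfies $R\cong G\ast C^\ast$, with $G$ the group of units and $C^\ast$ free, and use the Bass--Serre tree of this free product to exhibit $S$ as a tree of spaces whose vertex spaces are copies of the Cayley complex of $G$ with its relator loops filled, the remaining pieces being contractible. Because $w$ is not a proper power, Lemma~\ref{lem_proppower} gives that $G$ is a \emph{torsion-free} one-relator group, so by Lyndon's Identity Theorem its presentation $2$-complex is aspherical and its Cayley complex (the universal cover) is contractible; hence $S$, a tree of contractible spaces, is contractible. Combining the paragraphs, $X$ is a contractible free $M$-CW complex of dimension at most $2$, i.e.\ a left equivariant classifying space for $M$. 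The delicate point to be checked in the crux step is the precise matching, after the decomposition $R\cong G\ast C^\ast$, of the filled $w$-loops over a single $\mathscr R$-class with the filled relator loops of a one-relator presentation of $G$, so that the torsion-free case of Lyndon's theorem applies; equivalently, in homological language, the obstacle is the injectivity of the boundary map $\partial_2\colon\mathbb ZM\to\mathbb ZM^{|A|}$ (right multiplication by the vector of Fox derivatives of $w$), which is exactly the identity-module computation underlying Lyndon's theorem and which uses precisely that $w$ is primitive.
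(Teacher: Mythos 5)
Your first two paragraphs are sound and in fact reconstruct machinery the paper develops in Section~\ref{sec_special}: the free $M$-CW structure with $P_0\cong M$, $P_1\cong M\times A$, $P_2\cong M$; the observation that every proper prefix of $w$ is right invertible, so each filled $w$-loop lies in a single Sch\"utzenberger graph; and the collapse of the Sch\"utzenberger complexes onto the tree $\Gamma$ of Theorem~\ref{t:looks.like.Hasse}, using Proposition~\ref{p:entrance} and Corollary~\ref{c:Cayley.geometry}. This correctly reduces the theorem to the contractibility of the Sch\"utzenberger complex $S$ of the identity. The problem is that your third paragraph does not prove this reduction's target: the assertion that $S$ ``is a tree of spaces whose vertex spaces are copies of the Cayley complex of $G$ with its relator loops filled'' is exactly where the entire difficulty of the theorem is concentrated, and it is not established. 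The $1$-skeleton of $S$ is the induced subgraph of $\Gamma(M,A)$ on $R$, labelled by letters of $A$ (with a varying set of letters available at each vertex), whereas the Cayley complex of $G$ is built over the alphabet $B$ of Zhang's presentation $\lb B\mid T_0\rb$; matching the $A$-labelled edges and the filled $w$-loops of $S$ with the $B$-labelled edges and filled $\phi(w)$-loops across the decomposition $R\cong G\ast C^*$ requires a genuine argument (a chain rule relating the Fox derivatives of $w$ over $A$ to those of $\phi(w)$ over $B$ via the minimal invertible factors), not just an invocation of the Bass--Serre tree of the free product. Your closing sentence compounds this by identifying the injectivity of $\partial_2\colon\mathbb ZM\to\mathbb ZM^{|A|}$ with ``the identity-module computation underlying Lyndon's theorem'': Lyndon's computation takes place over $\mathbb ZG$, and transferring it to $\mathbb ZM$ is precisely the nontrivial content here, not a restatement of it.

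For comparison, the paper's proof sidesteps this entirely: it quotes the proof of \cite[Theorem~6.14]{GraySteinberg1} for the fact that $X$ is a simply connected $M$-finite free $M$-CW complex of dimension at most $2$, then quotes Kobayashi's strict asphericity of $\langle A\mid w=1\rangle$ (\cite[Corollary~7.5]{Kobayashi2000} together with \cite[Theorem~7.2]{Kobayashi1998}) to conclude that the cellular chain complex of $X$ is a free resolution, i.e.\ that $X$ is acyclic, and finishes with the Hurewicz and Whitehead theorems. The homological fact you flag as the obstacle is exactly what Kobayashi's theorem supplies. Your geometric route could in principle be completed --- the reduction from $\mathbb ZM$ to $\mathbb ZR$ via Corollary~\ref{c:free.right}, and then from $R\cong G\ast C^*$ to $G$, is the right skeleton, and Lemma~\ref{lem_proppower} with Lyndon's theorem does handle the group at the bottom --- but as written the crux step is asserted rather than proved, so the argument is incomplete.
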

 \begin{proof}
It follows from the proof of \cite[Theorem~6.14]{GraySteinberg1} that $X$ is an $M$-finite simply connected free $M$-CW complex of dimension at most 2. It is shown in \cite[Corollary~7.5]{Kobayashi2000} that the presentation $\langle A\mid w=1\rangle$ is strictly aspherical in the sense defined in \cite[Section~2]{Kobayashi1998}. The cellular chain complex of $X$ gives a free resolution displayed in Equation~(7.2) in \cite[Theorem~7.2]{Kobayashi1998}. This shows that $X$ is acyclic. Since $X$ is acyclic and simply connected it follows from the Whitehead and Hurewicz theorems that $X$ is contractible, and hence $X$ is a left equivariant classifying space for the monoid $M$. 
 \end{proof}
The analogous result to Theorem~\ref{thm_CayleyComplex} is also known to hold for one-relator groups. 
This was first observed in~\cite{Cockcroft1954} and is a consequence of Lyndon's Identity Theorem~\cite{Lyndon1950}. A more topological proof is given in~\cite{DyerVasquez1973}.

We currently do not know whether the two-sided analogues of the results proved in this section, for bi-$\Fn$ and (two-sided) geometric dimension, hold. One way to establish these results might be to seek a better understanding of the two-sided Cayley graphs of special monoids. 

As mentioned in the introduction, building on the ideas presented in this section, in \cite{GraySteinberg3}
we have extended these results to arbitrary one-relator monoids. In particular in \cite {GraySteinberg3} we give a positive answer to Kobayashi's question \cite[Problem~1]{Kobayashi2000}
by showing that every one-relator monoid $\lb A \mid u=v \rb$ is of type left- and right-$\F_\infty$ and $
\FP_{\infty}$.

\section{Amalgamated free products}
\label{sec_amalg}

For graph of groups, including free products with amalgamation and HNN extensions, there are well-established methods for constructing a $K(G,1)$ from $K(G,1)$s of the vertex and edge groups; see for example~\cite[page~92]{Hatcher2002}. This can then be used to prove results for groups about the behaviour of the properties $\Fn$ and geometric dimension for amalgamated free products and HNN extensions.  In this section, and the two sections that follow it, we use topological methods to 
investigate the behaviour of topological and homological finiteness properties of monoids, for free products with amalgamation, and HNN extension constructions.

A \emph{monoid amalgam} is a triple $[M_1,M_2;W]$ where $M_1,M_2$ are monoids with a common submonoid $W$.  The \emph{amalgamated free product} is then the pushout in the diagram
\begin{equation}\label{eq:pushout.mon}
\begin{tikzcd} W\ar{r}\ar{d} & M_1\ar{d}\\ M_2\ar{r} & M_1\ast_W M_2\end{tikzcd}
\end{equation}
in the category of monoids. Monoid amalgamated products are
\textbf{much} more complicated than group ones.
For instance, the amalgamated free product of finite monoids can have an undecidable word problem, and the factors do not have to embed or intersect in the base monoid; see~\cite{Sapir2000}.
So there are no normal forms available in complete generality that allow one construct a Bass--Serre tree.  We use instead the homological ideas of Dicks.  For more details about these methods
we refer the reader to~\cite[Chapter 1, Sections 4-7]{DicksAndDunwoody}.

An $M$-graph $X$ is a one-dimensional CW complex with a cellular action by $M$ sending edges to edges. Given an $M$-graph $X$ we use $V$ to denote its set of $0$-cells and $E$ to denote its set of $1$-cells. Given any $M$-graph, if we choose some orientation for the edges, then the attaching maps of the $1$-cells define functions $\iota, \tau$ from $E$ to $V$ where in $X$ each oriented edge $e$ starts at $\iota e$ and ends at $\tau e$.
We call $V$ and $E$ the vertex set, and edge set respectively, of the $M$-graph $X$. We shall assume that the monoid action preserves the orientation. It shall sometimes be useful to think of an $M$-graph as given by a tuple $(X,V,E,\iota,\tau)$ where $X$ is an $M$-set, $X = V \cup E$ a disjoin union where each of $V$ and $E$ is closed under the action of $M$, and $\iota, \tau\colon  E \rightarrow V$ are $M$-equivariant maps.

Let $M$ be a monoid and let $X$ be an $M$-graph.
Let $\ZV$ and $\ZE$ denote the free abelian groups on $V$ and $E$, respectively.
The cellular boundary map of $X$ is the $M$-linear map $\partial\colon \ZE \rightarrow \ZV$ with $\partial(e) = \tau e - \iota e$ for all $e \in E$. The sequence
\[\ZE
\xrightarrow{\,\,\partial\,\,}
\ZV
\xrightarrow{\,\,\epsilon\,\,}
\Z \longrightarrow 0
\]
is the augmented cellular chain complex of $X$, where $\epsilon$ is the augmentation map sending $\sum_{v \in V} n_v v$ to $\sum_{v \in V} n_v$ (i.e., each element of the basis $V$ is mapped to $1$). Throughout this section we shall frequently be confronted with the task of showing that a given $M$-graph is a tree or a forest. To do this, it is useful to recall that the
$M$-graph $X$ is
a forest if and only if $\partial\colon \ZE \rightarrow \ZV$ is injective; see~\cite[Lemmas 6.4]{DicksAndDunwoody}, i.e.,
\[0\longrightarrow \ZE
\xrightarrow{\,\,\partial\,\,}
\ZV
\xrightarrow{\,\,\epsilon\,\,}
\Z \longrightarrow 0
\]
is exact.

The results in this section improve, and give simpler proofs of, several results of Cremanns and Otto~\cite{CremannsOtto1998}
on the behaviour of $\FP_n$ under free products and certain rather restricted free products of monoids with amalgamation. The proofs in Cremanns and Otto are quite long and technical, as is often the case for results in this area. The results in this section demonstrate the type of result our topological methods were introduced to prove. They show that the topological approach may be used to prove more general results in a less technical and more conceptual way. Our results also generalise and simplify proofs of some results of Kobayashi~\cite{Kobayashi2010} on preservation of left-, right- and bi-$\FP_n$ under free products (see for example~\cite[Proposition 4.1]{Kobayashi2010}). There are no bi-$\FPn$ analogues in the literature of the two-sided results we obtain below on the behaviour of bi-$\Fn$ and geometric dimension for free products with amalgamation. Also, as far as we are aware, the results that we obtain here are the first to appear in the literature on cohomological dimension of amalgamated free products of monoids.

A monoid presentation is said to have finite homological type, abbreviated to FHT, if the, so-called, homotopy bimodule of the given presentation is finitely generated. The homotopy bimodule is a $\mathbb{Z}M$-bimodule constructed from a complex of $\mathbb{Z}A^*$-bimodules defined using the set of defining relations $R$ of the presentation $\lb A \mid \rel \rb$ of the monoid $M$, and a particular family of disjoint circuits in the derivation graph associated with the presentation. The property FHT was originally introduced by Wang and Pride~\cite{WangPride2000}.
We refer the reader to that paper, or to~\cite[Section~3]{KobayashiOtto2001}, for full details of the definition of FHT. It was proved in~\cite{KobayashiOtto2003} that for finitely presented monoids FHT and bi-$\FP_3$ (equivalently bi-$\F_3$) are equivalent. So some of the results below also have an interpretation in terms of FHT.

\subsection{The one-sided setting}

Let us define a tree $T$ for a pushout diagram \eqref{eq:pushout.mon}.  Let us assume that $f_i\colon W\to M_i$, for $i=1,2$, is the homomorphism in the diagram and put $L=M_1\ast_W M_2$ for the pushout. The right multiplicative actions of $M_1$, $M_2$ and $W$ give three different partitions of $L$ into weak orbits. Since $W \leq M_i$ the $W$-orbits give a finer partition than both the $M_1$- and $M_2$-orbits. We can then define a directed bipartite graph $T$ with one part given by the $M_1$-orbits and the other part given by the $M_2$-orbits. When an $M_1$-orbit intersects an $M_2$-orbit, that intersection will be a union of $W$-orbits,
and in this case we draw directed edges from the $M_1$-orbit to the $M_2$-orbit labelled by the $W$-orbits in this intersection.

In more detail, let $T$ be the $L$-graph with vertex set
\[V=L/M_1\coprod L/M_2\]  and edge set
\[E=L/W\] where $M_1,M_2,W$ act on the right of $L$ by first applying the canonical map to the pushout and then right multiplying.  We write $[x]_K$ for the class of $x\in L$ in $L/K$.  The edge $[x]_W$ connects $[x]_{M_1}$ with $[x]_{M_2}$ (and we usually think of it as oriented in this direction).  The incidence here is easily seen to be well defined and the action of $L$ on the left of these sets is by cellular mappings sending edges to edges and preserving orientation. Hence $T$ is an $L$-graph.

\begin{Lemma}\label{l:connected.bs.tree}
The graph $T$ is connected.
\end{Lemma}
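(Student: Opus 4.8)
The plan is to prove connectivity by a direct combinatorial argument, showing that every vertex of $T$ lies in the connected component of the base vertex $v_0=[1]_{M_1}$. The starting point is that $L=M_1\ast_W M_2$, being the pushout \eqref{eq:pushout.mon}, is generated as a monoid by the union of the images of $M_1$ and $M_2$ under the canonical maps (the pushout is a quotient of the free product $M_1\ast M_2$, and quotients of generated monoids are generated by the images of the generators). Hence every $x\in L$ can be written as a finite product $x=\bar s_1\bar s_2\cdots \bar s_n$, where each $\bar s_j$ is the image of an element of $M_1$ or of $M_2$, the empty product giving $1$.

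The key observation I would record first is that right multiplication by a generator does not change the relevant weak orbit: if $x\in L$ and $\bar s$ is the image of some $s\in M_i$, then, since the canonical map is a homomorphism, $(x\bar s)M_i=\{x\overline{st}\mid t\in M_i\}\subseteq xM_i$, so $x\bar s\leq x$ in the preorder and therefore $[x\bar s]_{M_i}=[x]_{M_i}$. Combined with the fact that the edge $[x]_W$ joins $[x]_{M_1}$ to $[x]_{M_2}$ for every $x$, this is all the input the induction needs.

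I would then induct on the length $n$ of the chosen factorization of $x$, proving simultaneously that both $[x]_{M_1}$ and $[x]_{M_2}$ are connected to $v_0$. For $n=0$ we have $x=1$, so $[x]_{M_1}=v_0$, while $[x]_{M_2}$ is joined to $v_0$ by the edge $[1]_W$. For the inductive step, write $x=y\bar s$ with $s\in M_i$; the observation above gives $[x]_{M_i}=[y]_{M_i}$, which is connected to $v_0$ by induction, and then the edge $[x]_W$ connects $[x]_{M_1}$ to $[x]_{M_2}$, so both endpoints are connected to $v_0$. Since every vertex of $T$ is of the form $[x]_{M_1}$ or $[x]_{M_2}$, this proves that $T$ is connected.

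I do not anticipate a serious obstacle, as this is a routine induction; the only point requiring care is keeping the two sorts of vertices in play at once. One must carry both $[x]_{M_1}$ and $[x]_{M_2}$ through the induction and invoke the bridging edge $[x]_W$ at each stage, because multiplication by a generator of $M_i$ controls only the $M_i$-orbit. As an alternative one could argue homologically, showing that $\ZE\xrightarrow{\,\partial\,}\ZV\xrightarrow{\,\epsilon\,}\Z\to 0$ is exact at $\ZV$; but for mere connectivity the combinatorial induction is shorter and more transparent, and injectivity of $\partial$ is better reserved for the separate verification that $T$ is a forest.
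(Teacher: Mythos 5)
Your proof is correct. It follows the same basic skeleton as the paper's argument -- induction on the length of an expression for $x\in L$ as a product of images of elements of $M_1\cup M_2$, with the edges $[x]_W$ serving as bridges between the two sorts of vertices -- but the mechanism of the inductive step is different. The paper peels off the \emph{first} letter $x_1$ and uses the left action of $L$ on $T$ by cellular maps to translate a path $p$ from $[1]_{M_1}$ to $[x_2\cdots x_{k+1}]_{M_1}$ into the path $x_1p$, then reconnects $[x_1]_{M_1}$ to $[1]_{M_1}$ by a case analysis on whether $x_1$ comes from $M_1$ or $M_2$. You instead peel off the \emph{last} letter and exploit the elementary observation that right multiplication by the image of $s\in M_i$ stays inside $xM_i$ and hence does not change the weak $M_i$-orbit, so $[x\bar s]_{M_i}=[x]_{M_i}$; carrying both $[x]_{M_1}$ and $[x]_{M_2}$ through the induction then makes the bridging edge $[x]_W$ do the rest. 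Your variant has the mild advantage of not invoking the $L$-action on $T$ at all (only the definition of the weak orbits), while the paper's version illustrates the equivariance of $T$ that is used elsewhere; both are equally valid and of comparable length.
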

\begin{proof}
The pushout $L$, being a quotient of the free product $M_1\ast M_2$, is generated by the images of $M_1$ and $M_2$ under the natural maps (which we omit from the notation even though they need not be injective).
We define the length of $x\in L$ to be the minimum $k$ such that $x=x_1\cdots x_k$ with $x_i\in M_1\cup M_2$.  We prove by induction on the length of $x$ that there is a path in $T$ from $[1]_{M_1}$ to $[x]_{M_1}$.  If $x=1$, this is trivial, so assume the statement is true for length $k$ and $x=x_1\cdots x_{k+1}$.  Let $p$ be a path from $[1]_{M_1}$ to $[x_2\cdots x_{k+1}]_{M_1}$.  Then $x_1p$ is a path from $[x_1]_{M_1}$ to $[x]_{M_1}$.  If $x_1\in M_1$, then $[x_1]_{M_1}=[1]_{M_1}$ and so $x_1p$ is a path from $[1]_{M_1}$ to $[x]_{M_1}$.  If $x_1\in M_2$, then $[x_1]_W$ is an edge connecting $[x_1]_{M_1}$ and $[x_1]_{M_2}=[1]_{M_2}$ and $[1]_W$ is an edge connecting $[1]_{M_1}$ with $[1]_{M_2}$ and so there is a path from $[1]_{M_1}$ to $[x]_{M_1}$.  Finally, if $x\in L$, then $[x]_{M_2}$ is connected by $[x]_W$ to $[x]_{M_1}$, which in turn is connected by a path to $[1]_{M_1}$.  Thus $T$ is connected.
\end{proof}

We aim to prove that $T$ is a tree by showing that the cellular boundary map $\partial\colon  \ZE \rightarrow \ZV$ is injective. To prove this we shall make use of semidirect products of monoids and the concept of a derivation. An account of this theory for groups may be found in~\cite{DicksAndDunwoody} where it is applied to show that the standard graph of the fundamental group of a graph of groups is a tree; see~\cite[Theorem~7.6]{DicksAndDunwoody}.

Let $M$ be a monoid and let $A$ be a left $\ZM$-module. Then we can form the semidirect product $A \rtimes M$, of the abelian group $A$ and the monoid $M$, with elements $A \times M$ and multiplication given by
\[
(a, m) (a', m') =
(a + m a', mm').
\]
The natural projection $\pi\colon  A \rtimes M \rightarrow M$, $(a,m) \mapsto m$ is clearly a monoid homomorphism. A \emph{splitting} of this projection is a monoid homomorphism $\sigma\colon  M \rightarrow A \rtimes M$ such that $\pi(\sigma(m)) = m$ for all $m \in M$. Associated to any splitting $\sigma$ of $\pi$ is a mapping $d\colon  M \rightarrow A$ defined as the unique function satisfying
\[
\sigma(m) = (d(m),m)
\]
for all $m \in M$. It follows from the fact that $\sigma$ is a homomorphism that the function $d\colon M \rightarrow A$ must satisfy
\begin{equation}\label{eq_der}
d(mm') = d(m) + md(m')
\end{equation}
for all $m, m' \in M$. Any function $d\colon M \rightarrow A$ satisfying \eqref{eq_der} is called a \emph{derivation}.
 A derivation is called \emph{inner} if it is of
the form $d(m)=ma-a$ for some $a\in A$.  It is easy to check that a
mapping $d\colon M\to A$ is a derivation if and only if
$m\mapsto (d(m),m)$ provides a splitting of the semidirect product
projection $A\rtimes M\to M$.

\begin{Lemma}\label{l:acyclic.bs.tree}
The graph $T$ is a tree.
\end{Lemma}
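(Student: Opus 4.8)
The plan is to use the forest criterion already recalled before the statement: since $T$ is connected by Lemma~\ref{l:connected.bs.tree}, it suffices to prove that the boundary map $\partial\colon \ZE\to\ZV$ is injective, for then $T$ is a connected forest and hence a tree. Because there are no normal forms for the elements of $L=M_1\ast_W M_2$, I would not attempt a combinatorial argument on paths in $T$; instead I would follow Dicks and obtain injectivity homologically, by producing a derivation out of $L$ whose existence is forced by the universal property of the pushout, and then using it to split $\partial$ over $\Z$.

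Concretely, recall that with the chosen orientation $\partial([x]_W)=[x]_{M_2}-[x]_{M_1}$, and view $\ZE=\Z[L/W]$ as a left $\ZL$-module. First I would build a derivation $d\colon L\to\ZE$ by gluing two pieces across the pushout: the zero derivation on $M_1$, and the inner derivation $m\mapsto [m]_W-[1]_W$ on $M_2$. The observation that makes the gluing legitimate is that both pieces restrict to the zero map on $W$: for $w\in W$ one has $[w]_W=[1]_W$, so $[w]_W-[1]_W=0$. Passing to the associated splittings $M_i\to \ZE\rtimes L$, which therefore agree on $W$, the universal property of the pushout \eqref{eq:pushout.mon}, applied with target $\ZE\rtimes L$ over its projection to $L$, yields a splitting $\sigma\colon L\to \ZE\rtimes L$, equivalently a derivation $d\colon L\to\ZE$ with $d|_{M_1}=0$ and $d(m)=[m]_W-[1]_W$ for $m\in M_2$. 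Note that $d$ is not inner on $L$, which is precisely why the pushout is needed rather than a single global choice of $a\in\ZE$.

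To finish I would exhibit a $\Z$-linear retraction $r\colon \ZV\to\ZE$ of $\partial$. On the two families of basis vertices set $r([x]_{M_1})=-d(x)$ and $r([x]_{M_2})=[x]_W-d(x)$. The main point to verify is that these are well defined on weak orbits, and here the derivation rule \eqref{eq_der} does the work: for $m\in M_1$ one gets $d(xm)=d(x)+x\,d(m)=d(x)$, so $r([xm]_{M_1})=r([x]_{M_1})$, while for $m\in M_2$ one gets $d(xm)=d(x)+[xm]_W-[x]_W$, whence $[xm]_W-d(xm)=[x]_W-d(x)$. Since the weak-orbit relations on $L/M_1$ and $L/M_2$ are generated by these right multiplications, $r$ descends to $\ZV$. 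A direct computation on the basis then gives $r(\partial([x]_W))=([x]_W-d(x))-(-d(x))=[x]_W$, so $r\partial=\operatorname{id}_{\ZE}$ and $\partial$ is split injective. Hence $T$ is a forest, and with connectivity it is a tree.

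I expect the main obstacle to be conceptual rather than computational: recognising that the correct derivation is obtained by fusing a \emph{zero} derivation on one factor with an \emph{inner} derivation on the other, a combination that has no inner analogue on $L$ itself, and then reading off the formulas for $r$ that it forces. Once the derivation is in hand, the remaining verifications (that the two pieces agree on $W$, that $r$ is well defined on weak orbits, and that $r\partial=\operatorname{id}_{\ZE}$) are all short.
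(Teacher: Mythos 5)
Your proposal is correct and follows essentially the same route as the paper's proof: connectivity plus a splitting of $\partial$ obtained by gluing the zero derivation on $M_1$ with an inner derivation on $M_2$ via the universal property of the pushout applied to $\ZE\rtimes L$. The only differences are an immaterial sign convention (the paper uses $d(m_2)=[1]_W-[m_2]_W$, the inner derivation at $-[1]_W$, and the retraction $\beta([x]_{M_1})=d(x)$, $\beta([x]_{M_2})=d(x)+[x]_W$), and all your verifications match the paper's.
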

\begin{proof}
Since $T$ is connected by Lemma~\ref{l:connected.bs.tree}, it suffices to show that the cellular boundary map $\partial\colon \mathbb ZE\to \mathbb ZV$ is injective.  To show this, we define a  left inverse  $\beta\colon \mathbb ZV\to \mathbb ZE$.  In what follows, we abuse notation by identifying an element of $M_1$, $M_2$ or $W$ with its image in $L$.

First define $\p_1\colon M_1\to \mathbb ZE\rtimes L$ by $\p_1(m_1) = (0,m_1)$.  Then $\p_1$ is clearly a monoid  homomorphism.  Define $\p_2\colon M_2\to \mathbb ZE\rtimes L$ by $\p_2(m_2)  = ([1]_W-[m_2]_W,m_2)$.  Notice that $m_2\mapsto [1]_W-[m_2]_W$ is the inner derivation of the $\mathbb ZM_2$-module $\mathbb ZE$ associated to $-[1]_W\in \mathbb ZE$ and hence $\p_2$ is a homomorphism. Next, we observe that $\p_1f_1=\p_2f_2$. Indeed, if $w\in W$, then $\p_1f_1(w)=(0,w)$ and $\p_2f_2(w) = ([1]_W-[w]_W,w)=(0,w)$ as $[1]_W=[w]_W$.  Thus there is a well defined homomorphism $\p\colon L\to \mathbb ZE\rtimes L$ extending $\p_1,\p_2$ by the universal property of a pushout.  This map must split the semidirect product projection by construction of $\p_1,\p_2$.
Indeed, for all $m_1 \in L$ in the image of $M_1$ we have
$
\p(m_1) = \p_1(m_1) = (0,m_1)
$
and
for all $m_2 \in L$ in the image of $M_2$ we have
\[
\p(m_2) = \p_2(m_2) =
([1]_W - [m_2]_w, m_2).
\]
It follows that for all $m_1 \in L$ in the image of $M_1$ we have $\pi(\p(m_1))=m_1$, and for all $m_2 \in L$ in the image of $M_2$ we have $\pi(\p(m_2))=m_2$. Since, as already observed above, $L$ is generated by the images of $M_1$ and $M_2$ under the natural maps, and since $\pi$ and $\p$ are homomorphisms, we conclude that $\pi(\p(l))=l$ for all $l \in L$, as required.
It follows that  $\p(x) = (d(x),x)$ for some derivation $d\colon L\to \mathbb ZE$ with the property that $d(m_1)=0$ for $m_1\in M_1$ and $d(m_2) = [1]_W-[m_2]_W$ for $m_2\in M_2$.

Define $\beta\colon \mathbb ZV\to \mathbb ZE$ by  $\beta([x]_{M_1}) =d(x)$ and $\beta([x]_{M_2}) = d(x)+[x]_W$ for $x\in L$.  We must show that this is well defined.  First suppose that $x\in L$ and $m_1\in M_1$.  Then $d(xm_1)= xd(m_1)+d(x)=d(x)$ because $d$ vanishes on the image of $M_1$.   If $x\in L$ and $m_2\in M_2$, then
\begin{align*}
d(xm_2)+[xm_2]_W&=xd(m_2)+d(x)+[xm_2]_W \\&= x([1]_W-[m_2]_W)+d(x)+[xm_2]_W = d(x)+[x]_W.
\end{align*}
It follows that $\beta$ is well defined.

We now compute \[\beta\partial([x]_W) = \beta([x]_{M_2})-\beta([x]_{M_1}) = d(x)+[x]_W-d(x)=[x]_W\] for $x\in L$.  Thus $\beta\partial=1_{\mathbb ZE}$ and so $\partial$ is injective.  This completes the proof that $T$ is a tree.
\end{proof}

Since $T$ is a tree we obtain an exact sequence of $\ZL$-modules
\[
0 \longrightarrow \mathbb{Z}E
\xrightarrow{\,\,\partial\,\,} \mathbb{Z}V
\xrightarrow{\,\,\epsilon\,\,} \Z \longrightarrow 0
\]
where $E,V$ are the edge and vertex sets of $T$, respectively.
See~\cite[Theorem 6.6]{DicksAndDunwoody}.
The exactness of this cellular chain complex of $T$ can be reformulated in the following manner.

\begin{Cor}\label{c:exact.mv.begin}
There is an exact sequence of $\ZL$-modules
\[0\longrightarrow \mathbb ZL\otimes_{\mathbb ZW} \mathbb Z\longrightarrow (\mathbb ZL\otimes_{\mathbb ZM_1}\mathbb Z)\oplus (\mathbb ZL\otimes_{\mathbb ZM_2} \mathbb Z)\longrightarrow \mathbb Z\longrightarrow 0\]
 where  $L=M_1\ast_{W} M_2$ is the pushout.
\end{Cor}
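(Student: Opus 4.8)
The plan is to notice that all the substantive work has already been done in Lemma~\ref{l:acyclic.bs.tree}: once $T$ is known to be a tree, its augmented cellular chain complex
\[
0\longrightarrow \mathbb{Z}E \xrightarrow{\,\,\partial\,\,} \mathbb{Z}V \xrightarrow{\,\,\epsilon\,\,} \mathbb{Z}\longrightarrow 0
\]
is exact. Indeed, injectivity of $\partial$ is the forest criterion recorded before the corollary, surjectivity of $\epsilon$ is immediate, and exactness in the middle follows from connectedness (Lemma~\ref{l:connected.bs.tree}); this is exactly the displayed sequence from \cite[Theorem~6.6]{DicksAndDunwoody}. So the corollary reduces entirely to rewriting the three terms of this sequence as the tensor modules in the statement, as an isomorphism of short exact sequences of left $\mathbb{Z}L$-modules.

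For the key identification I would use the $\mathbb{Z}$-linearized form of the preliminary identity $M\backslash B = 1\otimes_M B$. Recall $E = L/W$ and $V = L/M_1 \coprod L/M_2$, the quotients being weak-orbit sets for the indicated right actions. Dually to the cited identity, for a right $K$-set $A$ one has $A/K \cong A\otimes_K 1$ with $1$ the trivial left $K$-set; concretely $A\otimes_K 1$ is the quotient of $A$ by the relation $a\sim ak$, and the equivalence this generates is precisely $\approx$. Applying the free-abelian-group functor, which is a left adjoint and hence commutes with these coequalizers and with coproducts, I would obtain
\[
\mathbb{Z}E \cong \mathbb{Z}L\otimes_{\mathbb{Z}W}\mathbb{Z}, \qquad \mathbb{Z}V\cong (\mathbb{Z}L\otimes_{\mathbb{Z}M_1}\mathbb{Z})\oplus(\mathbb{Z}L\otimes_{\mathbb{Z}M_2}\mathbb{Z}),
\]
the direct sum coming from the coproduct decomposition of $V$. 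Since $L$ is a left $L$- and right $W$- (respectively $M_i$-) biset, the left $L$-action induced on $A\otimes_K 1$ from left multiplication on $L$ makes each of these an isomorphism of left $\mathbb{Z}L$-modules.

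To finish I would check that these identifications carry the canonical maps to the expected ones: under them $\epsilon$ becomes the augmentation on $\mathbb{Z}L\otimes_{\mathbb{Z}M_i}\mathbb{Z}$, and $\partial$ sends the generator $[x]_W$ to $[x]_{M_2}-[x]_{M_1}$, so substituting the isomorphisms into the exact sequence above yields exactly the asserted sequence. I do not expect any genuine obstacle here, since the real analytic content—the injectivity of $\partial$ that makes $T$ a tree—is already established in Lemma~\ref{l:acyclic.bs.tree}. The only point requiring care is the bookkeeping of the left-$L$/right-$K$ biset structures, namely verifying that the weak-orbit identifications are left $\mathbb{Z}L$-equivariant and intertwine $\partial$ and $\epsilon$ with the canonical maps; this is routine and is where I would spend the few lines of the actual write-up.
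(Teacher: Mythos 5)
Your proposal is correct and follows essentially the same route as the paper: the corollary is exactly the exactness of the augmented cellular chain complex of the tree $T$, combined with the identification $\mathbb Z[L/K]\cong \mathbb ZL\otimes_{\mathbb ZK}\mathbb Z$ for $K=M_1,M_2,W$. Your extra care in verifying left $\mathbb ZL$-equivariance and the compatibility of $\partial$ and $\epsilon$ with the canonical maps is exactly the bookkeeping the paper leaves implicit.
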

\begin{proof}
This follows from the definition of $T$, the fact that  $T$ is a tree, and the observation that $\mathbb Z[L/K]\cong \mathbb ZL\otimes_{\mathbb ZK}\mathbb Z$ for $K=M_1,M_2,W$.
\end{proof}

We call $T$ the \emph{Bass--Serre tree} of the pushout.

If $f\colon X\to Y$ and $g\colon X\to Z$ are continuous mappings of topological spaces, the \emph{homotopy pushout} of $f,g$ is the space obtained by attaching $X\times I$ to $Y\coprod Z$ by the mapping $h\colon X\times \partial I\to Y\coprod Z$  with $h(x,0)=f(x)$ and $h(x,1)=g(x)$.  If $X,Y,Z$ are CW complexes and $f,g$ are cellular mappings, then $h$ is cellular and so the homotopy pushout $U$ of $f$ and $g$ is a CW complex.  If, in addition,  $X,Y,Z$ are projective $M$-CW complexes and  $f,g$ are cellular and $M$-equivariant, then $U$ is a projective $M$-CW complex by~\cite[Lemma~2.1]{GraySteinberg1}.
Moreover, by the description of the cells coming from the proof of
\cite[Lemma~2.1]{GraySteinberg1},
if $Y,Z$ have $M$-finite $n$-skeleton and $X$ has $M$-finite $(n-1)$-skeleton (whence $X\times I$ has $M$-finite $n$-skeleton), then $U$ has $M$-finite $n$-skeleton.

The homotopy pushout construction is functorial with respect to commutative diagrams
\[
\begin{tikzcd} Y\ar{r}{f_1}\ar{rd}{r}\ar{d}[swap]{f_2}& X_1\ar{rd}{s} &\\
                 X_2\ar{rd}[swap]{t}      &Y'\ar{r}{g_1}\ar{d}{g_2} & X_1'\\
                 & X_2' & &\end{tikzcd}
\]
Moreover, if $r,s,t$ are homotopy equivalences, then it is well known that the induced mapping of homotopy pushouts is a homotopy equivalence; see for example~\cite[Theorem~4.2.1]{tomDieckBook}, or~\cite[page 19]{DwyerHennBook}
where it is observed that homotopy colimits have the strong homotopy equivalence property.

For the reader's convenience, we shall prove a special case of this fact that will be crucial in what follows. Recall that if $Y$ is a space,  the \emph{suspension} of $Y$ is the space $\Sigma Y=Y\times I/(Y\times \{0\}\cup Y\times \{1\})$. If $Y$ is contractible, then the mapping $\Sigma Y\to I$ induced by the projection $Y\times I\to I$ is a homotopy equivalence.

\begin{Lemma}\label{l:pushout.graph}
Let $M$ be a monoid and $X_1,X_2,Y$ locally path connected $M$-spaces.  Assume that the natural mappings $r_i\colon X_i\to \pi_0(X_i)$, for $i=1,2$, and $r\colon Y\to \pi_0(Y)$ are homotopy equivalences (where the set of path components is given the discrete topology).  Let $f_i\colon Y\to X_i$ be continuous mappings, for $i=1,2$, and let $Z$ be the homotopy pushout of $X_1,X_2$ along $Y$, which is naturally an $M$-space.  Let $\Gamma$ be the $M$-graph with vertex set $\pi_0(X_1)\coprod \pi_0(X_2)$ and edge set $\pi_0(Y)$ where the edge corresponding to $C\in \pi_0(Y)$ connects the component of $f_1(C)$ to the component of $f_2(C)$; this is the homotopy pushout of $\pi_0(X_1)$ and $\pi_0(X_2)$ along $\pi_0(Y)$.  Then the natural $M$-equivariant mapping $h\colon Z\to \Gamma$ is a homotopy equivalence.
\end{Lemma}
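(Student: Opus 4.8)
The plan is to exhibit both $Z$ and $\Gamma$ as ordinary pushouts of spaces along a cofibration and then to invoke the gluing lemma for homotopy equivalences, of which this lemma is exactly the special case advertised just above. First I would pin down the comparison map $h$: on the copies of $X_1$ and $X_2$ inside $Z$ it is $r_1$ and $r_2$, and on the cylinder $Y\times I$ it is $r\times\mathrm{id}_I$ followed by the quotient onto $\pi_0(Y)\times I$. Well-definedness at the two ends of the cylinder is the identity $r_i\circ f_i=g_i\circ r$, which holds by the very definition of the edge attachments in $\Gamma$ (the component of $f_i(C)$ is $g_i(C)$), and $M$-equivariance is automatic since $r,r_1,r_2$ are the quotient maps of $M$-spaces onto their $M$-sets of path components. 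Local path connectedness is used precisely here, to guarantee that path components are open, so that $\pi_0(Y)$ and $\pi_0(X_i)$ really are discrete and $r,r_i$ are continuous. Since the conclusion asks only for an ordinary (not $M$-equivariant) homotopy equivalence, I will not need an equivariant inverse or equivariant homotopies.

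Next I would realize the two homotopy pushouts as strict pushouts. The double mapping cylinder $Z$ is the pushout of the span
\[
X_1\coprod X_2 \xleftarrow{(f_1,f_2)} Y\times\partial I \hookrightarrow Y\times I,
\]
where the left leg sends $(y,0)\mapsto f_1(y)$ and $(y,1)\mapsto f_2(y)$, and the right leg is the inclusion; likewise $\Gamma$ is the pushout of
\[
\pi_0(X_1)\coprod\pi_0(X_2) \xleftarrow{(g_1,g_2)} \pi_0(Y)\times\partial I \hookrightarrow \pi_0(Y)\times I.
\]
The second pushout is a graph because $\pi_0(Y)$ is discrete, each $\{C\}\times I$ becoming an edge from $g_1(C)$ to $g_2(C)$. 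The maps $r\times\mathrm{id}_I$, $r\times\mathrm{id}_{\partial I}=r\coprod r$, and $r_1\coprod r_2$ assemble into a morphism of these two pushout diagrams; the relevant squares commute by the same identity $r_i\circ f_i=g_i\circ r$, and the induced map of pushouts is exactly $h$.

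Finally I would check the hypotheses of the gluing lemma and apply it. The inclusion $Y\times\partial I\hookrightarrow Y\times I$ is a cofibration for an arbitrary space $Y$, obtained by crossing the standard NDR structure on the pair $(I,\partial I)$ with $Y$, and similarly on the $\pi_0$ side. Each comparison map is a homotopy equivalence: $r_1\coprod r_2$ because coproducts of homotopy equivalences are homotopy equivalences, and $r\times\mathrm{id}_I$ and $r\coprod r$ because $r$ is a homotopy equivalence and crossing with the fixed space $I$ (respectively, taking two copies) preserves this. The gluing lemma (the cofibration version of the fact cited above; see~\cite{tomDieckBook}) then gives that $h\colon Z\to\Gamma$ is a homotopy equivalence. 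It is worth recording that the concrete content of this reduction is precisely the suspension fact recalled above: over a single component $C$ of $Y$ whose images lie in contractible components of $X_1$ and $X_2$, the corresponding piece of $Z$ is the suspension $\Sigma C$, and $h$ restricts there to the map $\Sigma C\to I$, which is a homotopy equivalence because $C$ is contractible.

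The step I expect to be the main obstacle is the honest verification of the gluing lemma's hypotheses in this topological (non-CW) generality — chiefly that $Y\times\partial I\hookrightarrow Y\times I$ is a cofibration for an arbitrary space $Y$, and that the three comparison maps are homotopy equivalences — together with the bookkeeping needed to see that the morphism of pushout diagrams genuinely induces the stated map $h$. Once these are in place the conclusion is immediate, so the real work lies entirely in setting up the two pushout squares correctly and in confirming the cofibration and homotopy-equivalence hypotheses.
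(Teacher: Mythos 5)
Your proof is correct, but it takes the route the paper explicitly sets up and then deliberately sidesteps. Just before this lemma the paper recalls the general fact that a morphism of homotopy pushout diagrams whose three components are homotopy equivalences induces a homotopy equivalence of homotopy pushouts (citing tom Dieck and Dwyer--Henn), and states that the lemma is ``a special case of this fact'' to be proved ``for the reader's convenience.'' You prove it by instantiating that general fact: you realise $Z$ and $\Gamma$ as strict pushouts along the cofibrations $Y\times\partial I\hookrightarrow Y\times I$ and $\pi_0(Y)\times\partial I\hookrightarrow \pi_0(Y)\times I$, check that $r_1\coprod r_2$, $r\coprod r$ and $r\times\mathrm{id}_I$ are homotopy equivalences, and invoke the gluing lemma. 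The paper instead gives a hands-on argument: it first collapses each (contractible) path component of $X_1\coprod X_2$ inside $Z$ to a point, identifies the resulting space as the discrete set $\pi_0(X_1)\coprod\pi_0(X_2)$ with a suspension $\Sigma C$ attached for each $C\in\pi_0(Y)$, and concludes from the observation that $\Sigma C\to I$ is a homotopy equivalence when $C$ is contractible. What each approach buys: yours is cleaner and, as you note, valid for arbitrary topological spaces since $Y\times\partial I\hookrightarrow Y\times I$ is always a cofibration, whereas the paper's collapsing step speaks of ``contractible subcomplexes'' and thus implicitly leans on cofibration properties of the inclusions of components; the paper's version, on the other hand, is self-contained and makes the geometric content (the suspension picture) explicit, which you also record at the end. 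Your identification of the comparison map $h$ and the commutativity checks $r_i\circ f_i=g_i\circ r$ match the paper's description of $h$ exactly, so there is no gap.
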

\begin{proof}
The mapping $h$ takes an element of $X_i$ to its path component and an element $(y,t)\in Y\times I$ to $(C,t)$ where $C$ is the component of $y$.  This is well defined, by construction of the homotopy pushout, and is $M$-equivariant.
As the connected components of $X_i$, for $i=1,2$, are disjoint and contractible subcomplexes, $Z$ is homotopy equivalent to the space obtained by contracting each of these subcomplexes to a point.  Then $Z$ has the homotopy type of the CW complex obtained by adjunction of $\coprod_{C\in \pi_0(Y)} \Sigma C$ to the discrete set $\pi_0(X_1)\coprod \pi_0(X_2)$ where $\Sigma C$ is attached via the mapping sending $(y,0)$ to the component of $f_1(C)$ and $(y,1)$ to the component of $f_2(C)$.  Since the mapping $\Sigma C\to I$ induced by the projection $C\times I\to I$ is a homotopy equivalence by contractibility of $C$, it follows that $h$ is a homotopy equivalence.  This completes the proof.
\end{proof}

We now prove some preservation results for amalgamated free products.   We shall apply the observation in Remark~\ref{r:tensor.with.free} without comment.

\begin{Thm}\label{t:bass.serre.free}
Let $[M_1,M_2;W]$ be an amalgam of monoids such that $M_1,M_2$ are free as right $W$-sets.  If $M_1,M_2$ are of type left-$\F_n$ and $W$ is of type left-$\F_{n-1}$, then $M_1\ast_W M_2$ is of type left-$\F_n$.
\end{Thm}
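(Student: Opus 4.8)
The plan is to build a contractible projective $L$-CW complex for $L=M_1\ast_W M_2$ modelled on the Bass--Serre tree $T$ of the pushout, placing an equivariant classifying space of the relevant factor over each vertex- and edge-orbit and gluing by a homotopy pushout. Fix equivariant classifying spaces $X_1,X_2$ for $M_1,M_2$ with $M_i$-finite $n$-skeleton (available since $M_i$ is of type left-$\F_n$) and an equivariant classifying space $Y$ for $W$ with $W$-finite $(n-1)$-skeleton (available since $W$ is of type left-$\F_{n-1}$). Since $M_i\otimes_W Y$ is a projective $M_i$-CW complex by Proposition~\ref{c:base.change.cw} and $X_i$ is contractible, the universal property of equivariant classifying spaces from~\cite{GraySteinberg1} (any projective $M_i$-CW complex admits an $M_i$-equivariant map, unique up to $M_i$-homotopy, into a contractible one) yields $M_i$-equivariant cellular maps $g_i\colon M_i\otimes_W Y\to X_i$.

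Next I would induce everything up to $L$. Viewing $L$ as an $L$-$M_i$-biset and as an $L$-$W$-biset, free of rank one as a left $L$-set on $1$, Proposition~\ref{c:base.change.cw} shows that $\widetilde X_i:=L\otimes_{M_i}X_i$ and $\widetilde Y:=L\otimes_W Y$ are projective $L$-CW complexes, with $\widetilde X_i$ of $L$-finite $n$-skeleton and $\widetilde Y$ of $L$-finite $(n-1)$-skeleton. Using the associativity isomorphism $L\otimes_{M_i}(M_i\otimes_W Y)\cong L\otimes_W Y=\widetilde Y$, the maps $g_i$ induce $L$-equivariant cellular maps $f_i\colon\widetilde Y\to\widetilde X_i$. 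Let $Z$ be the homotopy pushout of $f_1$ and $f_2$. By the discussion preceding Lemma~\ref{l:pushout.graph}, $Z$ is a projective $L$-CW complex, and since $\widetilde X_1,\widetilde X_2$ have $L$-finite $n$-skeleton while $\widetilde Y$ has $L$-finite $(n-1)$-skeleton, $Z$ has $L$-finite $n$-skeleton. It then remains only to prove that $Z$ is contractible, for then $Z$ is an equivariant classifying space witnessing that $L$ is of type left-$\F_n$.

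For contractibility I would apply Lemma~\ref{l:pushout.graph} to $f_1,f_2$. Because base change commutes with $\pi_0$ (both are left adjoints, so preserve colimits, and products with discrete sets are preserved) and $X_i,Y$ are contractible, one computes $\pi_0(\widetilde X_i)\cong L\otimes_{M_i}\{\ast\}=L/M_i$ and $\pi_0(\widetilde Y)\cong L/W$, with $f_i$ inducing $[x]_W\mapsto[x]_{M_i}$. Hence the graph $\Gamma$ produced by Lemma~\ref{l:pushout.graph} is precisely the Bass--Serre tree $T$, which is a tree by Lemmas~\ref{l:connected.bs.tree} and~\ref{l:acyclic.bs.tree} and so contractible; thus $Z\simeq T\simeq\ast$. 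The only hypothesis of Lemma~\ref{l:pushout.graph} still needing verification is that the path components of $\widetilde X_i$ and of $\widetilde Y$ are contractible.

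This last point is exactly where the hypothesis that $M_1,M_2$ are free as right $W$-sets is used, and it is the main obstacle. Decomposing $L$ into weak orbits of the right $M_i$-action exhibits $\widetilde X_i$ as the disjoint union over $O\in L/M_i$ of the connected complexes $O\otimes_{M_i}X_i$, so I must show each such component is contractible; the cleanest sufficient condition is that every weak orbit $O$ is isomorphic to $M_i$, i.e.\ that $L$ is free as a right $M_i$-set (then $O\otimes_{M_i}X_i\cong X_i$, and by Remark~\ref{r:tensor.with.free} likewise $\widetilde Y\cong\coprod_{L/W}Y$). I would prove that $M_i$ free as a right $W$-set forces $L$ to be free as a right $M_i$-set --- the monoid analogue of the fact that $G_1\ast_H G_2$ is a free right $G_1$-set --- by exhibiting the reduced-sequence model for the pushout (alternating products of nonidentity basis elements of $M_1$ and $M_2$ over $W$, followed by an element of $M_i$) and checking that it satisfies the universal property of~\eqref{eq:pushout.mon}; the freeness of both factors over $W$ is precisely what makes the reduction well defined, so that normal forms exist here even though they fail for general monoid amalgams. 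Freeness of $L$ as a right $W$-set then follows by composing the free extensions $L$ over $M_i$ and $M_i$ over $W$. With contractible components established, Lemma~\ref{l:pushout.graph} applies, $Z$ is contractible, and the proof is complete.
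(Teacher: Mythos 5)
Your proposal is correct and follows essentially the same route as the paper: induce the classifying spaces $X_1,X_2,Y$ up to $L$ by tensoring, form the homotopy pushout, and identify it up to homotopy with the Bass--Serre tree $T$ via Lemma~\ref{l:pushout.graph}, using freeness of $L$ as a right $M_i$- and $W$-set to see that the components of the induced complexes are contractible. The one point where you diverge is that you propose to \emph{prove} that $M_i$ free as a right $W$-set forces $L=M_1\ast_W M_2$ to be free as a right $M_i$-set (and hence as a right $W$-set) by constructing a reduced-word normal form for the pushout; the paper instead simply quotes this from McDuff~\cite{McDuff1979}. Your sketch of that normal-form argument is plausible and is essentially McDuff's result, but it is the only ingredient you have not actually carried out, and it can be replaced by the citation; everything else in your argument matches the paper's proof step for step.
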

\begin{proof}
Let $X_i$ be an equivariant classifying space for $M_i$ with $M_i$-finite $n$-skeleton, for $i=1,2$, and let $Y$ be an equivariant classifying space for $W$ with $W$-finite $(n-1)$-skeleton.  By
\cite[Lemma~6.2]{GraySteinberg1}
and the cellular approximation theorem \cite[Theorem 2.8]{GraySteinberg1}, we can find $W$-equivariant cellular mappings $f_i\colon Y\to X_i$, for $i=1,2$.  Let $L=M_1\ast_W M_2$.  By McDuff~\cite{McDuff1979}, $L$ is a free right $M_i$-set, for $i=1,2$, and a free right $W$-set.  Then $X_i'=L\otimes_{M_i} X_i$, for $i=1,2$, is a projective $L$-CW complex with $L$-finite $n$-skeleton and $Y'=L\otimes_W Y$ is a projective $L$-CW complex with $L$-finite $(n-1)$-skeleton by Proposition~\ref{c:base.change.cw}.  Let $\tilde{f}_i\colon Y'\to X_i'$ be the map induced by $f_i$, for $i=1,2$, and let $Z$ be the homotopy pushout of $\tilde{f}_1,\tilde{f}_2$.  It is a projective $L$-CW complex.   We claim that $Z$ is an equivariant classifying space for $L$.  Note that $Z$ has an $L$-finite $n$-skeleton by construction.

Our goal is to show that $Z$ is homotopy equivalent to the Bass--Serre tree $T$. By
\cite[Proposition~3.4]{GraySteinberg1},
we have that $\pi_0(X_i')\cong L\otimes_{M_i} \pi_0(X_i)\cong L/M_i$ and $\pi_0(Y')\cong L\otimes_W \pi_0(Y)\cong L/W$ and $f_i$ induces the natural mapping $L/W\to L/M_i$ under these identifications, for $i=1,2$.    As $X_i'\cong L/M_i\times X_i$ and $Y'\cong L/W\times Y$ (by freeness of $L$ as a right $K$-set for $K=M_1,M_2,W$) and $X_i$, for $i=1,2$, and $Y$ are contractible, the projections $X_i'\to \pi_0(X_i')$, for $i=1,2$, and $Y'\to \pi_0(Y')$ are homotopy equivalences.  It follows that $Z$ is homotopy equivalent to $T$, by Lemma~\ref{l:pushout.graph}, and hence contractible.  This completes the proof.
\end{proof}

Note that we do not assume that the monoids $M_1$ and $M_2$ are finitely generated, or finitely presented, in the above result. Recall that a monoid can be of type left-$\F_2$ without being finitely presented, and can be of type left-$\F_1$ without being finitely generated; see~\cite[Section~6]{GraySteinberg1}.
The hypotheses of Theorem~\ref{t:bass.serre.free} hold if $W$ is trivial or if $M_1,M_2$ are left cancellative and $W$ is a group. As another example, if we consider $\mathbb N$, then, for any $k>0$, $\mathbb N$ is a free $k\mathbb N$-set with basis $\{0,1,\ldots, k-1\}$. Since $k\mathbb N\cong \mathbb N$, it follows from Theorem~\ref{t:bass.serre.free} that $\mathbb N\ast_{k\mathbb N=m\mathbb N} \mathbb N$ is of type left-$\F_{\infty}$, as $\mathbb N$ is of type left-$\F_{\infty}$, for any $k,m>0$. As a special case of Theorem~\ref{t:bass.serre.free} we obtain the following result as a corollary.  

\begin{Cor}\label{c:free.prod.fp}
A free product $M\ast N$  of monoids of type left-$\F_n$ is of type left-$\F_n$. If $M,N$ are finitely presented monoids, then $M\ast N$ is of type left-$\Fn$ if and only if $M$ and $N$ both are of type left-$\Fn$.
\end{Cor}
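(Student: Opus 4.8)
The plan is to present $M\ast N$ as the amalgamated free product $M\ast_W N$ over the trivial monoid $W=\{1\}$ and treat the two implications separately.

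For the first assertion (which is also the ``if'' half of the second) I would simply invoke Theorem~\ref{t:bass.serre.free}. Every monoid is free as a right $\{1\}$-set, with itself as a basis, so the hypothesis that $M$ and $N$ be free right $W$-sets is automatic; the trivial monoid has a one-point equivariant classifying space with finite skeleta in every dimension, hence is of type left-$\F_\infty$ and in particular of type left-$\F_{n-1}$; and the pushout over $W=\{1\}$ is exactly the coproduct $M\ast N$. Thus if $M,N$ are of type left-$\F_n$ then so is $M\ast N$, with no finite-presentability hypothesis required, and dually for right-$\F_n$.

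For the ``only if'' half I would argue homologically. Since left-$\F_n$ always implies left-$\FP_n$, it suffices to show that if the trivial module $\Z$ is of type $\FP_n$ over $\ZL$, where $L=M\ast N$, then it is of type $\FP_n$ over $\ZM$ (and symmetrically over $\ZN$); finite presentability of $M$ and $N$ will then upgrade left-$\FP_n$ back to left-$\F_n$. The first step is to feed the Mayer--Vietoris sequence of Corollary~\ref{c:exact.mv.begin}, taken with $W=\{1\}$ so that $\ZW=\Z$, namely the short exact sequence of $\ZL$-modules $0\to \ZL\to(\ZL\otimes_{\ZM}\Z)\oplus(\ZL\otimes_{\ZN}\Z)\to\Z\to 0$, into the standard behaviour of $\FP_n$ under short exact sequences: the outer terms $\ZL$ and $\Z$ are of type $\FP_n$ over $\ZL$ (the former being finitely generated free, the latter by hypothesis), hence so is the middle term, and since $\FP_n$ is closed under direct summands, each of $\ZL\otimes_{\ZM}\Z$ and $\ZL\otimes_{\ZN}\Z$ is of type $\FP_n$ over $\ZL$.

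The crux is then to \emph{descend} this finiteness from $\ZL$ to $\ZM$, i.e.\ to deduce from ``$\ZL\otimes_{\ZM}\Z$ is of type $\FP_n$ over $\ZL$'' that ``$\Z$ is of type $\FP_n$ over $\ZM$''; this is converse to the flat base change of Lemma~\ref{l:flat.base} and is where the free-product structure does the real work. By McDuff's theorem $L$ is a free right $M$-set, so $\ZL$ is free, hence flat, as a right $\ZM$-module and the induction functor $\ZL\otimes_{\ZM}(-)$ is exact; meanwhile the retraction $\pi\colon L\to M$ that is the identity on $M$ and the trivial map on $N$ (well defined by the universal property of the pushout) splits the inclusion $\iota\colon M\to L$. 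I would prove the descent by induction on $n$: the splitting $\pi_*\colon\ZL\to\ZM$ furnishes, for any left $\ZM$-module $V$, a $\ZM$-linear retraction $\rho\colon \ZL\otimes_{\ZM}V\to V$, $x\otimes v\mapsto\pi_*(x)v$, of the unit map $v\mapsto 1\otimes v$, and a short computation shows that the images under $\rho$ of a finite $\ZL$-generating set of $\ZL\otimes_{\ZM}V$ generate $V$ over $\ZM$, which settles $n=0$; for the inductive step one covers $V$ by a finitely generated free $\ZM$-module, applies the exact functor $\ZL\otimes_{\ZM}(-)$, and combines the usual $\FP_n$/$\FP_{n-1}$ relations in the resulting short exact sequence (in the spirit of Corollary~\ref{c:fp.resolved}) with the inductive hypothesis. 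Taking $V=\Z$ yields that $M$ is of type left-$\FP_n$, and the argument for $N$ is identical. I expect this descent to be the main obstacle: for a \emph{general} monoid retract $\ZL$ need not be flat over $\ZM$ and the downward transfer can genuinely fail, so it is precisely the McDuff freeness of $L$ over $M$ together with the splitting $\pi_*$ that must be exploited.
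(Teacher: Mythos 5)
Your proof of the first assertion coincides with the paper's: both apply Theorem~\ref{t:bass.serre.free} to the amalgam over the trivial monoid, for which the freeness hypothesis is automatic. For the converse you take a genuinely different route. The paper, after reducing left-$\F_n$ to left-$\FP_n$ via finite presentability exactly as you do, simply invokes Pride's theorem~\cite{Pride2006} that a retract of a monoid of type left-$\FP_n$ is of type left-$\FP_n$, applied to the retractions $M\ast N\to M$ and $M\ast N\to N$. You instead prove the special case of that retract theorem needed here from first principles: you isolate $\ZL\otimes_{\ZM}\Z$ as an $\FP_n$ direct summand using the Mayer--Vietoris sequence of Corollary~\ref{c:exact.mv.begin} (this step also uses the standard, though unstated in the paper, fact that $\FP_n$ passes to direct summands, not just direct sums), and you then descend finiteness from $\ZL$ to $\ZM$ by induction on $n$, using McDuff freeness of $L$ over $M$ (so that $\ZL\otimes_{\ZM}(-)$ is exact) together with the $\ZM$-linear retraction $\rho$ induced by $\pi_*$. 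I checked the descent and it is sound: the base case $n=0$ works because $\rho$ carries a finite $\ZL$-generating set of elementary tensors to a finite $\ZM$-generating set of $V$, and the inductive step is the usual dimension shift through a finitely generated free cover. What each approach buys: the paper's argument is two lines but leans on an external result that is valid for arbitrary monoid retracts with no flatness hypothesis; yours is self-contained within the machinery already built in this paper (the Bass--Serre exact sequence, flat base change, and the $\FP$-calculus of Corollary~\ref{c:fp.resolved}), at the cost of being tied to the free-product situation --- as you rightly observe, for a general retract $\ZL$ need not be flat over $\ZM$ and your descent would not go through, so your argument does not recover Pride's theorem in full generality.
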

\begin{proof}
If $M$ and $N$ are of type left-$\F_n$, then $M \ast N$ is of type left-$\F_n$ by Theorem~\ref{t:bass.serre.free} as $M,N$ are free $\{1\}$-sets.  Conversely, if $M,N$ are finitely presented, then so is $M\ast N$ and hence left $\Fn$ is equivalent to left-$\FPn$ for these monoids.  A result of Pride~\cite{Pride2006} says that a retract of a left-$\FPn$ monoid is left-$\FPn$.  As $M,N$ are retracts of $M\ast N$, the converse follows.
\end{proof}

The fact that for finitely presented monoids $M,N$ of type left-$\FP_n$, the free product $M\ast N$ is of type left-$\FP_n$ was first proved in~\cite[Theorem~5.5]{CremannsOtto1998}.

The following corollary is classical.

\begin{Cor}
If $[G_1,G_2;H]$ is an amalgam of groups with $G_1,G_2$ of type left-$\F_n$ and $H$ of type left-$\F_{n-1}$, then $G_1\ast_H G_2$ is of type left $\F_n$.
\end{Cor}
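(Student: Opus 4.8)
The plan is to read this off Theorem~\ref{t:bass.serre.free}, the only genuine work being to check its hypotheses in the group setting and to identify the monoid pushout appearing in that theorem with the classical group amalgam. First I would verify that $G_1$ and $G_2$ are free as right $H$-sets. Since $H$ is a subgroup of the group $G_i$, right multiplication by $H$ partitions $G_i$ into the left cosets $\{gH\}$, and this action is free because $G_i$ is a group, so every stabilizer is trivial; any transversal for the left cosets is then a free basis, exhibiting $G_i$ as a free right $H$-set. Recalling from Section~\ref{sec_prelims} that for a group the monoid notion of left-$\F_n$ agrees with the usual group-theoretic $\F_n$, the stated hypotheses say exactly that $G_1,G_2$ are of type left-$\F_n$ and $H$ is of type left-$\F_{n-1}$ as monoids. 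Theorem~\ref{t:bass.serre.free} then immediately yields that the monoid pushout $L=G_1\ast_H G_2$ is of type left-$\F_n$.

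It remains to see that this monoid pushout is in fact a group, and coincides with the classical amalgamated free product, so that the conclusion genuinely concerns a group. Here I would argue that $L$ is generated by the images of $G_1$ and $G_2$ under the canonical maps, and that each such image element is a \emph{unit} of $L$, since its inverse in $G_i$ maps to a two-sided inverse in $L$. Thus $L$ is generated by its units; as the units of any monoid form a subgroup, this forces $L$ to equal its own group of units, i.e.\ $L$ is a group. Writing $P$ for the group amalgamated free product of $G_1,G_2$ over $H$, the monoid universal property of $L$ gives a homomorphism $L\to P$ (since $P$ is a monoid with compatible maps from $G_1,G_2$ agreeing on $H$), while the group universal property of $P$ gives a homomorphism $P\to L$ (since $L$ is now a group with the same compatible maps); by uniqueness these are mutually inverse, so $L\cong P$.

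Finally, since $L$ is a group, its being of type left-$\F_n$ as a monoid is the same as being of type $\F_n$ as a group, again by the agreement of the two notions recalled in the preliminaries, giving the result. I expect the freeness check and the functoriality to be routine; the one point that really requires care is the identification in the middle paragraph, namely confirming that the monoid colimit does not produce something larger or stranger than the group amalgam. This is exactly the subtlety flagged earlier in the section, that monoid amalgams are far wilder than group ones, and it is tamed here precisely because every generator of $L$ is invertible.
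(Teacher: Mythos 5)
Your proposal is correct and follows the same route as the paper, which simply observes that $G_1,G_2$ are free $H$-sets and invokes Theorem~\ref{t:bass.serre.free}. Your additional middle paragraph, checking that the monoid pushout is generated by units and hence coincides with the classical group amalgam, is a point the paper leaves implicit, and your argument for it is sound.
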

\begin{proof}
Since $G_1,G_2$ are free left $H$-sets, this follows from Theorem~\ref{t:bass.serre.free}.
\end{proof}

The homotopy pushout construction in the proof of Theorem~\ref{t:bass.serre.free} also serves to establish the following.

\begin{Thm}\label{t:amalg.cd}
  Let $[M_1,M_2;W]$ be an amalgam of monoids such that $M_1,M_2$ are
  free as right $W$-sets.  Suppose that $d_i$ is the left geometric
  dimension of $M_i$, for $i=1,2$, and $d$ is the left geometric
  dimension of $W$. Then the left geometric dimension of
  $M_1\ast_W M_2$ is bounded above by $\max\{d_1,d_2,d+1\}$.
\end{Thm}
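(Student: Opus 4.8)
The plan is to re-run the homotopy pushout construction from the proof of Theorem~\ref{t:bass.serre.free}, but this time to choose the equivariant classifying spaces of the three monoids so that they realize the geometric dimensions, and then simply to bookkeep the dimensions of the cells produced by the homotopy pushout. Concretely, I would choose a left equivariant classifying space $X_i$ for $M_i$ with $\dim X_i=d_i$ (for $i=1,2$) and a left equivariant classifying space $Y$ for $W$ with $\dim Y=d$; these exist by the definition of left geometric dimension.

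Next I would produce $W$-equivariant cellular maps $f_i\colon Y\to X_i$ using \cite[Lemma~6.2]{GraySteinberg1} together with the cellular approximation theorem \cite[Theorem~2.8]{GraySteinberg1}, exactly as in Theorem~\ref{t:bass.serre.free}. Writing $L=M_1\ast_W M_2$, McDuff's theorem~\cite{McDuff1979} gives that $L$ is free as a right $M_i$-set and as a right $W$-set, so by Proposition~\ref{c:base.change.cw} the base-changed complexes $X_i'=L\otimes_{M_i}X_i$ and $Y'=L\otimes_W Y$ are projective $L$-CW complexes with $\dim X_i'=\dim X_i=d_i$ and $\dim Y'=\dim Y=d$, since base change along a free biset preserves dimension. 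Let $Z$ be the homotopy pushout of the induced maps $\tilde f_i\colon Y'\to X_i'$. The proof of Theorem~\ref{t:bass.serre.free} shows verbatim that $Z$ is a projective $L$-CW complex homotopy equivalent (via Lemma~\ref{l:pushout.graph}) to the Bass--Serre tree $T$, hence contractible, so $Z$ is a left equivariant classifying space for $L$.

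It then remains to bound $\dim Z$. By construction the homotopy pushout is obtained from $X_1'\coprod X_2'$ by attaching $Y'\times I$ along $Y'\times\partial I$. Using the cell description of $Y'\times I$ recalled in Section~\ref{sec_prelims}, the cells of $Y'\times I$ not already lying in $Y'\times\partial I$ are products of cells of $Y'$ with the open interval, so the top-dimensional ones have dimension $\dim Y'+1=d+1$. Hence $\dim Z\leq\max\{\dim X_1',\dim X_2',\dim Y'+1\}=\max\{d_1,d_2,d+1\}$, and since $Z$ is a left equivariant classifying space for $L$ this yields $\mathop{\mathrm{left \; gd}} L\leq\max\{d_1,d_2,d+1\}$, as required.

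I do not expect any serious obstacle: all of the homotopy-theoretic content has already been carried out in Theorem~\ref{t:bass.serre.free}, and the only genuinely new ingredient is the elementary dimension count for the homotopy pushout, which rests on the facts that base change preserves dimension (Proposition~\ref{c:base.change.cw}) and that forming $Y'\times I$ raises dimension by exactly one. The one point worth double-checking is that $Z$ is a classifying space with no finiteness hypotheses on the skeleta, but this is immediate since the contractibility argument through Lemma~\ref{l:pushout.graph} never uses finiteness.
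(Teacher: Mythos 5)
Your proposal is correct and is exactly the argument the paper intends: the paper gives no separate proof of Theorem~\ref{t:amalg.cd}, stating only that ``the homotopy pushout construction in the proof of Theorem~\ref{t:bass.serre.free} also serves to establish'' it, and your dimension count (base change preserves dimension by Proposition~\ref{c:base.change.cw}, and the new cells coming from $Y'\times I$ have dimension at most $\dim Y'+1$) is precisely the intended bookkeeping. No issues.
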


\begin{Cor}\label{c:free.prod.geom}
  Let $M$ and $N$ be monoids of left geometric dimension at most $n$.
  Then $M\ast N$ has left geometric dimension at most $\max\{n,1\}$.
\end{Cor}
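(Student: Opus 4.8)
The plan is to deduce this immediately from Theorem~\ref{t:amalg.cd} by recognizing a free product as a degenerate amalgamated free product. First I would observe that $M\ast N=M\ast_{\{1\}} N$, i.e.\ the free product is exactly the pushout \eqref{eq:pushout.mon} taken over the trivial monoid $W=\{1\}$ (the maps $f_i\colon \{1\}\to M,N$ being the unique unital homomorphisms). This identification lets me invoke the already-established geometric-dimension bound for amalgams.

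Next I would check that the hypotheses of Theorem~\ref{t:amalg.cd} are satisfied in this setting. The theorem requires that $M$ and $N$ be free as right $W$-sets. With $W=\{1\}$, a right $\{1\}$-set is just a set and a free right $\{1\}$-set on a basis $B$ is $B$ itself, so both $M$ and $N$ are trivially free as right $\{1\}$-sets and the freeness hypothesis holds automatically. I would then note that the left geometric dimension of the trivial monoid is $0$: a single point is a contractible free $\{1\}$-CW complex of dimension $0$, hence an equivariant classifying space for $\{1\}$, so $d=0$ in the notation of Theorem~\ref{t:amalg.cd}.

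Finally I would apply Theorem~\ref{t:amalg.cd} with $d_1=\mathop{\mathrm{left \; gd}} M\leq n$, $d_2=\mathop{\mathrm{left \; gd}} N\leq n$, and $d=0$, which gives
\[
\mathop{\mathrm{left \; gd}}(M\ast N)\leq \max\{d_1,d_2,d+1\}=\max\{d_1,d_2,1\}\leq \max\{n,1\},
\]
as required. (The result is of course dual for right geometric dimension, via $M^{op}\ast N^{op}\cong (M\ast N)^{op}$.)

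There is no real obstacle here; all of the genuine content—constructing the Bass--Serre tree, the homotopy-pushout classifying space, and the dimension count—was already carried out in Theorem~\ref{t:amalg.cd} and its proof. The only points that need a word of justification are the two easy verifications above: that the freeness condition is vacuous over the trivial monoid, and that $\mathop{\mathrm{gd}}\{1\}=0$, which accounts for the extra summand $d+1=1$ and hence for the $\max\{n,1\}$ rather than $n$ in the conclusion.
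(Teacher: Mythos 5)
Your proposal is correct and is exactly the argument the paper intends: Corollary~\ref{c:free.prod.geom} is stated as an immediate specialization of Theorem~\ref{t:amalg.cd} to the amalgam $[M,N;\{1\}]$, where the freeness hypothesis is vacuous and the trivial monoid has left geometric dimension $0$, yielding the bound $\max\{d_1,d_2,1\}\leq\max\{n,1\}$. Nothing further is needed.
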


We now wish to prove a homological analogue of
Theorem~\ref{t:bass.serre.free}.

\begin{Thm}\label{t:bass.serre.flat}
Let $[M_1,M_2;W]$ be an amalgam of monoids such that $\mathbb ZL$ is flat as a right $\mathbb ZM_1$-, $\mathbb ZM_2$- and $\mathbb ZW$-module, where $L=M_1\ast_W M_2$.  If $M_1,M_2$ are of type left-$\FP_n$ and $W$ is of type left-$\FP_{n-1}$, then $M_1\ast_W M_2$ is of type left-$\FP_n$.
\end{Thm}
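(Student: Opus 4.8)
The plan is to run the algebraic shadow of the proof of Theorem~\ref{t:bass.serre.free}. Rather than build a contractible $L$-CW complex and collapse it onto the Bass--Serre tree, I would start directly from the short exact Mayer--Vietoris sequence of $\mathbb ZL$-modules that Corollary~\ref{c:exact.mv.begin} has already extracted from the tree,
\[0\longrightarrow \mathbb ZL\otimes_{\mathbb ZW}\mathbb Z\longrightarrow (\mathbb ZL\otimes_{\mathbb ZM_1}\mathbb Z)\oplus (\mathbb ZL\otimes_{\mathbb ZM_2}\mathbb Z)\longrightarrow \mathbb Z\longrightarrow 0,\]
and read it as a two-term partial resolution $C_1\to C_0\to \mathbb Z$ of the trivial left $\mathbb ZL$-module, with vertex part $C_0=(\mathbb ZL\otimes_{\mathbb ZM_1}\mathbb Z)\oplus (\mathbb ZL\otimes_{\mathbb ZM_2}\mathbb Z)$ and edge part $C_1=\mathbb ZL\otimes_{\mathbb ZW}\mathbb Z$. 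The aim is then to verify that $C_0$ is of type $\FP_n$ and $C_1$ of type $\FP_{n-1}$ over $\mathbb ZL$, and to conclude by Brown's machinery in the form of Corollary~\ref{c:fp.resolved}(1).

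First I would feed the flatness hypotheses into Lemma~\ref{l:flat.base}. Since $M_i$ is of type left-$\FP_n$, the trivial module $\mathbb Z$ is of type $\FP_n$ over $\mathbb ZM_i$; as $\mathbb ZL$ is flat as a right $\mathbb ZM_i$-module, Lemma~\ref{l:flat.base}(1) (applied to the canonical ring homomorphism $\mathbb ZM_i\to \mathbb ZL$) gives that $\mathbb ZL\otimes_{\mathbb ZM_i}\mathbb Z$ is of type $\FP_n$ as a left $\mathbb ZL$-module, for $i=1,2$. The same lemma applied to $W$, which is of type left-$\FP_{n-1}$, together with the flatness of $\mathbb ZL$ over $\mathbb ZW$, shows that $C_1=\mathbb ZL\otimes_{\mathbb ZW}\mathbb Z$ is of type $\FP_{n-1}$. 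Since the class of modules of type $\FP_n$ is closed under direct sums (as recalled just before Lemma~\ref{l:brown}), the module $C_0$ is of type $\FP_n$.

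With $C_0$ of type $\FP_n$ and $C_1$ of type $\FP_{n-1}$ in hand, the degrees match the hypotheses of Corollary~\ref{c:fp.resolved}(1) exactly: padding the partial resolution with zero modules in degrees $\geq 2$, which are trivially of type $\FP_\infty$, makes $C_i$ of type $\FP_{n-i}$ for every $0\leq i\leq n$. The corollary then yields that the trivial left $\mathbb ZL$-module $\mathbb Z$ is of type $\FP_n$, that is, $L=M_1\ast_W M_2$ is of type left-$\FP_n$, as required.

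I do not expect a genuine obstacle here. The substantive content, namely that the graph $T$ is a tree and hence that the two-term complex above is exact, was already done in Lemmas~\ref{l:connected.bs.tree} and~\ref{l:acyclic.bs.tree}; and the flatness that in the topological argument had to be manufactured from freeness (via McDuff) is here simply assumed, which is precisely what Lemma~\ref{l:flat.base} consumes. The only point demanding a little care is the degree bookkeeping in Corollary~\ref{c:fp.resolved}(1): the edge module sits in homological degree one and so needs only $\FP_{n-1}$, whereas the vertex module in degree zero needs the full $\FP_n$. This is exactly the reason the hypothesis on the amalgamating submonoid $W$ may be weakened by one degree relative to the factors $M_1,M_2$.
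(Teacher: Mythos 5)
Your proposal is correct and follows exactly the paper's own argument: apply Lemma~\ref{l:flat.base} with the flatness hypotheses to see that the vertex and edge terms of the exact sequence in Corollary~\ref{c:exact.mv.begin} are of type $\FP_n$ and $\FP_{n-1}$ respectively, then invoke Corollary~\ref{c:fp.resolved}. The paper states this in two sentences; your version merely spells out the direct-sum closure and the degree bookkeeping that the paper leaves implicit.
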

\begin{proof}
By Lemma~\ref{l:flat.base} and the hypotheses, we deduce that $\mathbb ZL\otimes_{\mathbb ZM_i} \mathbb Z$ is of type $\FP_n$, for $i=1,2$, and $\mathbb ZL\otimes_{\mathbb ZW}\mathbb Z$ is of type $\FP_{n-1}$.  The result now follows by applying Corollary~\ref{c:fp.resolved} to the exact sequence in Corollary~\ref{c:exact.mv.begin}.
\end{proof}

\begin{remark}
It is reasonable to consider whether it might be possible to weaken the hypothesis of Theorem~\ref{t:bass.serre.flat} to just assuming that $\mathbb ZM_1$ and $\mathbb ZM_2$ are flat as $\mathbb ZW$-modules.
In~\cite[Lemma~5.2(a)]{Fiedorowicz1984}, Fiedorowicz claims that if $[M_1,M_2;W]$ is an amalgam of monoids such that $\mathbb ZM_1$ and $\mathbb ZM_2$ are flat as left $\mathbb ZW$-modules, then $\mathbb ZL$ (where $L=M_1\ast_W M_2$) is flat as a left $\mathbb ZM_i$-module, for $i=1,2$, and as a left $\mathbb ZW$-module. Unfortunately, his result is not correct. The following counterexample to~\cite[Lemma~5.2(a)]{Fiedorowicz1984} is due to Tyler Lawson (see~\cite{MOExample}), whom we thank for allowing us to reproduce it.
Let
\[
M_1 = \langle a, a^{-1} \mid aa^{-1} = 1, \; a^{-1}a = 1 \rangle, \quad
W = \{ b \}^*, \quad \mbox{and} \quad
M_2 = \{ c,d \}^*.
\]
So $M_1$ is isomorphic to the infinite cyclic group, and $W$ and $M_2$ are the free monoids of ranks $1$ and $2$, respectively. Let $f_1\colon W \rightarrow M_1$ be the homomorphism which maps $b \mapsto a$, let $f_2\colon W \rightarrow M_2$ be the homomorphism which maps $b \mapsto c$, and let $L$ be the monoid amalgam $[M_1,M_2;W]$ with respect to the embeddings $f_1$ and $f_2$.
Then $L$ is isomorphic to the monoid with presentation
\[
\langle a,a^{-1}, d \mid aa^{-1} = 1, a^{-1}a = 1 \rangle,
\]
that is, to $\mathbb Z\ast \{d\}^*$.  

As the commutative ring $\mathbb ZM_1$ is a localization of $\mathbb ZW$, it is clearly flat as a left $\mathbb ZW$-module.  Since $W$ is a free factor in $M_2$, we have that $M_2$ is a free left $W$-set and hence $\mathbb ZM_2$ is a free left $\mathbb ZW$-module (and thus flat). On the other hand, $\mathbb{Z}L$ is not flat as a left $\mathbb{Z}M_2$-module. This may be shown by considering the exact sequence of $\mathbb{Z}M_2$-modules
\[
0 \rightarrow \mathbb{Z}M_2 \oplus \mathbb{Z}M_2
\rightarrow \mathbb{Z}M_2
\rightarrow \mathbb{Z}
\rightarrow 0,
\]
where the first map sends $(u,v)$ to $uc + vd$, and the second sends $c$ and $d$ to zero. Here $\mathbb{Z}$ is made a left $\mathbb{Z}M_2$-module by having $c$ and $d$ annihilate it rather than via the trivial module structure. Tensoring this sequence over $\mathbb{Z}M_2$ on the left by $\mathbb{Z}L$ gives the sequence
\[
0 \rightarrow \mathbb{Z}L \oplus \mathbb{Z}L
\rightarrow \mathbb{Z}L
\rightarrow 0
\rightarrow 0,
\]
which is not left exact since the first factor of the direct sum is taken isomorphically to the middle term by invertibility of $a$. Hence $\mathbb{Z}L$ is not flat as a $\mathbb{Z}M_2$-module.  A nearly identical proof was given by Bergman to show that universal localization does not preserve flatness in the non-commutative setting~\cite[Page~70]{Bergman74}.

Since~\cite[Lemma~5.2(a)]{Fiedorowicz1984} does not hold, it cannot be used to weaken the
hypothesis of Theorem~\ref{t:bass.serre.flat} to assuming only that $\mathbb ZM_1$ and $\mathbb ZM_2$ are flat as $\mathbb ZW$-modules. Similarly~\cite[Lemma~5.2(a)]{Fiedorowicz1984} cannot be used to weaken the hypotheses of any of Theorems~\ref{t:bass.serre.flat.cd}, \ref{t:bass.serre.flat.bi} or \ref{t:bass.serre.flat.bi2}.
\end{remark}

It follows from results of McDuff~\cite{McDuff1979} that the hypotheses of Theorem~\ref{t:bass.serre.free} are satisfied when $M_1$ and $M_2$ are free as $W$-sets which gives the following corollary.

\begin{Cor}\label{t:bass.serre.free.hom}
Let $[M_1,M_2;W]$ be an amalgam of monoids such that $M_1,M_2$ are free as right $W$-sets.  If $M_1,M_2$ are of type left-$\FP_n$ and $W$ is of type left-$\FP_{n-1}$, then $M_1\ast_W M_2$ is of type left-$\FP_n$.
\end{Cor}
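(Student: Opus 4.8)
The plan is to obtain this as an immediate special case of Theorem~\ref{t:bass.serre.flat}; all that is required is to check that the stated freeness hypothesis forces the flatness hypothesis of that theorem. Writing $L = M_1 \ast_W M_2$, I would begin by recalling the results of McDuff~\cite{McDuff1979} already invoked in the proof of Theorem~\ref{t:bass.serre.free}: when $M_1$ and $M_2$ are free as right $W$-sets, the pushout $L$ is a free right $M_i$-set for $i = 1, 2$ and a free right $W$-set.

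Next I would pass from $M$-sets to modules. If $X$ is a free right $M$-set, then $\mathbb ZX$ is a free right $\mathbb ZM$-module, a basis being any transversal of the $M$-orbits of $X$; in particular $\mathbb ZX$ is flat. Applying this to the three freeness statements above shows that $\mathbb ZL$ is free, and hence flat, as a right $\mathbb ZM_1$-, $\mathbb ZM_2$-, and $\mathbb ZW$-module. These are precisely the hypotheses of Theorem~\ref{t:bass.serre.flat}.

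Finally I would invoke Theorem~\ref{t:bass.serre.flat}: since $M_1, M_2$ are of type left-$\FPn$ and $W$ is of type left-$\FP_{n-1}$, the theorem yields that $L = M_1 \ast_W M_2$ is of type left-$\FPn$. There is no real obstacle in this argument, since the substantive work---constructing the Bass--Serre tree, deriving the short exact sequence of Corollary~\ref{c:exact.mv.begin}, and applying Brown's lemma via Corollary~\ref{c:fp.resolved}---is already contained in Theorem~\ref{t:bass.serre.flat}. The only point meriting attention is the elementary observation that freeness of $L$ as a one-sided set over each of $M_1, M_2, W$ upgrades to flatness of $\mathbb ZL$ over the corresponding monoid rings, which is exactly what licenses the appeal to that theorem.
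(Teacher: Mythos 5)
Your proposal is correct and follows essentially the same route as the paper: the text immediately preceding the corollary derives it by citing McDuff's results that freeness of $M_1,M_2$ as right $W$-sets forces $L$ to be free (hence $\mathbb ZL$ flat) as a right set over $M_1$, $M_2$ and $W$, and then applying the flat version of the theorem. The only cosmetic slip is the phrase ``any transversal of the $M$-orbits'' --- the $\mathbb Z$-linear basis one wants is the free basis of the free $M$-set itself, not an arbitrary orbit transversal --- but this does not affect the argument.
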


Corollary~\ref{t:bass.serre.free.hom} applies, in particular, when $W$ is trivial.  Thus we obtain the following improvement on~\cite[Theorem~5.5]{CremannsOtto1998} in which we do not need to assume the factors are finitely presented.

\begin{Cor}\label{c:free.prod.hom}
Let $M_1,M_2$ be monoids of type left-$\FP_n$.  Then $M_1\ast M_2$ is of type left-$\FP_n$.
\end{Cor}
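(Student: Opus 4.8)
The plan is to obtain Corollary~\ref{c:free.prod.hom} as the degenerate case of Corollary~\ref{t:bass.serre.free.hom} in which the amalgamated submonoid $W$ is trivial. The first step is to observe that the free product is exactly the amalgamated free product over the trivial monoid: taking $W=\{1\}$, the two structure maps $\{1\}\to M_i$ are the unit homomorphisms, and the pushout \eqref{eq:pushout.mon} reduces to the coproduct diagram in the category of monoids, so that $M_1\ast_{\{1\}}M_2=M_1\ast M_2$.

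The remaining step is to check that the hypotheses of Corollary~\ref{t:bass.serre.free.hom} hold automatically when $W=\{1\}$. For the freeness condition, I would note that over the trivial monoid every right $M$-set is free, since each element $x$ is uniquely expressible as $x\cdot 1$, so its underlying set is itself a free basis; hence $M_1$ and $M_2$ are free as right $\{1\}$-sets with no further assumption. For the finiteness condition on the edge monoid, the trivial monoid is the trivial group, whose trivial module $\mathbb Z$ is already finitely generated and free over $\mathbb Z\{1\}\cong\mathbb Z$; thus $\{1\}$ is of type left-$\FP_\infty$ and in particular of type left-$\FP_{n-1}$.

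With $M_1,M_2$ of type left-$\FP_n$ by hypothesis and $W=\{1\}$ of type left-$\FP_{n-1}$, Corollary~\ref{t:bass.serre.free.hom} applies verbatim and yields that $M_1\ast M_2$ is of type left-$\FP_n$, as required. I do not anticipate any genuine obstacle here: all of the substantive work has already been carried out in constructing the Bass--Serre tree and deriving the short exact (Mayer--Vietoris) sequence of Corollary~\ref{c:exact.mv.begin} that underlies Corollary~\ref{t:bass.serre.free.hom}. One merely specializes to the trivial edge monoid, for which both the freeness and the $\FP_{n-1}$ conditions are satisfied vacuously, so the corollary is immediate.
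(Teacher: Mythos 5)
Your proposal is correct and is exactly the paper's route: the paper obtains this corollary by applying Corollary~\ref{t:bass.serre.free.hom} with $W$ trivial, noting (as you do) that every right set over the trivial monoid is free and that the trivial monoid is of type left-$\FP_\infty$. Nothing further is needed.
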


\begin{Thm}\label{t:bass.serre.flat.cd}
Let $[M_1,M_2;W]$ be an amalgam of monoids such that $\mathbb ZL$, where $L=M_1\ast_W M_2$, is flat as a right $\mathbb ZM_1$-, $\mathbb ZM_2$- and $\mathbb ZW$-module. If $M_1,M_2$ have left cohomological dimension at most $d$ and $W$ has left cohomological dimension at most $d-1$, then $M_1\ast_W M_2$ has left cohomological dimension at most $d$.
\end{Thm}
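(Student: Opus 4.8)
The plan is to mimic the proof of Theorem~\ref{t:bass.serre.flat}, replacing the preservation of type $\FP_n$ under flat base change by the preservation of projective dimension, and then feeding the result into the second part of Corollary~\ref{c:fp.resolved} rather than the first. The whole argument rests on the exact sequence of $\mathbb ZL$-modules furnished by Corollary~\ref{c:exact.mv.begin}, which encodes the fact that the Bass--Serre tree $T$ is a tree.

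First I would recall that the left cohomological dimension of a monoid is, by definition, the projective dimension of its trivial left module. Thus the hypotheses say precisely that $\mathbb Z$ has projective dimension at most $d$ as a $\mathbb ZM_i$-module for $i=1,2$, and projective dimension at most $d-1$ as a $\mathbb ZW$-module; in particular $d\geq 1$. Applying part (2) of Lemma~\ref{l:flat.base} with $R=\mathbb ZM_i$ and $S=\mathbb ZL$ (flat as a right $\mathbb ZM_i$-module by hypothesis) shows that $\mathbb ZL\otimes_{\mathbb ZM_i}\mathbb Z$ has projective dimension at most $d$ over $\mathbb ZL$, for $i=1,2$; likewise with $R=\mathbb ZW$ we get that $\mathbb ZL\otimes_{\mathbb ZW}\mathbb Z$ has projective dimension at most $d-1$ over $\mathbb ZL$. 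Since the class of modules of projective dimension at most $d$ is closed under direct sums, $C_0=(\mathbb ZL\otimes_{\mathbb ZM_1}\mathbb Z)\oplus(\mathbb ZL\otimes_{\mathbb ZM_2}\mathbb Z)$ then has projective dimension at most $d$.

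Finally I would read the exact sequence of Corollary~\ref{c:exact.mv.begin} as a partial resolution $C_1\to C_0\to \mathbb Z$ of the trivial $\mathbb ZL$-module, with $C_1=\mathbb ZL\otimes_{\mathbb ZW}\mathbb Z$ and $C_0$ as above, and apply part (2) of Corollary~\ref{c:fp.resolved} with $n=1$. The hypotheses are in place: $d\geq n=1$; the map $C_1\to C_0$ is injective (this is exactly the exactness of the Mayer--Vietoris sequence, equivalently the injectivity of $\partial$ coming from the fact that $T$ is a tree, as established in Lemma~\ref{l:acyclic.bs.tree}); and $C_i$ has projective dimension at most $d-i$ for $i=0,1$ by the previous paragraph. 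The conclusion is that $\mathbb Z$ has projective dimension at most $d$ over $\mathbb ZL$, i.e.\ $L=M_1\ast_W M_2$ has left cohomological dimension at most $d$.

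I do not expect any genuine obstacle beyond correct bookkeeping of the index shift between $C_0$ and $C_1$. The single point deserving explicit mention is that the injectivity requirement of Corollary~\ref{c:fp.resolved}(2) is supplied precisely by the exactness at $C_1$ in Corollary~\ref{c:exact.mv.begin}; it is this, together with the additivity of projective dimension over direct sums, that lets the short Mayer--Vietoris sequence do all the work.
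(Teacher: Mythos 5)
Your proposal is correct and follows exactly the paper's own argument: Lemma~\ref{l:flat.base}(2) applied to the flat right modules $\mathbb ZL$ gives the projective dimension bounds on $\mathbb ZL\otimes_{\mathbb ZM_i}\mathbb Z$ and $\mathbb ZL\otimes_{\mathbb ZW}\mathbb Z$, and then Corollary~\ref{c:fp.resolved}(2) is applied to the exact sequence of Corollary~\ref{c:exact.mv.begin}. Your explicit verification of the injectivity hypothesis (supplied by the tree property of $T$) and of $d\geq 1$ is a welcome expansion of details the paper leaves implicit.
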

\begin{proof}
By Lemma~\ref{l:flat.base} and the hypotheses, we deduce that $\mathbb ZL\otimes_{\mathbb ZM_i} \mathbb Z$ is of cohomological dimension at most $d$, for $i=1,2$, and $\mathbb ZL\otimes_{\mathbb ZA}\mathbb Z$ is of cohomological dimension $d-1$.  We deduce the theorem by applying Corollary~\ref{c:fp.resolved} to the exact sequence in Corollary~\ref{c:exact.mv.begin}.
\end{proof}

Again, combining this with results of McDuff~\cite{McDuff1979} gives the following.

\begin{Cor}\label{t:amalg.cd.cohom}
  Let $[M_1,M_2;W]$ be an amalgam of monoids such that $M_1,M_2$ are
  free as right $W$-sets.  Suppose that $d_i$ is the left cohomological
  dimension of $M_i$, for $i=1,2$, and $d$ is the left cohomological
  dimension of $W$. Then the left cohomological dimension of
  $M_1\ast_W M_2$ is bounded above by $\max\{d_1,d_2,d+1\}$.
\end{Cor}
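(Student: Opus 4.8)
The plan is to deduce this directly from Theorem~\ref{t:bass.serre.flat.cd} together with the freeness results of McDuff, exactly as Corollary~\ref{t:bass.serre.free.hom} was deduced from Theorem~\ref{t:bass.serre.flat}. First I would record the flatness hypothesis needed to invoke Theorem~\ref{t:bass.serre.flat.cd}. Since $M_1$ and $M_2$ are free as right $W$-sets, McDuff's results~\cite{McDuff1979} guarantee that $L=M_1\ast_W M_2$ is free as a right $M_i$-set, for $i=1,2$, and as a right $W$-set. Freeness of these $M$-sets yields that $\mathbb ZL$ is a free, hence flat, right $\mathbb ZM_1$-, $\mathbb ZM_2$- and $\mathbb ZW$-module, which is precisely the standing hypothesis of Theorem~\ref{t:bass.serre.flat.cd}.

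Next I would set $D=\max\{d_1,d_2,d+1\}$ (assuming all three of $d_1,d_2,d$ are finite, as otherwise $D=\infty$ and there is nothing to prove) and check that the numerical hypotheses of Theorem~\ref{t:bass.serre.flat.cd} are met with $D$ playing the role of the theorem's parameter. By construction $M_1$ and $M_2$ have left cohomological dimension $d_1,d_2\leq D$, and since $d+1\leq D$ we have that $W$ has left cohomological dimension $d\leq D-1$. Applying Theorem~\ref{t:bass.serre.flat.cd} then gives that $M_1\ast_W M_2$ has left cohomological dimension at most $D=\max\{d_1,d_2,d+1\}$, as required.

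The argument carries no real obstacle: all the genuine work is already contained in Theorem~\ref{t:bass.serre.flat.cd} (whose proof runs the Mayer--Vietoris sequence of Corollary~\ref{c:exact.mv.begin} through Corollary~\ref{c:fp.resolved} and the flat base-change Lemma~\ref{l:flat.base}) and in McDuff's freeness theorem. The only point requiring a moment's care is the index shift encoded in the maximum: one must use the $+1$ in the term $d+1$ to absorb the gap between the cohomological dimension of the edge monoid $W$ and that of the vertex monoids, which is exactly the $d$ versus $d-1$ discrepancy in the statement of Theorem~\ref{t:bass.serre.flat.cd}.
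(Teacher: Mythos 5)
Your proposal is correct and follows exactly the route the paper intends: the corollary is stated immediately after the remark that it follows by ``combining this with results of McDuff,'' i.e.\ McDuff's freeness results give that $\mathbb ZL$ is free, hence flat, as a right $\mathbb ZM_1$-, $\mathbb ZM_2$- and $\mathbb ZW$-module, and then Theorem~\ref{t:bass.serre.flat.cd} applies with the parameter $\max\{d_1,d_2,d+1\}$. Your bookkeeping of the $d$ versus $d-1$ shift is exactly right.
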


\subsection{The two-sided setting}
We need some preliminary properties of tensor products before investigating amalgams in the two-sided context.

\begin{Prop}\label{p:tensor.id}
If $f\colon M\to N$ is a monoid homomorphism, then there is an $N\times N^{op}$ isomorphism $F\colon N\otimes_M M\otimes_M N\to N\otimes_M N$ defined by $F(n\otimes m\otimes n')=nm\otimes n'$.
\end{Prop}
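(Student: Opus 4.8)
The plan is to realise $F$ as the composite of two canonical isomorphisms — the unit isomorphism for the tensor product, followed by an application of the functor $-\otimes_M N$ — and then simply to read off the stated formula. Throughout, I would regard $N$ as an $M$-$M$-biset (indeed as an $N$-$M$-biset) through $f$, with left action $m\cdot n=f(m)n$ and right action $n\cdot m=nf(m)$; in particular the product $nm$ appearing in the statement is to be read as $nf(m)$.

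First I would recall the general fact that for any right $M$-set $A$ the map $A\otimes_M M\to A$ given by $a\otimes m\mapsto am$ is an isomorphism of right $M$-sets, with inverse $a\mapsto a\otimes 1$; this simply expresses that the regular biset $M$ is a two-sided unit for $\otimes_M$. Its well-definedness follows from the universal property stated earlier, since $(a\cdot m',m)$ and $(a,m'm)$ are both sent to $amm'$. When $A$ is in addition a $K$-$M$-biset, a one-line check shows this isomorphism is $K$-$M$-equivariant. Applying it with $A=N$, viewed as an $N$-$M$-biset via $f$, yields an isomorphism of $N$-$M$-bisets $\theta\colon N\otimes_M M\to N$, $\theta(n\otimes m)=nf(m)$, with inverse $n\mapsto n\otimes 1$.

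Next I would invoke associativity of the tensor product to identify $N\otimes_M M\otimes_M N$ with $(N\otimes_M M)\otimes_M N$, and then apply the functor $-\otimes_M N$ to $\theta$. Since tensoring preserves isomorphisms, preserves the left $N$-action, and supplies the right $N$-action carried by the final factor, $\theta\otimes 1_N$ is an isomorphism of $N$-$N$-bisets from $(N\otimes_M M)\otimes_M N$ onto $N\otimes_M N$. Taking $F=\theta\otimes 1_N$ under the associativity identification gives $F(n\otimes m\otimes n')=\theta(n\otimes m)\otimes n'=nf(m)\otimes n'=nm\otimes n'$, so $F$ is the stated $N\times N^{op}$-isomorphism.

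For a more transparent self-contained argument I would instead exhibit the inverse directly: define $G\colon N\otimes_M N\to N\otimes_M M\otimes_M N$ by $G(n\otimes n')=n\otimes 1\otimes n'$, with well-definedness from the universal property. Then $F\circ G$ is the identity since $F(n\otimes 1\otimes n')=n\otimes n'$, while $G\circ F$ is the identity because $G(nm\otimes n')=nm\otimes 1\otimes n'=n\otimes m\otimes n'$, the last equality being the instance $(nm)\otimes 1=n\otimes m$ of the defining relation in $N\otimes_M M$. The only point requiring genuine care — and the step I would treat as the main, if modest, obstacle — is the bookkeeping of the three-fold tensor relations needed to confirm that $F$ and $G$ respect them and that the biset actions match up under $F$; everything else is formal.
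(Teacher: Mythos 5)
Your proposal is correct, and your self-contained argument --- defining $G(n\otimes n')=n\otimes 1\otimes n'$, checking well-definedness from the universal property, and verifying $F$ and $G$ are mutually inverse via the relation $nm\otimes 1\otimes n'=n\otimes m\otimes n'$ --- is essentially identical to the paper's proof. Your first route via the unit isomorphism $N\otimes_M M\cong N$ and functoriality of $-\otimes_M N$ is a valid repackaging of the same computation, but the paper simply does the direct check.
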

\begin{proof}
The mapping $h\colon N\times M\times N\to N\otimes_M N$ given by $(n,m,n')\mapsto nm\otimes n'$ is $N\times N^{op}$-equivariant and satisfies $(nm',m,m''n')\mapsto nm'm\otimes m''n'=nm'mm''\otimes n'=h(n,m'mm'',n')$ and so the mapping $F$ is well defined.  The mapping $k\colon N\times N\to N\otimes_M M\otimes_M N$ given by $(n,n')\mapsto n\otimes 1\otimes n$ satisfies $k(nm,n')=nm\otimes 1\otimes n' = n\otimes m\otimes n'=n\otimes 1\otimes mn'=k(n,mn')$ for $m\in M$ and hence induces a mapping $N\otimes_M N\to N\otimes_M M\otimes_M N$.  Clearly, $h$ and $k$ induce inverse mappings as $nm\otimes 1\otimes n'=n\otimes m\otimes n'$ for $m\in M$.
\end{proof}

The next proposition will frequently be used to decongest notation.

\

\begin{Prop}\label{p:bi.tensor.iso}
Let $A$ be right $M$-set, $B$ a left $M$-set and $C$ a left $M\times M^{op}$-set.  Then $A\otimes_M C\otimes_M B$ is naturally isomorphic to $(A\times B)\otimes_{M\times M^{op}} C$ in the category of sets where we view $A\times B$ as a right $M\times M^{op}$-set via the action $(a,b)(m,m') = (am,m'b)$.
\end{Prop}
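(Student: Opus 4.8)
The plan is to exhibit explicit mutually inverse bijections between the two sets, realising the correspondence $a\otimes c\otimes b\leftrightarrow (a,b)\otimes c$, and then to observe that they are natural. First I would unpack the two structures involved. Viewing the left $M\times M^{op}$-set $C$ as a bi-$M$-set, I write $mc$ for the action of $(m,1)$ and $cm'$ for the action of $(1,m')$; these two actions commute and the general action is $(m,m')c=mcm'$. In the iterated tensor product $A\otimes_M C\otimes_M B$ the set $A\otimes_M C$ is formed using the left action of $C$ and inherits the right action $(a\otimes c)m'=a\otimes cm'$, which is then used to tensor with $B$. Thus $A\otimes_M C\otimes_M B$ is subject to exactly the two relations $am\otimes c\otimes b=a\otimes mc\otimes b$ and $a\otimes cm'\otimes b=a\otimes c\otimes m'b$. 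On the other side, the right $M\times M^{op}$-action on $A\times B$ is $(a,b)(m,m')=(am,m'b)$, so $(A\times B)\otimes_{M\times M^{op}}C$ is subject to the single relation $(am,m'b)\otimes c=(a,b)\otimes mcm'$.

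To define $\Phi\colon A\otimes_M C\otimes_M B\to (A\times B)\otimes_{M\times M^{op}}C$ by $a\otimes c\otimes b\mapsto (a,b)\otimes c$, I would apply the universal property of the tensor product (as recalled in Section~\ref{sec_prelims}) twice. The two relations above are respected because $(am,b)\otimes c=(a,b)(m,1)\otimes c=(a,b)\otimes mc$ and $(a,m'b)\otimes c=(a,b)(1,m')\otimes c=(a,b)\otimes cm'$. For the inverse $\Psi$ sending $(a,b)\otimes c\mapsto a\otimes c\otimes b$, I must check that the single defining relation of the right-hand side is respected, i.e.\ that $am\otimes c\otimes m'b=a\otimes mcm'\otimes b$ in $A\otimes_M C\otimes_M B$. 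This follows by sliding: first $am\otimes c\otimes m'b=a\otimes mc\otimes m'b$ using the first relation, and then $a\otimes mc\otimes m'b=a\otimes (mc)m'\otimes b=a\otimes mcm'\otimes b$ using the second relation in reverse. Since $\Phi$ and $\Psi$ are evidently mutually inverse on representatives, they are inverse bijections.

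The step I expect to require the most care is precisely this last well-definedness verification for $\Psi$, since it is the only place where both defining relations of the iterated tensor are used simultaneously and where one must correctly transport the right $M$-action on $C$ across to the left $M$-action on $B$. Equivalently, one can avoid choosing a direction by checking that both sides corepresent the same functor: set functions $g\colon A\times C\times B\to X$ satisfying $g(am,c,b)=g(a,mc,b)$ and $g(a,cm',b)=g(a,c,m'b)$, which are exactly the conditions obtained from a map out of $(A\times B)\otimes_{M\times M^{op}}C$ by specialising $m'=1$ and $m=1$. Finally, naturality in $A$, $B$ and $C$ is routine: for a right $M$-map $A\to A'$, a left $M$-map $B\to B'$ and an $M\times M^{op}$-map $C\to C'$, the formula $a\otimes c\otimes b\mapsto (a,b)\otimes c$ manifestly commutes with the induced maps on both sides, so the family $\{\Phi\}$ constitutes a natural isomorphism.
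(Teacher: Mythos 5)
Your proposal is correct and follows essentially the same route as the paper: both define the map $a\otimes c\otimes b\mapsto (a,b)\otimes c$ and its inverse $(a,b)\otimes c\mapsto a\otimes c\otimes b$ on representatives, verify well-definedness against the defining relations of the two tensor products, and observe the maps are mutually inverse and natural. Your extra care over the "sliding" step for $\Psi$ and the corepresentability remark are fine elaborations of the same argument.
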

\begin{proof}
Define $f\colon A\times C\times B\to (A\times B)\otimes_{M\times M^{op}} C$ by $f(a,c,b)=(a,b)\otimes c$. Then $f(am,c,m'b) = (am,m'b)\otimes c=(a,b)\otimes mcm'$ and so $f$ induces a well-defined mapping $A\otimes_M C\otimes_M B\to (A\times B)\otimes_{M\times M^{op}} C$.  Define $g\colon A\times B\times C\to A\otimes_M C\otimes_M B$ by $g(a,b,c) = a\otimes c\otimes b$.  Then $g(am,m'b,c) = am\otimes c\otimes m'b = a\otimes mcm'\otimes b= g(a,b,mcm')$ and so $g$ induces a well-defined mapping $(A\times B)\otimes_{M\times M^{op}} C\to A\otimes_M C\otimes_M B$.  The maps induced by $f$ and $g$ are clearly mutually inverse and natural in $A,B,C$.
\end{proof}

\begin{Rmk}\label{r:same.arg}
A nearly identical proof shows that if $A$ is a right $\mathbb ZM$-module, $B$ is a left $\mathbb ZM$-module and $C$ is an $\mathbb ZM$-bimodule, then we have that $A\otimes_{\mathbb ZM} C\otimes_{\mathbb ZM}B\cong (A\otimes B)\otimes_{\mathbb ZM\otimes \mathbb ZM^{op}} C$ as abelian groups and the isomorphism is natural.
\end{Rmk}

\begin{Prop}\label{p:tensor.free}
Suppose that $A$ is a free right $M$-set, $B$ is a free left $M$-set and $C$ is an $M$-$M$-biset.  Then $A\otimes_M C\otimes_M B$ is naturally isomorphic to $A/M\times C\times M\backslash B$ in the category of sets.
\end{Prop}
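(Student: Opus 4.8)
The plan is to reduce the double tensor product over $M$ to a single tensor product over the product monoid $M\times M^{op}$ by means of Proposition~\ref{p:bi.tensor.iso}, and then to exploit the structure theory of free sets recorded in Remark~\ref{r:tensor.with.free}. First I would invoke Proposition~\ref{p:bi.tensor.iso} to obtain a natural isomorphism $A\otimes_M C\otimes_M B\cong (A\times B)\otimes_{M\times M^{op}} C$, where $A\times B$ carries the right $M\times M^{op}$-action $(a,b)(m,m')=(am,m'b)$ and $C$ is regarded as a left $M\times M^{op}$-set.

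The key observation is that $A\times B$ is a \emph{free} right $M\times M^{op}$-set. Indeed, a left $M$-set is the same thing as a right $M^{op}$-set, and under this identification the free left $M$-set $B$ becomes a free right $M^{op}$-set with $B/M^{op}=M\backslash B$. If $\Lambda$ is a free basis for $A$ as a right $M$-set and $\Xi$ is a free basis for $B$ as a right $M^{op}$-set, then each pair $(a,b)$ can be written uniquely as $(\lambda,\xi)(m,m')$ with $\lambda\in\Lambda$, $\xi\in\Xi$ and $(m,m')\in M\times M^{op}$; hence $\Lambda\times\Xi$ is a free basis for $A\times B$, and its weak orbit set is $(A\times B)/(M\times M^{op})\cong A/M\times M\backslash B$.

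Having established freeness, I would apply the observation of Remark~\ref{r:tensor.with.free} to the monoid $M\times M^{op}$: since $A\times B$ is free on $(A\times B)/(M\times M^{op})$, for any left $M\times M^{op}$-set $C$ we have $(A\times B)\otimes_{M\times M^{op}} C\cong ((A\times B)/(M\times M^{op}))\times C\cong A/M\times M\backslash B\times C$. Composing this with the isomorphism coming from Proposition~\ref{p:bi.tensor.iso} and permuting the three coordinates yields the desired natural isomorphism $A\otimes_M C\otimes_M B\cong A/M\times C\times M\backslash B$. Naturality in $A$, $B$ and $C$ is inherited from the naturality of the two isomorphisms being composed.

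The computations are entirely routine; the only point that needs care — and the one I would state most explicitly — is the identification of $A\times B$ as a free right $M\times M^{op}$-set together with the description of its orbit set as $A/M\times M\backslash B$, since everything else follows formally. As an alternative route that avoids Proposition~\ref{p:bi.tensor.iso} altogether, one could instead use Remark~\ref{r:tensor.with.free} directly to write $A\cong A/M\times M$ and $B\cong M\times M\backslash B$, pull the trivial-action factors $A/M$ and $M\backslash B$ out of the tensor product (using that $-\otimes_M-$ preserves disjoint unions), and simplify the middle term via the canonical isomorphism $M\otimes_M C\otimes_M M\cong C$.
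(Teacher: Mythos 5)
Your argument is correct, but your primary route is genuinely different from the paper's. The paper proves the statement in two direct steps: freeness of $A$ gives $A\otimes_M C\cong A/M\times C$ via $a\otimes c\mapsto([a],c)$, and one checks this is an isomorphism of \emph{right} $M$-sets; then tensoring with the free left $M$-set $B$ (with basis $M\backslash B$) gives $(A/M\times C)\otimes_M B\cong A/M\times C\times M\backslash B$. You instead pass through Proposition~\ref{p:bi.tensor.iso} to rewrite everything as $(A\times B)\otimes_{M\times M^{op}}C$ and then prove the auxiliary fact that $A\times B$ is a free right $M\times M^{op}$-set on $\Lambda\times\Xi$ with orbit set $A/M\times M\backslash B$; this is valid (and you are right that it is the one point needing explicit care), and once it is in place the conclusion follows from the set-level content of Remark~\ref{r:tensor.with.free} applied to the monoid $M\times M^{op}$. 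What your route buys is a uniform ``one tensor product over the product monoid'' picture that meshes well with how Proposition~\ref{p:bi.tensor.iso} is used elsewhere in the section; what the paper's route buys is brevity, at the cost of having to observe that the intermediate isomorphism is $M$-equivariant on the right so that the second tensoring is legitimate. Your sketched alternative at the end (writing $A\cong A/M\times M$ and $B\cong M\times M\backslash B$ and simplifying $M\otimes_M C\otimes_M M\cong C$) is essentially the paper's own argument in slightly different clothing.
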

\begin{proof}
By freeness, $A\otimes_M C\cong A/M\times C$ via $a\otimes c\mapsto ([a],c)$ where $[a]$ is the class of $a$ and, moreover, this is a right $M$-set isomorphism.  Therefore, $A\otimes_M C\otimes_M B\cong (A/M\times C)\otimes_M B\cong A/M\times C\times M\backslash B$ because $B$ is a free left $M$-set on $M\backslash B$.  The isomorphism is clearly natural in $A,B,C$.
\end{proof}

We now wish to consider a pushout diagram \eqref{eq:pushout.mon} in the bimodule setting.  Let us assume that $f_i\colon W\to M_i$ is the homomorphism in the diagram, for $i=1,2$, and we continue to use $L$ to denote the pushout.  Let us proceed to define a forest $T$. The vertex set of $T$ will be
\[V=(L\otimes_{M_1} L)\coprod (L\otimes_{M_2} L)\] and the edge set will be
\[E=L\otimes_W L.\]  We shall write $[x,y]_{K}$ for the tensor $x\otimes y$ in $L\otimes_{K} L$ for $K=M_1,M_2,W$.  The edge $[x,y]_W$ will connect $[x,y]_{M_1}$ to $[x,y]_{M_2}$, and we think of it as oriented in this direction.  Note that $T$ is an $L\times L^{op}$-graph.  Note that $[x,y]_K\mapsto xy$ is well defined for any of $K=M_1,M_2,W$.

\begin{Lemma}\label{l:bass.serre.com}
There is an $L\times L^{op}$-equivariant isomorphism $\pi_0(T)\to L$ induced by the multiplication map on vertices.
\end{Lemma}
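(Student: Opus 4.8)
The plan is to exhibit the desired isomorphism as the map induced on components by the multiplication map $\mu\colon V\to L$ sending $[x,y]_{M_i}$ to $xy$, which is well defined as already noted. First I would check that $\mu$ is constant on connected components: the edge $[x,y]_W$ has endpoints $[x,y]_{M_1}$ and $[x,y]_{M_2}$, and both of these, as well as the edge itself, are sent to $xy$. Hence $\mu$ factors through an $L\times L^{op}$-equivariant map $\theta\colon \pi_0(T)\to L$ (equivariance being immediate, since $(l,l')\cdot[x,y]_{M_i}=[lx,yl']_{M_i}\mapsto l(xy)l'$). Surjectivity of $\theta$ is equally immediate, as $\mu([\ell,1]_{M_1})=\ell$ for every $\ell\in L$. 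The entire content is therefore the injectivity of $\theta$, which amounts to showing that any two vertices with the same image under $\mu$ lie in a common component.

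To prepare for this I would isolate two elementary moves in $T$: (i) for any $x,y\in L$ and any $z$ in the image of $M_i$ one has $[xz,y]_{M_i}=[x,zy]_{M_i}$, by the defining relation of the tensor product $L\otimes_{M_i}L$, so an element of $M_i$ may be slid across the bar at an $M_i$-vertex without changing the vertex; and (ii) for any $x,y\in L$ the $M_1$-vertex $[x,y]_{M_1}$ and the $M_2$-vertex $[x,y]_{M_2}$ are joined by the edge $[x,y]_W$, so one may pass between the two vertices carrying the same representative.

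Using these, I would prove by induction on the length of a word $z_1\cdots z_n$ with each $z_i$ in the image of $M_1$ or of $M_2$ (such words represent every element of $L$ by the generation statement established in the proof of Lemma~\ref{l:connected.bs.tree}) the following claim: for all $x$ and all $y=z_1\cdots z_n$, both $[x,y]_{M_1}$ and $[x,y]_{M_2}$ lie in the component of the canonical vertex $[xy,1]_{M_1}$. The base case $y=1$ is trivial, using move (ii) for the $M_2$-vertex. For the inductive step, write $y=z_1y'$; if $z_1$ lies in the image of $M_1$, then move (i) gives $[x,y]_{M_1}=[xz_1,y']_{M_1}$ and the inductive hypothesis applies; if $z_1$ lies in the image of $M_2$, I first use move (ii) to pass to $[x,y]_{M_2}$, apply move (i) there to reach $[xz_1,y']_{M_2}$, and again invoke the inductive hypothesis. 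The assertion for $[x,y]_{M_2}$ then follows by one further application of move (ii). Granting the claim, any two vertices $v,v'$ with $\mu(v)=\mu(v')=\ell$ are both connected to $[\ell,1]_{M_1}$, hence lie in the same component, so $\theta$ is injective and therefore an isomorphism of $L\times L^{op}$-sets.

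The only real obstacle is keeping the bookkeeping of move (ii) straight: one must alternate between $M_1$- and $M_2$-vertices as the letters are slid across, exactly as in the connectivity argument for the one-sided Bass--Serre tree in Lemma~\ref{l:connected.bs.tree}, of which this is the two-sided refinement. Everything else is routine once the two moves have been isolated.
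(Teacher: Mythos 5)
Your proof is correct and follows essentially the same route as the paper: the paper proves by induction on word length that $[1,x]_{M_1}$ is joined by a path to $[x,1]_{M_1}$ (peeling letters off with the tensor relations and switching between $M_1$- and $M_2$-vertices via the $W$-edges) and then translates, whereas you simply fold the translation into the inductive claim. No gaps.
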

\begin{proof}
As an edge $[x,y]_W$ connects $[x,y]_{M_1}$ to $[x,y]_{M_2}$, we have that multiplication $[x,y]_{M_i}\mapsto xy$ on vertices induces an $L\times L^{op}$-equivariant surjective mapping $\pi_0(T)\to L$.  To prove the injectivity, we first claim that $[1,x]_{M_1}$ is connected by an edge path to $[x,1]_{M_1}$ for all $x\in L$ by induction on the length of $x$.  If $x=1$, there is nothing to prove.  So assume the claim for length $k$ and let $x=x_1\cdots x_{k+1}$ with $x_i\in M_1\cup M_2$ (again abusing notation as $M_i$ need not embed in $L$).  Let $p$ be a path from $[1,x_2\cdots x_{k+1}]_{M_1}$ to $[x_2\cdots x_{k+1},1]_{M_1}$.  Then $x_1p_1$ is a path from $[x_1,x_2\cdots x_{k+1}]_{M_1}$ to $[x,1]_{M_1}$.  If $x_1\in M_1$, then $[x_1,x_2\cdots x_{k+1}]_{M_1}=[1,x]_{M_1}$ and we are done.  If $x_1\in M_2$, then $[x_1,x_2\cdots x_{k+1}]_W$ is an edge between $[x_1,x_2\cdots x_{k+1}]_{M_1}$ and $[1,x]_{M_2}$.  But $[1,x]_{W}$ is an edge from $[1,x]_{M_1}$ to $[1,x]_{M_2}$ and so we are again done in this case.

If $x=x_1x_2$ with $x_1,x_2\in L$, there is a path $p$ from $[1,x_1]_{M_1}$ to $[x_1,1]_{M_1}$ by the above claim. Then $px_2$ is a path from $[1,x]_{M_1}$ to $[x_1,x_2]_{M_1}$.  Thus any two vertices $[u,v]_{M_1}$ and $[u',v']_{M_1}$ with $uv=u'v'$ are connected in $T$.  But $[u,v]_W$ connects $[u,v]_{M_2}$ to $[u,v]_{M_1}$ and hence any two vertices $[u,v]_{M_i}$ and $[u',v']_{M_j}$ with $uv=u'v'$ are connected for all $i,j\in \{1,2\}$.  This completes the proof.
\end{proof}

Next we prove that $T$ is a forest.   Note that $\mathbb ZE$ is a $\mathbb ZL$-bimodule.

If $A$ is a bimodule over a monoid ring $\mathbb ZK$ then we can form the two-sided semidirect product $A\bowtie K$, of the abelian group $A$ and the monoid $K$, with elements $A \times K$ and multiplication given by
\[
(a,k)(a',k') = (ak' + ka', kk').
\]
A \emph{splitting} $\sigma$ of the projection $\pi\colon  A \bowtie K \rightarrow K$  is a monoid homomorphism $\sigma\colon  K \rightarrow A \bowtie K$ such that $\pi(\sigma(k))=k$ for all $k \in K$. A mapping $d\colon K\to A$ is a \emph{derivation} if
\[d(kk') =kd(k') + d(k)k'\] for all $k, k' \in K$.  A derivation in \emph{inner} if $d(k) = ka-ak$ for some $a\in A$.  Derivations correspond to splittings of the two-sided semidirect product projection $A\bowtie K\to K$, each splitting being of the form $k\mapsto (d(k),k)$ with $d$ a derivation.

\begin{Lemma}\label{l:forest}
The graph $T$ is a forest.
\end{Lemma}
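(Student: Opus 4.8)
The plan is to imitate the one-sided argument of Lemma~\ref{l:acyclic.bs.tree}, now working inside a \emph{two-sided} semidirect product. By the forest criterion recalled in the preliminaries (following~\cite[Lemma~6.4]{DicksAndDunwoody}), since we already know $\pi_0(T)\cong L$ by Lemma~\ref{l:bass.serre.com}, it suffices to prove that the cellular boundary map $\partial\colon \mathbb ZE\to \mathbb ZV$ is injective, and I will do this by constructing an explicit left inverse $\beta\colon \mathbb ZV\to \mathbb ZE$. Recall that $\mathbb ZE=\mathbb Z[L\otimes_W L]$ is a $\mathbb ZL$-bimodule via $x\cdot [u,v]_W\cdot z=[xu,vz]_W$, so we may form the two-sided semidirect product $\mathbb ZE\bowtie L$.

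First I would manufacture a derivation $d\colon L\to \mathbb ZE$ via the pushout. Define $\p_1\colon M_1\to \mathbb ZE\bowtie L$ by $\p_1(m_1)=(0,m_1)$ and $\p_2\colon M_2\to \mathbb ZE\bowtie L$ by $\p_2(m_2)=([1,m_2]_W-[m_2,1]_W,\,m_2)$. Here $\p_1$ is the trivial splitting, while $m\mapsto [1,m]_W-[m,1]_W$ is the inner derivation of $\mathbb ZE$ associated to $-[1,1]_W$, so $\p_2$ is a homomorphism. The key compatibility check is $\p_1 f_1=\p_2 f_2$: for $w\in W$ the defining relation of $L\otimes_W L$ gives $[w,1]_W=[1,w\cdot 1]_W=[1,w]_W$, so the derivation part of $\p_2 f_2(w)$ vanishes and both maps send $w$ to $(0,w)$. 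The universal property of the pushout then yields $\p\colon L\to \mathbb ZE\bowtie L$ extending $\p_1,\p_2$, and it splits the semidirect product projection $\pi$ because $\pi\p$ agrees with $1_L$ on the images of $M_1$ and $M_2$, which generate $L$. Writing $\p(x)=(d(x),x)$ produces a derivation with $d|_{M_1}=0$ and $d(m_2)=[1,m_2]_W-[m_2,1]_W$.

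Next I would set $\beta([x,y]_{M_1})=d(x)y$ and $\beta([x,y]_{M_2})=d(x)y+[x,y]_W$, using the right $L$-action on $\mathbb ZE$, and extend $\Z$-linearly. The crux is showing these are well defined on the two tensor factors. Over $M_1$ it is immediate: $d(xm_1)y=(xd(m_1)+d(x)m_1)y=d(x)(m_1y)$ since $d(m_1)=0$. Over $M_2$ one uses the derivation rule together with the explicit form of $d(m_2)$ to compute $x\,d(m_2)\,y=[x,m_2y]_W-[xm_2,y]_W$; this is precisely the term needed so that $\beta([xm_2,y]_{M_2})=d(x)m_2y+[x,m_2y]_W=\beta([x,m_2y]_{M_2})$. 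Finally one gets $\beta\partial([x,y]_W)=\beta([x,y]_{M_2})-\beta([x,y]_{M_1})=[x,y]_W$, so $\beta\partial=1_{\mathbb ZE}$ and $\partial$ is injective, whence $T$ is a forest.

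The main obstacle is the bookkeeping of the bimodule actions: one must choose the inner derivation with exactly the right sign and on exactly the right side so that the residual terms $x\,d(m_2)\,y$ telescope against $[xm_2,y]_W$ and leave the formula for $\beta$ balanced over $M_2$. Once the derivation $d$ is pinned down, the verification is formal and parallels the one-sided Lemma~\ref{l:acyclic.bs.tree} verbatim.
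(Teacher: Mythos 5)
Your proposal is correct and follows essentially the same route as the paper's proof: the same splittings $\p_1,\p_2$ of the two-sided semidirect product $\mathbb ZE\bowtie L$, the same inner derivation associated to $-[1,1]_W$, the same induced derivation $d$ via the pushout's universal property, and the same retraction $\beta$ with the identical well-definedness checks over $M_1$ and $M_2$. (The appeal to $\pi_0(T)\cong L$ is not needed for the forest criterion itself, but this is harmless.)
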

\begin{proof}
A graph with vertex set $V$ and edge set $E$ is a forest if and only if the cellular boundary map $\partial\colon \mathbb ZE\to \mathbb ZV$ is injective.  We again use derivations to construct a left inverse to $\partial$.  As usual, we identify elements of $M_1$, $M_2$ and $W$ with their images in $L$ (abusing notation).

Define $\p_1\colon M_1\to \mathbb ZE\bowtie L$ by $\p_1(m_1)=(0,m_1)$; this is clearly a homomorphism.  Next define $\p_2\colon M_2\to \mathbb ZE\bowtie L$ by $\p_2(m_2) = ([1,m_2]_W-[m_2,1]_W,m_2)$.  Note that $m_2\mapsto [1,m_2]_W-[m_2,1]_W$ is the inner derivation of the $\mathbb ZM_2$-bimodule $\mathbb ZE$ associated to the element $-[1,1]_W$ and hence $\p_2$ is a homomorphism.  If $w\in W$, then \[\p_2f_2(w) = ([1,w]_W-[w,1]_W,w) = (0,w)=\p_1f_1(w)\] as $[1,w]_W=[w,1]_W$ for $w\in W$.  Therefore, there is a homomorphism $\p\colon L\to \mathbb ZE\bowtie L$ extending $\p_1,\p_2$, which is a splitting of the projection by construction.  Thus $\p(x)=(d(x),x)$ for some derivation $d\colon L\to \mathbb ZE$ satisfying $d(m_1)=0$ for $m_1\in M_1$ and $d(m_2) = [1,m_2]_W-[m_2,1]_W$ for $m_2\in M_2$.

We now define $\beta\colon \mathbb ZV\to \mathbb ZE$ by $\beta([x,y]_{M_1}) = d(x)y$ and $\beta([x,y]_{M_2}) = d(x)y+[x,y]_W$.  To show that this is well defined, we need that if $m_1\in M_1$, then $[xm_1,y]_{M_1}$ and $[x,m_1y]_{M_1}$ are sent to the same element and if $m_2\in M_2$, then $[xm_2,y]_{M_2}$ and $[x,m_2y]_{M_2}$ are sent to the same element.  But $d(xm_1)y = xd(m_1)y+d(x)m_1y = d(x)m_1y$ because $d(m_1)=0$.  Also, we compute
\begin{align*}
d(xm_2)y+[xm_2,y]_W&=xd(m_2)y+d(x)m_2y+[xm_2,y]_W\\ &=x([1,m_2]_W-[m_2,1]_W)y+d(x)m_2y+[xm_2,y]_W\\ &= d(x)m_2y+[x,m_2y]_W.
\end{align*}

We then obtain \[\beta\partial([x,y]_W)=\beta([x,y]_{M_2})-\beta([x,y]_{M_1}) = d(x)y+[x,y]_W-d(x)y=[x,y]_W.\]  Thus $\beta\partial=1_{\mathbb ZE}$ and hence $\partial$ is injective.  This completes the proof that $T$ is a forest.
\end{proof}

We call $T$ the \emph{Bass--Serre forest} of the pushout.    Since $H_0(T)\cong \mathbb Z\pi_0(T)\cong \mathbb ZL$ as an $L\times L^{op}$-bimodule (by Lemma~\ref{l:bass.serre.com}), Lemma~\ref{l:forest} has the following reinterpretation.

\begin{Cor}\label{c:exact.bimod.forest}
There is an exact sequence of $L\times L^{op}$-modules
\[0\longrightarrow \mathbb ZL\otimes_{\mathbb ZW} \mathbb ZL\longrightarrow (\mathbb ZL\otimes_{\mathbb ZM_1}\mathbb ZL)\oplus (\mathbb ZL\otimes_{\mathbb ZM_2} \mathbb ZL)\longrightarrow \mathbb ZL\longrightarrow 0\]
 where  $L=M_1\ast_{W} M_2$ is the pushout.
\end{Cor}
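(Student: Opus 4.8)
The plan is to reinterpret Lemma~\ref{l:forest} in the language of $\mathbb ZL$-bimodules, in exact parallel with the way Corollary~\ref{c:exact.mv.begin} was deduced from Lemma~\ref{l:acyclic.bs.tree} in the one-sided case. All the real content has already been established, so the corollary should come out as a formal repackaging of the augmented cellular chain complex of the Bass--Serre forest $T$. Accordingly, I would begin from the two-term cellular chain complex $\mathbb ZE\xrightarrow{\,\partial\,}\mathbb ZV$, where $V$ and $E$ are the vertex and edge sets of $T$.

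First I would record that, since $T$ is a forest by Lemma~\ref{l:forest}, the boundary map $\partial\colon\mathbb ZE\to\mathbb ZV$ is injective, so that $H_1(T)=\ker\partial=0$ and the complex yields a short exact sequence of $L\times L^{op}$-modules
\[0\longrightarrow \mathbb ZE\xrightarrow{\,\partial\,}\mathbb ZV\longrightarrow H_0(T)\longrightarrow 0.\]
Next I would identify the three terms. By Lemma~\ref{l:bass.serre.com} the multiplication map on vertices induces an $L\times L^{op}$-isomorphism $H_0(T)\cong\mathbb Z\pi_0(T)\cong\mathbb ZL$, so the cokernel is exactly the bimodule $\mathbb ZL$. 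For the two remaining terms I would invoke the linearization identity $\mathbb Z[A\otimes_K B]\cong\mathbb ZA\otimes_{\mathbb ZK}\mathbb ZB$ for a right $K$-set $A$ and left $K$-set $B$ (which holds because $\mathbb Z[A\times B]\cong\mathbb ZA\otimes_{\mathbb Z}\mathbb ZB$ and the relations defining $\otimes_K$ pass to exactly the relations defining $\otimes_{\mathbb ZK}$), applied with $A=B=L$ and $K=M_1,M_2,W$. This gives $\mathbb ZE=\mathbb Z[L\otimes_W L]\cong\mathbb ZL\otimes_{\mathbb ZW}\mathbb ZL$ and $\mathbb ZV\cong(\mathbb ZL\otimes_{\mathbb ZM_1}\mathbb ZL)\oplus(\mathbb ZL\otimes_{\mathbb ZM_2}\mathbb ZL)$, and substituting into the displayed sequence produces the asserted exact sequence.

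The points requiring care are bookkeeping rather than conceptual, and they are the closest thing to an obstacle. One must check that the linearization isomorphisms are isomorphisms of $L\times L^{op}$-modules and not merely of abelian groups; this follows because the outer left and right $L$-actions on $L\otimes_K L$ act by left multiplication on the first tensor factor and right multiplication on the second, and these commute with the $K$-balancing. One must also confirm that, under these identifications, the cokernel map $\mathbb ZV\to\mathbb ZL$ is precisely the multiplication map $[x,y]_{M_i}\mapsto xy$ furnished by Lemma~\ref{l:bass.serre.com}. Both verifications are routine, so the corollary reduces to a reformulation of Lemmas~\ref{l:forest} and~\ref{l:bass.serre.com}.
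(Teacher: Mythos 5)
Your proof is correct and follows exactly the paper's route: the short exact sequence comes from the injectivity of $\partial$ established in Lemma~\ref{l:forest}, the cokernel is identified with $\mathbb ZL$ via Lemma~\ref{l:bass.serre.com}, and the edge and vertex modules are linearized as $\mathbb ZL\otimes_{\mathbb ZK}\mathbb ZL$ for $K=W,M_1,M_2$. The extra care you take over the bimodule structure of the linearization isomorphisms is a welcome elaboration of what the paper leaves implicit.
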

\begin{proof}
This follows by consideration of the cellular chain complex of the forest $T$ and using that $\mathbb ZV/\partial\mathbb ZE=H_0(T)\cong \mathbb ZL$, as observed before the corollary.
\end{proof}

\begin{Thm}\label{t:bass.serre.free.bi}
Let $[M_1,M_2;W]$ be an amalgam of monoids such that $M_1,M_2$ are free as both left and right $W$-sets.  If $M_1,M_2$ are of type bi-$\F_n$ and $W$ is of type bi-$\F_{n-1}$, then $M_1\ast_W M_2$ is of type bi-$\F_n$.
\end{Thm}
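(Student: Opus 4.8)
The plan is to mimic the one-sided proof of Theorem~\ref{t:bass.serre.free}, replacing the one-sided base change $L\otimes_{M_i}(-)$ by the two-sided functor $L\otimes_{M_i}(-)\otimes_{M_i}L$ and replacing the Bass--Serre tree by the Bass--Serre forest $T$ of Lemma~\ref{l:forest}. First I would choose a bi-equivariant classifying space $X_i$ for $M_i$ with $M_i\times M_i^{op}$-finite $n$-skeleton ($i=1,2$) and a bi-equivariant classifying space $Y$ for $W$ with $W\times W^{op}$-finite $(n-1)$-skeleton. Since each $X_i$ is $M_i\times M_i^{op}$-homotopy equivalent to the discrete set $M_i$, restricting along $W\times W^{op}\to M_i\times M_i^{op}$ makes $X_i$ a $W\times W^{op}$-space that is $W\times W^{op}$-homotopy equivalent to the discrete set $M_i$; composing a $W\times W^{op}$-map $Y\to\pi_0(Y)\cong W\hookrightarrow M_i$ with a homotopy inverse of this equivalence, and then applying cellular approximation (\cite[Theorem~2.8]{GraySteinberg1}), produces $W\times W^{op}$-equivariant cellular maps $f_i\colon Y\to X_i$ inducing the inclusion $W\hookrightarrow M_i$ on path components.

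Next I would base change. By Proposition~\ref{p:bi.tensor.iso}, $L\otimes_{M_i}X_i\otimes_{M_i}L\cong (L\times L)\otimes_{M_i\times M_i^{op}}X_i$ and $L\otimes_W Y\otimes_W L\cong (L\times L)\otimes_{W\times W^{op}}Y$, where $L\times L$ carries the left $L\times L^{op}$-action $(l,l')\cdot(a,b)=(la,bl')$ and the right $K\times K^{op}$-action $(a,b)(k,k')=(ak,k'b)$ for $K=M_i,W$. The set $L\times L$ is the free left $L\times L^{op}$-set on $(1,1)$, and, because $M_1,M_2$ are free as both left and right $W$-sets, McDuff~\cite{McDuff1979} gives that $L$ is free as a left and as a right $M_i$- and $W$-set; hence $L\times L$ is also free as a right $M_i\times M_i^{op}$- and $W\times W^{op}$-set. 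Proposition~\ref{c:base.change.cw} then shows $X_i':=L\otimes_{M_i}X_i\otimes_{M_i}L$ and $Y':=L\otimes_W Y\otimes_W L$ are projective (indeed free) $L\times L^{op}$-CW complexes, with $L\times L^{op}$-finite $n$- and $(n-1)$-skeleta respectively, since $L\times L$ is cyclic as an $L\times L^{op}$-set. Forming the homotopy pushout $Z$ of the induced maps $\tilde f_i\colon Y'\to X_i'$ yields, by \cite[Lemma~2.1]{GraySteinberg1} and the finiteness remark following it, a projective $L\times L^{op}$-CW complex with $L\times L^{op}$-finite $n$-skeleton.

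It remains to identify the homotopy type of $Z$. Using freeness together with Remark~\ref{r:tensor.with.free} (applied to $L\times L$ over $M_i\times M_i^{op}$ and over $W\times W^{op}$), I obtain homeomorphisms $X_i'\cong (L/M_i\times M_i\backslash L)\times X_i$ and $Y'\cong (L/W\times W\backslash L)\times Y$; since the components of $X_i$ and $Y$ are contractible, the projections $X_i'\to\pi_0(X_i')$ and $Y'\to\pi_0(Y')$ are homotopy equivalences. By Proposition~\ref{p:tensor.id}, $\pi_0(X_i')\cong L\otimes_{M_i}\pi_0(X_i)\otimes_{M_i}L\cong L\otimes_{M_i}M_i\otimes_{M_i}L\cong L\otimes_{M_i}L$ and $\pi_0(Y')\cong L\otimes_W L$, while the $f_i$ induce the natural maps $L\otimes_W L\to L\otimes_{M_i}L$. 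Thus the graph $\Gamma$ produced by Lemma~\ref{l:pushout.graph} is exactly the Bass--Serre forest $T$ of Lemma~\ref{l:forest}, and Lemma~\ref{l:pushout.graph} gives an $L\times L^{op}$-equivariant homotopy equivalence $Z\to T$. Since $T$ is a forest with $\pi_0(T)\cong L$ as an $L\times L^{op}$-set (Lemma~\ref{l:bass.serre.com}) and each component of $T$ is a tree, hence contractible, $Z$ has $\pi_0(Z)\cong L$ with contractible components; that is, $Z$ is a bi-equivariant classifying space for $L$ with $L\times L^{op}$-finite $n$-skeleton, proving $L$ is of type bi-$\F_n$.

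The main obstacle I expect is the bookkeeping for the two-sided base change: verifying that $L\times L$ is free on the correct side for both Proposition~\ref{c:base.change.cw} (freeness as a left $L\times L^{op}$-set, to obtain a projective $L\times L^{op}$-CW complex of the right finite type) and for the homotopy-type computation (freeness as a right $M_i\times M_i^{op}$- and $W\times W^{op}$-set, to apply Remark~\ref{r:tensor.with.free} and Proposition~\ref{p:tensor.id}), and confirming that the two-sided McDuff freeness of $L$ over $M_i$ and $W$ indeed follows from the two-sided freeness of $M_1,M_2$ over $W$.
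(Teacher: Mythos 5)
Your proposal is correct and follows essentially the same route as the paper's proof: base change via $L\otimes_{M_i}(-)\otimes_{M_i}L$ using the two-sided freeness of $L$ from McDuff, form the homotopy pushout, and identify it with the Bass--Serre forest $T$ via Lemma~\ref{l:pushout.graph} after checking the projections to path components are homotopy equivalences (the paper packages the product decomposition as Proposition~\ref{p:tensor.free} rather than Remark~\ref{r:tensor.with.free}, and obtains the maps $f_i$ from \cite[Lemma~7.1]{GraySteinberg1}, but these are the same computations).
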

\begin{proof}
Let $X_i$ be a bi-equivariant classifying space for $M_i$ with $M_i \times M_i^{op}$-finite $n$-skeleton, for $i=1,2$, and $Y$ a bi-equivariant classifying space for $W$ with $W \times W^{op}$-finite $(n-1)$-skeleton.  Fix bi-equivariant isomorphisms $r_i\colon M_i\to \pi_0(X_i)$ and $r\colon W\to \pi_0(Y)$.   By
\cite[Lemma~7.1]{GraySteinberg1}
and the cellular approximation theorem  \cite[Theorem 2.8]{GraySteinberg1}, we can find $W\times W^{op}$-equivariant cellular mappings $f_i\colon Y\to X_i$, for $i=1,2$, such that the composition of $r$ with the composition of the mapping induced by $f_i$ with $r_i^{-1}$ is the inclusion, for $i=1,2$.  Let $L=M_1\ast_W M_2$.  By McDuff~\cite{McDuff1979}, $L$ is a free as both a left and a right $M_i$-set, for $i=1,2$, and as a left and right $W$-set.

For $i=1,2$, $X_i'=L\otimes_{M_i} X_i\otimes_{M_i} L\cong (L\times L^{op})\otimes_{L\times L^{op}} X_i$ (the isomorphism by Proposition~\ref{p:bi.tensor.iso}) is a projective $L\times L^{op}$-CW complex with $L\times L^{op}$-finite $n$-skeleton and $Y'=L\otimes_W Y\otimes_W L\cong (L\times L^{op})\otimes_{L\times L^{op}} Y$ is a projective $L\times L^{op}$-CW complex with $L\times L^{op}$-finite $(n-1)$-skeleton by Proposition~\ref{c:base.change.cw}.  Let $F_i\colon Y'\to X_i'$ be the mapping induced by $f_i$, for $i=1,2$, and let $Z$ be the homotopy pushout of $F_1,F_2$; it is a projective $L\times L^{op}$-CW complex.   We claim that $Z$ is a bi-equivariant classifying space for $L$.  Note that $Z$ has an $L\times L^{op}$-finite $n$-skeleton by construction.

Our goal is to show that $Z$ is homotopy equivalent to the Bass--Serre forest $T$ via an $L\times L^{op}$-equivariant homotopy equivalence. By~\cite[Proposition~3.4]{GraySteinberg1}
 and Proposition~\ref{p:tensor.id} we have that $\pi_0(X_i')\cong L\otimes_{M_i} M\otimes_{M_i} L\cong L\otimes_{M_i} L$, for $i=1,2$, and $\pi_0(Y')\cong L\otimes_W W\otimes_W L\cong L\otimes_W L$ and, moreover, $F_i$ induces the natural mapping $L\otimes_W L\to L\otimes_{M_i} L$, for $i=1,2$ (by construction).  Thus, by Lemma~\ref{l:pushout.graph}, it suffices to show that the projections $X_i'\to \pi_0(X_i')$, for $i=1,2$, and $Y'\to \pi_0(Y)$ are homotopy equivalences.

 Since $L$ is free as a left and right $M_i$-set, for $i=1,2$, and as a left and right $W$-set, we have by Proposition~\ref{p:tensor.free} that $X_i'\cong L/M_i\times X_i\times M_i\backslash L$ (for $i=1,2$) and $Y'\cong L/W\times Y\times W\backslash L$.  As $X_1,X_2,Y$ are homotopy equivalent to their sets of path components via the canonical projection, we deduce that the projections to path components are, indeed, homotopy equivalences for $X_1',X_2',Y'$.  This completes the proof.
\end{proof}

The hypotheses of Theorem~\ref{t:bass.serre.free.bi}, of course, hold if $W$ is trivial. It also holds if we amalgamate two copies of $\mathbb N$ along cyclic submonoids.  So $\mathbb N\ast_{k\mathbb N=m\mathbb N}\mathbb N$ is of type bi-$\F_{\infty}$ for any $m,k>0$.

\begin{Cor}
A free product $M\ast N$  of monoids of type bi-$\F_n$ is of type bi-$\F_n$. If $M,N$ are finitely presented monoids, then $M\ast N$ is of type bi-$\FP_n$ if and only if $M$ and $N$ both are of type bi-$\FP_n$.
\end{Cor}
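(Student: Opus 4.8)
The plan is to mirror the one-sided Corollary~\ref{c:free.prod.fp}: the first sentence should follow directly from Theorem~\ref{t:bass.serre.free.bi}, and the second by combining this with the $\F_n$/$\FP_n$ dictionary and a retract argument.

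For the first sentence I would take $W=\{1\}$, so that $M\ast N=M\ast_{\{1\}}N$. Every set is a free $\{1\}$-set, so $M$ and $N$ are trivially free as both left and right $W$-sets, and the trivial monoid is of type bi-$\F_\infty$, hence of type bi-$\F_{n-1}$. Theorem~\ref{t:bass.serre.free.bi} then applies and gives that $M\ast N$ is of type bi-$\F_n$. For the forward implication of the second sentence, let $M,N$ be finitely presented and of type bi-$\FP_n$. Since bi-$\FP_n$ and bi-$\F_n$ agree for finitely presented monoids by~\cite[Theorem~7.15]{GraySteinberg1}, both are of type bi-$\F_n$; the first sentence makes $M\ast N$ of type bi-$\F_n$, whence of type bi-$\FP_n$ because bi-$\F_n$ implies bi-$\FP_n$.

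The converse is the real content. Here $M$ and $N$ are monoid retracts of $M\ast N$, via the inclusions and the canonical retractions collapsing the complementary factor, so it suffices to know that a retract of a monoid of type bi-$\FP_n$ is again of type bi-$\FP_n$. This is the two-sided analogue of the result of Pride~\cite{Pride2006} used in Corollary~\ref{c:free.prod.fp}, and retract statements for bi-$\FP_n$ can be taken from Kobayashi~\cite{Kobayashi2010}. The main obstacle is exactly supplying this two-sided retract property, since the most readily citable retract theorems are one-sided.

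To establish it within the present framework I would argue by descent. Writing $E_K=\mathbb Z[K\times K^{op}]$ and $L=M\ast N$, freeness of $L$ as a left and right $M$-set (McDuff~\cite{McDuff1979}) makes $E_L$ free, hence flat, as a right $E_M$-module along $\iota\times\iota^{op}$, and Remark~\ref{r:same.arg} together with the evident module form of Proposition~\ref{p:tensor.id} identifies the induced bimodule $\mathbb ZL\otimes_{\mathbb ZM}\mathbb ZL$ with $E_L\otimes_{E_M}\mathbb ZM$. Feeding $\mathbb ZL$ (of type $\FP_n$) and the free module $E_L\cong\mathbb ZL\otimes_{\mathbb Z}\mathbb ZL$ into the Mayer--Vietoris sequence of Corollary~\ref{c:exact.bimod.forest} (with $W$ trivial), and using closure of $\FP_n$ under direct summands, shows $E_L\otimes_{E_M}\mathbb ZM$ is of type $\FP_n$ over $E_L$ whenever $L$ is of type bi-$\FP_n$. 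Flat base change then identifies $\operatorname{Tor}^{E_L}(-,E_L\otimes_{E_M}\mathbb ZM)$ with $\operatorname{Tor}^{E_M}(-,\mathbb ZM)$, while the retraction $\rho\colon L\to M$ exhibits every $E_M$-module as the restriction of an $E_L$-module; the characterisation of $\FP_n$ by commutation of $\operatorname{Tor}$ with direct products then descends to give that $\mathbb ZM$ is of type $\FP_n$ over $E_M$, i.e.\ that $M$ is of type bi-$\FP_n$. The delicate point, and the crux of the whole corollary, is this descent: flatness transfers finiteness only in the induction direction, so one must use the retraction $\rho$ to transport the product criterion back down from $E_L$ to $E_M$.
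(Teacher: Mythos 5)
Your proposal is correct, and its skeleton coincides with the paper's proof: the first sentence is obtained exactly as you say, by applying Theorem~\ref{t:bass.serre.free.bi} with $W=\{1\}$, and the forward implication of the second sentence uses the equivalence of bi-$\F_n$ and bi-$\FP_n$ for finitely presented monoids. For the converse the paper simply invokes the result of Pride~\cite{Pride2006} that the class of monoids of type bi-$\FP_n$ is closed under retracts --- so the two-sided retract theorem you were worried might only exist in one-sided form is in fact available and is precisely what the authors cite. What you do differently is to supply a self-contained proof of this retract closure in the special case of a free product: you use the bimodule Mayer--Vietoris sequence of Corollary~\ref{c:exact.bimod.forest} with $W$ trivial (whose first term is the free bimodule $\mathbb ZL\otimes_{\mathbb Z}\mathbb ZL$) to see that the induced bimodule $\mathbb ZL\otimes_{\mathbb ZM}\mathbb ZL\cong\mathbb Z[L\times L^{op}]\otimes_{\mathbb Z[M\times M^{op}]}\mathbb ZM$ is of type $\FP_n$ over $\mathbb Z[L\times L^{op}]$, and then descend along the retraction $\rho\colon L\to M$. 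This is a legitimate alternative that keeps the corollary independent of Pride's paper, at the cost of extra work. Two small points to make the descent airtight: first, the criterion you invoke must be the Bieri--Brown characterisation of $\FP_n$ via commutation of $\operatorname{Tor}_k$ with products of \emph{arbitrary} families of right modules (not merely products of copies of the ring), since the $E_M$-modules $\prod_I W_i$ inflated along $\rho$ are not products of copies of $E_L$; second, one should note the naturality of the flat base change isomorphism so that the comparison squares commute. With those provisos your argument goes through.
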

\begin{proof}
The first statement follows from Theorem~\ref{t:bass.serre.free.bi}.  The second follows from the equivalence of bi-$\F_n$ and bi-$\FP_n$ for finitely presented monoids and the result of Pride~\cite{Pride2006} that the class of monoids of type bi-$\FP_n$ is closed under retracts.
\end{proof}

The hypotheses of Theorem~\ref{t:bass.serre.free.bi} also hold if $M_1,M_2$ are cancellative and $W$ is a group.
The homotopy pushout construction in the proof of Theroem~\ref{t:bass.serre.free.bi} yields the following theorem.

\begin{Thm}\label{t:amalg.cd.bi}
Let $[M_1,M_2;W]$ be an amalgam of monoids such that $M_1,M_2$ are free as left and right $W$-sets.  Suppose that $d_i$ is the geometric dimension of $M_i$, for $i=1,2$ and $d$ is the geometric dimension of $W$. Then the geometric dimension of $M_1\ast_W M_2$ is bounded above by $\max\{d_1,d_2,d+1\}$.
\end{Thm}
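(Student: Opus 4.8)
The plan is to re-run the homotopy pushout construction from the proof of Theorem~\ref{t:bass.serre.free.bi}, but to feed it bi-equivariant classifying spaces of \emph{minimal} dimension and to read off the dimension of the resulting complex instead of tracking its finiteness type. The key observation is that the contractibility part of that proof---the identification of the output up to $L\times L^{op}$-homotopy with the Bass--Serre forest $T$ via Lemma~\ref{l:pushout.graph}---makes no use whatsoever of the dimensions of the input spaces, so exactly the same construction serves both the finiteness statement of Theorem~\ref{t:bass.serre.free.bi} and the dimension statement here; only the bookkeeping at the end changes.

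First I would choose bi-equivariant classifying spaces $X_i$ for $M_i$ with $\dim X_i=d_i$ $(i=1,2)$ and a bi-equivariant classifying space $Y$ for $W$ with $\dim Y=d$; these exist by the definition of geometric dimension. Writing $L=M_1\ast_W M_2$, I would form $X_i'=L\otimes_{M_i}X_i\otimes_{M_i}L$ and $Y'=L\otimes_W Y\otimes_W L$ exactly as in the proof of Theorem~\ref{t:bass.serre.free.bi}. Using McDuff's theorem~\cite{McDuff1979} that $L$ is free as a left and as a right $M_i$-set and as a left and right $W$-set, together with Propositions~\ref{p:bi.tensor.iso} and~\ref{c:base.change.cw}, these are projective $L\times L^{op}$-CW complexes, and the dimension-preservation clause of Proposition~\ref{c:base.change.cw} gives $\dim X_i'=\dim X_i=d_i$ and $\dim Y'=\dim Y=d$. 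Letting $F_i\colon Y'\to X_i'$ be the maps induced by the cellular mappings $f_i\colon Y\to X_i$ of that proof, I would take $Z$ to be the homotopy pushout of $F_1,F_2$; the proof of Theorem~\ref{t:bass.serre.free.bi} then shows verbatim that $Z$ is a bi-equivariant classifying space for $L$.

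Finally I would bound $\dim Z$ directly from the cell structure of the homotopy pushout. Recall (from the discussion preceding Lemma~\ref{l:pushout.graph}) that $Z$ is obtained by attaching $Y'\times I$ to $X_1'\coprod X_2'$ along $Y'\times\partial I$. Every cell of $Z$ either already lies in $X_1'\coprod X_2'$, and so has dimension at most $\max\{d_1,d_2\}$, or is one of the cells of $Y'\times I$ lying over the open interval, each of which has dimension exactly one more than the corresponding cell of $Y'$, hence dimension at most $d+1$ since $\dim Y'=d$. Therefore $\dim Z\le\max\{d_1,d_2,d+1\}$, and because $Z$ is a bi-equivariant classifying space for $L$, the geometric dimension of $M_1\ast_W M_2$ is at most $\max\{d_1,d_2,d+1\}$, as required.

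I do not expect a serious obstacle here: the construction is forced once Theorem~\ref{t:bass.serre.free.bi} is in hand, and the whole argument is the two-sided mirror of the (unstated) estimate behind Theorem~\ref{t:amalg.cd}. The only point needing care is the elementary cell count for the product $Y'\times I$ and the accompanying verification that the contractibility argument of Theorem~\ref{t:bass.serre.free.bi} genuinely never invoked the dimensions of the inputs---which it does not, and which is precisely what licenses reusing that construction unchanged.
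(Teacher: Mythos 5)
Your proposal is correct and is exactly the argument the paper intends: the paper offers no separate proof, merely remarking that ``the homotopy pushout construction in the proof of Theorem~\ref{t:bass.serre.free.bi} yields'' the bound, and your write-up supplies precisely that---rerun the construction with minimal-dimensional bi-equivariant classifying spaces, note that contractibility is established independently of dimension, and read off $\dim Z\le\max\{d_1,d_2,d+1\}$ from the cells of $X_1'\coprod X_2'$ and of $Y'\times I$. The dimension bookkeeping via Proposition~\ref{c:base.change.cw} together with Proposition~\ref{p:bi.tensor.iso} and the cell count for the mapping cylinder part are all as the paper's framework prescribes.
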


Since only the trivial monoid has geometric dimension $0$, we obtain the following special case.

\begin{Cor}\label{c:free.prod.geom.bi}
Let $M$ and $N$ be monoids of geometric dimension at most $n$.  Then $M\ast N$ has geometric dimension at most $n$.
\end{Cor}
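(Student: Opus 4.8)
The plan is to deduce this directly from Theorem~\ref{t:amalg.cd.bi} by taking the amalgamated submonoid to be trivial, so that the free product $M\ast N$ is realised as the pushout $M\ast_{\{1\}} N$. First I would check that the hypotheses of Theorem~\ref{t:amalg.cd.bi} are automatically met in this case. With $W=\{1\}$ acting trivially, every monoid is free as both a left and a right $\{1\}$-set, since each element constitutes its own singleton orbit and the whole underlying set serves as a basis. Moreover the trivial monoid has a single point as a bi-equivariant classifying space, so $\mathop{\mathrm{gd}}\{1\}=0$.

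Setting $d_1=d_2=n$ and $d=0$, Theorem~\ref{t:amalg.cd.bi} then yields immediately that the geometric dimension of $M\ast N$ is bounded above by $\max\{n,n,0+1\}=\max\{n,1\}$. Whenever $n\geq 1$ this quantity equals $n$, and the asserted bound follows at once.

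The only real subtlety, and hence the step I expect to be the main obstacle, is the boundary case $n=0$, where the crude estimate $\max\{n,1\}=1$ overshoots the claimed bound by one. To handle this I would invoke the observation recorded immediately before the statement, namely that only the trivial monoid has geometric dimension $0$. Thus if $n=0$ both $M$ and $N$ must be trivial, so $M\ast N$ is itself trivial and has geometric dimension $0=n$. Combining this with the case $n\geq 1$ gives the bound of $n$ in all cases, completing the proof.
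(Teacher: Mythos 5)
Your proposal is correct and matches the paper's reasoning exactly: the corollary is deduced from Theorem~\ref{t:amalg.cd.bi} with $W=\{1\}$, giving the bound $\max\{n,1\}$, and the boundary case $n=0$ is disposed of by the observation (stated in the paper immediately before the corollary) that only the trivial monoid has geometric dimension $0$. Nothing further is needed.
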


Next we wish to consider the homological analogue.

\begin{Prop}\label{p:flatness.bi}
Suppose that $A$ is a flat right $\mathbb ZM$-module and $B$ is a flat left $\mathbb ZM$-module.  Then $A\otimes B$ is a flat right $\mathbb ZM\otimes \mathbb ZM^{op}$-module (with respect to the structure $(a\otimes b)(m,m') = am\otimes m'b$).
\end{Prop}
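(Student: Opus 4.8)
The plan is to reduce the assertion to the definition of flatness and then feed everything through the natural isomorphism recorded in Remark~\ref{r:same.arg}. Recall that a right $\mathbb ZM\otimes \mathbb ZM^{op}$-module $P$ is flat exactly when the functor $P\otimes_{\mathbb ZM\otimes \mathbb ZM^{op}}(-)$ is exact on left $\mathbb ZM\otimes \mathbb ZM^{op}$-modules; and since $\mathbb ZM\otimes \mathbb ZM^{op}=\mathbb Z[M\times M^{op}]$, a left module over this ring is precisely a $\mathbb ZM$-bimodule. So I would fix an arbitrary short exact sequence $0\to C'\to C\to C''\to 0$ of $\mathbb ZM$-bimodules and aim to show that applying $(A\otimes B)\otimes_{\mathbb ZM\otimes \mathbb ZM^{op}}(-)$ keeps it exact.

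The first step is to invoke Remark~\ref{r:same.arg}, which supplies a natural isomorphism
\[(A\otimes B)\otimes_{\mathbb ZM\otimes \mathbb ZM^{op}} C\cong A\otimes_{\mathbb ZM} C\otimes_{\mathbb ZM}B.\]
Because this isomorphism is natural in $C$, it suffices to prove that the functor $C\mapsto A\otimes_{\mathbb ZM} C\otimes_{\mathbb ZM}B$ carries short exact sequences of bimodules to short exact sequences. The second step is to factor this functor as a composite of two one-sided tensoring operations and apply the two flatness hypotheses in turn. Regarding the sequence as one of left $\mathbb ZM$-modules and applying $A\otimes_{\mathbb ZM}(-)$, exact because $A$ is flat as a right $\mathbb ZM$-module, yields a short exact sequence of right $\mathbb ZM$-modules; then applying $(-)\otimes_{\mathbb ZM}B$, exact because $B$ is flat as a left $\mathbb ZM$-module, preserves exactness. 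Composing and unwinding the isomorphism of Remark~\ref{r:same.arg} gives exactness of $(A\otimes B)\otimes_{\mathbb ZM\otimes \mathbb ZM^{op}}(-)$, hence flatness of $A\otimes B$.

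The only point that requires genuine care, rather than purely formal bookkeeping, is the intermediate module structure: one must check that $A\otimes_{\mathbb ZM} C$ inherits a \emph{right} $\mathbb ZM$-module structure from the right $M$-action on $C$, and that the connecting maps $A\otimes_{\mathbb ZM}C'\to A\otimes_{\mathbb ZM}C\to A\otimes_{\mathbb ZM}C''$ remain right $\mathbb ZM$-linear, so that the second tensoring step with $B$ is actually legitimate. This is immediate from the commuting of the two $M$-actions on a bimodule, but it is where the argument could go wrong if the structures were mismatched; everything else is a direct application of the definition of flatness together with Remark~\ref{r:same.arg}.
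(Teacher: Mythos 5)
Your proposal is correct and follows essentially the same route as the paper: take a short exact sequence of $\mathbb ZM$-bimodules, apply $A\otimes_{\mathbb ZM}(-)$ using flatness of $A$, then $(-)\otimes_{\mathbb ZM}B$ using flatness of $B$, and identify the composite with $(A\otimes B)\otimes_{\mathbb ZM\otimes\mathbb ZM^{op}}(-)$ via Remark~\ref{r:same.arg}. Your explicit remark about checking that the intermediate object $A\otimes_{\mathbb ZM}C$ retains a right $\mathbb ZM$-module structure is a point the paper leaves implicit, but it is the same argument.
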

\begin{proof}
If $0\longrightarrow J\longrightarrow K\longrightarrow L\longrightarrow 0$ is a short exact sequences of $M$-bimodules, then
$0\longrightarrow A\otimes_{\mathbb ZM} J\longrightarrow  A\otimes_{\mathbb ZM}K\longrightarrow A\otimes_{\mathbb ZM}L\longrightarrow 0$ is exact by flatness of $A$.  Therefore,
\[0\longrightarrow A\otimes_{\mathbb ZM} J\otimes_{\mathbb ZM} B\longrightarrow  A\otimes_{\mathbb ZM}K\otimes_{\mathbb ZM}B\longrightarrow A\otimes_{\mathbb ZM}L\otimes_{\mathbb ZM}B\longrightarrow 0\] is exact by flatness of $B$.  The result now follows by Remark~\ref{r:same.arg}.
\end{proof}

\begin{Thm}\label{t:bass.serre.flat.bi}
Let $[M_1,M_2;W]$ be an amalgam of monoids such that $\mathbb ZL$ is flat as both a left and right $\mathbb ZM_i$-module and $\mathbb ZW$-module, for $i=1,2$, where $L=M_1\ast_W M_2$.  If $M_1,M_2$ are of type bi-$\FP_n$ and $W$ is of type bi-$\FP_{n-1}$, then $M_1\ast_W M_2$ is of type bi-$\FP_n$.
\end{Thm}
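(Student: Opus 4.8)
The plan is to mirror the one-sided argument of Theorem~\ref{t:bass.serre.flat}, but now working over the ring $\mathbb Z[L\times L^{op}]$ and using the two-sided short exact sequence
\[
0\longrightarrow \mathbb ZL\otimes_{\mathbb ZW} \mathbb ZL\longrightarrow (\mathbb ZL\otimes_{\mathbb ZM_1}\mathbb ZL)\oplus (\mathbb ZL\otimes_{\mathbb ZM_2} \mathbb ZL)\longrightarrow \mathbb ZL\longrightarrow 0
\]
of $L\times L^{op}$-modules supplied by Corollary~\ref{c:exact.bimod.forest}. Since $M_i$ being of type bi-$\FP_n$ means precisely that the bimodule $\mathbb ZM_i$ is of type $\FP_n$ over $\mathbb Z[M_i\times M_i^{op}]$ (and likewise $\mathbb ZW$ is of type $\FP_{n-1}$ over $\mathbb Z[W\times W^{op}]$), the whole problem reduces to showing that the three induced terms appearing above are of the appropriate $\FP$ type as $\mathbb Z[L\times L^{op}]$-modules, and then feeding this into Corollary~\ref{c:fp.resolved}.

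The key technical step is a flat base change identification. First I would rewrite each summand as an induced module: using Proposition~\ref{p:tensor.id} to collapse $\mathbb ZL\otimes_{\mathbb ZM_i}\mathbb ZM_i\otimes_{\mathbb ZM_i}\mathbb ZL\cong \mathbb ZL\otimes_{\mathbb ZM_i}\mathbb ZL$, and then Remark~\ref{r:same.arg} to obtain
\[
\mathbb ZL\otimes_{\mathbb ZM_i}\mathbb ZL\cong (\mathbb ZL\otimes \mathbb ZL)\otimes_{\mathbb Z[M_i\times M_i^{op}]}\mathbb ZM_i,
\]
and similarly $\mathbb ZL\otimes_{\mathbb ZW}\mathbb ZL\cong (\mathbb ZL\otimes \mathbb ZL)\otimes_{\mathbb Z[W\times W^{op}]}\mathbb ZW$. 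Here $\mathbb ZL\otimes \mathbb ZL$ carries the right $\mathbb Z[M_i\times M_i^{op}]$-action $(l\otimes l')(m\otimes m')=lm\otimes m'l'$, under which it is exactly $\mathbb Z[L\times L^{op}]$ regarded as a right module over $\mathbb Z[M_i\times M_i^{op}]$ via the natural map. Now Proposition~\ref{p:flatness.bi}, applied with $A=B=\mathbb ZL$ and $M=M_i$, shows that this module is flat, precisely because the hypothesis guarantees $\mathbb ZL$ is flat as both a left and a right $\mathbb ZM_i$-module; the same argument works for $W$. Hence $\mathbb Z[L\times L^{op}]$ is flat as a right $\mathbb Z[M_i\times M_i^{op}]$-module and as a right $\mathbb Z[W\times W^{op}]$-module, so Lemma~\ref{l:flat.base}(1) yields that $\mathbb ZL\otimes_{\mathbb ZM_i}\mathbb ZL$ is of type $\FP_n$ and $\mathbb ZL\otimes_{\mathbb ZW}\mathbb ZL$ is of type $\FP_{n-1}$ over $\mathbb Z[L\times L^{op}]$.

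Finally I would apply Corollary~\ref{c:fp.resolved}(1) to the above partial resolution of $\mathbb ZL$ of length $1$: the degree-zero term, being a finite direct sum of modules of type $\FP_n$, is of type $\FP_n=\FP_{n-0}$, while the degree-one term is of type $\FP_{n-1}$, so $\mathbb ZL$ is of type $\FP_n$ over $\mathbb Z[L\times L^{op}]$, i.e.\ $L$ is of type bi-$\FP_n$. I expect the only genuinely delicate point to be the bookkeeping with the opposite structures when matching the module actions in the two tensor identities with the action in Proposition~\ref{p:flatness.bi}, so that $\mathbb Z[L\times L^{op}]$ really is recognized as the base-changed ring $(\mathbb ZL\otimes\mathbb ZL)$ over $\mathbb Z[M_i\times M_i^{op}]$; once those identifications are made correctly, the flatness and the assembly are routine.
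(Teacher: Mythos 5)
Your proposal is correct and follows essentially the same route as the paper's proof: identify the terms of the exact sequence from Corollary~\ref{c:exact.bimod.forest} as modules induced along the flat ring map $\mathbb Z[M_i\times M_i^{op}]\to\mathbb Z[L\times L^{op}]$ (via Proposition~\ref{p:tensor.id} together with Proposition~\ref{p:bi.tensor.iso}/Remark~\ref{r:same.arg}), establish flatness of $\mathbb Z[L\times L^{op}]$ via Proposition~\ref{p:flatness.bi}, apply Lemma~\ref{l:flat.base}, and conclude with Corollary~\ref{c:fp.resolved}. The bookkeeping of the opposite-ring actions that you flag as the delicate point is handled exactly as you describe, so nothing further is needed.
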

\begin{proof}
Note that $\mathbb Z[L\times L^{op}]\cong \mathbb ZL\otimes \mathbb ZL^{op}$ is a flat right $\mathbb Z[M_i\times M_i^{op}]$-module, for $i=1,2$, and a flat right-$\mathbb Z[W\times W^{op}]$-module by Proposition~\ref{p:flatness.bi}. By Lemma~\ref{l:flat.base} and the hypotheses, we deduce that $\mathbb Z[L\times L^{op}]\otimes_{\mathbb Z[M_i\times M_i^{op}]} \mathbb ZM_i$ is of type $\FP_n$, for $i=1,2$, and $\mathbb Z[L\times L^{op}]\otimes_{\mathbb Z[W\times W^{op}]}\mathbb ZW$ is of type $\FP_{n-1}$.  The result now follows by applying Corollary~\ref{c:fp.resolved} to the exact sequence in Corollary~\ref{c:exact.bimod.forest}, in light of Proposition~\ref{p:tensor.id} and Proposition~\ref{p:bi.tensor.iso}.
\end{proof}

\begin{Thm}\label{t:bass.serre.flat.bi2}
Suppose that $[M_1,M_2;W]$ is an amalgam of monoids such that $M_i$ has Hochschild cohomological dimension at most  $d$, for $i=1,2$, $W$ has Hochschild cohomological  dimension  at most $d-1$, and $\mathbb ZL$ is flat as both a left and right $\mathbb ZM_i$-module and $\mathbb ZW$-module, for $i=1,2$, where $L=M_1\ast_W M_2$.   Then  $M_1\ast_W M_2$ has Hochschild cohomological dimension at most $d$.
\end{Thm}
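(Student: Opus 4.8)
The plan is to run the bimodule analogue of the argument for Theorem~\ref{t:bass.serre.flat.cd}, in exactly the way that Theorem~\ref{t:bass.serre.flat.bi} is the bimodule analogue of Theorem~\ref{t:bass.serre.flat}. The backbone is the short exact sequence of $\mathbb Z[L\times L^{op}]$-modules furnished by Corollary~\ref{c:exact.bimod.forest},
\[0\longrightarrow \mathbb ZL\otimes_{\mathbb ZW} \mathbb ZL\longrightarrow (\mathbb ZL\otimes_{\mathbb ZM_1}\mathbb ZL)\oplus (\mathbb ZL\otimes_{\mathbb ZM_2} \mathbb ZL)\longrightarrow \mathbb ZL\longrightarrow 0,\]
which comes from the cellular chain complex of the Bass--Serre forest $T$ together with the identification $H_0(T)\cong \mathbb ZL$. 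Since the Hochschild cohomological dimension of $L$ is by definition the projective dimension of $\mathbb ZL$ as a $\mathbb Z[L\times L^{op}]$-module, it suffices to bound that projective dimension by $d$.

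First I would record, exactly as in the proof of Theorem~\ref{t:bass.serre.flat.bi}, that $\mathbb Z[L\times L^{op}]\cong \mathbb ZL\otimes \mathbb ZL^{op}$ is flat as a right $\mathbb Z[M_i\times M_i^{op}]$-module, for $i=1,2$, and as a right $\mathbb Z[W\times W^{op}]$-module. This is precisely Proposition~\ref{p:flatness.bi} applied to the standing hypothesis that $\mathbb ZL$ is flat as both a left and a right $\mathbb ZM_i$- and $\mathbb ZW$-module. Next I would identify the two outer terms of the sequence as flat base extensions of the appropriate bimodules: using Proposition~\ref{p:tensor.id} and Proposition~\ref{p:bi.tensor.iso} (in its module form, Remark~\ref{r:same.arg}) one obtains
\[\mathbb ZL\otimes_{\mathbb ZM_i}\mathbb ZL\cong \mathbb Z[L\times L^{op}]\otimes_{\mathbb Z[M_i\times M_i^{op}]}\mathbb ZM_i,\qquad \mathbb ZL\otimes_{\mathbb ZW}\mathbb ZL\cong \mathbb Z[L\times L^{op}]\otimes_{\mathbb Z[W\times W^{op}]}\mathbb ZW.\]

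Now the flat base-change statement Lemma~\ref{l:flat.base}(2) does the rest. The Hochschild hypotheses say that $\mathbb ZM_i$ has projective dimension at most $d$ over $\mathbb Z[M_i\times M_i^{op}]$ and that $\mathbb ZW$ has projective dimension at most $d-1$ over $\mathbb Z[W\times W^{op}]$; combined with the flatness just recorded, Lemma~\ref{l:flat.base}(2) gives that each $\mathbb ZL\otimes_{\mathbb ZM_i}\mathbb ZL$ has projective dimension at most $d$ over $\mathbb Z[L\times L^{op}]$, hence so does their direct sum (projective dimension being closed under direct sums), while $\mathbb ZL\otimes_{\mathbb ZW}\mathbb ZL$ has projective dimension at most $d-1$. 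Finally I would read the displayed sequence as a partial resolution $C_1\to C_0\to \mathbb ZL$ with $C_0=(\mathbb ZL\otimes_{\mathbb ZM_1}\mathbb ZL)\oplus(\mathbb ZL\otimes_{\mathbb ZM_2}\mathbb ZL)$ and $C_1=\mathbb ZL\otimes_{\mathbb ZW}\mathbb ZL$, and feed it into Corollary~\ref{c:fp.resolved}(2) with $n=1$ (we may assume $d\geq 1$, the case $d=0$ being degenerate, so that $d\geq n$); the injectivity of $C_1\to C_0$ demanded there is exactly the exactness at the left-hand end of the sequence. As $C_0$ has projective dimension at most $d-0$ and $C_1$ at most $d-1$, the corollary yields projective dimension at most $d$ for $\mathbb ZL$, which is the asserted bound on the Hochschild cohomological dimension of $M_1\ast_W M_2$.

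I do not anticipate a genuine obstacle: the only substantive input, the passage from one-sided flatness of $\mathbb ZL$ to the two-sided flatness of $\mathbb Z[L\times L^{op}]$, is already isolated in Proposition~\ref{p:flatness.bi}, and the exact sequence is supplied by Corollary~\ref{c:exact.bimod.forest}. The one place to stay careful is matching the terms of that sequence with the flat base extensions via Proposition~\ref{p:tensor.id} and Remark~\ref{r:same.arg}. In effect the proof is the proof of Theorem~\ref{t:bass.serre.flat.bi} verbatim, with the type-$\FP_n$/$\FP_{n-1}$ conclusions replaced by projective-dimension-at-most-$d$/$d-1$ conclusions and with part~(1) of Corollary~\ref{c:fp.resolved} replaced by part~(2).
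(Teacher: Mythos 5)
Your proposal is correct and is exactly the argument the paper intends: the paper states Theorem~\ref{t:bass.serre.flat.bi2} without proof immediately after Theorem~\ref{t:bass.serre.flat.bi}, leaving the reader to make precisely the substitution you describe (Proposition~\ref{p:flatness.bi} plus Lemma~\ref{l:flat.base}(2) applied to the terms of the exact sequence of Corollary~\ref{c:exact.bimod.forest}, then Corollary~\ref{c:fp.resolved}(2) with $n=1$). Your care about the identifications via Proposition~\ref{p:tensor.id} and Remark~\ref{r:same.arg}, and the remark that $d=0$ is vacuous so $d\geq n=1$ may be assumed, are both apt.
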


As with the one-sided results, combining these results with results of McDuff~\cite{McDuff1979} gives the following corollaries.

\begin{Cor}\label{t:bass.serre.free.bi.hom}
Let $[M_1,M_2;W]$ be an amalgam of monoids such that $M_1,M_2$ are free as both left and right $W$-sets.  If $M_1,M_2$ are of type bi-$\FP_n$ and $W$ is of type bi-$\FP_{n-1}$, then $M_1\ast_W M_2$ is of type bi-$\FP_n$.  This applies, in particular, to free products.
\end{Cor}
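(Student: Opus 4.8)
The plan is to discharge the flatness hypotheses of Theorem~\ref{t:bass.serre.flat.bi} using McDuff's freeness theorem~\cite{McDuff1979}, and then simply invoke that theorem. The corollary is thus a packaging result rather than one requiring a genuinely new argument.

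First I would observe that the standing hypothesis — that $M_1$ and $M_2$ are free as both left and right $W$-sets — is exactly the input to McDuff's results that was already used in the proof of Theorem~\ref{t:bass.serre.free.bi}. By McDuff~\cite{McDuff1979} it follows that $L = M_1 \ast_W M_2$ is free as both a left and a right $M_i$-set, for $i = 1,2$, and as both a left and a right $W$-set.

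Next I would promote these freeness statements from the category of sets to the category of modules. Since $L$ is free as a right $M_i$-set, the $\mathbb ZM_i$-module $\mathbb ZL$ is free, and in particular flat; the same holds for the left action and for the corresponding $\mathbb ZW$-module structures. Thus $\mathbb ZL$ is flat as both a left and a right $\mathbb ZM_i$-module and as both a left and a right $\mathbb ZW$-module, which is precisely the flatness hypothesis required by Theorem~\ref{t:bass.serre.flat.bi}. Applying that theorem then yields that $M_1 \ast_W M_2$ is of type bi-$\FP_n$.

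For the final assertion, I would specialise to $W = \{1\}$: every set is a free left and right $\{1\}$-set, so the hypotheses hold automatically, and the trivial monoid is of type bi-$\FP_{n-1}$ (indeed bi-$\FP_\infty$). Hence the corollary shows that $M_1 \ast M_2$ is of type bi-$\FP_n$ whenever $M_1$ and $M_2$ are. I do not expect any genuine obstacle: the only point needing attention is the two-sided bookkeeping, namely ensuring that freeness on both sides over $W$ propagates to freeness on both sides over each $M_i$ and over $W$ inside $L$ — which is exactly what McDuff supplies and what was already exploited in the proof of Theorem~\ref{t:bass.serre.free.bi}.
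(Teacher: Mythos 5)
Your proposal is correct and matches the paper's intended argument exactly: the paper derives this corollary by noting that McDuff's freeness results make $\mathbb{Z}L$ free, hence flat, over each of $\mathbb{Z}M_1$, $\mathbb{Z}M_2$ and $\mathbb{Z}W$ on both sides, so that Theorem~\ref{t:bass.serre.flat.bi} applies. The specialisation to $W=\{1\}$ for free products is likewise the intended reading.
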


\begin{Cor}\label{t:amalg.cd.bi.cohom}
Let $[M_1,M_2;W]$ be an amalgam of monoids such that $M_1,M_2$ are free as left and right $W$-sets.  Suppose that $d_i$ is the Hochschild cohomological dimension of $M_i$, for $i=1,2$ and $d$ is the Hochschild cohomological dimension of $W$. Then the Hochschild cohomological dimension of $M_1\ast_W M_2$ is bounded above by $\max\{d_1,d_2,d+1\}$.
\end{Cor}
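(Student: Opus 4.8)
The plan is to mimic the proof of Theorem~\ref{t:bass.serre.flat.bi} almost verbatim, replacing the finiteness property $\FP_n$ by a bound on projective dimension and invoking part~(2) rather than part~(1) of Lemma~\ref{l:flat.base} and of Corollary~\ref{c:fp.resolved}. Recall that the Hochschild cohomological dimensions $\dim M_i$, $\dim W$ and $\dim L$ are by definition the projective dimensions of $\mathbb ZM_i$, $\mathbb ZW$ and $\mathbb ZL$ over $\mathbb Z[M_i\times M_i^{op}]$, $\mathbb Z[W\times W^{op}]$ and $\mathbb Z[L\times L^{op}]$, respectively. Thus the hypotheses say that $\mathbb ZM_i$ has projective dimension at most $d$ over $\mathbb Z[M_i\times M_i^{op}]$ and $\mathbb ZW$ has projective dimension at most $d-1$ over $\mathbb Z[W\times W^{op}]$, and the goal is to bound the projective dimension of $\mathbb ZL$ over $\mathbb Z[L\times L^{op}]$ by $d$.

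First I would set up the flat base change exactly as in Theorem~\ref{t:bass.serre.flat.bi}. By Proposition~\ref{p:flatness.bi}, together with the hypothesis that $\mathbb ZL$ is flat as a left and right $\mathbb ZM_i$- and $\mathbb ZW$-module, the ring $\mathbb Z[L\times L^{op}]\cong \mathbb ZL\otimes \mathbb ZL^{op}$ is flat as a right $\mathbb Z[M_i\times M_i^{op}]$-module and as a right $\mathbb Z[W\times W^{op}]$-module. Using Propositions~\ref{p:tensor.id} and~\ref{p:bi.tensor.iso} I would identify
\[
\mathbb ZL\otimes_{\mathbb ZM_i}\mathbb ZL\cong \mathbb Z[L\times L^{op}]\otimes_{\mathbb Z[M_i\times M_i^{op}]}\mathbb ZM_i,\qquad
\mathbb ZL\otimes_{\mathbb ZW}\mathbb ZL\cong \mathbb Z[L\times L^{op}]\otimes_{\mathbb Z[W\times W^{op}]}\mathbb ZW.
\]
Then Lemma~\ref{l:flat.base}(2) gives that each $\mathbb ZL\otimes_{\mathbb ZM_i}\mathbb ZL$ has projective dimension at most $d$ over $\mathbb Z[L\times L^{op}]$, and that $\mathbb ZL\otimes_{\mathbb ZW}\mathbb ZL$ has projective dimension at most $d-1$.

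Next I would feed this into the Bass--Serre exact sequence of Corollary~\ref{c:exact.bimod.forest}, namely
\[
0\longrightarrow \mathbb ZL\otimes_{\mathbb ZW}\mathbb ZL\longrightarrow (\mathbb ZL\otimes_{\mathbb ZM_1}\mathbb ZL)\oplus (\mathbb ZL\otimes_{\mathbb ZM_2}\mathbb ZL)\longrightarrow \mathbb ZL\longrightarrow 0,
\]
read as a length-one partial resolution $0\to C_1\to C_0\to \mathbb ZL\to 0$ of the $\mathbb Z[L\times L^{op}]$-module $\mathbb ZL$, with $C_0=(\mathbb ZL\otimes_{\mathbb ZM_1}\mathbb ZL)\oplus(\mathbb ZL\otimes_{\mathbb ZM_2}\mathbb ZL)$ and $C_1=\mathbb ZL\otimes_{\mathbb ZW}\mathbb ZL$. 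Since projective dimension is additive over direct sums, $C_0$ has projective dimension at most $d=d-0$ and $C_1$ has projective dimension at most $d-1$. The map $C_1\to C_0$ is injective by exactness (this is precisely the injectivity of $\partial$, i.e.\ the statement of Lemma~\ref{l:forest} that $T$ is a forest). Applying Corollary~\ref{c:fp.resolved}(2) with $n=1$ then yields that $\mathbb ZL$ has projective dimension at most $d$ over $\mathbb Z[L\times L^{op}]$, i.e.\ $\dim(M_1\ast_W M_2)\leq d$, as required. I do not expect a serious obstacle: the only genuine geometry (exactness of the sequence, from $T$ being a forest) was already established in Lemma~\ref{l:forest} and Corollary~\ref{c:exact.bimod.forest}. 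The main point to check is that the hypotheses of Corollary~\ref{c:fp.resolved}(2) are met — the injectivity of $C_1\to C_0$ (supplied by exactness) and the inequality $d\geq n=1$, which is forced by the hypothesis $\dim W\leq d-1$ — after which the flatness bookkeeping via Proposition~\ref{p:flatness.bi} and the tensor identifications is routine.
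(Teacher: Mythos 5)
Your argument follows the paper's intended route: the corollary is obtained by combining Theorem~\ref{t:bass.serre.flat.bi2} with McDuff's results, and what you have written is in effect a correct proof of Theorem~\ref{t:bass.serre.flat.bi2} itself --- flat base change via Proposition~\ref{p:flatness.bi} and Lemma~\ref{l:flat.base}(2), the identifications from Propositions~\ref{p:tensor.id} and~\ref{p:bi.tensor.iso}, and Corollary~\ref{c:fp.resolved}(2) applied to the short exact sequence of Corollary~\ref{c:exact.bimod.forest}. The checks you single out (injectivity of $C_1\to C_0$ from the forest lemma, and the requirement that the target bound be at least $n=1$, which holds since $\max\{d_1,d_2,d+1\}\geq d+1\geq 1$) are exactly the right ones.

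The one genuine elision is at the start: you write ``together with the hypothesis that $\mathbb ZL$ is flat as a left and right $\mathbb ZM_i$- and $\mathbb ZW$-module,'' but that is the hypothesis of Theorem~\ref{t:bass.serre.flat.bi2}, not of the corollary. The corollary assumes only that $M_1$ and $M_2$ are free as left and right $W$-sets. To obtain the flatness you use, one must first invoke McDuff's theorem that under this assumption $L=M_1\ast_W M_2$ is free as a left and a right $M_i$-set, for $i=1,2$, and as a left and a right $W$-set, whence $\mathbb ZL$ is a free (hence flat) module on each side over $\mathbb ZM_1$, $\mathbb ZM_2$ and $\mathbb ZW$. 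This is precisely the step the paper signals by ``combining these results with results of McDuff,'' and it is not a formality: flatness does not propagate from $W$ to the amalgam in general, as the Fiedorowicz counterexample discussed in the paper shows, so the passage through the set-theoretic structure of $L$ is essential. Once that citation is inserted the proof is complete. A purely cosmetic point: you reuse the letter $d$ both for $\dim W$ and for the target bound; writing $D=\max\{d_1,d_2,d+1\}$, the inequalities you actually need are $d_i\leq D$ and $d\leq D-1$, both of which hold.
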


We remark that the results of this section and the previous section have analogues for the amalgamation of a finite family of monoids over a common submonoid.

\section{HNN extensions} 
\label{sec_HNNOttoPride}
In this section we shall present several new theorems about the behaviour of homological and topological finiteness properties for HNN extensions of monoids.   
Several natural HNN extension definitions for monoids have arisen in the literature in different contexts. 

First in this section we consider a generalization of a construction of Otto and Pride, which they used to distinguish finite derivation type from finite homological type~\cite{Pride2004}. Let $M$ be a monoid, $A$ a submonoid and $\p\colon A\to M$ a homomorphism.  The free monoid generated by a set $A$ is denoted by $A^*$.  Then the \emph{Otto-Pride extension} of $M$ with base monoid $A$ is the quotient $L$ of the free product $M\ast \{t\}^*$ by the smallest congruence such that $at=t\p(a)$ for $a\in A$, i.e., $L=\langle M,t\mid at=t\p(a), a\in A\rangle$.  For example, if $A=M$ and $\p$ is the trivial homomorphism, then the Otto-Pride extension is the monoid $M\cup \ov M$ where $\ov M$ is an adjoined set of right zeroes in bijection with $M$.  Otto and Pride have considered Otto-Pride extensions of groups where $\varphi$ is injective, in~\cite{Pride2004} and~\cite{Pride2005}.

\subsection{The one-sided case}

The following model for $L$ will be useful for constructing normal forms and for proving flatness results.

\begin{Prop}\label{p:hnnlike.as.tensor}
View $M$ as a right $A$-set via right multiplication and as a left $A$-set via the action $a\odot m=\p(a)m$ for $a\in A$.  Then $L$ is isomorphic to the monoid
with underlying set $R=\coprod_{i=0}^{\infty}R_i$, where $R_0=M$ and $R_{i+1}=R_i\otimes_A M$,  and with multiplication defined by
\[(m_1\otimes \cdots\otimes m_k)(m_1'\otimes \cdots \otimes m'_\ell) = m_1\otimes \cdots \otimes m_{k-1}\otimes m_km_1'\otimes m_2'\otimes \cdots \otimes m'_{\ell}.\]  In particular, $M$ and $t^*$ embed in $L$ (where $t$ is identified with $1\otimes 1\in R_1$).
\end{Prop}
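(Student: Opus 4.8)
The plan is to first verify that $R$ equipped with the stated multiplication is a monoid, and then to produce mutually inverse homomorphisms $\psi\colon L\to R$ and $\theta\colon R\to L$ by invoking the relevant universal properties. Throughout I would identify $R_{k-1}$ with the $k$-fold tensor power $M\otimes_A\cdots\otimes_A M$.

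For the monoid structure, I would establish well-definedness of the product via the universal property of the tensor product recalled in Section~\ref{sec_prelims}. Concretely, I would show the map $M^{\times k}\times M^{\times\ell}\to R_{k+\ell-2}$ sending $(m_1,\dots,m_k,n_1,\dots,n_\ell)$ to $m_1\otimes\cdots\otimes m_{k-1}\otimes m_kn_1\otimes n_2\otimes\cdots\otimes n_\ell$ is balanced at every internal tensor symbol of both source tuples. Away from the interface this is inherited verbatim; at the last symbol of the first tuple and the first symbol of the second tuple it reduces to the single identities $m_{k-1}a\otimes(m_kn_1)=m_{k-1}\otimes\varphi(a)(m_kn_1)$ and $(m_kn_1)a\otimes n_2=(m_kn_1)\otimes\varphi(a)n_2$, which are instances of the defining tensor relation in the target. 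Crucially, no relation is imposed between $m_k$ and $n_1$, which is exactly what licenses multiplying them in $M$. Associativity then reduces to associativity in $M$: in comparing $(uv)w$ with $u(vw)$ the first factor of the middle element is merged with the last factor of $u$ and the last factor of the middle element with the first factor of $w$, which are distinct factors unless the middle element lies in $R_0$, and in that single overlapping case the identity needed is $(m_kn_1)p_1=m_k(n_1p_1)$. Finally $1\in R_0=M$ is visibly a two-sided identity.

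For the isomorphism, I would note that $R_0=M$ is a submonoid and set $x=1\otimes 1\in R_1$. A direct computation gives $a\cdot x=a\otimes 1$ and $x\cdot\varphi(a)=1\otimes\varphi(a)$, and these agree because $a\otimes 1=(1\cdot a)\otimes 1=1\otimes\varphi(a)$ by the tensor relation; hence the inclusion $M\hookrightarrow R$ together with $t\mapsto x$ respects the defining relations of $L$ and, by the universal property of $L$ as a quotient of $M\ast\{t\}^*$, extends to a homomorphism $\psi\colon L\to R$. In the other direction I would define $\theta\colon R\to L$ by $m_1\otimes\cdots\otimes m_{i+1}\mapsto m_1tm_2t\cdots tm_{i+1}$; this is well defined precisely because $at=t\varphi(a)$ holds in $L$, and the multiplication formula in $R$ shows it is a homomorphism. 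Since both $\psi$ and $\theta$ are homomorphisms, to see they are mutually inverse it suffices to test on generators: $\theta\psi$ fixes each $m\in M$ and sends $t\mapsto x\mapsto t$, so $\theta\psi=1_L$ as $L$ is generated by $M$ and $t$; and writing $m_1\otimes\cdots\otimes m_{i+1}=m_1\cdot x\cdot m_2\cdots x\cdot m_{i+1}$ shows $R$ is generated by $M\cup\{x\}$, on which $\psi\theta$ is the identity, so $\psi\theta=1_R$.

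The embedding claims then drop out: $\psi$ restricts on $M$ to the injection $m\mapsto m\in R_0$, so the structure map $M\to L$ is injective, while $\psi(t^n)=x^n=1^{\otimes(n+1)}\in R_n$ lies in pairwise disjoint summands, so $t^*\to L$ is injective as well. I expect the main obstacle to be the well-definedness and associativity of the product on $R$: the only genuinely non-formal point is the merge at the interface, where the asymmetric left action $a\odot m=\varphi(a)m$ must be matched against the defining relation $at=t\varphi(a)$; once the balancedness check at the two interface symbols is in place, the remainder is formal manipulation with tensor products together with a verification on generators.
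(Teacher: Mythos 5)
Your proposal is correct and follows essentially the same route as the paper: build the monoid $R$, define $L\to R$ on $M\cup\{t\}$ via the universal property of the pushout/quotient and $R\to L$ by $m_1\otimes\cdots\otimes m_{i+1}\mapsto m_1tm_2t\cdots tm_{i+1}$, and check the two maps are mutually inverse on generators. The only difference is cosmetic (the paper proves surjectivity of $L\to R$ by the same induction you use to show $R$ is generated by $M\cup\{1\otimes 1\}$, and you supply the balancedness and associativity details that the paper dismisses as a straightforward exercise).
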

\begin{proof}
It is a straightforward exercise to verify that $R$ is a monoid with identity $1\in R_0=M$.   Define $f\colon M\cup \{t\}\to R$ by $f(m)=m$ and $f(t)=1\otimes 1$.  Then if $a\in A$, we have that $f(a)f(t)=a\otimes 1=1\otimes \p(a)=f(t)f(\p(a))$ and so $f$ induces a homomorphism $f\colon L\to R$.  Note that $f$ is surjective.  Indeed, $R_0$ is in the image of $f$ by construction.  Assume that $R_i$ is in the image of $f$ and let $m_1\otimes\cdots \otimes m_{i+1}\in R_i$.  If $f(x)=m_1\otimes \cdots\otimes m_i$ (by induction), then
$f(xtm_{i+1}) = m_1\otimes \cdots \otimes m_i\otimes m_{i+1}$.  Now define $g\colon R\to L$ by $g(m_1\otimes \cdots \otimes m_i) = m_1tm_2t\cdots tm_i$. It is easy to verify that this is well defined using the defining relations of $L$ and trivially $g$ is a homomorphism.  Now $gf(m)=m$ for $m\in M$ and $gf(t)=g(1\otimes 1)=t$.  Therefore, $gf=1_L$ and so $f$ is injective.  This concludes the proof that $f$ is an isomorphism.
\end{proof}

As a corollary, we can deduce a normal form theorem for $L$ if $M$ is free as a right $A$-set.

\begin{Cor}\label{c:normal.form}
Let $\p\colon A\to M$ be a homomorphism with $A$ a submonoid of $M$.  Let $L=\langle M,t\mid at=t\p(a), a\in A\rangle$ be the Otto-Pride extension.  Suppose that $M$ is a free right $A$-set with basis $C$ containing $1$.  Then every element of $M$ can be uniquely written in the form $c_0tc_1\cdots tc_ka$ with $k\geq 0$, $c_i\in C$ and $a\in A$.  Consequently, $L$ is free both as a right $M$-set and a right $A$-set.
\end{Cor}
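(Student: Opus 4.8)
The plan is to run the whole argument inside the tensor model $R=\coprod_{i=0}^{\infty}R_i$ of Proposition~\ref{p:hnnlike.as.tensor} (with $R_0=M$ and $R_{i+1}=R_i\otimes_A M$), identifying $L$ with $R$ through the isomorphism $g$ of that proof, so that $g(m_0\otimes\cdots\otimes m_i)=m_0tm_1\cdots tm_i$. Everything then reduces to a single uniqueness statement about tensors, which I will label $(\ast)$: every element of $R_i$ can be written uniquely as $c_0\otimes c_1\otimes\cdots\otimes c_{i-1}\otimes m$ with $c_0,\ldots,c_{i-1}\in C$ and $m\in M$. Here the right $A$-action on $R_i$ used to iterate the tensor (and, later, the right $M$-action) is ordinary multiplication on the last factor, while inside $R_i=R_{i-1}\otimes_A M$ the copy of $M$ is a left $A$-set via $a\odot m=\p(a)m$.

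First I would prove $(\ast)$ by induction on $i$, the case $i=0$ being vacuous. For the inductive step, note that the inductive hypothesis says exactly that $R_{i-1}$ is free as a right $A$-set with basis $B_{i-1}=\{c_0\otimes\cdots\otimes c_{i-1}:c_j\in C\}$: refining the last factor $m$ of the unique expression $c_0\otimes\cdots\otimes c_{i-2}\otimes m$ through the free decomposition $m=c_{i-1}a$ ($c_{i-1}\in C$, $a\in A$) exhibits each element of $R_{i-1}$ uniquely as $(c_0\otimes\cdots\otimes c_{i-1})a$. Applying Remark~\ref{r:tensor.with.free} to the free right $A$-set $R_{i-1}$ then gives $R_i=R_{i-1}\otimes_A M\cong B_{i-1}\times M$ via $c_0\otimes\cdots\otimes c_{i-1}\otimes m\mapsto(c_0\otimes\cdots\otimes c_{i-1},m)$, which is precisely $(\ast)$ for $i$. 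The step I expect to be the main obstacle is bookkeeping the two roles of $A$: I would check once and for all that the left $A$-action $a\odot m=\p(a)m$ commutes with ordinary right multiplication on $M$, i.e. $(a\odot m)m'=\p(a)mm'=a\odot(mm')$, so $M$ is an $A$-$M$-biset and the bijection above is in fact an isomorphism of right $M$-sets (with $M$ acting on the $M$-coordinate). This is exactly what makes the freeness corollaries fall out.

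With $(\ast)$ established, the normal form is immediate. An element of $L$ lies in a unique summand $R_i$ (the $R_i$ are disjoint), $(\ast)$ writes it uniquely as $c_0\otimes\cdots\otimes c_{i-1}\otimes m$, and applying $g$ followed by the free decomposition $m=c_ia$ turns this into $c_0tc_1\cdots tc_{i-1}tc_ia=c_0tc_1\cdots tc_ka$ with $k=i$, all $c_j\in C$ and $a\in A$. Uniqueness of $k$ (the $t$-degree), of the tensor, and of $m=c_ia$ give uniqueness of the normal form.

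Finally I would read the two freeness statements off the same parametrisation. For the right $M$-set structure, right multiplication by $m'$ sends $c_0\otimes\cdots\otimes c_{i-1}\otimes m$ to $c_0\otimes\cdots\otimes c_{i-1}\otimes mm'$, so by $(\ast)$ each $R_i$ is a free right $M$-set on $\{c_0\otimes\cdots\otimes c_{i-1}\otimes 1:c_j\in C\}$, whence $L=\coprod_i R_i$ is a free right $M$-set. For the right $A$-set structure, refining $m=c_ia$ and observing that $A$ acts only on the $a$-part shows, through the normal form, that $L$ is freely generated as a right $A$-set by the normal forms with trivial $A$-tail, $\{c_0tc_1\cdots tc_k:k\ge 0,\ c_j\in C\}$. (Right $A$-freeness also follows from right $M$-freeness by restricting the action, since $M$ is free as a right $A$-set by hypothesis.)
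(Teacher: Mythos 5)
Your proposal is correct and follows essentially the same route as the paper: both work inside the tensor model $R=\coprod_i R_i$ of Proposition~\ref{p:hnnlike.as.tensor}, establish that $R_i\cong C^{i+1}\times A$ (you phrase this as $R_i\cong C^i\times M$ and then split $m=c_ia$, which is the same identification), and transport the result through $g$ to get the normal form and the two freeness statements. The only difference is presentational: the paper asserts the identification $(c_0,\ldots,c_i,a)\mapsto c_0\otimes\cdots\otimes c_ia$ directly, whereas you justify it by induction on $i$, which fills in the details the paper leaves to the reader.
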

\begin{proof}
Since $M$ is free as a right $A$-set on $C$, retaining the notation of Proposition~\ref{p:hnnlike.as.tensor}, we have that $R_i\cong C^{i+1}\times A$ via the mapping $(c_0,\ldots, c_i,a)\mapsto c_0\otimes c_1\otimes \cdots \otimes c_ia$. Composing this mapping with the isomorphism $g$ in the proof of Proposition~\ref{p:hnnlike.as.tensor} provides the desired normal form.  Clearly, $L$ is a free right $M$-set on the normal forms with $c_k=1=a$ and $L$ is a free right $A$-set on the normal forms with $a=1$.  This completes the proof.
\end{proof}

Note that if $M$ is left cancellative and $A$ is a group, then $M$ is a free right $A$-set.

\begin{Cor}\label{c:flat.hnnlike}
Let $M$ be a monoid, $A$ a submonoid and $\p\colon A\to M$ be a homomorphism.  Let $L=\langle M,t\mid at=t\p(a), a\in A\rangle$ be the Otto-Pride extension.  Suppose that $\mathbb ZM$ is flat as a right $\mathbb ZA$-module.  Then $\mathbb ZL$ is flat both as a right $\mathbb ZM$-module and a right $\mathbb ZA$-module.
\end{Cor}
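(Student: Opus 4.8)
The plan is to use the explicit model $L\cong R=\coprod_{i\geq 0}R_i$ from Proposition~\ref{p:hnnlike.as.tensor}, where $R_0=M$ and $R_{i+1}=R_i\otimes_A M$, and to analyze flatness one graded piece at a time. Passing to free abelian groups, the standard identification $\mathbb Z(X\otimes_A Y)\cong \mathbb ZX\otimes_{\mathbb ZA}\mathbb ZY$ (for a right $A$-set $X$ and left $A$-set $Y$) gives $\mathbb ZL\cong\bigoplus_{i\geq 0}\mathbb ZR_i$ with $\mathbb ZR_0=\mathbb ZM$ and $\mathbb ZR_i\cong \mathbb ZR_{i-1}\otimes_{\mathbb ZA}\mathbb ZM$. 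Here $\mathbb ZM$ carries its structure as a $\mathbb ZA$-$\mathbb ZM$-bimodule (left $A$-action via $a\odot m=\p(a)m$, right $M$-action by multiplication), so the right $\mathbb ZM$- and right $\mathbb ZA$-module structures on $\mathbb ZR_i$ both act through the final tensor factor, matching the multiplication in $R$. Since a direct sum of flat modules is flat, it suffices to prove that each $\mathbb ZR_i$ is flat as a right $\mathbb ZM$-module and as a right $\mathbb ZA$-module.

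The engine is the following transitivity of flatness: if $P$ is a flat right $\mathbb ZA$-module and $Q$ is a $\mathbb ZA$-$\mathbb ZB$-bimodule that is flat as a right $\mathbb ZB$-module, then $P\otimes_{\mathbb ZA}Q$ is flat as a right $\mathbb ZB$-module; indeed, for a left $\mathbb ZB$-module $X$ one has $(P\otimes_{\mathbb ZA}Q)\otimes_{\mathbb ZB}X\cong P\otimes_{\mathbb ZA}(Q\otimes_{\mathbb ZB}X)$, and $Q\otimes_{\mathbb ZB}-$ is exact (flatness of $Q$ over $\mathbb ZB$) landing in left $\mathbb ZA$-modules, after which $P\otimes_{\mathbb ZA}-$ is exact (flatness of $P$). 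I would run a single induction on $i$ proving that $\mathbb ZR_i$ is flat as a right $\mathbb ZA$-module: the base case $\mathbb ZR_0=\mathbb ZM$ is exactly the hypothesis, and the inductive step applies the transitivity statement with $P=\mathbb ZR_{i-1}$ (flat over $\mathbb ZA$ by induction), $Q=\mathbb ZM$ viewed as a $\mathbb ZA$-$\mathbb ZA$-bimodule (flat as a right $\mathbb ZA$-module by hypothesis), and $B=A$. Having established right-$\mathbb ZA$-flatness of every $\mathbb ZR_{i-1}$, a second application with the same $P$, with $Q=\mathbb ZM$ now regarded as a $\mathbb ZA$-$\mathbb ZM$-bimodule (flat, indeed free, as a right $\mathbb ZM$-module), and $B=M$ shows $\mathbb ZR_i$ is flat as a right $\mathbb ZM$-module.

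Finally, summing over $i$ yields that $\mathbb ZL$ is flat as a right $\mathbb ZM$-module and as a right $\mathbb ZA$-module. I expect no serious obstacle here: the content is entirely formal once Proposition~\ref{p:hnnlike.as.tensor} is in hand. The one point requiring care is the bookkeeping of the bimodule structures --- in particular, that the left $\mathbb ZA$-action on $\mathbb ZM$ is the twisted action through $\p$, which is irrelevant to right flatness but is precisely what makes the iterated tensor products associate correctly --- together with the routine facts that $\mathbb Z(-)$ carries tensor products of $A$-sets to tensor products of $\mathbb ZA$-modules and that flatness is preserved under arbitrary direct sums.
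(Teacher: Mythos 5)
Your proposal is correct and follows essentially the same route as the paper: both decompose $\mathbb ZL\cong\bigoplus_{i\geq 0}V_i$ with $V_0=\mathbb ZM$, $V_{i+1}=V_i\otimes_{\mathbb ZA}\mathbb ZM$ via Proposition~\ref{p:hnnlike.as.tensor}, and induct on $i$ using the hypothesis that $\mathbb ZM$ is flat over $\mathbb ZA$. The only cosmetic difference is that you package the inductive step as an explicit ``transitivity of flatness'' lemma, whereas the paper verifies the preservation of injectivity directly by composing the two tensor functors; the underlying argument is identical.
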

\begin{proof}
Put $V_0=\mathbb ZM$ and $V_{i+1}=V_i\otimes_{\mathbb ZA}\mathbb ZM$.  Then by Proposition~\ref{p:hnnlike.as.tensor}, we have that as a right $\mathbb ZM$-module, $\mathbb ZL\cong \bigoplus_{i\geq 0} V_i$ so it suffices to show that $V_i$ is flat as both a right $\mathbb ZM$-module and a right $\mathbb ZA$-module.  We prove this by induction.  As $V_0$ is a free right $\mathbb ZM$-module and a flat $\mathbb ZA$-module, by assumption, this case is handled.  Assume that $V_i$ is flat both as a right $\mathbb ZM$-module and a right $\mathbb ZA$-module.  Let $h\colon U\to W$ be an injective homomorphism of $\mathbb ZM$-modules (respectively, $\mathbb ZA$-modules).  Then the induced mapping $\mathbb ZM\otimes_{\mathbb ZM} U\to \mathbb ZM\otimes_{\mathbb ZM} W$ (respectively,  $\mathbb ZM\otimes_{\mathbb ZA} U\to \mathbb ZM\otimes_{\mathbb ZA} W$) is injective since $\mathbb ZM$ is flat as a right module over both $\mathbb ZM$ and $\mathbb ZA$.  Then tensoring  these injective mappings on the left with $V_i$ over $\mathbb ZA$ results in an injective mapping by flatness of $V_i$.  Thus we see that $V_{i+1}$ is flat as a right $\mathbb ZM$-module and as a right $\mathbb ZA$-module.
\end{proof}

We now construct a Bass--Serre tree for Otto-Pride extensions.  Again fix a monoid $M$ together with a homomorphism $\p\colon A\to M$ from a submonoid $A$ and let $L$ be the Otto-Pride extension.  We define a graph $T$ with vertex set $V=L/M$ and edge set $E=L/A$.  An edge $[x]_A$ connects $[x]_M$ to $[xt]_M$ (oriented in this way), where $[x]_K$ denotes the class of $x$ in $L/K$.  This is well defined because if $a\in A$, then $[xa]_M=[x]_M$ and $[xat]_M = [xt\p(a)]_M=[xt]_M$.  Clearly, the left action of $L$ is by cellular mappings sending edges to edges and so $T$ is an $L$-graph. We aim to prove that $T$ is a tree.

\begin{Lemma}\label{l:bserre.op.conn}
The graph $T$ is connected.
\end{Lemma}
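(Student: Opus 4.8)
The plan is to show that every vertex of $T$ is joined by an edge path to the base vertex $[1]_M$, mirroring the argument used for Lemma~\ref{l:connected.bs.tree}. Since $L$ is generated by the image of $M$ together with $t$, and since any product of consecutive elements of $M$ collapses to a single element of $M$, every $x\in L$ can be written in the form $x=m_0tm_1t\cdots tm_k$ with $m_i\in M$ and $k\geq 0$. I would define the \emph{length} of $x$ to be the least such $k$, i.e.\ the minimal number of occurrences of $t$ in such an expression, and then induct on this length.

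When $k=0$ we have $x\in M$, so $x=1\cdot x\in 1\cdot M$ shows $[x]_M=[1]_M$ and there is nothing to prove. For the inductive step I would write $x=ytm_k$, where $y=m_0tm_1t\cdots tm_{k-1}$ has length $k-1$. Because $x=(yt)m_k$ lies in $(yt)M$, we have $xM\subseteq (yt)M$ and hence $[x]_M=[yt]_M$ as weak orbits. The key point is that the edge $[y]_A$ of $T$ joins $[y]_M$ to $[yt]_M=[x]_M$ by the very definition of incidence in $T$, so $[x]_M$ is adjacent to $[y]_M$; by the induction hypothesis $[y]_M$ is connected to $[1]_M$, and therefore so is $[x]_M$. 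This proves that $T$ is connected.

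The only subtlety I would take care to verify is that the reduction to a word of the form $m_0tm_1t\cdots tm_k$ is legitimate and that the length is well defined; this rests only on $L$ being generated by $M$ and $t$ (immediate from the presentation) together with closure of $M$ under multiplication. Notably, no normal form (such as Corollary~\ref{c:normal.form}) and hence no freeness hypothesis on $M$ as an $A$-set is required, so connectivity holds for every Otto-Pride extension. I do not anticipate a genuine obstacle here: once the edge $[y]_A$ is recognised as the link between the consecutive vertices $[y]_M$ and $[yt]_M$, the induction closes immediately, exactly as in the amalgamated free product case.
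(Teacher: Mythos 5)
Your proof is correct. It is the same basic strategy as the paper's --- induction on the length of a word representing $x$, connecting $[x]_M$ back to the root $[1]_M$ --- but the decomposition runs in the opposite direction, and this changes the mechanism slightly. The paper peels off the \emph{leftmost} generator, writing $x=yz$ with $y\in M\cup\{t\}$, and then uses the left action of $L$ on $T$ by cellular maps to translate an entire path $p$ from $[1]_M$ to $[z]_M$ into a path $yp$ from $[y]_M$ to $[x]_M$, finishing with the single edge $[1]_A$ when $y=t$. You instead peel off the \emph{rightmost} block $tm_k$, observe that $[x]_M=[yt]_M$ since right multiplication by $m_k\in M$ does not change the weak $M$-orbit, and note that the edge $[y]_A$ joins $[y]_M$ directly to $[yt]_M$. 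Your version never needs to invoke the left $L$-action on $T$: each inductive step contributes exactly one new edge, which is marginally more economical. Your closing remark is also accurate --- neither argument uses any freeness or normal-form hypothesis, so connectivity holds for arbitrary Otto--Pride extensions, exactly as in the paper. The only point worth being careful about (and it is harmless) is that in your inductive step $y=m_0tm_1\cdots tm_{k-1}$ has length \emph{at most} $k-1$ rather than exactly $k-1$; phrasing the induction as strong induction on the minimal number of occurrences of $t$ disposes of this.
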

\begin{proof}
The monoid $L$ is generated by $M\cup \{t\}$.  The length of an element $x$ is its shortest expression as a product in these generators.  We prove by induction on length that there is a path from $[1]_M$ to $[x]_M$.  If $x=1$, there is nothing to prove.  Assume that $x=yz$ with $y\in M\cup \{t\}$ and $z$ of length one shorter.  Let $p$ be a path from $[1]_M$ to $[z]_M$. Then $yp$ is a path from $[y]_M$ to $[x]_M$.  If $y\in M$, then $[y]_M=[1]_M$ and we are done.  If $y=t$, then since $[1]_A$ connects $[1]_M$ with $[t]_M=[y]_M$ and so we are done in this case, as well.  It follows that $T$ is connected.
\end{proof}

Next we use derivations to prove that $T$ is a tree.

\begin{Lemma}\label{l:bserre.op.tree}
The graph $T$ is a tree.
\end{Lemma}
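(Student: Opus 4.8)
Since $T$ is connected by Lemma~\ref{l:bserre.op.conn}, the plan is to show that the cellular boundary map $\partial\colon \mathbb ZE\to \mathbb ZV$, given by $\partial([x]_A)=[xt]_M-[x]_M$, is injective; then $T$ is a forest, and a connected forest is a tree. Mirroring the proof of Lemma~\ref{l:acyclic.bs.tree}, I would exhibit a left inverse $\beta\colon \mathbb ZV\to \mathbb ZE$ of $\partial$ built from a suitable derivation $d\colon L\to \mathbb ZE$, where $\mathbb ZE=\mathbb Z[L/A]$ carries the left $\mathbb ZL$-action $x[y]_A=[xy]_A$.

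First I would construct the derivation $d$ using the semidirect product $\mathbb ZE\rtimes L$, seeking a splitting $\psi$ of the projection $\pi\colon \mathbb ZE\rtimes L\to L$ whose associated derivation kills $M$ and sends $t$ to $[1]_A$. Concretely, I would set $\psi(m)=(0,m)$ for $m\in M$ and $\psi(t)=([1]_A,t)$. Since $L$ is the quotient of the free product $M\ast\{t\}^*$ by the congruence generated by the relations $at=t\p(a)$ for $a\in A$, the universal property of the free product produces a homomorphism $M\ast\{t\}^*\to\mathbb ZE\rtimes L$, and the task reduces to checking that it respects these relations, so that it descends to $L$.

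The hard part, and really the only non-formal point, will be this relation check. Computing both sides gives
\[
\psi(a)\psi(t)=(0,a)([1]_A,t)=([a]_A,at),\qquad
\psi(t)\psi(\p(a))=([1]_A,t)(0,\p(a))=([1]_A,t\p(a)).
\]
The second coordinates coincide because $at=t\p(a)$ holds in $L$, and the first coordinates coincide precisely because $[a]_A=[1]_A$ in $L/A$ for $a\in A$ (right multiplication by $a\in A$ fixes the base class). Granting this, $\psi$ descends to a splitting of $\pi$, and writing $\psi(x)=(d(x),x)$ yields a derivation $d$ with $d(m)=0$ for $m\in M$ and $d(t)=[1]_A$.

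To finish, I would define $\beta\colon \mathbb ZV\to\mathbb ZE$ by $\beta([x]_M)=d(x)$; this is well defined since $d(xm)=d(x)+x\,d(m)=d(x)$ for $m\in M$ shows that $d$ is constant on the right $M$-classes, hence on weak orbits. Then for $x\in L$,
\[
\beta\partial([x]_A)=\beta([xt]_M)-\beta([x]_M)=d(xt)-d(x)=x\,d(t)=x[1]_A=[x]_A,
\]
so that $\beta\partial=1_{\mathbb ZE}$ and $\partial$ is injective. Combined with connectivity, this shows that $T$ is a tree.
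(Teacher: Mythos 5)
Your proposal is correct and follows essentially the same route as the paper: the same splitting $\gamma(m)=(0,m)$, $\gamma(t)=([1]_A,t)$ of the semidirect product $\mathbb ZE\rtimes L$, the same relation check using $[a]_A=[1]_A$, and the same left inverse $\beta([x]_M)=d(x)$ of $\partial$. No gaps.
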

\begin{proof}
We prove that $\partial\colon \mathbb ZE\to \mathbb ZV$ is injective.  It will then follows that $T$ is a tree as it was already shown to be connected in Lemma~\ref{l:bserre.op.conn}.  Define $\gamma\colon M\cup\{t\}\to \mathbb ZE\rtimes L$ by $\gamma(m) = (0,m)$ for $m\in M$ and $\gamma(t)=([1]_A,t)$.  Then if $a\in A$, we have that $\gamma(a)\gamma(t) = (0,a)([1]_A,t) = ([a]_A,at)=([1]_A,t\p(a)) = ([1]_A,t)(0,\p(a))=\gamma(t)\gamma(\p(a))$.  Therefore, $\gamma$ extends to a homomorphism $\gamma\colon L\to \mathbb ZE\rtimes L$ splitting the semidirect product.  Thus $\gamma(x)=(d(x),x)$ for some derivation $d\colon L\to \mathbb ZE$ with $d(m)=0$ for $m\in M$ and $d(t)=[1]_A$.

Define $\beta\colon \mathbb ZV\to \mathbb ZE$ by $\beta([x]_M) = d(x)$.  This is well defined because if $m\in M$, then $d(xm)=xd(m)+d(x)=d(x)$ as $d(m)=0$.  Now we compute that $\beta\partial([x]_A) = \beta([xt]_M)-\beta([x]_M) = d(xt)-d(x) = xd(t)+d(x)-d(x)=x[1]_A=[x]_A$.  Therefore, $\beta\partial=1_{\mathbb ZE}$ and hence $\partial$ is injective.  We conclude that $T$ is a tree.
\end{proof}

We call $T$ the \emph{Bass--Serre tree} of the extension. Lemma~\ref{l:bserre.op.tree} can be restated in terms of exact sequences using that $\mathbb Z[L/K]\cong \mathbb ZL\otimes_{\mathbb ZK} \mathbb Z$ for $K=M,A$.

\begin{Cor}\label{c:exact.seq.OP}
There is an exact sequence
\[0\longrightarrow \mathbb ZL\otimes_{\mathbb ZA} \mathbb Z\longrightarrow\mathbb ZL\otimes_{\mathbb ZM}\mathbb Z\longrightarrow \mathbb Z\longrightarrow 0\]
of left $\mathbb ZL$-modules.
\end{Cor}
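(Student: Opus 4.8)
The plan is to extract the claimed exact sequence directly from the augmented cellular chain complex of the Bass--Serre tree $T$, exactly as was done for amalgams in Corollary~\ref{c:exact.mv.begin}. By Lemma~\ref{l:bserre.op.tree} the graph $T$ is a tree, so in particular it is connected and its boundary map $\partial\colon \mathbb ZE\to \mathbb ZV$ is injective. As recalled in Section~\ref{sec_amalg}, injectivity of $\partial$ is exactly the statement that $T$ is a forest, while connectedness guarantees that the augmentation $\epsilon\colon \mathbb ZV\to \mathbb Z$ is surjective with kernel equal to the image of $\partial$. Hence the augmented simplicial chain complex
\[0\longrightarrow \mathbb ZE \xrightarrow{\,\,\partial\,\,} \mathbb ZV \xrightarrow{\,\,\epsilon\,\,} \mathbb Z\longrightarrow 0\]
is a short exact sequence of left $\mathbb ZL$-modules.

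It then remains to identify the two outer modules. By construction the vertex set of $T$ is $V=L/M$ and its edge set is $E=L/A$, so that $\mathbb ZV=\mathbb Z[L/M]$ and $\mathbb ZE=\mathbb Z[L/A]$ as left $\mathbb ZL$-modules. I would invoke the natural isomorphism $\mathbb Z[L/K]\cong \mathbb ZL\otimes_{\mathbb ZK}\mathbb Z$, valid for any submonoid $K$: viewing $L$ as a right $K$-set, tensoring the permutation module $\mathbb ZL$ over $\mathbb ZK$ with the trivial module $\mathbb Z$ collapses each right $K$-orbit to a single generator, yielding the permutation module on the orbit set $L/K$, and the left $L$-action on $L$ makes this an isomorphism of left $\mathbb ZL$-modules. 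Applying this with $K=M$ and $K=A$ rewrites the exact sequence above as
\[0\longrightarrow \mathbb ZL\otimes_{\mathbb ZA}\mathbb Z\longrightarrow \mathbb ZL\otimes_{\mathbb ZM}\mathbb Z\longrightarrow \mathbb Z\longrightarrow 0,\]
which is the assertion of the corollary.

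Almost all of the real work has already been carried out in Lemma~\ref{l:bserre.op.tree}; what is left is purely formal. The only point requiring a line of care is that the identifications are compatible with the maps: under $[x]_A\leftrightarrow x\otimes 1$ and $[x]_M\leftrightarrow x\otimes 1$ the boundary $\partial([x]_A)=[xt]_M-[x]_M$ becomes $x\otimes 1\mapsto xt\otimes 1-x\otimes 1$, and the augmentation $\epsilon$ becomes the canonical map $\mathbb ZL\otimes_{\mathbb ZM}\mathbb Z\to \mathbb Z$ induced by the trivial action. I do not expect any genuine obstacle here; the corollary is essentially a reformulation of the lemma in the language of tensor products, recorded so that the sequence is available in the form needed for the subsequent finiteness and dimension arguments.
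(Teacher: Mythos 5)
Your proposal is correct and is essentially identical to the paper's own (very brief) argument: the paper likewise derives the sequence from the exactness of the augmented cellular chain complex of the tree $T$ established in Lemma~\ref{l:bserre.op.tree}, together with the identification $\mathbb Z[L/K]\cong \mathbb ZL\otimes_{\mathbb ZK}\mathbb Z$ for $K=M,A$. Your additional check that the boundary and augmentation maps match under these identifications is a worthwhile, if routine, piece of care that the paper leaves implicit.
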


The analogue of the homotopy pushout that we shall need in this context is the homotopy coequalizer.  If $f,g\colon Y\to X$ are continuous mappings, then the \emph{homotopy coequalizer} $M(f,g)$ is the space obtained by gluing $Y\times I$ to $X$ via the mapping $h\colon Y\times \partial I\to X$ given by $h(y,0)=f(y)$ and $h(y,1)=g(y)$.  If $X$ and $Y$ are CW complexes and $f,g$ are cellular, then $M(f,g)$ is a CW complex.  If $X,Y$ are projective $M$-CW complexes and $f,g$ are $M$-equivariant and cellular, then $M(f,g)$ is a projective $M$-CW complex by
\cite[Lemma~2.1]{GraySteinberg1}.
Moreover, if $X$ has $M$-finite $n$-skeleton and $Y$ has $M$-finite $(n-1)$-skeleton, then $M(f,g)$ has $M$-finite $n$-skeleton.

Homotopy coequalizers like homotopy pushouts, are examples of homotopy colimits. If $f',g'\colon Y'\to X'$ are continuous mappings and  $r\colon Y\to Y'$ and $s\colon X\to X'$ are continuous such that
\[\begin{tikzcd}
Y \arrow[yshift=0.7ex]{r}{f} \arrow[yshift=-0.7ex,swap]{r}{g}\arrow{d}[swap]{r}
& X \arrow{d}{s} \\
Y'\arrow[yshift=0.7ex]{r}{f'} \arrow[yshift=-0.7ex]{r}[swap]{g'}&X'
\end{tikzcd}\] commutes, then there is an induced continuous mapping $t\colon M(f,g)\to M(f',g')$ (which will be $M$-equivariant if all spaces are $M$-spaces and all maps are $M$-equivariant).  Moreover, if $r,s$ are homotopy equivalences, then so is $t$; see~\cite[page 19]{DwyerHennBook}.  For example, the graph $T$ is the homotopy coequalizer of $i,j\colon L/A\to L/M$ given by $i([x]_A) = [x]_A$ and $j([x]_A) = [xt]_A$ (where these sets are viewed as discrete spaces).

\begin{Thm}\label{t:ottopride.one.side}
Let $M$ be a monoid, $A$ a submonoid and $\p\colon A\to M$ be a homomorphism.  Let $L=\langle M,t\mid at=t\p(a), a\in A\rangle$ be the Otto-Pride extension.  Suppose that $M$ is free as a right $A$-set.  If $M$ is of type left-$\F_n$ and $A$ is of type left-$\F_{n-1}$, then $L$ is of type left-$\F_n$.
\end{Thm}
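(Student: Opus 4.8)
The plan is to transcribe the proof of Theorem~\ref{t:bass.serre.free}, replacing the homotopy pushout by the homotopy coequalizer and the pushout's Bass--Serre tree by the tree $T$ of Lemma~\ref{l:bserre.op.tree}, which (as noted just before the theorem) is the homotopy coequalizer of the endpoint maps $i,j\colon L/A\to L/M$ given by $i([x]_A)=[x]_M$ and $j([x]_A)=[xt]_M$. First I would fix an equivariant classifying space $X$ for $M$ with $M$-finite $n$-skeleton and an equivariant classifying space $Y$ for $A$ with $A$-finite $(n-1)$-skeleton. The space $X$ carries \emph{two} $A$-actions: one via the inclusion $A\hookrightarrow M$, and one via $a\cdot x=\p(a)x$ coming from $\p$ (as in the left action of Proposition~\ref{p:hnnlike.as.tensor}); in both cases the underlying space, being that of $X$, is contractible. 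Since $Y$ is a projective $A$-CW complex and we are mapping \emph{out} of it into a contractible $A$-space, the universal property of equivariant classifying spaces together with the cellular approximation theorem (\cite[Lemma~6.2]{GraySteinberg1} and \cite[Theorem~2.8]{GraySteinberg1}) let me choose $A$-equivariant cellular maps $f_1,f_2\colon Y\to X$, with $f_1$ equivariant for the inclusion action and $f_2$ for the $\p$-action. Note that we never need $X$ to be a \emph{free} $A$-CW complex, which is why we only require $M$ to be free as a \emph{right} $A$-set.

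Next, set $X'=L\otimes_M X$ and $Y'=L\otimes_A Y$. By Corollary~\ref{c:normal.form}, $L$ is free both as a right $M$-set and as a right $A$-set, so Proposition~\ref{c:base.change.cw} shows $X'$ is a free $L$-CW complex with $L$-finite $n$-skeleton and $Y'$ a free $L$-CW complex with $L$-finite $(n-1)$-skeleton. I would then define $L$-equivariant maps $\tilde f_1,\tilde f_2\colon Y'\to X'$ by $\tilde f_1(\ell\otimes y)=\ell\otimes f_1(y)$ and $\tilde f_2(\ell\otimes y)=\ell t\otimes f_2(y)$. Well-definedness of $\tilde f_1$ uses that $f_1$ is equivariant for the inclusion action; for $\tilde f_2$ one computes, for $a\in A$, that $\ell a\,t\otimes f_2(y)=\ell t\p(a)\otimes f_2(y)=\ell t\otimes\p(a)f_2(y)=\ell t\otimes f_2(a\cdot y)$, where the first equality is precisely the defining relation $at=t\p(a)$ of the Otto--Pride extension and the last is the $\p$-equivariance of $f_2$. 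Using $\pi_0(X')\cong L/M$ and $\pi_0(Y')\cong L/A$ via \cite[Proposition~3.4]{GraySteinberg1}, these maps induce $i$ and $j$ respectively on path components.

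Finally, let $Z=M(\tilde f_1,\tilde f_2)$ be the homotopy coequalizer; it is a projective $L$-CW complex by \cite[Lemma~2.1]{GraySteinberg1}, and it has $L$-finite $n$-skeleton by the finiteness statement for homotopy coequalizers recorded above. It remains to show $Z$ is contractible, which I would do by proving it is $L$-homotopy equivalent to $T$. By Remark~\ref{r:tensor.with.free} and freeness of $L$ as a right $M$- and right $A$-set, $X'\cong L/M\times X$ and $Y'\cong L/A\times Y$, so the projections $X'\to\pi_0(X')=L/M$ and $Y'\to\pi_0(Y')=L/A$ are homotopy equivalences (as $X,Y$ are contractible). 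These projections fit into a diagram carrying $(\tilde f_1,\tilde f_2)$ to $(i,j)$, so by functoriality of the homotopy coequalizer and its invariance under homotopy equivalence (\cite[page~19]{DwyerHennBook}) the induced map $Z\to M(i,j)=T$ is a homotopy equivalence. Since $T$ is a tree by Lemma~\ref{l:bserre.op.tree}, it is contractible, hence so is $Z$, and $Z$ is an equivariant classifying space for $L$ with $L$-finite $n$-skeleton, proving $L$ is of type left-$\F_n$. The main obstacle is purely bookkeeping: keeping the two distinct $A$-actions on $X$ straight, and verifying that $\tilde f_2$ is well defined and induces the target map $j$ on $\pi_0$; once that is done the argument is a direct analogue of the amalgamated case.
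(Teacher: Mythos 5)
Your proposal is correct and follows essentially the same route as the paper's proof: the same choice of two $A$-equivariant maps $Y\to X$ (one for the inclusion action, one for the $\p$-twisted action), the same induced maps on $L\otimes_M X$ and $L\otimes_A Y$, and the same identification of the homotopy coequalizer with the Bass--Serre tree $T$ via the projections to path components. The only quibble is that $L\otimes_M X$ is a \emph{projective} (not necessarily free) $L$-CW complex, since $X$ is only assumed projective as an $M$-CW complex; this does not affect the argument.
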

\begin{proof}
Let $X$ be an equivariant classifying space for $M$ with $M$-finite $n$-skeleton and let $Y$ be an equivariant classifying space for $A$ with $A$-finite $(n-1)$-skeleton.  Using
\cite[Lemma~6.2]{GraySteinberg1}
and the
cellular approximation theorem \cite[Theorem 2.8]{GraySteinberg1}, we can find continuous cellular mappings $f,g\colon Y\to X$ such that $f(ay)=af(y)$ and $g(ay)=\p(a)g(y)$ for all $a\in A$ and $y\in Y$.  To construct $g$, we view $X$ as an $A$-space via the action $a\odot x=\p(a)x$ for $a\in A$.  Let $X'=L\otimes_M X$ and $Y'=L\otimes_A Y$.  These are projective $L$-CW complexes by Proposition~\ref{c:base.change.cw} and $X'$ has $L$-finite $n$-skeleton, $Y'$ has $L$-finite $(n-1)$-skeleton.

Let $F\colon Y'\to X'$ be the mapping induced by $f$ and define $G\colon Y'\to X'$ by $G(u\otimes y) =ut\otimes g(y)$. The latter is well defined since if $a\in A$, then $uat\otimes g(y)=ut\p(a)\otimes g(y) = ut\otimes \p(a)g(y)=ut\otimes g(ay)$. Clearly, $G$ is $L$-equivariant, continuous and cellular.  Let $Z=M(F,G)$ be the homotopy coequalizer.  Then $Z$ is a projective $L$-CW complex with $L$-finite $n$-skeleton.  We aim to show that $Z$ is homotopy equivalent to $T$ and hence contractible.

By~\cite[Proposition~3.4]{GraySteinberg1}
we have that $\pi_0(Y')\cong L\otimes_A \pi_0(Y)\cong L/A$ and $\pi_0(X')\cong L\otimes_M \pi_0(X)\cong L/M$ as $X,Y$ are connected. By construction $F$ and $G$ induce the mappings $[u]_A\mapsto [u]_M$ and $[u]_A\mapsto [ut]_M$, respectively, on path components under these identifications.   As the tree $T$ is the homotopy coequalizer of these two mappings, it suffices to show that the projections $X'\to \pi_0(X')$ and $Y'\to \pi_0(Y')$ are homotopy equivalences. Then $Z$ will be homotopy equivalent to $T$.

 Since $L$ is free as a right $M$-set and as a right $A$-set, we have that $X'\cong L/M\times X$ and $Y'\cong L/A\times Y$ as $L$-CW complexes.
  As $X$ and $Y$ are contractible and $L/M$ and $L/A$ are discrete, we deduce that the projections to connected components are homotopy equivalences in both cases.  This completes the proof.
\end{proof}

The proof of Theorem~\ref{t:ottopride.one.side} can be used to show that if $M$ is free as a right $A$-set, $M$ has left geometric dimension $d$ and $A$ has left geometric dimension $d'$, then $L$ has left geometric dimension at most $\max\{d,d'+1\}$.  The hypothesis  of Theorem~\ref{t:ottopride.one.side} applies if $M$ is left cancellative and $A$ is a group or if $M=\mathbb N$ and $A$ is a cyclic submonoid.

Next we prove the homological analogue of Theorem~\ref{t:ottopride.one.side} under the weaker assumption of flatness.

\begin{Thm}\label{t:ottopride.one.side.flat}
Let $M$ be a monoid and let $\p\colon A\to M$ be a homomorphism from a submonoid $A$ of $M$.  Let $L=\langle M,t\mid at=t\p(a), a\in A\rangle$ be the Otto-Pride extension.  Suppose that $\mathbb ZM$ is flat as a right $\mathbb ZA$-module.  If $M$ is of type left-$\FP_n$ and $A$ is of type left-$\FP_{n-1}$, then $L$ is of type left-$\FP_n$.
\end{Thm}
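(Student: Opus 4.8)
The plan is to follow the same template as the proof of Theorem~\ref{t:bass.serre.flat}, replacing the homotopy-theoretic input of Theorem~\ref{t:ottopride.one.side} by the homological machinery of Section~\ref{sec_prelims}. The geometry has already been packaged for us: the Bass--Serre tree $T$ of the Otto--Pride extension yields, via Corollary~\ref{c:exact.seq.OP}, the short exact sequence of left $\mathbb ZL$-modules
\[0\longrightarrow \mathbb ZL\otimes_{\mathbb ZA}\mathbb Z\longrightarrow \mathbb ZL\otimes_{\mathbb ZM}\mathbb Z\longrightarrow \mathbb Z\longrightarrow 0,\]
and this is the partial resolution of the trivial module $\mathbb Z$ to which I would apply Corollary~\ref{c:fp.resolved}. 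So the whole proof reduces to controlling the finiteness type of the two nontrivial terms.

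First I would record the flatness we need. By Corollary~\ref{c:flat.hnnlike}, the hypothesis that $\mathbb ZM$ is flat as a right $\mathbb ZA$-module guarantees that $\mathbb ZL$ is flat as a right $\mathbb ZM$-module and as a right $\mathbb ZA$-module. This is precisely the ingredient that lets flat base change do its work, and it is where the weakening from the freeness hypothesis of Theorem~\ref{t:ottopride.one.side} to mere flatness is absorbed.

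Next I would transport the finiteness hypotheses along these flat extensions. Being of type left-$\FP_n$ means the trivial left $\mathbb ZM$-module $\mathbb Z$ is of type $\FP_n$, and likewise for $A$ in degree $n-1$. Applying Lemma~\ref{l:flat.base}(1) with the ring homomorphisms $\mathbb ZM\to \mathbb ZL$ and $\mathbb ZA\to \mathbb ZL$ and the flatness just established, I obtain that $\mathbb ZL\otimes_{\mathbb ZM}\mathbb Z$ is of type $\FP_n$ and that $\mathbb ZL\otimes_{\mathbb ZA}\mathbb Z$ is of type $\FP_{n-1}$, both as left $\mathbb ZL$-modules.

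Finally I would feed the displayed exact sequence into Corollary~\ref{c:fp.resolved}(1), reading it as a partial resolution $C_1\to C_0\to \mathbb Z$ with $C_0=\mathbb ZL\otimes_{\mathbb ZM}\mathbb Z$ of type $\FP_n$ and $C_1=\mathbb ZL\otimes_{\mathbb ZA}\mathbb Z$ of type $\FP_{n-1}$; since $C_i$ is of type $\FP_{n-i}$ for $i=0,1$, the conclusion is that the trivial left $\mathbb ZL$-module $\mathbb Z$ is of type $\FP_n$, that is, $L$ is of type left-$\FP_n$. There is no real obstacle here beyond the bookkeeping: all the substance has been pushed into Corollary~\ref{c:flat.hnnlike} (the flatness of $\mathbb ZL$) and Corollary~\ref{c:exact.seq.OP} (the exactness coming from the tree), so the only point to watch is the degree shift, namely that one needs $A$ only in type $\FP_{n-1}$ because the edge module $\mathbb ZL\otimes_{\mathbb ZA}\mathbb Z$ sits in homological degree one.
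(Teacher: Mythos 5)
Your proposal is correct and follows exactly the paper's own argument: flatness of $\mathbb ZL$ over $\mathbb ZM$ and $\mathbb ZA$ via Corollary~\ref{c:flat.hnnlike}, transport of the $\FP$ hypotheses by Lemma~\ref{l:flat.base}, and then Corollary~\ref{c:fp.resolved} applied to the exact sequence of Corollary~\ref{c:exact.seq.OP}. Nothing is missing, and the degree bookkeeping is handled correctly.
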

\begin{proof}
By Corollary~\ref{c:flat.hnnlike}, $\mathbb ZL$ is flat as a  right $\mathbb ZM$-module and as a  right $\mathbb ZA$-module. It follows from Lemma~\ref{l:flat.base} and the hypotheses that $\mathbb ZL\otimes_{\mathbb ZM} \mathbb Z$ is of type $\FP_n$ and $\mathbb ZL\otimes_{\mathbb ZA}\mathbb Z$ is of type $\FP_{n-1}$.  The result now follows by applying Corollary~\ref{c:fp.resolved} to the exact sequence in Corollary~\ref{c:exact.seq.OP}.
\end{proof}

One can prove similarly the following theorem.

\begin{Thm}\label{t:ottopride.one.side.flat.cd}
Let $M$ be a monoid and $\p\colon A\to M$ a homomorphism from a submonoid $A$ of $M$.  Let $L=\langle M,t\mid at=t\p(a), a\in A\rangle$ be the Otto-Pride extension.  Suppose that $\mathbb ZM$ is flat as a right $\mathbb ZA$-module.  If $M$ has left cohomological dimension at most $d$ and $A$ has left cohomological dimension at most $d-1$, then $L$ has left cohomological dimension at most $d$.
\end{Thm}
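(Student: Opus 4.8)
The plan is to follow the proof of Theorem~\ref{t:ottopride.one.side.flat} almost verbatim, replacing each appeal to the $\FP_n$ clause of the cited results by its projective-dimension counterpart. Recall that the left cohomological dimension of a monoid is precisely the projective dimension of the trivial left module over its integral monoid ring, so it suffices to show that the trivial left $\mathbb ZL$-module $\mathbb Z$ has projective dimension at most $d$.

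First I would invoke Corollary~\ref{c:flat.hnnlike}: since $\mathbb ZM$ is flat as a right $\mathbb ZA$-module, $\mathbb ZL$ is flat both as a right $\mathbb ZM$-module and as a right $\mathbb ZA$-module. The hypotheses say that the trivial left $\mathbb ZM$-module $\mathbb Z$ has projective dimension at most $d$ and the trivial left $\mathbb ZA$-module $\mathbb Z$ has projective dimension at most $d-1$. Applying the projective-dimension clause Lemma~\ref{l:flat.base}(2) to the flat base-change functors $\mathbb ZL\otimes_{\mathbb ZM}(-)$ and $\mathbb ZL\otimes_{\mathbb ZA}(-)$ then gives that $\mathbb ZL\otimes_{\mathbb ZM}\mathbb Z$ has projective dimension at most $d$ over $\mathbb ZL$, and $\mathbb ZL\otimes_{\mathbb ZA}\mathbb Z$ has projective dimension at most $d-1$ over $\mathbb ZL$.

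Next I would feed these two bounds into the short exact sequence of Corollary~\ref{c:exact.seq.OP},
\[0\longrightarrow \mathbb ZL\otimes_{\mathbb ZA}\mathbb Z\longrightarrow \mathbb ZL\otimes_{\mathbb ZM}\mathbb Z\longrightarrow \mathbb Z\longrightarrow 0,\]
read as a partial resolution of $\mathbb Z$ with $n=1$, $C_0=\mathbb ZL\otimes_{\mathbb ZM}\mathbb Z$ and $C_1=\mathbb ZL\otimes_{\mathbb ZA}\mathbb Z$. The map $C_1\to C_0$ is injective by exactness, and we have just verified that $C_0$ has projective dimension at most $d$ and $C_1$ has projective dimension at most $d-1$; thus $C_i$ has projective dimension at most $d-i$ for $i=0,1$. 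The side condition $d\geq n=1$ demanded by Corollary~\ref{c:fp.resolved}(2) is automatic here, since the left cohomological dimension of $A$ is a nonnegative integer bounded above by $d-1$, forcing $d\geq 1$. Corollary~\ref{c:fp.resolved}(2) therefore yields that $\mathbb Z$ has projective dimension at most $d$ over $\mathbb ZL$, that is $\mathrm{left \; cd}\, L\leq d$, as desired.

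I do not expect any genuine obstacle in this argument: it is a purely formal assembly of results already established in the excerpt, and is the exact analogue of Theorem~\ref{t:ottopride.one.side.flat} with part~(2) of Lemma~\ref{l:flat.base} and of Corollary~\ref{c:fp.resolved} in place of part~(1). The only thing to be careful about is the bookkeeping in Corollary~\ref{c:fp.resolved}(2)---that the injectivity of the leftmost nonzero map and the degreewise bound $\operatorname{pd} C_i\leq d-i$ both hold---which I have checked above.
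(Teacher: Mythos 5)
Your proof is correct and is precisely the argument the paper intends: the theorem is stated with the remark ``One can prove similarly the following theorem,'' referring to the proof of Theorem~\ref{t:ottopride.one.side.flat}, and your substitution of part~(2) of Lemma~\ref{l:flat.base} and of Corollary~\ref{c:fp.resolved} for part~(1) is exactly that adaptation. The bookkeeping you checked (injectivity of $C_1\to C_0$ from exactness, and $d\geq 1$ from $\mathrm{cd}\,A\leq d-1$) is right.
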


\subsection{The two-sided case}
It turns out that in the two-sided setting we shall need to consider Otto-Pride extensions corresponding to injective monoid homomorphisms $\p\colon A\to M$ from a submonoid $A$ of $M$ in order to make the construction left-right dual.  Putting $B=\p(A)$, we have that $B$ is isomorphic to $A$.  Otto and Pride considered the special case when $M$ and $A$ are groups (and hence so is $B$).  We shall call an Otto-Pride extension \emph{HNN-like} if $\p$ is injective. Let $L$ be the Otto-Pride extension. It is straightforward to check $L=\langle M,t\mid tb=\p\inv(b)t, b\in B\rangle$ and hence left/right duals of Proposition~\ref{p:hnnlike.as.tensor} and Corollary~\ref{c:normal.form} are valid with $B$ in the role of $A$ and using left sets instead of right sets.  Note that an HNN-like Otto-Pride extension of groups, which is the case considered by Otto and Pride, embeds as a submonoid of the corresponding group HNN extension (note that the Otto-Pride extension does not contain $t^{-1}$ and hence is a monoid, not a group).  Our results give geometric proofs of a number of the results of~\cite{Pride2004} and~\cite{Pride2005}.

 In what follows, we shall always view $L$ as a right $A$-set via left multiplication and as a left $A$-set via $a\odot x= \p(a)x$.  Therefore, we view $L\times L^{op}$ as a right $A\times A^{op}$-set via $(x,y)(a,a') = (xa,\p(a')y)$.

\begin{Prop}\label{p:basic.tensor.ids}
There is an isomorphism \[L\otimes_A L\cong L\otimes_A A\otimes_A L\cong (L\times L^{op})\otimes_{A\times A^{op}} A\] of left $L\times L^{op}$-sets.
\end{Prop}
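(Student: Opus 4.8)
The plan is to derive both displayed isomorphisms as purely formal consequences of the two tensor identities already established, namely Proposition~\ref{p:tensor.id} and Proposition~\ref{p:bi.tensor.iso}; the only real work is to keep careful track of the two different $A$-actions on $L$ (the right action by right multiplication, $x\cdot a=xa$, and the left action $a\odot x=\p(a)x$) so that the induced right $A\times A^{op}$-structure on $L\times L^{op}$ comes out exactly as stated before the proposition.

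For the first isomorphism $L\otimes_A L\cong L\otimes_A A\otimes_A L$, I would argue that this is the unit law for the middle copy of $A$, which is precisely the content of Proposition~\ref{p:tensor.id}: the map $F\colon L\otimes_A A\otimes_A L\to L\otimes_A L$ given by $x\otimes a\otimes y\mapsto xa\otimes y$ is well defined, since in $L\otimes_A L$ the relation $xaa''\otimes y=xa\otimes\p(a'')y$ absorbs the middle slot, and $x\otimes y\mapsto x\otimes 1\otimes y$ is a two-sided inverse. Although Proposition~\ref{p:tensor.id} is phrased for a single homomorphism, its proof uses only the right $A$-action on the first factor and the left $A$-action on the last factor, and these are allowed to be independent, so it applies verbatim here even though our two $A$-actions are twisted differently. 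Both maps are clearly equivariant for the left $L\times L^{op}$-action $(\ell,\ell')(x\otimes a\otimes y)=\ell x\otimes a\otimes y\ell'$.

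For the second isomorphism I would apply Proposition~\ref{p:bi.tensor.iso} with the monoid taken to be $A$, with both outer sets equal to $L$ (as a right $A$-set via right multiplication and as a left $A$-set via $\odot$), and with the middle left $A\times A^{op}$-set equal to the regular biset $A$. The proposition then yields $L\otimes_A A\otimes_A L\cong(L\times L)\otimes_{A\times A^{op}}A$, where $L\times L$ carries the right $A\times A^{op}$-action $(x,y)(a,a')=(x\cdot a,\,a'\odot y)=(xa,\p(a')y)$; this is exactly the structure on $L\times L^{op}$ recorded before the statement. Composing with the first isomorphism gives the full chain, the composite being $x\otimes y\mapsto(x,y)\otimes 1$, which is $L\times L^{op}$-equivariant because the left $L\times L^{op}$-action on the first factor of $L\times L^{op}$ commutes with the right $A\times A^{op}$-action (a one-line associativity check, using $\p(a')(y\ell')=(\p(a')y)\ell'$) and hence descends to the tensor product.

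I expect no genuine obstacle: the statement is a formal rewriting, and the only point requiring vigilance is that the left $A$-action is twisted by $\p$, so that the second coordinate of the induced $A\times A^{op}$-action produces $\p(a')y$ rather than $a'y$. Everything else is an instance of the tensor-product coherence recorded in Propositions~\ref{p:tensor.id} and~\ref{p:bi.tensor.iso}.
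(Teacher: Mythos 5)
Your proposal is correct and matches the paper's proof: the paper gives exactly the same explicit maps ($x\otimes y\mapsto x\otimes 1\otimes y$ with inverse $x\otimes a\otimes y\mapsto xa\otimes y$ for the first isomorphism, and $x\otimes a\otimes y\mapsto (x,y)\otimes a$ for the second) and leaves the well-definedness and equivariance checks to the reader. Your packaging of these as instances of Propositions~\ref{p:tensor.id} and~\ref{p:bi.tensor.iso}, with the explicit caution that the left $A$-action is twisted by $\p$, is just a more careful spelling-out of the same argument.
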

\begin{proof}
The first isomorphism is given by $x\otimes y\mapsto x\otimes 1\otimes y$ with inverse $x\otimes a\otimes y\mapsto xa\otimes y$ (the reader should check that these are well defined and equivariant).  The second isomorphism sends $x\otimes a\otimes y$ to $(x,y)\otimes a$ with inverse mapping $(x,y)\otimes a$ to $x\otimes a\otimes y$.  The reader should again check that this is well defined and equivariant.
\end{proof}

We now associate a Bass--Serre forest $T$ to an HNN-like Otto-pride extension.  The vertex set of $T$ is $V=L\otimes_M L$ and the edge set is $E=L\otimes_A L$.  Again, we write $[x,y]_K$ for the tensor $x\otimes y$ of $L\otimes_K L$, for $K=M,A$.  With this notation, the edge $[x,y]_A$ connects $[x,ty]_M$ to $[xt,y]_M$ (which we think of as oriented in this way).  To check that this is well defined, observe that if $x,y\in L$ and $a\in A$, then $[xa,y]_A=[x,\p(a)y]_A$ and  $[xa,ty]_M = [x,aty]_M = [x,t\p(a)y]_M$ and $[xat,y]_M=[xt\p(a),y]_M = [xt,\p(a)y]_M$. By construction, $T$ is an $L\times L^{op}$-graph.

It is immediate from the definition of the incidences in $T$ that  the multiplication mapping $L\otimes_M L\to L$ induces an $L\times L^{op}$-equivariant surjection $\pi_0(T)\to L$.  We aim to show that it is an isomorphism.

\begin{Lemma}\label{l:hnnlike.bi.iso.comp}
The multiplication mapping $L\otimes_M L\to L$ induces an $L\times L^{op}$-equivariant isomorphism of $\pi_0(T)$ with  $L$.
\end{Lemma}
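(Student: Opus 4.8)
The plan is to follow the template of the proof of Lemma~\ref{l:bass.serre.com}, which established the analogous statement for the two-sided amalgamated free product. The multiplication mapping $L\otimes_M L\to L$ already induces an $L\times L^{op}$-equivariant surjection $\pi_0(T)\to L$, so the only thing left to establish is injectivity. Equivalently, I must show that any two vertices $[u,v]_M$ and $[u',v']_M$ of $T$ with $uv=u'v'$ lie in the same connected component of $T$.

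The heart of the argument will be the claim that for every $x\in L$ the vertices $[1,x]_M$ and $[x,1]_M$ are joined by an edge path in $T$. I would prove this by induction on the length of $x$, measured as its shortest expression as a product of generators from $M\cup\{t\}$ (which generate $L$). The base case $x=1$ is immediate. For the inductive step, write $x=x_1z$ with $x_1\in M\cup\{t\}$ and $z$ of length one less, and let $p$ be a path from $[1,z]_M$ to $[z,1]_M$ supplied by the induction hypothesis. Left-translating by $x_1$ yields a path $x_1p$ from $[x_1,z]_M$ to $[x_1z,1]_M=[x,1]_M$, so it remains only to connect $[1,x]_M$ to $[x_1,z]_M$.

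Here two cases arise. If $x_1\in M$, then the tensor relation over $M$ gives $[x_1,z]_M=[1,x_1z]_M=[1,x]_M$ directly, so the two vertices coincide and there is nothing to do. If $x_1=t$, then $x=tz$ and by the very definition of the incidences in $T$ the edge $[1,z]_A$ joins $[1,tz]_M=[1,x]_M$ to $[t,z]_M=[x_1,z]_M$; concatenating this edge with $x_1p$ completes the claim. I expect this $t$-case to be the only genuine subtlety: the edges of $T$ were defined precisely so that moving a single $t$ from the right argument of the tensor to the left argument costs exactly one edge, and the whole proof turns on exploiting this.

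With the claim in hand, an arbitrary vertex $[u,v]_M$ is handled by right-translating the path from $[1,u]_M$ to $[u,1]_M$ by $v$ (using that $T$ is an $L\times L^{op}$-graph, so $[a,b]_M\cdot v=[a,bv]_M$), producing a path from $[1,uv]_M$ to $[u,v]_M$. Thus every vertex $[u,v]_M$ is connected to $[1,uv]_M$, a vertex depending only on the product $uv$. Consequently any two vertices with equal product lie in the same component, which gives injectivity of $\pi_0(T)\to L$. Combined with the surjectivity and equivariance already noted before the lemma, this shows that the induced map is an $L\times L^{op}$-equivariant isomorphism, completing the proof.
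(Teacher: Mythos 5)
Your proposal is correct and follows essentially the same argument as the paper: induction on the length of $x$ in the generators $M\cup\{t\}$ to connect $[1,x]_M$ to $[x,1]_M$, with the edge $[1,z]_A$ handling the $t$-case exactly as the incidences of $T$ were designed to, followed by right-translation to connect $[u,v]_M$ to $[1,uv]_M$. No gaps.
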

\begin{proof}
We first prove by induction on the length of $x$ as a product of elements of $M\cup\{t\}$ that there is a path from $[1,x]_M$ to $[x,1]_M$.  If $x=1$ , there is nothing to prove.  Otherwise, assume $x=uy$ with $u\in M\cup \{t\}$ and $y$ of shorter length.  Let $p$ be a path from $[1,y]_M$ to $[y,1]_M$.  Then $up$ is a path from $[u,y]_M$ to $[x,1]_M$.  If $u\in M$, then $[u,y]_M=[1,x]_M$ and we are done. If $u=t$, then $[1,y]_A$ is an edge connecting $[1,x]_M=[1,ty]_M$ to $[t,y]_M=[u,y]_M$ and so we are again done.

Now if $x=uv$ in $L$, then by the above, there is a path $p$ from $[1,u]_M$ to $[u,1]_M$.  Then $pv$ is a path from $[1,x]_M$ to $[u,v]_M$.  If follows that all vertices $[u',v']_M$ with $u'v'=x$ are in a single connected component and hence the multiplication map induces an isomorphism from $\pi_0(T)$ to $L$.
\end{proof}

Next we use derivations to prove that $T$ is a forest.

\begin{Lemma}\label{l:hnnlike.forest.bi}
The graph $T$ is a forest.
\end{Lemma}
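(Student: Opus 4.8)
The plan is to invoke the forest criterion already used in the amalgam setting: $T$ is a forest if and only if the cellular boundary map $\partial\colon \mathbb ZE\to \mathbb ZV$ is injective (see Lemma~\ref{l:forest}). Accordingly I would construct a left inverse $\beta\colon \mathbb ZV\to \mathbb ZE$ to $\partial$ by means of a derivation, paralleling the argument for the Bass--Serre forest of an amalgam almost verbatim. Here $\mathbb ZE=\mathbb Z[L\otimes_A L]$ carries the structure of a $\mathbb ZL$-bimodule coming from the left $L\times L^{op}$-action on $L\otimes_A L$, and the recipe is to produce a splitting of the two-sided semidirect product projection $\mathbb ZE\bowtie L\to L$.

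First I would define $\gamma\colon L\to \mathbb ZE\bowtie L$ on the generating set $M\cup\{t\}$ of $L$ by $\gamma(m)=(0,m)$ for $m\in M$ and $\gamma(t)=([1,1]_A,t)$, and check that $\gamma$ respects the defining relations $at=t\p(a)$ for $a\in A$. Computing in $\mathbb ZE\bowtie L$ one finds $\gamma(a)\gamma(t)=([a,1]_A,at)$ and $\gamma(t)\gamma(\p(a))=([1,\p(a)]_A,t\p(a))$: the second coordinates agree since $at=t\p(a)$ in $L$, and the first coordinates agree because $[a,1]_A=[1,\p(a)]_A$ in $L\otimes_A L$. This last identity is exactly the defining tensor relation, since the right $A$-action on $L$ is right multiplication while the left $A$-action is $a\odot y=\p(a)y$, so that $a\otimes 1=1\otimes(a\odot 1)=1\otimes \p(a)$. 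Hence $\gamma$ extends to a homomorphism splitting $\pi\colon \mathbb ZE\bowtie L\to L$, whence $\gamma(x)=(d(x),x)$ for a derivation $d\colon L\to \mathbb ZE$ satisfying $d(m)=0$ for $m\in M$ and $d(t)=[1,1]_A$.

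Next I would set $\beta([x,y]_M)=d(x)y$ for $x,y\in L$. This is well defined: the relation $[xm,y]_M=[x,my]_M$ with $m\in M$ is respected because $d(xm)=xd(m)+d(x)m=d(x)m$ (as $d(m)=0$), so $d(xm)y=d(x)(my)$. Finally, using the incidence $\partial([x,y]_A)=[xt,y]_M-[x,ty]_M$ together with $d(xt)=xd(t)+d(x)t=x[1,1]_A+d(x)t$, I would compute
\[
\beta\partial([x,y]_A)=d(xt)y-d(x)(ty)=x[1,1]_A y+d(x)ty-d(x)ty=[x,y]_A,
\]
since $x[1,1]_A y=x\otimes y=[x,y]_A$. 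Thus $\beta\partial=1_{\mathbb ZE}$, so $\partial$ is injective and $T$ is a forest.

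The whole computation is a two-sided analogue of Lemma~\ref{l:forest} (with $M$ and $t$ in place of $M_1,M_2$), and the only genuine point requiring care — hence the main potential obstacle — is the bookkeeping of the two distinct $A$-actions on $L$ (right multiplication on one side, twisting by $\p$ on the other) that make the key identity $[a,1]_A=[1,\p(a)]_A$ hold and make $\beta$ well defined, along with keeping the orientation of the edges of $T$ straight. Once the $\mathbb ZL$-bimodule structure on $\mathbb ZE$ and the incidence maps are pinned down correctly, everything else is formal.
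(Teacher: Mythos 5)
Your proof is correct and follows essentially the same route as the paper's: the same splitting $\gamma(m)=(0,m)$, $\gamma(t)=([1,1]_A,t)$ of the two-sided semidirect product, the same retraction $\beta([x,y]_M)=d(x)y$, and the same computation $\beta\partial([x,y]_A)=[x,y]_A$. Your explicit justification of the key identity $[a,1]_A=[1,\p(a)]_A$ (which the paper asserts without comment) is a welcome addition, but the argument is otherwise the paper's verbatim.
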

\begin{proof}
It suffices to prove that the cellular boundary map $\partial\colon \mathbb ZE\to \mathbb ZV$ is injective.  Define a mapping $\gamma\colon M\cup \{t\}\to \mathbb ZE\bowtie L$ by $\gamma(m) = (0,m)$ for $m\in M$ and $\gamma(t) = ([1,1]_A,t)$. If $a\in A$, then we compute
$\gamma(a)\gamma(t) = ([a,1]_A,at) = ([1,\p(a)]_A,t\p(a))=\gamma(t)\gamma(\p(a))$ and hence $\gamma$ extends to a homomorphism $\gamma\colon L\to \mathbb ZE\bowtie L$ splitting the two-sided semidirect product projection.  Thus $\gamma(x)=(d(x),x)$ for some derivation $d\colon L\to \mathbb ZE$ such that $d(m)=0$ for $m\in M$ and $d(t)=[1,1]_A$.  Define $\beta\colon \mathbb ZV\to \mathbb ZE$ by $\beta([x,y]_M) = d(x)y$.  We must verify that $\beta$ is well defined.  If $m\in M$, then $d(xm)y = xd(m)y+d(x)my = d(x)my$ because $d(m)=0$.  This shows that $\beta$ is well defined.  Next we compute that
\begin{align*}
\beta\partial([x,y]_A) &= \beta([xt,y]_M)-\beta([x,ty]_M) = d(xt)y-d(x)ty\\ &= xd(t)y+d(x)ty-d(x)ty= x[1,1]_Ay=[x,y]_A
\end{align*}
 as $d(t)=[1,1]_A$.  This establishes that $\beta\partial=1_{\mathbb ZE}$ and hence $T$ is a forest.
\end{proof}

We call $T$ the \emph{Bass--Serre forest} for $L$.

The exactness of the sequence
 \[0\longrightarrow \mathbb ZE\longrightarrow \mathbb ZV\longrightarrow H_0(T)\longrightarrow 0,\] coming from $T$ being a forest, together with the isomorphism $\mathbb ZL\cong \mathbb Z\pi_0(L)\cong H_0(T)$ coming from Lemma~\ref{l:hnnlike.bi.iso.comp}, yields the following exact sequence.

 \begin{Cor}\label{c:low.exact.hnnlike}
 Let $L$ be the HNN-like Otto-Pride extension associated to a monomorphism $\p\colon A\to M$ with $A$ a submonoid of $M$.  Then there is an exact sequence
 \[0\longrightarrow \mathbb ZL\otimes_{\mathbb ZA} \mathbb ZL\longrightarrow \mathbb ZL\otimes_{\mathbb ZM} \mathbb ZL\longrightarrow \mathbb ZL\longrightarrow0\]
 where $\mathbb ZL$ is viewed as a right $\mathbb ZA$-module via the inclusion and as a left $\mathbb ZA$-module via $\p$.
 \end{Cor}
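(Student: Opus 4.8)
The plan is to read the claimed exact sequence straight off the augmented cellular chain complex of the Bass--Serre forest $T$, exactly as was done for amalgams in Corollary~\ref{c:exact.bimod.forest}. Since $T$ is a forest by Lemma~\ref{l:hnnlike.forest.bi}, its cellular boundary map $\partial\colon \mathbb ZE\to \mathbb ZV$ is injective, so the cellular chain complex of $T$ furnishes a short exact sequence of $L\times L^{op}$-modules
\[0\longrightarrow \mathbb ZE\xrightarrow{\,\,\partial\,\,}\mathbb ZV\longrightarrow H_0(T)\longrightarrow 0,\]
where $H_0(T)=\mathbb ZV/\partial\mathbb ZE$ is the degree-zero homology of the forest.

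First I would translate the two chain groups into tensor products. By definition $V=L\otimes_M L$ and $E=L\otimes_A L$, and the free-abelian-group functor carries a tensor product of $M$-sets (respectively $A$-sets) to the corresponding tensor product of free modules, so that $\mathbb ZV\cong \mathbb ZL\otimes_{\mathbb ZM}\mathbb ZL$ and $\mathbb ZE\cong \mathbb ZL\otimes_{\mathbb ZA}\mathbb ZL$ as $L\times L^{op}$-modules. The one point requiring attention is the $A$-module structure built into $E$: here $\mathbb ZL$ is a right $\mathbb ZA$-module via the inclusion $A\hookrightarrow M\subseteq L$ (right multiplication) and a left $\mathbb ZA$-module via the action $a\odot x=\p(a)x$, which are precisely the conventions in force for $L$ as an $A$-set in this subsection. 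This is pure bookkeeping, but it is what pins down the module structures asserted in the statement of the corollary. Next I would identify the cokernel: by Lemma~\ref{l:hnnlike.bi.iso.comp} the multiplication map induces an $L\times L^{op}$-equivariant isomorphism $\pi_0(T)\cong L$, whence $H_0(T)\cong \mathbb Z\pi_0(T)\cong \mathbb ZL$. Substituting these three identifications into the short exact sequence above yields exactly
\[0\longrightarrow \mathbb ZL\otimes_{\mathbb ZA}\mathbb ZL\longrightarrow \mathbb ZL\otimes_{\mathbb ZM}\mathbb ZL\longrightarrow \mathbb ZL\longrightarrow 0,\]
as required.

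As for difficulty, there is essentially no remaining obstacle: all of the substantive work has already been carried out in Lemma~\ref{l:hnnlike.forest.bi} (that $T$ is a forest, established by constructing a derivation $d\colon L\to \mathbb ZE$ that splits the two-sided semidirect product projection, so that $\partial$ admits a left inverse and is injective) and in Lemma~\ref{l:hnnlike.bi.iso.comp} (that $\pi_0(T)\cong L$). The corollary is a repackaging of these two facts, and the proof amounts to assembling them and confirming the $\mathbb ZA$-module conventions; the parallel argument for amalgams is the short proof given for Corollary~\ref{c:exact.bimod.forest}.
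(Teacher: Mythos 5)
Your proposal is correct and matches the paper's own argument: the corollary is obtained by reading off the augmented cellular chain complex of the Bass--Serre forest $T$, using Lemma~\ref{l:hnnlike.forest.bi} for injectivity of $\partial$ and Lemma~\ref{l:hnnlike.bi.iso.comp} for $H_0(T)\cong\mathbb ZL$, with the identifications $\mathbb ZV\cong\mathbb ZL\otimes_{\mathbb ZM}\mathbb ZL$ and $\mathbb ZE\cong\mathbb ZL\otimes_{\mathbb ZA}\mathbb ZL$. Your extra care about the left $\mathbb ZA$-module structure via $\p$ is exactly the bookkeeping the statement requires.
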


Suppose that we have an HNN-like Otto-Pride extension $L$ with base monoid $A$ and monomorphism $\p\colon A\to M$. Put $B=\p(A)$.

\begin{Prop}\label{p:hhnlike.bi.free}
If $M$ is free as a right $A$-set and as a left $B$-set, then $L$ is free as both a right and a left $M$-set.  Moreover, $L$ is free as a right $A$-set and a left $B$-set. Hence $L$ is free as a left $A$-set via the action $a\odot x=\p(a)x$ for $a\in A$ and $x\in L$. 
\end{Prop}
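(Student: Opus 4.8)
The plan is to assemble the statement from Corollary~\ref{c:normal.form} and its left/right dual; almost all of the work has already been done, and what remains is to track carefully which side $A$ and $B$ act on and via which map. First I would dispatch the right-handed claims: since $M$ is free as a right $A$-set, Corollary~\ref{c:normal.form} applies verbatim and gives at once that $L$ is free as a right $M$-set and as a right $A$-set, with no further argument needed.

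For the left-handed claims I would use the alternative presentation $L=\langle M,t\mid tb=\pinv(b)t,\ b\in B\rangle$ recorded just before the proposition. Because $\p$ is injective, $\p\colon A\to B$ is an isomorphism, and this presentation exhibits $L$ as an Otto--Pride extension of exactly the same shape as the original one but read from the opposite side: $t$ conjugates $B$ (acting on $M$ on the left by multiplication) back to $A=\pinv(B)$. The left/right dual of Proposition~\ref{p:hnnlike.as.tensor} and of Corollary~\ref{c:normal.form} --- legitimate precisely because $M$ is free as a left $B$-set --- then produces a unique normal form for each element of $L$ read from the right, with $B$ in the role formerly played by $A$. From this normal form I would read off that $L$ is free as a left $M$-set and as a left $B$-set.

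Finally, the concluding ``hence'' is a transport of structure along the isomorphism $\p$. The left $A$-action $a\odot x=\p(a)x$ is, by definition, the restriction of the left $B$-action on $L$ along $\p\colon A\to B$. Thus if $D$ is a basis for $L$ as a left $B$-set --- so every $x\in L$ is uniquely $bd$ with $b\in B$ and $d\in D$ --- then writing $b=\p(a)$ for the unique $a\in A$ gives $x=\p(a)d=a\odot d$, again uniquely; hence $D$ is a basis for $L$ as a left $A$-set under $\odot$, which is the last assertion.

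The only genuinely delicate point, and the step I would check most carefully, is the bookkeeping in the second paragraph: one must verify that passing to the presentation $\langle M,t\mid tb=\pinv(b)t,\ b\in B\rangle$ really does interchange the two $A$-module structures on $M$ (the right action by multiplication and the left action $a\odot m=\p(a)m$) with the corresponding $B$-module structures, so that the dual of Corollary~\ref{c:normal.form} is applicable under the hypothesis ``$M$ free as a left $B$-set.'' Once that identification of presentations and side-conventions is granted, there is no further obstacle, since both the extraction of freeness from the normal form and the transport-of-structure argument are routine.
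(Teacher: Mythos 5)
Your proposal is correct and follows exactly the paper's route: the paper's entire proof is the one-line statement that the result follows from Corollary~\ref{c:normal.form} and its left/right dual (applied via the rewritten presentation $L=\langle M,t\mid tb=\pinv(b)t,\ b\in B\rangle$), which is precisely what you spell out, including the transport of the left $B$-action along the isomorphism $\p$ for the final assertion.
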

\begin{proof}
This follows from Corollary~\ref{c:normal.form} and its dual.
\end{proof}

The flat version is the following.

\begin{Prop}\label{p:hhnlike.bi.flat}
If $\mathbb ZM$ is a flat right $\mathbb ZA$-module and a flat left $\mathbb ZB$-module, then $\mathbb ZL$ is flat as both a right and a left $\mathbb ZM$-module.  Furthermore, $\mathbb ZL$ is flat as a right $\mathbb ZA$-module and a left $\mathbb ZB$-module. Thus $\mathbb ZL$ is flat as a left $\mathbb ZA$-module via the $\mathbb ZA$-module structure coming from $\p$.
\end{Prop}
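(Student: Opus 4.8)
The plan is to prove the right-module assertions by a direct appeal to Corollary~\ref{c:flat.hnnlike} and the left-module assertions by applying its left/right dual, precisely mirroring the way Proposition~\ref{p:hhnlike.bi.free} is deduced from Corollary~\ref{c:normal.form} and its dual. For the right-hand statements, the hypothesis that $\mathbb ZM$ is flat as a right $\mathbb ZA$-module is exactly the assumption of Corollary~\ref{c:flat.hnnlike}, so that corollary at once yields that $\mathbb ZL$ is flat both as a right $\mathbb ZM$-module and as a right $\mathbb ZA$-module. Internally this rests on the decomposition $\mathbb ZL\cong\bigoplus_{i\geq 0}V_i$ of Proposition~\ref{p:hnnlike.as.tensor}, with each $V_i$ shown flat by induction.

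For the left-hand statements I would first invoke the observation made in the text preceding Proposition~\ref{p:hhnlike.bi.free}: since $\p$ is injective, $L$ admits the alternative presentation $L=\langle M,t\mid tb=\p\inv(b)t,\ b\in B\rangle$. This exhibits $L$ as a left-handed Otto-Pride extension built from the monomorphism $\p\inv\colon B\to M$, so the left/right dual of Corollary~\ref{c:flat.hnnlike} applies with $B$ in the role of $A$ and left modules replacing right modules throughout. The dual hypothesis is that $\mathbb ZM$ is flat as a left $\mathbb ZB$-module, which is one of our standing assumptions, and the dual conclusion is that $\mathbb ZL$ is flat both as a left $\mathbb ZM$-module and as a left $\mathbb ZB$-module. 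This handles all but the final sentence.

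To deduce that last sentence, I would use that $\p$ restricts to a monoid isomorphism $A\to B$ and hence induces a ring isomorphism $\mathbb ZA\to\mathbb ZB$. The left $\mathbb ZA$-module structure on $\mathbb ZL$ coming from $\p$, namely $a\odot x=\p(a)x$, is by definition the restriction of the left $\mathbb ZB$-module structure along this ring isomorphism. Since restriction of scalars along an isomorphism is an equivalence of module categories and in particular preserves flatness, the left $\mathbb ZB$-flatness established above transfers directly to left $\mathbb ZA$-flatness via $\p$.

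I do not expect a serious obstacle here: the whole argument is a bookkeeping exercise in dualization, with the one-line summary being ``this follows from Corollary~\ref{c:flat.hnnlike} and its dual.'' The single point demanding care is confirming that the injectivity of $\p$ is genuinely used and correctly invoked, for it is exactly what makes $\p\inv\colon B\to M$ a well-defined homomorphism and thus what licenses viewing $L$ as a left Otto-Pride extension. Without injectivity the left-sided duals are unavailable, which is precisely why (as noted in the text) the two-sided theory must be restricted to HNN-like extensions.
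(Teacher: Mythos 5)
Your proposal is correct and follows exactly the paper's argument, which simply states that the result ``follows from Corollary~\ref{c:flat.hnnlike} and its dual''; you have merely spelled out the dualization via the presentation $L=\langle M,t\mid tb=\p\inv(b)t,\ b\in B\rangle$ and the transport of left $\mathbb ZB$-flatness to left $\mathbb ZA$-flatness along the ring isomorphism induced by $\p$. No gaps.
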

\begin{proof}
This follows from Corollary~\ref{c:flat.hnnlike} and its dual.
\end{proof}

We can now investigate the two-sided topological and homological finiteness of HNN-like Otto-Pride extensions.
The following theorem generalises~\cite[Theorem~1]{Pride2004} and~\cite[Theorem~5]{Pride2005}.

\begin{Thm}\label{t:op.hnn.bi}
Let $L$ be an HNN-like Otto-Pride extension of $M$ with respect to an injective homomorphism $\p\colon A\to M$ and put $B=\p(A)$.  Suppose that $M$ is free as a right $A$-set and as a left $B$-set.  Then if $M$ is of type bi-$\F_n$ and $A$ is of type bi-$\F_{n-1}$, then $L$ is of type bi-$\F_n$.
\end{Thm}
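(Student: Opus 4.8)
The plan is to imitate the proof of the one-sided result Theorem~\ref{t:ottopride.one.side}, but carried out in the two-sided (bimodule) setting with the Bass--Serre \emph{forest} $T$ of Lemma~\ref{l:hnnlike.forest.bi} in place of the tree, using that $\pi_0(T)\cong L$ as an $L\times L^{op}$-set by Lemma~\ref{l:hnnlike.bi.iso.comp}. First I would take a bi-equivariant classifying space $X$ for $M$ with $M\times M^{op}$-finite $n$-skeleton and a bi-equivariant classifying space $Y$ for $A$ with $A\times A^{op}$-finite $(n-1)$-skeleton, and fix bi-equivariant isomorphisms $\pi_0(X)\cong M$ and $\pi_0(Y)\cong A$. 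By Proposition~\ref{p:hhnlike.bi.free}, $L$ is free as a left and right $M$-set, as a right $A$-set, and as a left $A$-set via $a\odot x=\p(a)x$; hence the base-changed spaces
\[
X'=L\otimes_M X\otimes_M L\cong (L\times L^{op})\otimes_{M\times M^{op}} X,\qquad Y'=L\otimes_A Y\otimes_A L\cong (L\times L^{op})\otimes_{A\times A^{op}} Y
\]
(the isomorphisms by Proposition~\ref{p:bi.tensor.iso}) are projective $L\times L^{op}$-CW complexes by Proposition~\ref{c:base.change.cw}, with $X'$ having $L\times L^{op}$-finite $n$-skeleton and $Y'$ having $L\times L^{op}$-finite $(n-1)$-skeleton.

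The heart of the argument is the construction of two $L\times L^{op}$-equivariant cellular maps $F,G\colon Y'\to X'$ realizing the two incidence maps of $T$. Using~\cite[Lemma~7.1]{GraySteinberg1} and the cellular approximation theorem, I would produce bi-$A$-equivariant cellular maps $f,g\colon Y\to X$, where for $f$ the monoid $A$ acts on $X$ through the inclusion $A\hookrightarrow M$ on both sides (so $f$ induces the inclusion $A\to M$ on $\pi_0$), while for $g$ the monoid $A$ acts on $X$ through $\p$ on both sides (so $g$ induces $a\mapsto\p(a)$ on $\pi_0$). I then set
\[
F(u\otimes y\otimes v)=u\otimes f(y)\otimes tv,\qquad G(u\otimes y\otimes v)=ut\otimes g(y)\otimes v.
\]
Checking that these are well defined is exactly the step I expect to be the main obstacle: one must verify that $F$ and $G$ respect the defining tensor relations of $Y'$, and this is where the relation $at=t\p(a)$ and the two different (untwisted and $\p$-twisted) $A$-actions interact. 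Concretely, for $F$ the right-hand relation is reconciled using $a'tv=t\p(a')v$, and for $G$ the left-hand relation is reconciled using $uat=ut\p(a)$; once this is done, $F$ and $G$ are manifestly $L\times L^{op}$-equivariant, continuous and cellular. I would then form the homotopy coequalizer $Z=M(F,G)$, which is a projective $L\times L^{op}$-CW complex with $L\times L^{op}$-finite $n$-skeleton.

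It remains to show $Z\simeq_{L\times L^{op}} T$. On path components, Proposition~\ref{p:basic.tensor.ids} and Proposition~\ref{p:tensor.id} give $\pi_0(Y')\cong L\otimes_A L=E$ and $\pi_0(X')\cong L\otimes_M L=V$, and by construction $F,G$ induce exactly the incidence maps $[x,y]_A\mapsto[x,ty]_M$ and $[x,y]_A\mapsto[xt,y]_M$ of $T$; thus $T$ is the homotopy coequalizer of these $\pi_0$-maps. Since $L$ is free as a left and right $M$-set and $A$-set, Proposition~\ref{p:tensor.free} yields $X'\cong L/M\times X\times M\backslash L$ and $Y'\cong L/A\times Y\times A\backslash L$, so the projections $X'\to\pi_0(X')$ and $Y'\to\pi_0(Y')$ are homotopy equivalences because the $X$- and $Y$-factors are contractible on each component. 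By functoriality of the homotopy coequalizer (\cite[page~19]{DwyerHennBook}) it follows that $Z$ is $L\times L^{op}$-homotopy equivalent to $T$.

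Finally, $T$ is a forest with $\pi_0(T)\cong L$, so each of its components is contractible; hence $Z$ has contractible components and $\pi_0(Z)\cong L$ as an $L\times L^{op}$-set, that is, $Z$ is a bi-equivariant classifying space for $L$. As $Z$ has $L\times L^{op}$-finite $n$-skeleton, this shows $L$ is of type bi-$\F_n$, as required.
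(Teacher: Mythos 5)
Your proposal is correct and follows essentially the same route as the paper's own proof: the same base changes $X'=L\otimes_M X\otimes_M L$ and $Y'=L\otimes_A Y\otimes_A L$, the same pair of maps $u\otimes y\otimes v\mapsto u\otimes f(y)\otimes tv$ and $u\otimes y\otimes v\mapsto ut\otimes g(y)\otimes v$ with the same well-definedness computations, and the same identification of the homotopy coequalizer with the Bass--Serre forest $T$ via freeness of $L$ over $M$, $A$ and $B$. The only cosmetic difference is that you write $A\backslash L$ where the paper writes $B\backslash L$ for the quotient by the $\p$-twisted left $A$-action, which denotes the same set.
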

\begin{proof}
Let $X$ be a bi-equivariant classifying space for $M$ with $M$-finite $n$-skeleton and $Y$ a bi-equivariant classifying space for $A$ with $A$-finite $(n-1)$-skeleton.  Let $r\colon M\to \pi_0(X)$ and $r'\colon A\to \pi_0(Y)$ be equivariant isomorphisms. By
\cite[Lemma~7.1]{GraySteinberg1}
and the cellular approximation theorem  \cite[Theorem 2.8]{GraySteinberg1}, we can find cellular mappings $f_1,f_2\colon Y\to X$ such that $f_1(aya')=af_1(y)a'$ and $f_2(aya') =\p(a)f_2(y)\p(a')$ for $a,a'\in A$ and $y\in Y$ and, moreover, $r\inv (f_1)_{\ast}r'$ is the inclusion and $r\inv (f_2)_{\ast}r'=\p$ where $(f_i)_{\ast}$ is the induced mapping on the set of path components, for $i=1,2$.

In what follows, we view $L$ as a (free) right $A$-set via the inclusion and a (free) left $A$-set via $\p$.  Put $X'=L\otimes_M X\otimes_M L$ and $Y'=L\otimes_A Y\otimes_A L$.   They are projective $L\times L^{op}$-CW complexes with $L\times L^{op}$-finite $n$-, $(n-1)$-skeletons, respectively, by Proposition~\ref{c:base.change.cw} and~\ref{p:bi.tensor.iso}. Define $F_1,F_2\colon Y'\to X'$ by $F_1(u\otimes y\otimes v) = u\otimes f_1(y)\otimes tv$ and $F_2(u\otimes y\otimes v) = ut\otimes f_2(y)\otimes v$.  Let us verify that this is well defined.  If $a,a'\in A$, then we have that
$ua\otimes f_1(y)\otimes t\p(a')v = ua\otimes f_1(y)\otimes a'tv =u\otimes f_1(aya')\otimes tv$ and so $F_1$ is well defined.  Also, we have that
$uat\otimes f_2(y)\otimes \p(a')v = ut\p(a)\otimes f_2(y)\otimes \p(a')v=ut\otimes \p(a)f_2(y)\p(a')\otimes v=ut\otimes f_2(aya')\otimes v$ and so $F_2$ is well defined.  Clearly, $F_1,F_2$ are continuous $L\times L^{op}$-equivariant cellular mappings.  Let $Z=M(F_1,F_2)$ be the homotopy coequalizer.  It a projective $L\times L^{op}$-CW complex with $L\times L^{op}$-finite $n$-skeleton by construction.  We prove that $Z$ is a bi-equivariant classifying space for $Z$.  To do this it suffices to construct an $L\times L^{op}$-equivariant homotopy equivalence to the Bass--Serre forest $T$.

First note, by~\cite[Proposition~3.4]{GraySteinberg1},
that $\pi_0(X')\cong L\otimes_M M\otimes_M L\cong L\otimes_M L$ (by Proposition~\ref{p:tensor.id}) and $\pi_0(Y')\cong L\otimes_A A\otimes_A L\cong L\otimes_A L$ (by Proposition~\ref{p:basic.tensor.ids}).  The mapping $L\otimes_A L\to L\otimes_M L$ induced by $F_1$ is $u\otimes v\mapsto u\otimes tv$  and the mapping induced by $F_2$ is $u\otimes v\mapsto ut\otimes v$. As $T$ is the homotopy coequalizer of these two mappings of  discrete sets $L\otimes_A L\to L\otimes_M L$, to complete the proof it suffices to show that $X'$ and $Y'$ are homotopy equivalent to their sets of path components (via the natural projections).  But this follows because $X$ and $Y$ are homotopy equivalent to their respective sets of path components and the isomorphisms $X'\cong L/M\times X\times M\backslash L$ and $Y'\cong L/A\times Y\times B\backslash L$ coming from $L$ being free as both a left and right $M$-set and as a right $A$-set and left $B$-set (cf.~Proposition~\ref{p:hhnlike.bi.free}).
\end{proof}

The hypotheses of Theorem~\ref{t:op.hnn.bi} hold if $M$ and $A$ are groups or, more generally, if $M$ is cancellative and $A$ is a group. It also holds if $M=\mathbb N$ and $A$ is a cyclic submonoid.  The proof of Theorem~\ref{t:op.hnn.bi} shows that if $M$ is free as a right $A$-set and a left $B$-set, $M$ has geometric dimension $d$ and $A$ has geometric dimension $d'$, then $L$ has geometric dimension at most $\max\{d,d'+1\}$.

The flat homological analogue of Theorem~\ref{t:op.hnn.bi} has a similar proof.

\begin{Thm}\label{t:op.hnn.bi.flat}
Let $L$ be an HNN-like Otto-Pride extension of $M$ with respect to a monomorphism $\p\colon A\to M$ and put $B=\p(A)$.  Assume that $\mathbb ZM$ is flat as a right $\mathbb ZA$-module and as a left $\mathbb ZB$-module.  If $M$ is of type bi-$\FP_n$ and $A$ is of type bi-$\FP_{n-1}$, then $L$ is of type bi-$\FP_n$.
\end{Thm}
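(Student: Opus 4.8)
The plan is to follow verbatim the template of the two-sided flat amalgam result, Theorem~\ref{t:bass.serre.flat.bi}, with the Bass--Serre forest of the amalgam replaced by the one built for HNN-like extensions. The starting point is the short exact sequence of $\mathbb Z[L\times L^{op}]$-modules furnished by Corollary~\ref{c:low.exact.hnnlike},
\[0\longrightarrow \mathbb ZL\otimes_{\mathbb ZA} \mathbb ZL\longrightarrow \mathbb ZL\otimes_{\mathbb ZM} \mathbb ZL\longrightarrow \mathbb ZL\longrightarrow 0,\]
which I would regard as a length-one partial resolution of the bimodule $\mathbb ZL$. By definition $L$ is of type bi-$\FP_n$ precisely when $\mathbb ZL$ is of type $\FP_n$ as a left $\mathbb Z[L\times L^{op}]$-module, so by Corollary~\ref{c:fp.resolved}(1) it suffices to prove that the degree-zero term $\mathbb ZL\otimes_{\mathbb ZM}\mathbb ZL$ is of type $\FP_n$ and the degree-one term $\mathbb ZL\otimes_{\mathbb ZA}\mathbb ZL$ is of type $\FP_{n-1}$, both as $\mathbb Z[L\times L^{op}]$-modules.

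First I would rewrite these two terms as flat base extensions of the coefficient modules. Linearizing Proposition~\ref{p:tensor.id} and invoking Remark~\ref{r:same.arg} gives $\mathbb ZL\otimes_{\mathbb ZM}\mathbb ZL\cong \mathbb ZL\otimes_{\mathbb ZM}\mathbb ZM\otimes_{\mathbb ZM}\mathbb ZL\cong \mathbb Z[L\times L^{op}]\otimes_{\mathbb Z[M\times M^{op}]}\mathbb ZM$, while linearizing Proposition~\ref{p:basic.tensor.ids} gives $\mathbb ZL\otimes_{\mathbb ZA}\mathbb ZL\cong \mathbb Z[L\times L^{op}]\otimes_{\mathbb Z[A\times A^{op}]}\mathbb ZA$, where throughout $\mathbb ZL$ is viewed as a right $\mathbb ZA$-module via the inclusion and as a left $\mathbb ZA$-module via $\p$, so that $L\times L^{op}$ carries the right $A\times A^{op}$-action $(x,y)(a,a')=(xa,\p(a')y)$.

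Next I would establish the required flatness of the coefficient ring. By Proposition~\ref{p:hhnlike.bi.flat} the hypotheses on $\mathbb ZM$ force $\mathbb ZL$ to be flat as a right and as a left $\mathbb ZM$-module, and flat as a right $\mathbb ZA$-module (via inclusion) and as a left $\mathbb ZA$-module (via $\p$). Feeding these into Proposition~\ref{p:flatness.bi} shows that $\mathbb Z[L\times L^{op}]\cong \mathbb ZL\otimes\mathbb ZL^{op}$ is flat as a right $\mathbb Z[M\times M^{op}]$-module and as a right $\mathbb Z[A\times A^{op}]$-module. Since $M$ is of type bi-$\FP_n$, the module $\mathbb ZM$ is of type $\FP_n$ over $\mathbb Z[M\times M^{op}]$, and since $A$ is of type bi-$\FP_{n-1}$, the module $\mathbb ZA$ is of type $\FP_{n-1}$ over $\mathbb Z[A\times A^{op}]$. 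Applying Lemma~\ref{l:flat.base} then yields that $\mathbb ZL\otimes_{\mathbb ZM}\mathbb ZL$ is of type $\FP_n$ and $\mathbb ZL\otimes_{\mathbb ZA}\mathbb ZL$ is of type $\FP_{n-1}$ over $\mathbb Z[L\times L^{op}]$, which is exactly what the reduction requires; Corollary~\ref{c:fp.resolved}(1) then completes the argument.

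Since essentially every ingredient has already been assembled in the preliminaries and in the HNN-like analogues of the tensor identities, I do not expect a genuine obstacle here. The only points demanding care are bookkeeping ones: confirming that the $\p$-twisted left $\mathbb ZA$-structure on $\mathbb ZL$ is the one under which the set-level isomorphism of Proposition~\ref{p:basic.tensor.ids} linearizes to the claimed bimodule isomorphism, and checking that the right $A\times A^{op}$-action $(xa,\p(a')y)$ is precisely the structure for which Proposition~\ref{p:flatness.bi} delivers flatness of $\mathbb Z[L\times L^{op}]$ over $\mathbb Z[A\times A^{op}]$. Provided these identifications are made consistently, the proof is a transcription of that of Theorem~\ref{t:bass.serre.flat.bi}.
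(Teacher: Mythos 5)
Your proposal is correct and matches the paper's own proof essentially step for step: the paper likewise combines Proposition~\ref{p:hhnlike.bi.flat} and Proposition~\ref{p:flatness.bi} to get flatness of $\mathbb Z[L\times L^{op}]$ over $\mathbb Z[M\times M^{op}]$ and $\mathbb Z[A\times A^{op}]$, applies Lemma~\ref{l:flat.base}, and then invokes Corollary~\ref{c:fp.resolved} on the exact sequence of Corollary~\ref{c:low.exact.hnnlike} using the tensor identifications of Propositions~\ref{p:tensor.id}, \ref{p:bi.tensor.iso} and \ref{p:basic.tensor.ids}. The bookkeeping points you flag are exactly the identifications the paper relies on, and they go through as you describe.
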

\begin{proof}
We have that $\mathbb ZL$ is flat as both a right and a left $\mathbb ZA$-module and as a right and a left $\mathbb ZM$-module by Proposition~\ref{p:hhnlike.bi.flat} (viewing $L$ as a left $A$-module via $\p$).
Therefore, $\mathbb Z[L\times L^{op}]\cong \mathbb ZL\otimes \mathbb ZL^{op}$ is flat as both a right $\mathbb Z[M\times M^{op}]$-module and as a right-$\mathbb Z[A\times A^{op}]$-module by Proposition~\ref{p:flatness.bi}. Applying Lemma~\ref{l:flat.base} and the hypotheses, we conclude that $\mathbb Z[L\times L^{op}]\otimes_{\mathbb Z[M\times M^{op}]} \mathbb ZM$ is of type $\FP_n$ and $\mathbb Z[L\times L^{op}]\otimes_{\mathbb Z[A\times A^{op}]}\mathbb ZA$ is of type $\FP_{n-1}$.  The result now follows by applying Corollary~\ref{c:fp.resolved} to the exact sequence in Corollary~\ref{c:low.exact.hnnlike}, taking into account Proposition~\ref{p:tensor.id}, Proposition~\ref{p:bi.tensor.iso} and Proposition~\ref{p:basic.tensor.ids},.
\end{proof}

As an example, if $M$ is any group containing a copy of $\mathbb Z$ and $A=\mathbb N$, viewed as a submonoid of $M$, then since $\mathbb ZM$ is free as a module over the group ring of $\mathbb Z$, which in turn is flat over the monoid ring of $\mathbb N$, being a localization, we conclude that $\mathbb ZM$ is flat over the monoid ring of $\mathbb N$.

One can similarly prove that if $L$ is an HNN-like Otto-Pride extension of $M$ with respect to a monomorphism $\p\colon A\to M$ and $\mathbb ZM$ is flat as a right $\mathbb ZA$-module and as a left $\mathbb ZB$-module, where $B=\p(A)$, then if $M$ has Hochschild cohomological dimension at most $d$ and $A$ has Hochschild cohomological dimension at most $d-1$, then $L$ has Hochschild cohomological dimension at most $d$.

We end this section by briefly explaining what happens for a different HNN extensions of monoids construction of the sort considered by Howie~\cite{Howie1963}. Suppose that $M$ is a monoid and $A,B$ are isomorphic submonoids via an isomorphism $\p\colon A\to B$. Let $C$ be an infinite cyclic group generated by $t$.  The \emph{HNN extension} of $M$ with base monoids $A,B$ is the quotient $L$ of the free product $M\ast C$ by the congruence generated by the relations $at=t\p(a)$ for $a\in A$.  In other words, $L=\langle M,t,t\inv\mid tt\inv=1=t\inv t, at=t\p(a), \forall a\in A\rangle$.
The following results may be proved in a similar way to Theorems~\ref{t:ottopride.one.side} and Theorem~\ref{t:op.hnn.bi}, respectively, using suitably modified definition of Bass-–Serre tree, and Bass-Serre forest, for these contexts. 

\begin{Thm}\label{t:hnn.official.one.side}
Let $L$ be an HNN extension of $M$ with base monoids $A,B$.  Suppose that, furthermore, $M$ is free as both a right $A$-set and a right $B$-set.  If $M$ is of type left-$\F_n$ and $A$ is of type left-$\F_{n-1}$, then $L$ is of type left-$\F_n$.
\end{Thm}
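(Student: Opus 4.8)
The plan is to follow the template of Theorem~\ref{t:ottopride.one.side} almost verbatim, adapting the Bass--Serre tree to the fact that $t$ is now invertible. First I would define the $L$-graph $T$ with vertex set $V=L/M$ and edge set $E=L/A$, where the edge $[x]_A$ runs from $[x]_M$ to $[xt]_M$. This is well defined exactly as in the Otto--Pride case, since for $a\in A$ we have $[xa]_M=[x]_M$ and $[xat]_M=[xt\varphi(a)]_M=[xt]_M$ using $\varphi(a)\in B\subseteq M$. Connectivity is then proved by induction on the length of $x$ as a product of generators from $M\cup\{t,t^{-1}\}$, exactly as in Lemma~\ref{l:bserre.op.conn}; the only new point is that a multiplication by $t^{-1}$ is absorbed by traversing the edge $[xt^{-1}]_A$ backwards, which is legitimate precisely because $t$ is invertible.

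Next I would show that $T$ is a tree by the derivation argument of Lemma~\ref{l:bserre.op.tree}. Define $\gamma\colon M\cup\{t\}\to \mathbb ZE\rtimes L$ by $\gamma(m)=(0,m)$ for $m\in M$ and $\gamma(t)=([1]_A,t)$. Because $t$ is invertible in $L$, the element $([1]_A,t)$ is automatically invertible in the semidirect product $\mathbb ZE\rtimes L$, with inverse $(-[t^{-1}]_A,t^{-1})$, so $\gamma$ extends over the infinite cyclic group $\langle t,t^{-1}\rangle$. One then checks $\gamma(a)\gamma(t)=([a]_A,at)=([1]_A,t\varphi(a))=\gamma(t)\gamma(\varphi(a))$ for $a\in A$, so $\gamma$ descends to a splitting $\gamma\colon L\to\mathbb ZE\rtimes L$ of the projection. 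Writing $\gamma(x)=(d(x),x)$ gives a derivation with $d|_M=0$ and $d(t)=[1]_A$, and the formula $\beta([x]_M)=d(x)$ furnishes a left inverse $\beta$ to $\partial\colon\mathbb ZE\to\mathbb ZV$ by the same computation $\beta\partial([x]_A)=d(xt)-d(x)=x\,d(t)=[x]_A$. Hence $\partial$ is injective and $T$ is a tree, yielding, as in Corollary~\ref{c:exact.seq.OP}, a short exact sequence $0\to\mathbb ZL\otimes_{\mathbb ZA}\mathbb Z\to\mathbb ZL\otimes_{\mathbb ZM}\mathbb Z\to\mathbb Z\to 0$ of left $\mathbb ZL$-modules.

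The step requiring genuinely new input---and the one I expect to be the main obstacle---is proving that $L$ is free both as a right $M$-set and as a right $A$-set, the HNN analogue of Corollary~\ref{c:normal.form}. Here the invertibility of $t$ forces me to use \emph{both} freeness hypotheses: from $at=t\varphi(a)$ one also gets $tb=\varphi^{-1}(b)t$ for $b\in B$, so reducing a word in $M\cup\{t,t^{-1}\}$ involves pushing $t$ to the right past $A$-cosets and $t^{-1}$ to the right past $B$-cosets. Choosing transversals $C_A\ni 1$ and $C_B\ni 1$ for $M$ viewed as a free right $A$-set and a free right $B$-set respectively, I would establish a Britton-type normal form theorem for $L$ (no factor $t^{-1}at$ with $a\in A$ and no factor $tbt^{-1}$ with $b\in B$) and read off from it the desired freeness of $L$ over $M$ and over $A$ on the right. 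This replaces the tensor-power model of Proposition~\ref{p:hnnlike.as.tensor}, which is unavailable because $t$ is invertible, and it is the place where the distinctive structure of the Howie HNN extension really enters.

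With freeness in hand, the topological argument is identical to that of Theorem~\ref{t:ottopride.one.side}. Taking an equivariant classifying space $X$ for $M$ with $M$-finite $n$-skeleton and $Y$ for $A$ with $A$-finite $(n-1)$-skeleton, I would use \cite[Lemma~6.2]{GraySteinberg1} and the cellular approximation theorem \cite[Theorem 2.8]{GraySteinberg1} to produce cellular maps $f,g\colon Y\to X$ equivariant for the inclusion $A\hookrightarrow M$ and for $a\mapsto\varphi(a)$ (the latter using the $A$-action $a\odot x=\varphi(a)x$ on $X$). Setting $X'=L\otimes_M X$ and $Y'=L\otimes_A Y$, which are projective $L$-CW complexes of the correct finite type by Proposition~\ref{c:base.change.cw}, and defining $F,G\colon Y'\to X'$ by $F(u\otimes y)=u\otimes f(y)$ and $G(u\otimes y)=ut\otimes g(y)$, I form the homotopy coequalizer $Z=M(F,G)$. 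By \cite[Proposition~3.4]{GraySteinberg1} we have $\pi_0(X')\cong L/M$ and $\pi_0(Y')\cong L/A$, under which $F,G$ induce precisely the two maps whose homotopy coequalizer is $T$; freeness gives $X'\cong L/M\times X$ and $Y'\cong L/A\times Y$, so the projections to path components are homotopy equivalences and $Z\simeq T$ is contractible. Thus $Z$ is an equivariant classifying space for $L$ with $L$-finite $n$-skeleton, proving that $L$ is of type left-$\F_n$.
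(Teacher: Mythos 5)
Your proposal follows exactly the route the paper intends: the paper states this theorem without a written proof, remarking only that it is proved in a similar way to Theorem~\ref{t:ottopride.one.side} using a suitably modified Bass--Serre tree, and your tree $T$ with $V=L/M$ and $E=L/A$, the derivation/splitting argument (where invertibility of $([1]_A,t)$ lets $\gamma$ extend over the free product with $\langle t\rangle$), and the homotopy coequalizer of $F,G$ are precisely that modification. The only step you assert rather than carry out is the Britton-type normal form yielding freeness of $L$ as a right $M$-set and right $A$-set, but you correctly flag it as the genuinely new input, and your reduction scheme (pushing $t$ to the right past $A$-cosets and $t^{-1}$ past $B$-cosets, which is where both freeness hypotheses enter) is the right way to establish it.
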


\begin{Thm}\label{t:hnn.official.two.side}
Let $L$ be an HNN extension of $M$ with base monoids $A,B$.  Suppose that, furthermore, $M$ is free as both a right and a left $A$-set (via the inclusion) and as a right and a left $B$-set.  If $M$ is of type left-$\F_n$ and $A$ is of type bi-$\F_{n-1}$, then $L$ is of type bi-$\F_n$.
\end{Thm}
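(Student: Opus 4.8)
The plan is to follow the template of the two-sided HNN-like Otto--Pride result, Theorem~\ref{t:op.hnn.bi}, replacing the non-invertible stable letter by an invertible one and supplying the extra bookkeeping that invertibility forces. The first task is to record the freeness that the hypotheses provide. Exactly as in Corollary~\ref{c:normal.form} and Proposition~\ref{p:hhnlike.bi.free}, the freeness of $M$ as a right $A$-set and a right $B$-set (respectively, as a left $A$-set and a left $B$-set) should yield a Britton-style normal form for elements of $L=\langle M,t,t\inv\mid tt\inv=1=t\inv t,\ at=t\p(a)\rangle$ as reduced alternating products, from which one reads off that $L$ is free as both a left and a right $M$-set and as a right $A$-set and a left $B$-set; consequently $\ZL$ is free, hence flat, over each of these. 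The only structural difference from the Otto--Pride situation is that reduced words now contain both $t$ and $t\inv$, and the two freeness conditions on each side are precisely what make the reductions $t\inv a t=\p(a)$ and $t b t\inv=\p\inv(b)$ unambiguous.

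Next I would build the Bass--Serre forest $T$ with vertex set $V=L\otimes_M L$ and edge set $E=L\otimes_A L$, where $L$ is a right $A$-set by inclusion and a left $A$-set via $\p$, and where the edge $[x,y]_A$ connects $[x,ty]_M$ to $[xt,y]_M$; the well-definedness check is the same one used before, via $at=t\p(a)$. One then proves $\pi_0(T)\cong L$ through the multiplication map, by induction on the length of an element in $M\cup\{t,t\inv\}$ as in Lemma~\ref{l:hnnlike.bi.iso.comp}, with a single extra case for the generator $t\inv$ handled by traversing the relevant edge backwards. The forest property is established as in Lemma~\ref{l:hnnlike.forest.bi} by constructing a splitting $\gamma\colon L\to \mathbb ZE\bowtie L$ of the two-sided semidirect product projection with $\gamma(m)=(0,m)$ for $m\in M$ and $\gamma(t)=([1,1]_A,t)$. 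The genuinely new point is that $\gamma(t)$ must be invertible: since $t$ is invertible in $L$, the element $([1,1]_A,t)$ is invertible in $\mathbb ZE\bowtie L$ with inverse $(-[t\inv,t\inv]_A,t\inv)$, so $\gamma$ extends over $M\ast\langle t\rangle$ and then descends to $L$ once one checks $\gamma(a)\gamma(t)=\gamma(t)\gamma(\p(a))$. The associated derivation $d$ satisfies $d(m)=0$ and $d(t)=[1,1]_A$, and $\beta([x,y]_M)=d(x)y$ is a left inverse to $\partial$, giving injectivity and hence that $T$ is a forest. Combined with $\pi_0(T)\cong L$, this produces the short exact sequence of $\mathbb Z[L\times L^{op}]$-modules
\[0\longrightarrow \ZL\otimes_{\mathbb ZA}\ZL\longrightarrow \ZL\otimes_{\mathbb ZM}\ZL\longrightarrow \ZL\longrightarrow 0\]
exactly parallel to Corollary~\ref{c:low.exact.hnnlike}.

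With the forest in hand, the topological construction mirrors Theorem~\ref{t:op.hnn.bi}. Let $X$ be a bi-equivariant classifying space for $M$ with $M\times M^{op}$-finite $n$-skeleton and $Y$ one for $A$ with $A\times A^{op}$-finite $(n-1)$-skeleton. Using \cite[Lemma~7.1]{GraySteinberg1} and cellular approximation, I would pick cellular maps $f_1,f_2\colon Y\to X$ realizing the inclusion and $\p$ on path components, form the projective $L\times L^{op}$-CW complexes $X'=L\otimes_M X\otimes_M L$ and $Y'=L\otimes_A Y\otimes_A L$ (of the correct finiteness type by Propositions~\ref{c:base.change.cw} and~\ref{p:bi.tensor.iso}), and let $Z=M(F_1,F_2)$ be the homotopy coequalizer of $F_1(u\otimes y\otimes v)=u\otimes f_1(y)\otimes tv$ and $F_2(u\otimes y\otimes v)=ut\otimes f_2(y)\otimes v$. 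By the freeness from the first step and Proposition~\ref{p:tensor.free}, $X'\cong L/M\times X\times M\backslash L$ and $Y'\cong L/A\times Y\times B\backslash L$, so both are homotopy equivalent to their path-component sets; the coequalizer form of Lemma~\ref{l:pushout.graph} then identifies $Z$, up to $L\times L^{op}$-equivariant homotopy equivalence, with $T$ (invoking Propositions~\ref{p:tensor.id} and~\ref{p:basic.tensor.ids} to compute $\pi_0(X')\cong L\otimes_M L$ and $\pi_0(Y')\cong L\otimes_A L$). Since each component of $T$ is a tree and $\pi_0(T)\cong L$, this exhibits $Z$ as a bi-equivariant classifying space for $L$ with $L\times L^{op}$-finite $n$-skeleton, which is the assertion.

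The hard part will be the first step: establishing the Britton-type normal form and the resulting two-sided freeness of $\ZL$ for the invertible-$t$ extension. In the Otto--Pride case the absence of $t\inv$ let one realize the monoid as an explicit increasing union of tensor powers (Proposition~\ref{p:hnnlike.as.tensor}); with $t$ invertible no such one-sided filtration exists, and one must instead verify that the rewriting of arbitrary words to reduced alternating form is confluent, which is exactly where the hypothesis that $M$ be free over $A$ and over $B$ on \emph{both} sides is used. Once freeness and flatness of $\ZL$ are secured, the forest construction, the exact sequence, and the homotopy-coequalizer argument are routine adaptations of the machinery already set up for Theorem~\ref{t:op.hnn.bi}.
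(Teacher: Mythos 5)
The paper offers no written proof of this theorem beyond the remark that it may be proved in the same way as Theorem~\ref{t:op.hnn.bi} with a suitably modified Bass--Serre forest, and your proposal is precisely that adaptation; the genuinely new verifications you supply — the incidence check for the forest, the extra $t^{-1}$ case in the connectivity induction, and above all the invertibility of $\gamma(t)$ in $\mathbb ZE\bowtie L$ with inverse $(-[t^{-1},t^{-1}]_A,\,t^{-1})$ — all check out. The Britton-type normal form you correctly flag as the real remaining work (and which does need freeness over $A$ and $B$ on both sides) is likewise left implicit by the paper, and your silent reading of the hypothesis ``$M$ of type left-$\F_n$'' as ``bi-$\F_n$'' is what the statement evidently intends, by comparison with Theorem~\ref{t:op.hnn.bi}.
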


Theorem~\ref{t:hnn.official.one.side} recovers the usual topological finiteness result for HNN extensions of groups. It also applies if $M$ is left cancellative and $A$ is a group.  The analogue of Theorem~\ref{t:hnn.official.one.side} for left geometric dimensions states that if $M$ is free as both a right $A$-set and a right $B$-set, $M$ has left geometric dimension at most $d$ and $A$ has geometric dimension at most $d-1$, then $L$ has geometric dimension at most $d$.  Theorem~\ref{t:hnn.official.two.side}  applies if $M$ is  cancellative and $A$ is a group.  Similarly, if $M$ is free as both a right and a left $A$-set and as a right and a left $B$-set, then if $M$ has geometric dimension at most $d$ and $A$ has geometric dimension at most $d-1$, then $L$ has geometric dimension at most $d$.

\end{document}